\documentclass[a4paper]{amsart}
\usepackage[OT2,T1]{fontenc}
\usepackage{amsmath, amsfonts, amssymb, amsthm}
\usepackage{mathrsfs}  
\usepackage{xypic}
\usepackage{tikz}
\usepackage{tikz-3dplot}
\usepackage{xcolor}
\usepackage{soul}
\usepackage[pdftex,colorlinks,linkcolor=red!66!black,citecolor=blue!66!black,urlcolor=green!50!black]{hyperref}
\usepackage{lastpage}
\usetikzlibrary{patterns}
\usetikzlibrary{patterns.meta}

\makeatletter
\renewcommand{\boxed}[1]{\text{\fboxsep=.15em\fbox{\m@th$\displaystyle#1$}}}
\makeatother

\DeclareMathOperator{\Hom}{\mathrm{Hom}}
\DeclareMathOperator{\End}{\mathrm{End}}
\DeclareMathOperator{\Ext}{\mathrm{Ext}}
\DeclareMathOperator{\supp}{\mathrm{supp}}
\DeclareMathOperator{\Rep}{\mathrm{Rep}}
\DeclareMathOperator{\RepK}{\mathrm{Rep}_{\mathcal{K}}}
\DeclareMathOperator{\rpwf}{\mathrm{rep}^{\text{pwf}}}
\DeclareMathOperator{\rpwfK}{\mathrm{rep}^{\text{pwf}}_{\mathcal{K}}}

\DeclareMathOperator{\rpA}{\mathrm{rep}^{\mathscr{A}^{(n)}}}
\DeclareMathOperator{\RpL}{\mathrm{Rep}^{\Lsc}}
\DeclareMathOperator{\rpLK}{\mathrm{rep}^{\Lsc}_{\mathcal{K}}}
\DeclareMathOperator{\RpLK}{\mathrm{Rep}^{\Lsc}_{\mathcal{K}}}

\DeclareMathOperator{\rfp}{\mathrm{rep}^{\mathcal{P}}}
\DeclareMathOperator{\add}{\mathrm{add}}

\DeclareMathOperator{\Ob}{\mathrm{Ob}}

\DeclareMathOperator{\Cov}{\mathrm{Cov}}
\DeclareMathOperator{\TS}{\mathrm{TS}}
\DeclareMathOperator{\Mor}{\mathrm{Mor}}
\DeclareMathOperator{\Init}{\mathrm{Init}}

\newcommand{\EE}{\mathbb{E}}
\newcommand{\NN}{\mathbb{N}}
\newcommand{\RR}{\mathbb{R}}

\newcommand{\XX}{\mathbb{X}}
\newcommand{\YY}{\mathbb{Y}}
\newcommand{\ZZ}{\mathbb{Z}}
\newcommand{\Cc}{\mathcal{C}}
\newcommand{\Ac}{\mathcal{A}}
\newcommand{\Ic}{\mathcal{I}}

\newcommand{\Dcal}{\mathcal{D}}

\newcommand{\Pf}{\mathfrak{P}}
\newcommand{\Qf}{\mathfrak{Q}}
\newcommand{\pf}{\mathfrak{p}}
\newcommand{\qf}{\mathfrak{q}}
\newcommand{\Xbf}{\mathbf{X}}
\newcommand{\AbarP}{{\overline{\Ac}}^{\Pf}}
\newcommand{\IbarP}{{\overline{\Ic}}^{\Pf}}
\newcommand{\AbarPlocal}{{\overline{\Ac}}^{\Pf}[\Sigma^{-1}_{\Pf}]}
\newcommand{\PfA}{{\boxed{\Ac}^{\Pf}}}
\newcommand{\PfC}{{\boxed{C}^{\Pf}}}

\newcommand{\Xpixel}{\mathsf{X}}
\newcommand{\Ypixel}{\mathsf{Y}}
\newcommand{\Zpixel}{\mathsf{Z}}

\newcommand{\im}{\mathrm{im}}

\newcommand{\Qc}{\mathcal{Q}}
\newcommand{\QAP}{Q(\Ac,\Pf)}
\newcommand{\QCP}{Q(C,\Pf)}
\newcommand{\QcAP}{\Qc(\Ac,\Pf)}
\newcommand{\Kcal}{\mathcal{K}}
\newcommand{\KCP}{\overline{\QCP}}
\newcommand{\KC}[1]{\overline{Q(C,{#1})}}
\newcommand{\KcAP}{{\overline{\QcAP}}}
\newcommand{\KcA}[1]{{\overline{\mathcal{Q}(\mathcal{A},{#1})}}}
\newcommand{\Jc}{\mathcal{J}}
\newcommand{\kvec}{\Bbbk\text{-}\mathrm{vec}}
\newcommand{\kVec}{\Bbbk\text{-}\mathrm{Vec}}
\newcommand{\kMod}{\Bbbk\text{-}\mathrm{Mod}}
\newcommand{\kCat}{\Bbbk\kern .3mm \boldsymbol{Cat}}
\newcommand{\Sbold}{\mathbf{S}}
\newcommand{\Lsc}{\mathscr{L}}
\newcommand{\Psc}{\mathscr{P}}
\newcommand{\Pscfin}{\overline{\Psc}}

\newcommand{\Tsc}{\mathscr{T}}

\newcommand{\Space}{(\XX,\Gamma{/}{\sim})}
\newcommand{\Spacep}{({\XX'},{\Gamma'}{/}{\sim'})}
\newcommand{\Spaces}[1]{(\XX_{#1},\Gamma_{#1}{/}{\sim_{#1}})}
\newcommand{\SpaceY}{(\YY,\Delta{/}{\approx})}
\newcommand{\Aus}[1]{\Ac^{(#1)}_{\RR}}

\newcommand{\PfAus}[1]{\boxed{\Ac^{(#1)}_{\RR}}^{\Pf}}
\newcommand{\Maus}[1]{\mathcal{M}^{(#1)}}
\newcommand{\Mausbar}[1]{\underline{\mathcal{M}^{(#1)}}}
\newcommand{\Mausbarop}[1]{{\underline{\mathcal{M}^{(#1)}}}^{\text{op}}}
\newcommand{\Top}[1]{\mathrm{Top}(#1)}

\newcommand{\Spec}[1]{\mathrm{Spec}(#1)}

\newcommand{\homo}{\boldsymbol{\pi}}
\newcommand{\pathcat}{\boldsymbol{PCat}}
\newcommand{\cats}{\boldsymbol{Cat}}
\newcommand{\OP}{\mathcal{O}_{\Pscfin}}
\newcommand{\OcE}{\mathcal{O}_E}
\newcommand{\Ab}{\boldsymbol{Ab}}
\newcommand{\Pbarx}{P_{\bar{x}}}

\newcommand{\barxi}{\bar{x}_{(i)}}
\newcommand{\barx}[1]{\bar{x}_{(#1)}}
\newcommand{\Asc}[1]{\mathscr{A}^{(#1)}}

\newtheorem{theorem}{Theorem}

\counterwithin{figure}{section}
\counterwithin{equation}{section}

\newtheorem{thm}{Theorem}[section]
\newtheorem{lem}[thm]{Lemma}
\newtheorem{prop}[thm]{Proposition}
\newtheorem{cor}[thm]{Corollary}
\newtheorem{conj}[thm]{Conjecture}
\theoremstyle{definition}
\newtheorem{deff}[thm]{Definition}
\newtheorem{note}[thm]{Notation}
\newtheorem{rem}[thm]{Remark}
\newtheorem{xmp}[thm]{Example}

\newtheorem{quest}[thm]{Question}

\author{J.~Daisie~Rock}
\address{Algebra Group, Department of Mathematics, KU Leuven, Leuven, Belgium.
\newline
\indent
Department of Mathematics W16, UGent, Ghent, Belgium}
\email{jobdaisie.rock@kuleuven.be}

\title{Introducing pixelation with applications}
\date{26 March 2026}

\keywords{pixelation, approximation, localization, representation theory, functor category, ring spectra, ringed space, sheaf of modules, higher Auslander algebra, higher homological algebra, quiver representations}
\subjclass{Primary: 18E35, 18A25, 18F20. Secondary: 18G15, 16G20, 16G99}

\begin{document}

	\begin{abstract}
		Motivated by the desire for a new kind of approximation, we define a type of localization called \emph{pixelation}.
		We present how pixelation manifests in representation theory and in the study of sites and sheaves.
		A path category is constructed from a set, a collection of ``paths'' into the set, and an equivalence relation on the paths.
		A screen is a partition of the set that respects the paths and equivalence relation.
		For a commutative ring, we also enrich the path category over its modules (=linearize the category with respect to the ring) and quotient by an ideal generated by paths (possibly 0).
		The pixelation is the localization of a path category, or the enriched quotient, with respect to a screen.
		The localization has useful properties and serves as an approximation of the original category.
		As applications, we use pixelations to provide a new point of view of the Zariski topology of localized ring spectra, provide a parallel story to a ringed space and sheaves of modules, and construct a categorical generalization of higher Auslander algebras of type \emph{A}.
	\end{abstract}
	\maketitle
	
%	\vspace*{-1em}
%	\begin{center}
%	\textit{to all FLINTA{*} mathematicians \\ for our brilliance and resilience}
%	\end{center}
	
	\tableofcontents

	\section*{Introduction}\label{sec:introduction}
	
	\subsection*{Context and Motivation}\label{sec:introduction:context}
	There is an increasing interest in studying representations of categories with infinitely-many objects.
	That is, studying the category of functors $C\to \Kcal$ where $C$ is a small category with infinitely-many objects and $\Kcal$ is some well-understood abelian category.
	Work typically begins by studying the case where $C$ is either a line (type $A$, though not necessarily totally-ordered) or a circle (type $\tilde{A}$, though not necessarily cyclicly ordered).
	
	Often, $C$ is replaced by some additive category $\Ac$ so that $\Ac$ and $\Kcal$ are both enriched over $\Bbbk$-modules, for some commutative ring $\Bbbk$, and the functors are replaced with additive functors enriched over $\Bbbk$-modules as well.
	In the language of representation theory: we say $\Ac$, $\Kcal$, and the functors $\Ac\to\Kcal$ are $\Bbbk$-linear.
	
	In topological data analysis, a persistence module is usually a functor from a small category into the category of finite-dimensional $\Bbbk$-vector spaces, for some field $\Bbbk$.
	In this case the persistence modules decompose uniquely (up to isomorphism) into indecomposable summands, each of which has a local endomorphism ring \cite{GR97,BCB20}.
	Recent work has begun in decomposing infinite persistence modules of type $A$ over other types of rings, such as PIDs \cite{LHP25}.
	One may also consider multiparameter persistence modules, which can be interpreted as representations as $\RR^n$.
	Here, complete decomposition/structure theorems are not possible but progress is made to understand persistence modules in other ways using invariants \cite{ABH24}.
	
	A structure similar to persistence modules appears as Fock space representations of continuum quantum groups of types $A$ and $\tilde{A}$ \cite{SS21}.
	Finitely-indexed persistence modules can also be considered as representations of quivers.
	An infinite generalization of quiver representations was introduced in \cite{BvR13}.
	A further generalization and partial structure theorem and classifications appears in \cite{PRY24}.
	In all of these cases, the category $C$ of interest has a notion of paths that are not just morphisms but also in a sense topological or geometric.
	
	One way to better understand representations in these cases is to consider approximations.	
	In \cite{PRY24} a technique was used to reduce representations to smaller, understandable parts.
	The technique splits representations into noise and noise-free pieces.
	One can view this as an approximation by deciding what kind of noise is allowed in the representations.
	The precursor to this technique was used in \cite{HR24} and this technique is the precursor to pixelation in the present paper.
	A homological type of approximation was used in \cite{BBH22} on multiparameter persistence modules.
	
	In the present paper, we think about approximation via localization.
	While the applications are heavily tied to representation theory, with the exception of some results on sites and ring spectra in Section~\ref{sec:sites:lattice}, the main techniques and results in the present paper are primarily categorical in nature.
	In the categorical sense localization as approximation is philosophically straightforward: one groups objects together based on some parameters, including morphisms, and this is an approximation of the original category.
	We can think of localizations of rings as approximations as well, by looking at how the corresponding ring spectra are related.
	We look at the categories of open sets and inclusions and see if one is related to the other by some kind of categorical localization process.
	
	Our approach to approximation draws inspiration from a real world process: digital photography.
	One takes a photo of what (feels like) infinitely-many tiny atoms and obtains a photo with a finite number of pixels.
	In its infancy, digital photos were very clearly (poor) approximations of what we see in the real world.
	In 2026, we can produce digital photos with more than enough pixels to accurately approximate what we see.
	However, even at the beginning, we only needed a few pixels to know if the coffee pot was full.\footnote{\color{black}The first digital camera had a resolution of 128x128 pixels and was used to check the coffee pot in a break room at the University of Cambridge. The client software was written by Quentin Stafford-Fraser and the server software was written by Paul Jardetzky.}
	%\medskip
	%
	%{\scriptsize\emph{The history of computer scientists and mathematicians is doing just enough work to get our caffeine, either from a conference break or the coffee maker downstairs.}}
	
	\subsection*{Organization and contributions}\label{sec:introduction:organization}
	Here we give an overview of the paper and highlight what the author considers to be the main results: Theorems \ref{introthm:pathcat}, \ref{introthm:exactness}, \ref{introthm:spectra}, and \ref{introthm:auslander}.
	However, given the extremely varied interests of those who use representation theory, the reader may find other results to be the ``highlight''.
	
	\subsubsection*{Path categories and pixelation}
	In Section~\ref{sec:paths and screens} we define triples $\Space$, where $\XX$ is a set, $\Gamma$ acts like a set of paths in $\XX$, an $\sim$ is an equivalence relation on $\Gamma$ (Definitions~\ref{deff:Gamma}~and~\ref{deff:sim}).
	Throughout the paper we use the running examples of $\RR$ and $\RR^n$, starting with Example~\ref{xmp:RR}.
	Inspired by our approach to approximation, we define a special type of partition on the set $\XX$, called a \emph{screen}, that ``plays nice'' with $\Gamma$ and $\sim$ (Definition~\ref{deff:screen}).
	The elements of a screen are called \emph{pixels}.
	We prove that the set $\Psc$ of screens on a triple $\Space$ has at least one maximal element (Proposition~\ref{prop:maximal screen}) and show that triples and screens behave well with respect to products (Propositions~\ref{prop:product of triples is a triple}~and~\ref{prop:product of screens is a screen}).
	\medskip
	
	In Section~\ref{sec:path category} we study path categories (Definition~\ref{deff:path category}).
	A \emph{path category} $C$ is constructed from a triple $\Space$.
	The objects are points in $\XX$ and morphisms are equivalence classes $[\gamma]$ of paths in $\Gamma$ using the relation $\sim$.
	The category $\pathcat$ is the subcategory of $\cats$ (the category of small categories) whose objects are path categories and whose morphisms are functors between path categories.
	We also define a $\Bbbk$-linear version $\Cc$, for a commutative ring $\Bbbk$.
	A \emph{path-based ideal} $\Ic$ in $\Cc$ is an ideal that behaves well with respect to paths (Definition~\ref{deff:path based ideal}).
	The quotient $\Cc/\Ic$ is written $\Ac$ and is the object of study in the $\Bbbk$-linear case.
	
	We show that, given a screen $\Pf$ of $\Space$, there is an induced class $\Sigma_\Pf$ of morphisms in $C$ that admits a calculus of fractions (Proposition~\ref{prop:Sigma yields a calculus of fractions nonlinear}).
	The localization $\PfC=C[\Sigma_{\Pf}^{-1}]$ is called a \emph{pixelation} (Definition~\ref{deff:pixelation}).
	We note that pixelation depends on the choice of triple and ask when is the localization of a path category is a pixelation with respect to a triple and a screen (Remark~\ref{rem:different screen structures} and Question~\ref{quest:which localizations come from pixelations}).
	
	In $\Cc$ we expand $\Ic$ to an ideal $\IbarP$ compatible with $\Pf$ and take a further quotient $\AbarP=\Cc/\IbarP$.
	Notice $\AbarP$ is also a quotient of $\Ac$.
	In this case we also have an induced class of morphisms $\Sigma_{\Pf}$ in $\AbarP$ that admits a calculus of fractions (Proposition~\ref{prop:Sigma yields a calculus of fractions}).
	We also call the localization $\PfA=\AbarP[\Sigma_{\Pf}^{-1}]$ a pixelation.
	
	Given a screen $\Pf$ of $\Space$, we construct categories $\KCP$ and $\KcAP$ equivalent to $\PfC$ and $\PfA$, respectively (Theorem~\ref{thm:barPlocal yields cat equivalent to cat from quiver}).
	Each of $\KCP$ and $\KcAP$ are its own respective skeleton and are constructed from quivers.
	
	We essentially prove that pixelation yields a new path category, in the non-$\Bbbk$-linear case.
	\begin{theorem}[Theorem~\ref{thm:KCP is a path category}]\label{introthm:pathcat}
		Let $C$ be the path category constructed from $\Space$ and let $\Pf$ be a screen of $\Space$.
		Then $\KCP$ is isomorphic to a path category and so $\PfC$ is equivalent to a path category.
	\end{theorem}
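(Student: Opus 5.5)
We will construct a triple $\Spacep$ directly from the screen, and show that its path category is isomorphic to $\KCP$. The equivalence $\PfC\simeq\KCP$ of Theorem~\ref{thm:barPlocal yields cat equivalent to cat from quiver} then gives the second assertion.

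\emph{The triple.} Take $\XX'=\Pf$, the set of pixels. This matches the objects of $\KCP$, which is a skeleton with one object per pixel. The paths $\Gamma'$ come from the quiver $\QCP$ used to build $\KCP$. A path of $\Gamma'$ from $\pf$ to $\qf$ is a finite formal concatenation of arrows of $\QCP$. Equivalently, it is a concatenation of classes of paths $[\gamma]$ in $\Gamma$ whose source lies in one pixel and whose target lies in the next. We then check the axioms of Definition~\ref{deff:Gamma} for $\Gamma'$:
\begin{itemize}
\item trivial paths at each pixel (the empty concatenation);
\item closure under concatenation;
\item closure under the restriction or subdivision operations the definition demands.
\end{itemize}
Subpaths should be exactly the subconcatenations, together with the restrictions inherited from $\Gamma$ through the pixel-compatibility in Definition~\ref{deff:screen}.

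\emph{The relation.} Define $\sim'$ by declaring $\gamma'\sim'\delta'$ when the two paths have the same image in $\KCP$, i.e.\ they agree modulo the relations used to form $\KCP$ from $\QCP$. We must verify the conditions of Definition~\ref{deff:sim}: it is an equivalence relation, it is compatible with concatenation, and it respects endpoints. Compatibility with concatenation holds because the relations defining $\KCP$ form a congruence on the path category of $\QCP$. Any extra axiom, such as compatibility with subpaths, must be deduced from the corresponding property of $\sim$ on $\Gamma$ and the screen axioms. The point is that a relation between concatenations in $\KCP$ is generated by relations $\gamma\sim\delta$ in $\Gamma$, together with the identification of paths staying inside a single pixel with identities, since those morphisms lie in $\Sigma_\Pf$.

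\emph{The isomorphism.} Let $C'$ be the path category of $\Spacep$. Define $F\colon C'\to\KCP$ as the identity on objects and as $[\gamma']\mapsto$ the image of $\gamma'$. $F$ is well defined and faithful by the definition of $\sim'$, and it is full because every morphism of $\KCP$ is a composite of arrows of $\QCP$. Since $F$ is bijective on objects and on hom-sets, it is an isomorphism of categories.

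\emph{Expected obstacle.} The hard part will be verifying the axioms of Definitions~\ref{deff:Gamma} and~\ref{deff:sim} for $(\Gamma',\sim')$, especially any axiom about splitting or restricting a path and the equivalence classes being compatible with that splitting. A formal concatenation of arrows has no intrinsic notion of an ``interior point'', so I would first try to define subpaths using the points of $\XX$ visited by a representative $\gamma\in\Gamma$ and their pixels. I would then use the screen axioms to show that the result is independent of the representative, up to $\sim'$. If this fails, the fallback is to enlarge $\Gamma'$ to be closed under whatever operations are required and check that $\sim'$ still matches the relations in $\KCP$.
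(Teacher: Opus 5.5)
\textbf{Verdict.} Your construction is the paper's own: $\XX'=\Pf$, $\Gamma'$ the paths tracing composites of arrows of $\QCP$, $\sim'$ equality in $\KCP$, and the identity-on-objects isomorphism. However, your check of Definition~\ref{deff:sim} targets the wrong conditions and omits the two that carry the content.

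\textbf{The gap.} Axiom~(\ref{deff:sim:compose}) is a biconditional. The congruence property of the relations defining $\KCP$ gives only $\gamma\sim'\gamma'\Rightarrow\rho\cdot\gamma\cdot\rho'\sim'\rho\cdot\gamma'\cdot\rho'$. The converse says that $g'\circ f\circ g=g'\circ f'\circ g$ in $\KCP$ forces $f=f'$, i.e.\ every morphism of $\KCP$ is both monic and epic. Being a quotient of a path category by a congruence gives no such cancellation. Your heuristic that the relations are ``generated by $\sim$ plus identifying in-pixel paths with identities'' does not produce it either. Likewise, axiom~(\ref{deff:sim:constant}) requires that no nonempty composite of arrows of $\QCP$ becomes an identity in $\KCP$. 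Since $\QCP$ may have $2$-cycles this is not automatic, and you do not address it. These are exactly the points the paper's proof must handle: it asserts the latter and argues the former by pulling back along $H^{-1}$ and using cancellation for $\sim$ on $\Gamma$.

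\textbf{Closing the cancellation direction.} Taking one outer path constant in Definition~\ref{deff:sim}(\ref{deff:sim:compose}) for $\sim$, together with reparameterization, shows every morphism of $C$ is monic and epic. Now use the calculus of fractions of Proposition~\ref{prop:Sigma yields a calculus of fractions nonlinear}:
\begin{itemize}
\item $p$ is faithful: if $p(f)=p(g)$ then $uf=ug$ for some $u\in\Sigma_{\Pf}$, and $u$ is monic.
\item Bring two fractions $[\rho]^{-1}[\gamma]$ to a common denominator and apply the Ore condition. This reduces monic/epic in $\PfC$ to monic/epic in $C$.
\item Transport the result to $\KCP$ along $H$.
\end{itemize}

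\textbf{Closing axiom~(\ref{deff:sim:constant}).} Suppose $[\rho]^{-1}[\gamma]=\boldsymbol{1}$ in $\PfC$. Then there is $[\mu]$ with $[\gamma\cdot\mu]=[\rho\cdot\mu]\in\Sigma_{\Pf}$, and $\rho\cdot\mu$ lies in a single pixel. By Definition~\ref{deff:screen}(\ref{deff:screen:thin}), $\gamma\cdot\mu$ lies in that pixel as well. This is impossible when $\gamma$ represents a nonempty composite of arrows, since such a $\gamma$ leaves its starting pixel.

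\textbf{The subpath ``obstacle'' is not one.} Realize a concatenation $\Xpixel_0\to\cdots\to\Xpixel_n$ as a step function $[0,1]\to\Pf$ taking these values on consecutive subintervals, as the paper does. Then:
\begin{itemize}
\item since $\QCP$ has no loops and at most one arrow per ordered pair of pixels, the step function determines the composite;
\item precomposing with an admissible $\phi$ gives a contiguous subconcatenation or a constant path;
\item reparameterizations fixing $0$ and $1$ are surjective, hence give the same composite.
\end{itemize}
No representatives in $\Gamma$ are needed.
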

	
	We introduce finitary refinements (Definition~\ref{deff:finitary refinement}).
	If $\Pf$ and $\Pf'$ are screens, and $\Pf'$ refines $\Pf$, then we have induced functors $\boxed{C}^{\Pf'}\to\PfC$ and $\boxed{\Ac}^{\Pf'}\to\PfA$, since pixelation is a localization.
	This yields induced functors $\KC{\Pf'}\to\KCP$ and $\KcA{\Pf'}\to\KcAP$, respectively.
	If $\Pf'$ is a finitary refinement of $\Pf$, then we also have a functor $\Init:\KCP\to\KC{\Pf'}$ (Proposition~\ref{prop:Init}).

	\subsubsection*{Representations}
	
	In Section~\ref{sec:representations} we study representations of path categories with values in a $\Bbbk$-linear abelian category $\Kcal$.
	A \emph{representation of $\Ac$ with values in $\Kcal$} is a functor $M:\Ac\to\Kcal$ (Definition~\ref{deff:representation}).
	The category of such representations is denoted $\RepK(\Ac)$.
	We discuss the $\Bbbk$-linear version here but nearly all of the statements hold for representations of $C$.
	A representation $M$ is \emph{pixelated} if there is a compatible screen $\Pf$ and we say the screen $\Pf$ \emph{pixelates} $M$ (Definition~\ref{deff:pixelated representation}).
	We show that every pixelated representation $M$ comes from a representation of some $\KcAP$, where $\Pf$ pixelates $M$ (Theorem~\ref{thm:pixelated rep comes from rep of pixelation}).
	
	%We then turn our attention categories $\RepK(\Ac)$ where if a sequence $\cdots\to A_{-1}\to A_0\to A_1\to \cdots$ is exact in $\RepK(\Ac)$ then $\cdots A_{-1}(x)\to A_0(x)\to A_1(x)\to\cdots$ is exact in $\Kcal$, for every object $x$ in $\Ac$.
	Let $\Lsc\subseteq\Psc$ be a set of screens of $\Space$ such that, for any finite collection $\{\Pf_i\}_{i=1}^n\subset\Lsc$, there exists a $\Pf\in\Lsc$ that refines each $\Pf_i$.
	The category $\RpLK(\Ac)$ is the full subcategory of $\RepK(\Ac)$ such that if $M$ is a representaiton in $\RpLK(\Ac)$ then there is a screen $\Pf\in\Lsc$ such that $\Pf$ pixelates $M$.
	\begin{theorem}[Theorem~\ref{thm:downward closed subsets in V give abelian categories} and Corollary~\ref{cor:exact restrictions}]\label{introthm:exactness}
		The category $\RpLK(\Ac)$ is a wide subcategory of $\RepK(\Ac)$.
		That is, $\RpLK(\Ac)$ is abelian and the embedding $\RpLK(\Ac)\to\RepK(\Ac)$ is exact.
		Moreover, any exact structure $\EE$ on $\RepK(\Ac)$ restricts to an exact structure on $\RpLK(\Ac)$.
	\end{theorem}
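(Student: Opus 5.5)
Since the definition of ``$\Pf$ pixelates $M$'' is not visible here, I will work with the reading suggested by Theorem~\ref{thm:pixelated rep comes from rep of pixelation}: every morphism in $\Sigma_\Pf$ is sent by $M$ to an isomorphism, and $M$ vanishes on $\IbarP$. Equivalently, $M$ factors through the localization functor $\Ac\to\AbarP\to\PfA$. The first ingredient is monotonicity. If $\Pf'$ refines $\Pf$ and $\Pf$ pixelates $M$, then $\Pf'$ pixelates $M$. The reason is that $\Sigma_{\Pf'}\subseteq\Sigma_\Pf$ and $\overline{\Ic}^{\Pf'}\subseteq\IbarP$, because finer pixels impose fewer conditions. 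I would verify this directly from the definitions of $\Sigma_\Pf$ and $\IbarP$.

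\textbf{Step 1: closure under kernels and cokernels.} Let $f\colon M\to N$ be a morphism in $\RpLK(\Ac)$, with $M$ pixelated by $\Pf_1\in\Lsc$ and $N$ by $\Pf_2\in\Lsc$. By the directedness hypothesis on $\Lsc$, there is a $\Pf\in\Lsc$ refining both, and by monotonicity $\Pf$ pixelates both $M$ and $N$. Kernels and cokernels in $\RepK(\Ac)$ are computed pointwise, since $\Kcal$ is abelian.
\begin{itemize}
\item For $\sigma\in\Sigma_\Pf$, the maps $M(\sigma)$ and $N(\sigma)$ are isomorphisms. By naturality of the induced maps on kernels and cokernels (five lemma), $\ker f(\sigma)$ and $\operatorname{coker} f(\sigma)$ are isomorphisms.
\item A morphism in $\IbarP$ acts as zero on $M$ and $N$. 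Hence it acts as zero on the subobject $\ker f$ and on the quotient $\operatorname{coker} f$.
\end{itemize}
So $\Pf$ pixelates $\ker f$ and $\operatorname{coker} f$.

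\textbf{Step 2: closure under finite biproducts.} The same common-refinement argument applies to $M\oplus N$. The zero representation is pixelated by any screen, and $\Lsc$ is nonempty (the $n=0$ case of the hypothesis, or assumed). Thus $\RpLK(\Ac)$ is a full additive subcategory closed under kernels and cokernels computed in $\RepK(\Ac)$. Hence it is abelian and the inclusion is exact, which makes it a wide subcategory.

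\textbf{Step 3: the exact structure.} For a conflation $0\to M\to E\to N\to 0$ in $\EE$ with $M,N\in\RpLK(\Ac)$, I expect that $E$ need \emph{not} be pixelated. So I would define $\EE$ restricted to $\RpLK(\Ac)$ as those conflations with all three terms in the subcategory. The exact-category axioms must then be checked:
\begin{itemize}
\item Split sequences belong to the class, by Step 2.
\item Composites of inflations or deflations stay in the class, since their kernels and cokernels remain in the subcategory by Step 1.
\item Pushouts and pullbacks exist. A pushout in $\RepK(\Ac)$ of objects of the subcategory is a cokernel of a map between direct sums, so it lies in the subcategory by Steps 1 and 2; pullbacks are dually kernels. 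Their conflations in $\EE$ then have all three terms in the subcategory.
\end{itemize}
This is a standard argument for full subcategories closed under kernels, cokernels and sums.

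The main obstacle I expect is the monotonicity claim, specifically that $\overline{\Ic}^{\Pf'}\subseteq\IbarP$ and $\Sigma_{\Pf'}\subseteq\Sigma_\Pf$ hold exactly as defined earlier. If pixelation is instead defined via factoring through $\KcAP$, I would instead use the induced functor $\boxed{\Ac}^{\Pf'}\to\PfA$ and compose.
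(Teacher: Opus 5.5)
\textbf{Abelian part.} This matches the paper's argument. You pass to a common refinement $\Pf\in\Lsc$, then compute kernels and cokernels pointwise with invertible vertical maps at each $\sigma\in\Sigma_{\Pf}$; this is Lemma~\ref{lem:pixelating exact sequence}(1),(2). The monotonicity you flag as the main obstacle is recorded in Remark~\ref{rem:pixelating partitions} (the set $\Psc_M$ is closed under refinement). It is immediate, because finer pixels give fewer morphisms in $\Sigma_{\Pf}$. Your remark that $\Lsc\neq\emptyset$ follows from the empty-collection case of the condition defining $\Sbold$ is a detail the paper leaves implicit.

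\textbf{Exact structure.} Here you take a genuinely different route from the paper.

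The paper proves a third closure property, Lemma~\ref{lem:pixelating exact sequence}(3). If $0\to A\to M\to B\to 0$ is exact and $\Pf$ pixelates $A$ and $B$, the five lemma on the rows at $x$ and $y$ shows that $M(\sigma)$ is invertible for every $\sigma\in\Sigma_{\Pf}$. Hence $\RpLK(\Ac)$ is closed under extensions, so $\Ext^1$ between its objects is the same whether computed there or in $\RepK(\Ac)$. An exact structure $\EE\subseteq\Ext^1$ then restricts simply as a subfunctor.

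You instead check the exact-category axioms directly for the $\EE$-conflations whose three terms all lie in the subcategory, using only closure under kernels, cokernels and finite sums. That check is sound. It is also more general: it works for any full additive subcategory closed under kernels and cokernels, whether or not it is extension-closed.

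It yields less, however. The paper's route also shows that \emph{every} $\EE$-conflation with pixelated end terms already belongs to the restricted structure, and that $\RpLK(\Ac)$ is wide in the usual, extension-closed sense.

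\textbf{Correction.} Your expectation that the middle term $E$ ``need not be pixelated'' is false. Definition~\ref{deff:pixelated representation} only asks that $M(\sigma)$ be invertible for all $\sigma\in\Sigma_{\Pf}$. Factoring through the pixelation is a consequence (Theorem~\ref{thm:pixelated rep comes from rep of pixelation}), not an extra hypothesis that $M$ vanish on $\IbarP$. The invertibility condition passes to extensions by the five lemma. This does not break your proof, but you should drop the claim. Once you do, your restricted class coincides with the paper's.
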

	
	\subsubsection*{Sites and sheaves}
	In section~\ref{sec:sites} we study how pixelation interacts with sites and sheaves.
	We show that if a path category $C$ is a site then so is $\KCP$ and the induced functor $C\to \KCP$ is continuous (Theorem~\ref{thm:pixelation is continuous}).
	We also show that a distributive lattice can be interpreted as a path category.
	When $C$ is a sublattice of a larger lattice $L$, we define a screen $\Pf_Y$ for each $Y\in L$ (Definition~\ref{deff:lattice screen}) and prove $\boxed{C}^{\Pf_Y}$ is isomorphic to a sublattice of $C$ (Theorem~\ref{thm:pixelation is equivalent to subspace}).
	The theorem may be contextualized as follows, where we write $\Top{X}$ to be the category whose objects are open sets of $X$ and whose morphisms are the inclusion maps of the open sets.
	\begin{theorem}[Corollary~\ref{cor:pixelated lattice}]\label{introthm:spectra}
		In increasing specificity:
		\begin{enumerate}
			\item Let $X$ be a topological space and $Y$ a subset with the subspace topology.
				Then $\Top{Y}$ is canonically isomorphic to $\overline{Q(\Top{X},\Pf_Y)}$.
			\item Let $R$ be a commutative ring and $\pf$ be a prime ideal in $R$.
				Let $Y(\pf)=\{q\in\Spec{R}\mid \qf\subset\pf\}$.
				Then $\Top{\Spec{R_{\pf}}}$ is canonically isomorphic to $\overline{Q(\Top{\Spec{R}},\Pf_{Y(\pf)})}$.
				That is, we may view $\Top{\Spec{R_{\pf}}}$ as a pixelation of $\Top{\Spec{R}}$.
		\end{enumerate}
	\end{theorem}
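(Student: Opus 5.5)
This corollary will follow from Theorem~\ref{thm:pixelation is equivalent to subspace}, applied to suitable lattices. For (1), I take $L$ to be the Boolean lattice of all subsets of $X$ and $C=\Top{X}$, the sublattice of open sets. Arbitrary unions and finite intersections of opens are open, so $C$ is a distributive sublattice of $L$. The cited theorem then says that $\boxed{C}^{\Pf_Y}$ is isomorphic to a sublattice of $C$. By Theorem~\ref{thm:barPlocal yields cat equivalent to cat from quiver}, the skeletal model $\overline{Q(\Top{X},\Pf_Y)}$ is isomorphic, not just equivalent, to that lattice, because both are skeletal posets.

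The main step is to identify this sublattice with $\Top{Y}$. From Definition~\ref{deff:lattice screen}, I expect the pixels of $\Pf_Y$ to be the classes $\{U\in\Top{X}\mid U\cap Y=V\}$, one for each open $V$ of $Y$. The class $\Sigma_{\Pf_Y}$ then consists of the inclusions $U\subseteq U'$ with $U\cap Y=U'\cap Y$. Each nonempty pixel has a largest element, namely the union of all its members. This union lies in the pixel because intersection with $Y$ commutes with unions.

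I then plan to define the map $\Top{Y}\to\overline{Q(\Top{X},\Pf_Y)}$ by sending an open $V$ of $Y$ to its pixel. This map is a bijection on objects: surjectivity is exactly the definition of the subspace topology, and injectivity holds because distinct traces $U\cap Y$ give distinct pixels. For morphisms, both sides are posets, so I only need to check that $V\subseteq V'$ exactly when there is a morphism between the corresponding pixels. I will get this by taking the largest representatives $\hat V$ and $\hat V'$ and showing $V\subseteq V'\iff \hat V\subseteq \hat V'$. The forward direction holds because $\hat V\cup\hat V'$ has trace $V'$, so it lies in the pixel of $V'$ and hence equals $\hat V'$. The map is canonical because it is induced by $U\mapsto U\cap Y$.

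I expect the main obstacle to be matching the morphism sets of the quiver-built category $\overline{Q(C,\Pf)}$ with inclusions. That category is defined through paths and the relation $\sim$, so I must confirm there are no extra zig-zag morphisms. The calculus of fractions from Proposition~\ref{prop:Sigma yields a calculus of fractions nonlinear} settles this: every morphism is a roof $s^{-1}f$, so the category stays a poset.

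Part (2) is then a specialization of (1). Take $X=\Spec{R}$ and $Y=Y(\pf)$. The standard fact is that $\Spec{R_\pf}\to\Spec{R}$ is a homeomorphism onto $\{\qf\mid\qf\subseteq\pf\}$ with the subspace topology. To check this, note that the basic opens $D(f/s)$ pull back to $D(f)\cap Y$, so the topologies agree. This gives an isomorphism $\Top{\Spec{R_\pf}}\cong\Top{Y(\pf)}$, and composing it with (1) yields the claim.
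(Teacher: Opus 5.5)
Your proof is correct, but for (1) it follows a different route from the paper's.

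\textbf{The paper's route for (1).} The paper's proof is a one-line application of Theorem~\ref{thm:pixelation is equivalent to subspace} with $C=\Top{X}$ and $L=2^X$. That theorem already gives $\overline{Q(C,\Pf_Y)}\cong C_Y=\{U\cap Y\mid U \text{ open in } X\}$, ordered by inclusion. By the definition of the subspace topology, $C_Y$ \emph{is} $\Top{Y}$, so what you call the main step is definitional.

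You also misquote the theorem in two ways. It is about $\overline{Q(C,\Pf_Y)}$, not $\boxed{C}^{\Pf_Y}$; the latter still has every open set of $X$ as an object, so it is a thin category only equivalent to $C_Y$, not a poset isomorphic to it. And $C_Y$ lives in $L$, not in $C$, since a trace $U\cap Y$ need not be open in $X$.

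\textbf{What your argument actually is.} It is a self-contained re-proof of that theorem in the topological case, by a different mechanism. The paper's proof of the theorem works with binary joins. For existence it picks representatives $U'\le V'$; as written this needs $V'$ replaced by $U'\vee V'$ via distributivity. For uniqueness it passes to $V_1\vee V_2$. You instead use the largest element $\hat V$ of each pixel, which exists because opens are closed under arbitrary unions. Then any roof $\hat V\to W\leftarrow \hat V'$ in the localization has $\hat V'\subseteq W\subseteq\hat V'$, since the backward leg is in $\Sigma_{\Pf_Y}$ and so $W$ lies in the pixel of $V'$. Hence the roof is forced to be the inclusion $\hat V\subseteq\hat V'$, which gives existence and uniqueness at once. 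Your passage to $\overline{Q}$ via Theorem~\ref{thm:barPlocal yields cat equivalent to cat from quiver}, together with ``equivalent skeletal categories are isomorphic,'' is sound.

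\textbf{Trade-off.} Your argument is cleaner in this case, but it needs pixels to have maxima. It therefore does not extend to an arbitrary distributive sublattice $C\subseteq L$, which the paper's theorem covers.

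\textbf{Part (2).} This matches the paper, except that the paper first passes through a general multiplicative set $S$ and then sets $S=R\setminus\pf$. One wording fix in your check: it is the \emph{image} of $D(f/s)$, not its pullback, that equals $D(f)\cap Y(\pf)$. Continuity comes from the preimage of $D(f)$ being $D(f/1)$.
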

	
	We also present a parallel story to ringed sites $(X,\mathcal{O}_X)$ and $\mathcal{O}_X$-modules.
	We define \emph{pathed sites} $(E,\OcE)$ to be a site $E$ with a sheaf of path categories $\OcE$.
	As the ``standard'' example, we show that if $\Psc$ admits a subcategory $\Pscfin$ that is a site (Definition~\ref{deff:Pscfin}), then there is a sheaf $\OP$ on $\Pscfin$ where a screen $\Pf$ is sent to $\KCP$ and the morphisms are the $\Init$ functors from Section~\ref{sec:path category}.
	
	The parallel to an $\mathcal{O}_X$-module is an \emph{$\OcE$-representation} (Definition~\ref{deff:OE-representation}).
	Similar requirements for an $\mathcal{O}_X$-module are axiomized in a way that works with representations in our setting (functors) without running into trouble with foundations.
	We provide a ``classical'' example of an $\OcE$-representation (Example~\ref{xmp:classical OE-representation}) and an example of an $\OP$-representation, where $\Psc$ is the set of screens of $\RR^n$ (Example~\ref{xmp:OP-representation}).
	
	\subsubsection*{Higher Auslander categories}
	In Section~\ref{sec:higher auslander categories} we present an application of pixelation to higher homological algebra.
	Specifically, we look at higher Auslander categories, assume $\Bbbk$ is a field, and $\Kcal=\kvec$, the category of finite-dimensional $\Bbbk$-vector spaces.
	We provide a continuous version of the story in \cite{OT12}, using some perspective from \cite{JKPV19}.
	We define the \emph{higher Auslander category of type $A$}, denoted $\Aus{n}$, for any $n\geq 1$ (Definition~\ref{deff:higher auslander algebra}).
	
	In this setting we consider finitely-presented modules, which coincide with a particular $\rpA(\Aus{n})$, as defined in Section~\ref{sec:representations:abelian subcategories and exact structures}, with a specified $\Asc{n}\subset\Psc$.
	The category $\rpwf(\Aus{n})$ is the category objects are functors into finite-dimensional $\Bbbk$-vector spaces, called \emph{pointwise finite-dimensional}, and $\rpA(\Aus{n})$ is a subcategory of $\rpwf(\Aus{n})$.
	For each $n\geq 1$ we define a subcategory $\Maus{n}$ of $\rpA(\Aus{n})$ and show $\add\Maus{n}$ is $(n-1)$-cluster tilting in $\rpA(\Aus{n})$ (Proposition~\ref{prop:Maus is n-1 cluster tilting}).
	
	Since we are working in the continuum, we have to take a quotient of $\Maus{n}$, denoted $\Mausbar{n}$ (Definition~\ref{deff:Mausbarn}).
	Then we have the following result.
	\begin{theorem}[Theorem~\ref{thm:higher auslander categories}]\label{introthm:auslander}
		Let $n\geq 1$ in $\NN$.
		Then $\Mausbarop{n}\cong \Aus{n+1}$.
	\end{theorem}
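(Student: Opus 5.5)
We prove that $\Mausbarop{n}$ and $\Aus{n+1}$ are isomorphic by writing down an explicit functor that is bijective on objects and on morphism spaces. The model is the finite story of \cite{OT12}: there the $(n+1)$-Auslander algebra of type $A$ is the endomorphism algebra of the $n$-cluster tilting module of the $n$-Auslander algebra. In the continuum version we follow the same pattern, but with finitely-presented (pixelated) representations in place of modules, and with the quotient $\Mausbar{n}$ in place of the stable category.

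\textbf{Step 1: objects.} By Definition~\ref{deff:higher auslander algebra}, the objects of $\Aus{n+1}$ are (weakly or strictly) increasing $(n+1)$-tuples $x_0\le x_1\le\cdots\le x_n$ in $\RR$, modulo whatever boundary identifications the definition imposes. The indecomposable objects of $\Maus{n}$ are the representations $M_{\bar{x}}$ used in Proposition~\ref{prop:Maus is n-1 cluster tilting}. Following \cite{JKPV19}, each such $M_{\bar{x}}$ should be an ``interval'' representation supported on a box in the space of $n$-tuples, determined by $n+1$ parameters. The first task is to match these parameters with the objects of $\Aus{n+1}$. We must also check that passing to $\Mausbar{n}$ (Definition~\ref{deff:Mausbarn}) removes exactly the objects that become zero, namely those with degenerate boxes, which correspond to the boundary objects killed in $\Aus{n+1}$.

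\textbf{Step 2: morphisms.} We compute $\Hom_{\rpA(\Aus{n})}(M_{\bar{x}},M_{\bar{y}})$. Since each $M_{\bar{x}}$ is pixelated and finitely presented, Theorem~\ref{thm:pixelated rep comes from rep of pixelation} lets us choose a single screen $\Pf$ in $\Asc{n}$ pixelating both representations. The computation then reduces to representations of the finite category $\KcAP$, where the finite result of \cite{OT12} applies. The expected answer is that this Hom space is $\Bbbk$ when $\bar{y}\le\bar{x}$ in the interlacing order, namely
\[
y_0\le x_0\le y_1\le x_1\le\cdots\le y_n\le x_n
\]
(or the mirror order, which is where the ``op'' comes from), and $0$ otherwise. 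Moreover, composites of nonzero maps should vanish exactly when they factor through objects killed by the quotient. Comparing with the defining relations of $\Aus{n+1}$, a $\Bbbk$-linear path category modulo a path-based ideal, we define the functor on morphisms by sending the generating path $[\gamma]$ to the canonical nonzero map. We check that it respects the ideal, and that it is full and faithful because both sides are at most one-dimensional, with supports that coincide.

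\textbf{Main obstacle.} The delicate part is controlling the quotient $\Mausbar{n}$ in the continuum, at the closed and open endpoints of the boxes. Composites that are nonzero in $\Maus{n}$ may factor through ``thin'' objects with degenerate support. These must lie exactly in the ideal defining $\Mausbar{n}$, and that ideal must match the path-based ideal of $\Aus{n+1}$. We would first establish the factorization claim on a common pixelation via \cite{OT12}. We would then use the refinement functors $\Init$ (Proposition~\ref{prop:Init}) to show that it is independent of the screen chosen, and deduce the global statement from that.
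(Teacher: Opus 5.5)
Your outline has the right overall shape: identify objects by $n+1$ real parameters, then compare Hom spaces that are at most one-dimensional. But both places where the argument actually lives are misidentified, and as written there is a genuine gap.

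First, $\Mausbar{n}$ is not a quotient of $\Maus{n}$ by an ideal. By Definition~\ref{deff:Mausbarn} it is the full subcategory on the $M_{\bar x,c}$ with $x_1>0$ and $c<1$, obtained by discarding the injectives ($x_1=0$) and the projectives ($c=1$). No object of $\Maus{n}$ has degenerate support, since a projective source has $x_1<\cdots<x_n<c$, and no morphism is killed. So the ``main obstacle'' you describe, matching an ideal of maps through thin objects with $\Ic^{(n+1)}$, does not exist, and the $\Init$ functors play no role. The object bijection is simply $M_{\bar x,c}\mapsto(x_1,\dots,x_n,c)$. The discarded objects correspond to tuples with first coordinate $0$ or last coordinate $1$, which are zero in $\Aus{n+1}$; note that $\Aus{n+1}$ is defined in Notation~\ref{note:higher auslander category}, not Definition~\ref{deff:higher auslander algebra}. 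The surviving objects are exactly the tuples $0<x_1<\cdots<x_n<c<1$, i.e.\ the nonzero objects of $\Aus{n+1}$.

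Second, the decisive input, which you only guess, is Lemma~\ref{lem:Mxcs hom spaces}: $\Hom(M_{\bar y,d},M_{\bar x,c})\cong\Bbbk$ if and only if $x_1\le y_1<x_2\le y_2<\cdots<x_n\le y_n<c\le d$, and it is $0$ otherwise. The alternation of $\le$ and $<$ is essential. On $\Mausbar{n}$, with $x_{n+1}=c$ and $y_{n+1}=d$, this is exactly the condition of Proposition~\ref{prop:Hom support of representable projectives in Aus(n)} for $\Hom_{\Aus{n+1}}((x_1,\dots,x_n,c),(y_1,\dots,y_n,d))\neq0$, and the reversal of direction is the ``op''. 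Your interlacing with $\le$ throughout, in either orientation, would give a nonzero Hom when, say, $y_1=x_2$. The corresponding Hom in $\Aus{n+1}$ is zero, because the path passes through a point with two equal coordinates, so your functor could not be an isomorphism.

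The paper proves the lemma directly from the supports: a nonzero map forces $\bar y\in\supp M_{\bar x,c}$ and $c\le d$. Your detour through a common screen $\Pf_{\bar a}$ and \cite{OT12} could be made to work. You would put all $x_i,y_i,c,d$ among the $a_j$ and use that restriction along the pixelation functor $\pi$ is fully faithful. But the strict versus non-strict bookkeeping is exactly what must be tracked when translating pixel indices back, and you have not done it.

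With the lemma in hand, composition also matches, a point the paper leaves implicit. Take nonzero maps $M_{\bar z,e}\to M_{\bar y,d}\to M_{\bar x,c}$. Their composite lies in $\Hom(M_{\bar z,e},M_{\bar x,c})$, which vanishes exactly when the corresponding Hom in $\Aus{n+1}$ does. Otherwise $\bar z$ lies in all three supports, so the composite of the canonical maps is the identity at $\bar z$ and hence equals the canonical map.
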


	\subsection*{Conventions}\label{sec:introduction:conventions}
	We use $\Bbbk$ for an associative, commutative ring with unit and use $\kMod$ for the category of $\Bbbk$-modules.
	For the category theorists: when we say ``$\Bbbk$-linear'' we mean ``enriched over $\Bbbk$-modules'', for our commutative ring $\Bbbk$.
	When we say ``a $\Bbbk$-linear functor'' we mean ``an additive $\Bbbk$-linear functor.''
	When $\Bbbk$ is a field, the categories $\kVec$ and $\kvec$ are the category of all $\Bbbk$-vector spaces and the category of finite-dimensional $\Bbbk$-vector spaces, respectively.
	Finally, the author is of the opinion that $0\in\NN$.
	
	\subsection*{Future directions}
	There are a number of ways to proceed with pixelation.
	The first consideration is multi-parameter persistence modules.
	In \cite{BBH22,BBH23,ABH24,BDL25}, work is being done to understand invariants of persistence modules since, in full generality, there is no hope for a complete structure theorem.
	The process of pixelation and the results in Section~\ref{sec:representations} are directly connected to homological approximation and using these kinds approximations to understand invariants.
	However, proofs and careful computations need to be done in order to properly establish such a connection.
	There is also the interesting case of M\"obius homology (see \cite{PS26}).
	It would be interesting to see if the results in Sections~\ref{sec:representations}~and~\ref{sec:sites} can be merged together in this particular context and, if so, in what way.
	
	Additionally, the story for pathed sites has barely begun.
	Is the category of $\OcE$-representations abelian?
	If not, why? What modifications to the definitions can be made so that the category of $\OcE$-representations is abelian.
	Assuming the category of $\OcE$-representations is abelian, what is the parallel construction to (quasi-)coherent sheaves in the pathed site story?
	What requirements are needed on $E$ or the path categories in $\OcE$ to tell this story?
	What can we learn in the word of algebraic geometry by taking this perspective?
	
	Finally, the definition of path categories in the present paper does not allow for ``parallel paths,'' i.e.\ paths whose images are the same but count as separate.
	For example the arrows of the Kronicker quiver are not permitted in the definition of a path category in the present paper: $1 \rightrightarrows 2$.
	It should be possible to modify or augment the presented constructions to allow such parallel paths but the present paper is already \pageref{LastPage} pages long.
	
	\subsection*{Acknowledgements}\label{sec:introduction:acknowledgements}
	The author would like to thank Karin M.~Jacobsen for inspiring discussions early on regarding higher Auslander algebras.
	The author would also like to thank Eric J.~Hanson, Charles Paquette, and Emine Y{\i}ld{\i}r{\i}m whose previous collaborations with the author informed some of the perspectives taken in the present paper.
	
	\subsection*{Funding}\label{sec:introduction:funding}
	The author is supported by FWO grant 1298325N.
	Work on this project began while the author was supported by BOF grant 01P12621 from Universiteit Gent.
	The author was also partially supported by the FWO grants G0F5921N (Odysseus) and G023721N, and by the KU Leuven grant iBOF/23/064.
	
	\section{Paths and screens}\label{sec:paths and screens}
	In this section we introduce the two main structures that we will use in the present paper.
	In Section~\ref{sec:paths and screens:paths} we introduce a coherent triple of a set $\XX$, a collection of ``paths'' $\gamma:[0,1]\to \XX$, and an equivalence relation $\sim$ on the paths.
	In Section~\ref{sec:paths and screens:screens} we introduce a special type of partition, called a \emph{screen}.
	We will extensively use the properties of these triples and screens for the rest of the paper.

	\subsection{Paths}\label{sec:paths and screens:paths}
		We begin by defining a coherent set of paths $\Gamma$ (Definition~\ref{deff:Gamma}) and a useful equivalence relation $\sim$ on the paths (Definition~\ref{deff:sim}).		
		
		Let $\XX$ be a nonempty set.
		Even though $\XX$ need not come from a topological space, we refer to a function $[0,1]\to \XX$ as a \emph{path}.
		We compose paths similar to those in a topological setting.
		Given $\gamma:[0,1]\to \XX$ and $\gamma':[0,1]\to \XX$ such that $\gamma(1)=\gamma'(0)$, we create a new path $\gamma\cdot\gamma':[0,1]\to\XX$ given by
		\[
			\gamma\cdot\gamma'(t) =
			\begin{cases}
				\gamma(2t) & 0\leq t\leq \frac{1}{2} \\
				\gamma'(2(t-\frac{1}{2})) & \frac{1}{2}\leq t \leq 1.
			\end{cases}
		\]
		
		\begin{deff}[$\Gamma$]\label{deff:Gamma}
			Let $\XX$ be a nonemtpy set and let $\Gamma$ be a subset of paths in $\XX$ that satisfies the following.
			\begin{enumerate}
				\item\label{deff:Gamma:composition} \emph{Closed under composition}: If $\gamma,\gamma'\in\Gamma$ and $\gamma(1)=\gamma'(0)$ then $\gamma\cdot \gamma'\in\Gamma$.
				\item\label{deff:Gamma:subpath} \emph{Closed under subpaths}: For each $\gamma\in\Gamma$ and each (weakly) order preserving map $\phi:[0,1]\to[0,1]$ such that $\phi$ sends intervals to intervals, the path $\gamma \phi$ is in $\Gamma$, where $\gamma\phi$ is the functional composition.
				\item\label{deff:Gamma:constant} \emph{Closed under constant paths}: For each $x\in\XX$, the constant path at $x$ is in $\Gamma$.
					That is, for each $x\in\XX$ there is a $\gamma\in\Gamma$ such that for all $t\in[0,1]$ we have $\gamma(t)=x$.
			\end{enumerate}
		\end{deff}
		
		We note that we explicitly do not assume anything about $\Gamma$ beyond the items in the definition.
		In particular, we are not assuming any finiteness, discreteness, cardinality, partial order, cyclic order, etc.
		
		The most natural example of $\XX$ and $\Gamma$ is to take $\XX$ as a topological space and $\Gamma$ as all paths into $\XX$.
		However, this is exceedingly cumbersome.
		A more convenient example is to take a manifold $\XX$ with some kind of flow and take the paths $\Gamma$ that follow the flow.
		
		An interesting example is to take $\XX$ to be a poset with relation $\leq$ and insist that, for each $\gamma\in\Gamma$, we have $\gamma(t)\leq \gamma(s)$ if and only if $t\leq s$ in $[0,1]$.
		
		One could also consider thread quivers from \cite{BvR13,PRY24}.
		Then, in most cases, the paths in the thread quivers are the images of the paths in $\Gamma$.
		
		\begin{deff}[$\sim$]\label{deff:sim}
			Given a nonempty set $\XX$ and a $\Gamma$ satisfying Definition~\ref{deff:Gamma}, we define an equivalence relation $\sim$ on $\Gamma$ satisfying the following requirements.
			\begin{enumerate}
				\item\label{deff:sim:constant} \emph{A constant path is only equivalent to itself}: If $\gamma \sim \gamma'$ and $\gamma$ is constant then so is $\gamma'$.
				\item\label{deff:sim:reparameterisation} \emph{Equivalence classes are closed under reparameterization}: If $\gamma\in\Gamma$ and $\phi:[0,1]\to[0,1]$ is a weakly order-preserving map that sends intervals to intervals, 0 to 0, and 1 to 1, then $\gamma\sim\gamma\phi$.
				\item\label{deff:sim:compose} \emph{Equivalence classes compose}: 
				Given $\gamma,\gamma',\rho,\rho'\in\Gamma$ such that $\rho\cdot\gamma\cdot\rho'$ and $\rho\cdot\gamma'\cdot\rho'$ are in $\Gamma$, we have $\gamma\sim\gamma'\in\Gamma$ if and only if $\rho\cdot\gamma\cdot \rho' \sim \rho\cdot\gamma'\cdot\rho'$.
			\end{enumerate}
		\end{deff}
		
		Immediately we see that if $\gamma\sim\gamma'$, $\rho_1\sim \rho'_1$, $\rho_2\sim\rho'_2$, $\rho_1(1)=\rho'_1(1)=\gamma(0)=\gamma'(0)$, and $\rho_2(0)=\rho'_2(0)=\gamma(1)=\gamma'(1)$, then $\rho_1\cdot\gamma\cdot \rho_2 \sim \rho'_1\cdot \gamma'\cdot \rho'_2$ by using Definition~\ref{deff:sim}(\ref{deff:sim:compose}) above. 
		First changing $\rho_1$ to $\rho'_1$, then $\rho_2$ to $\rho'_2$, and finally $\gamma$ to $\gamma'$.
		
		Given $\XX$, $\Gamma$, and $\sim$, we write the triple as $\Space$.		
		
		\begin{rem}
			In a topological setting, our equivalence relation $\sim$ is generally finer than homotopy equivalence of paths.
			The relation $\sim$ cannot be homotopy equivalence of paths if $\Gamma$ contains a path $\gamma$, with $\gamma(0)=\gamma(1)$ and some some $a\in[0,1]$ such that $\gamma(a)\neq\gamma(0)$, where $\gamma$ is homotopy equivalent to a constant path.
		\end{rem}
		
		\begin{note}[$\sim$-equivalence classes]
			Given $\gamma\in\Gamma$, the set of all $\gamma'\in\Gamma$ such that $\gamma\sim\gamma'$ is denoted $[\gamma]$.
			I.e., $[\gamma]$ is the $\sim$-equivalence class of $\gamma$.
		\end{note}
		
		The following example is the start of our running example throughout the paper.
		
		\begin{xmp}[running example]\label{xmp:RR}
			Let $\XX=\RR$ and let $\Gamma$ be all continuous functions $\gamma:[0,1]\to\RR$ where $s\leq t$ implies $\gamma(s)\leq \gamma(t)$.
			For the purposes of the word ``continuous'', we consider $[0,1]$ and $\RR$ to have the usual topologies.
			Let $\gamma\sim\gamma'$ if $\gamma(0)=\gamma'(0)$.
			Then it is straightforward to check that $\Space$ satisfies Definitions~\ref{deff:Gamma}~and~\ref{deff:sim}.
		\end{xmp}
		
		\begin{note}[$\RR$]
			Overloading notation, if we write about $\RR$ as if it is a triple $\Space$, then we mean the triple $(\RR,\Gamma{/}{\sim})$ in Example~\ref{xmp:RR}.
		\end{note}
		
		Our triples $\Space$ form a category in a natural way.		
		
		\begin{deff}[$\Xbf$]\label{deff:X category}
			We define the category $\Xbf$ to be the category whose objects are triples $\Space$ satisfying Definitions~\ref{deff:Gamma}~and~\ref{deff:sim} and whose morphisms are defined as follows.
			
			Let $\Spaces{1}$ and $\Spaces{2}$ satisfy Definitions~\ref{deff:Gamma}~and~\ref{deff:sim} and let $f:\XX_1\to\XX_2$ be a function of sets.
			We say $f:\Spaces{1}\to\Spaces{2}$ is a morphism in $\Xbf$ if the following conditions are satisfied.
			\begin{enumerate}
				\item If $\gamma\in\Gamma_1$ then $f\circ \gamma\in\Gamma_2$.
				\item If $\gamma,\gamma'\in\Gamma_1$ and $\gamma\sim_1\gamma'$ then $f\circ\gamma\sim_2 f\circ\gamma'$.
			\end{enumerate}
		\end{deff}
		
		\begin{prop}\label{prop:Xbf is a category}
			The $\Xbf$ in Definition~\ref{deff:X category} is a category.
		\end{prop}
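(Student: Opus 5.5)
We will verify the category axioms directly, treating morphisms of $\Xbf$ as set functions $f:\XX_1\to\XX_2$ satisfying the two conditions of Definition~\ref{deff:X category}, composed as functions.
The plan has three steps: show identities are morphisms, show composites of morphisms are morphisms, and inherit associativity and the unit laws from the category of sets.

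\emph{Identities.} For a triple $\Space$, we take $\mathrm{id}_\XX$. For $\gamma\in\Gamma$ we have $\mathrm{id}_\XX\circ\gamma=\gamma\in\Gamma$, and $\gamma\sim\gamma'$ gives $\mathrm{id}_\XX\circ\gamma\sim\mathrm{id}_\XX\circ\gamma'$ trivially. So $\mathrm{id}_\XX$ is a morphism $\Space\to\Space$.

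\emph{Composition.} Let $f:\Spaces{1}\to\Spaces{2}$ and $g:\Spaces{2}\to\Spaces{3}$ be morphisms.
\begin{itemize}
\item For $\gamma\in\Gamma_1$ we have $f\circ\gamma\in\Gamma_2$, hence $g\circ(f\circ\gamma)\in\Gamma_3$. By associativity of function composition this equals $(g\circ f)\circ\gamma$.
\item If $\gamma\sim_1\gamma'$, then $f\circ\gamma\sim_2 f\circ\gamma'$. Applying condition (2) for $g$ to the two elements $f\circ\gamma,f\circ\gamma'\in\Gamma_2$ gives $(g\circ f)\circ\gamma\sim_3(g\circ f)\circ\gamma'$.
\end{itemize}
Thus $g\circ f$ is a morphism $\Spaces{1}\to\Spaces{3}$.

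\emph{Axioms.} Associativity and the identity laws hold because they hold for functions of sets, and morphisms in $\Xbf$ are functions composed as such.

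There is one bookkeeping point, which is the nearest thing to an obstacle. A hom-set should determine its source and target triples, not just the underlying sets, since the same function can connect different triples on the same underlying sets. We formally regard a morphism as the triple (source, function, target). Composition and identities are then defined on these data, and the checks above go through unchanged. Nothing else is nontrivial; in particular no properties of $\Gamma$ or $\sim$ from Definitions~\ref{deff:Gamma} and~\ref{deff:sim} are needed beyond the two conditions defining morphisms.
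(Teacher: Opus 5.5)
Your proof is correct and matches the paper's: both check that identities and composites satisfy the two morphism conditions, and both inherit associativity and the unit laws from composition of functions. Your remark about recording the source and target triples with each morphism is a harmless formal refinement that the paper does not make (the paper instead closes with a brief remark that triples are built from sets and relations).
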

		\begin{proof}
			Suppose $f:\Spaces{1}\to\Spaces{2}$ and $g:\Spaces{2}\to\Spaces{3}$ are morphisms.
			We see that if $\gamma\in\Gamma_1$ then $f\circ\gamma\in\Gamma_2$ and so $g\circ (f\circ\gamma)\in\Gamma_3$.
			Moreover, if $\gamma\sim_1\gamma'$, for $\gamma,\gamma'\in\Gamma_1$, then we know $f\circ\gamma\sim_2 f\circ\gamma'$ and so $g\circ(f\circ\gamma)\sim_3 g\circ(f\circ\gamma')$.
			Thus, $g\circ f$ is also a morphism.
			
			Trivially, the identity map on any $\Space$ is a morphism.
			And, since functions between sets compose associatively, we see that morphisms also compose associatively.
			Finally, recall that each $\Space$ is a collection of sets and relations.
			Therefore, there is a category of triples $\Space$ satisfying Definitions~\ref{deff:Gamma}~and~\ref{deff:sim} with the morphisms we have just described.
		\end{proof}
		
		The terminal objects in $\Xbf$ are the objects in the the isomorphism class of $(\{*\},\{\gamma\}{/}{\sim})$, where $\gamma$ is the constant path at $*$.
		The equivalence relation is trivial.
		
		We now describe products in $\Xbf$.
		\begin{deff}[product of triples]\label{deff:product of triples}
			Let $\{\Spaces{i}\}_{i\in I}$ be a set-sized collection of triples in $\Xbf$.
			We define a new triple $\Space$ in $\Xbf$ as follows.
			\begin{itemize}
				\item Define $\XX=\prod_{i\in I} \XX_i$.
				\item We say a function $\gamma:[0,1]\to \XX$ is in $\Gamma$ if $\pi_i\circ \gamma\in\Gamma_i$, for each $i\in I$.
				\item For a pair $\gamma,\gamma'\in\Gamma$, we say $\gamma\sim\gamma'$ if $\pi_i\circ\gamma\sim_i\pi_i\circ\gamma'$ for each $i\in I$.
			\end{itemize}
			We also write $\Space$ as $\prod_{i\in I} \Spaces{i}$.
		\end{deff}
		
		The following proposition says that we have all products in $\Xbf$.
		
		\begin{prop}\label{prop:product of triples is a triple}
			Let $\{\Spaces{i}\}_{i\in I}$ be a set-sized collection of triples in $\Xbf$.
			Then $\Space=\prod_{i\in I} \Spaces{i}$ is the product in $\Xbf$.
		\end{prop}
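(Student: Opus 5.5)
The proof splits into two parts. First I show that $\Space$ satisfies Definitions~\ref{deff:Gamma} and~\ref{deff:sim}, so that it is an object of $\Xbf$. Then I check the universal property of the product, with the projections $\pi_i$ as the structure maps. I take $\XX$ to be nonempty, which is implicit in the statement since each $\XX_i$ is nonempty and we use the axiom of choice.

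For Definition~\ref{deff:Gamma} every check is componentwise, because post-composing with $\pi_i$ commutes with each path operation. Concatenation satisfies $\pi_i\circ(\gamma\cdot\gamma')=(\pi_i\circ\gamma)\cdot(\pi_i\circ\gamma')$, and the endpoints match in each coordinate. Subpaths satisfy $\pi_i\circ(\gamma\phi)=(\pi_i\circ\gamma)\phi$. A constant path at $x$ projects to the constant path at $\pi_i(x)$.

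For Definition~\ref{deff:sim}, the relation $\sim$ is an equivalence relation because each $\sim_i$ is one. Condition~(\ref{deff:sim:reparameterisation}) holds componentwise by the same identity $\pi_i\circ(\gamma\phi)=(\pi_i\circ\gamma)\phi$. Condition~(\ref{deff:sim:compose}) also holds componentwise, in both directions. If $\rho\cdot\gamma\cdot\rho'\in\Gamma$, then each projection $\pi_i\circ\rho\cdot\pi_i\circ\gamma\cdot\pi_i\circ\rho'$ lies in $\Gamma_i$. Since $\sim$ is defined coordinatewise, the biconditional in each $\sim_i$ gives the biconditional for $\sim$. Condition~(\ref{deff:sim:constant}) is also fine: if $\gamma$ is constant, then each $\pi_i\circ\gamma$ is constant. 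If $\gamma\sim\gamma'$, then each $\pi_i\circ\gamma'$ is constant, and a path all of whose coordinates are constant is constant.

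Next, the maps $\pi_i$ are morphisms in $\Xbf$ directly by the definition of $\Gamma$ and $\sim$. For the universal property, let $\Spacep$ be a triple with morphisms $f_i:\Spacep\to\Spaces{i}$. Define $f:\XX'\to\XX$ by $f(x)=(f_i(x))_{i\in I}$; this is the unique set map with $\pi_i f=f_i$. The map $f$ is a morphism of triples for two reasons. First, for $\gamma\in\Gamma'$ we have $\pi_i\circ f\circ\gamma=f_i\circ\gamma\in\Gamma_i$ for every $i$, so $f\circ\gamma\in\Gamma$. Second, if $\gamma\sim'\gamma'$, then $f_i\circ\gamma\sim_i f_i\circ\gamma'$ for every $i$, so $f\circ\gamma\sim f\circ\gamma'$. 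Uniqueness in $\Xbf$ follows from uniqueness in sets, because a morphism in $\Xbf$ is determined by its underlying function.

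The only step needing real care is condition~(\ref{deff:sim:constant}). Everything else is purely formal, since every path operation and every relation is defined so as to commute with the projections.
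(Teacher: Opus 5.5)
Your proof is correct and follows the same route as the paper: you check Definitions~\ref{deff:Gamma} and~\ref{deff:sim} coordinatewise through the projections $\pi_i$, then verify that the tuple map $f=(f_i)_{i\in I}$ preserves paths and the relation $\sim$. You also spell out two points the paper leaves implicit, namely that $\XX$ is nonempty and that the induced morphism is unique.
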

		\begin{proof}
			First we show $\prod_{i\in I}\Spaces{i}$ is a triple in $\Xbf$ and then we show that it is indeed the product in $\Xbf$.
			It is straightforward to check that $\Gamma$ satisfies Definition~\ref{deff:Gamma}, so we focus on showing that Definition~\ref{deff:sim} is satisfied.
			
			\emph{Definition~\ref{deff:sim}(\ref{deff:sim:constant})}.
			Suppose $\gamma,\gamma'\in\Gamma$, $\gamma\sim\gamma'$, and $\gamma$ is constant.
			Then, for each $1\leq i\leq n$, we have $\pi_i\gamma\sim_i\pi_i\gamma'$ and each $\pi_i\gamma$ is constant.
			Then $\pi_i\gamma'$ is constant for each $i\in I$ and so $\gamma'$ is constant also.
			
			\emph{Definition~\ref{deff:sim}(\ref{deff:sim:reparameterisation})}.
			Let $\gamma\in\Gamma$ and let $\phi:[0,1]\to[0,1]$ be a weakly order-preserving map that sends intervals to intervals, 0 to 0, and 1 to 1.
			For each $i\in I$, we have $\pi_i\gamma\phi\sim_i\pi_i\gamma$.
			Then we have $\gamma\phi\sim\gamma$.
%			
%			{\color{red}\emph{Definition~\ref{deff:sim}(\ref{deff:sim:convex})}.
%			Let $\gamma\in\Gamma$ and let $[\gamma]$ be the equivalence class of $\gamma$ by $\sim$.
%			Suppose $\rho\in\Gamma$ such that $\rho(0)=\gamma(0)$, $\rho(1)=\gamma(1)$, and $\im(\rho)\subset\im([\gamma])$.
%			Notice $\pi_i(\im(\rho))=\im(\pi_i(\rho))$.
%			Then, for $i\in I$, we have $\im(\pi_i(\rho))\subset\im([\pi_i\gamma])$ and so $\pi_i\rho\sim_i\pi_i\gamma$.
%			Thus, $rho\sim\gamma$.}
			
			\emph{Definition~\ref{deff:sim}(\ref{deff:sim:compose})}.
			Let $\gamma,\gamma',\rho,\rho'\in\Gamma$ such that $\rho\cdot\gamma\cdot \rho'$ and $\rho\cdot \gamma'\cdot \rho'$ are also in $\Gamma$.
			Then we have
			\begin{align*}
				\gamma\sim \gamma' & \Leftrightarrow \pi_i\gamma\sim_i\pi_i\gamma',\ \ \forall i\in I \\
				&\Leftrightarrow \pi_i\rho\cdot \pi_i\gamma\cdot \pi_i\rho' \sim_i \pi_i\rho \cdot \pi_i\gamma'\cdot \pi_i\rho',\ \ \forall i\in I \\
				&\Leftrightarrow \pi_i(\rho\cdot\gamma\cdot\rho')\sim_i \pi_i(\rho\cdot\gamma'\cdot \rho'),\ \ \forall i\in I \\
				&\Leftrightarrow \rho\cdot \gamma\cdot \rho'\sim\rho\cdot\gamma'\cdot \rho'.
			\end{align*}
			
			Now we show that $\Space=\prod_{i\in I} \Spaces{i}$ is the product in $\Xbf$.
			Notice that, by construction, each $\pi_i$ is a morphism $\Space\to\Spaces{i}$.
			Let $\SpaceY$ be a triple in $\Xbf$ and, for each $i\in I$, let $f_i:\SpaceY\to\Spaces{i}$ be a morphism.
			Define $f:\YY\to\XX$ by $y\mapsto (f_1(y),\ldots,f_n(y))$.
			We see immediately that, as functions of sets, $\pi_i\circ f=f_i$, for each $i\in I$.
			It remains to show that $f$ is a morphism in $\Xbf$.
			
			Let $\delta\in\Delta$.
			Then $f(\delta)=(f_1(\delta),\ldots,f_n(\delta))$.
			We know that each $f_i(\delta)\in\Gamma_i$, for $i\in I$.
			Thus, by definition, $f(\delta)\in\Gamma$.
			Suppose $\delta\approx\delta'$.
			Then, for each $i\in I$, $f_i\circ\delta\sim_i f_i\circ\delta'$.
			Thus, by definition, $f\circ\delta \sim f\circ \delta'$.
			Therefore $f$ is a morphism and so $\Space$ is the product in $\Xbf$.
		\end{proof}
		
		\begin{xmp}[running example]\label{xmp:RR^n}
			Let $n\in \NN_{>1}$ and let $\Spaces{i}=\RR$ (from Example~\ref{xmp:RR}), for each $1\leq i \leq n$.
			We will often denote by simply $\RR^n$ the product $\prod_{i=1}^n \RR=\prod_{i=1}^n\Spaces{i}$ in $\Xbf$.
		\end{xmp}
		
	\subsection{Screens}\label{sec:paths and screens:screens}
		In this section we consider partitions of $\XX$ that satisfy some conditions, called screens (Definition~\ref{deff:screen}), and prove some fundamental properties we will need later.
		
		We use $\Pf$ for partitions and $\Xpixel,\Ypixel,\Zpixel...$ for the elements of the partition.
		I.e., $\Xpixel\in\Pf$ and $\Xpixel\subseteq\XX$.
		
		\begin{deff}[screen]\label{deff:screen}
		Given a triple $\Space$ in $\Xbf$, a partition $\Pf$ of $\XX$ is a \emph{screen} if the following are satisfied.
			\begin{enumerate}
				\item\label{deff:screen:thin} \emph{Elements of $\Pf$ are $\sim$-thin}: Consider $\gamma,\gamma'\in \Gamma$ such that $\im(\gamma)\subset \Xpixel\in\Pf$, $\gamma(0)=\gamma'(0)$, $\gamma(1)=\gamma'(1)$. Then $\gamma\sim\gamma'$ if and only if $\im(\gamma')\subset \Xpixel$.
				\item\label{deff:screen:connected} \emph{Elements of $\Pf$ are $\Gamma$-connected}: For any $x,y\in \Xpixel\in\Pf$ there is a finite walk $\gamma_0\cdot\gamma_1^{-1}\cdot \gamma_2\cdots \gamma_{2n-1}^{-1}\cdot \gamma_{2n}$, with $\gamma_0(0)=x$, $\gamma_{2n}(1)=y$, $\forall i(\gamma_i\in\Gamma)$, and $\forall i(\im(\gamma_i)\subset \Xpixel)$, where $\gamma^{-1}$ means to do the path backwards and any $\gamma_i$ may be the constant path.
				\item\label{deff:screen:ore} \emph{$\Pf$ has an Ore condition}: Consider the square of paths in $\XX$:
					\begin{displaymath}
					\begin{tikzpicture}[scale=1.5]
						\draw[thick] (0,0) -- (0,1) -- (1,1) -- (1,0) -- (0,0);
						\draw[->, thick] (0,1) -- (0,0.45);
						\draw[->, thick] (0,0) -- (0.55,0);
						\draw[->, thick] (1,1) -- (1,0.45);
						\draw[->, thick] (0,1) -- (0.55,1);
						\draw (0,0.5) node[anchor=east] {$\rho$};
						\draw (1,0.5) node[anchor=west] {$\rho'$};
						\draw (0.5,1) node[anchor=south] {$\gamma$};
						\draw (0.5,0) node[anchor=north] {$\gamma'$};
					\end{tikzpicture}
					\end{displaymath}
	 				If $\rho,\gamma\in\Gamma$, and $\im(\rho)\subset \Xpixel\in\Pf$, then there exists $\rho',\gamma'\in\Gamma$ such that $\im(\rho')\subset \Ypixel\in\Pf$ and $\gamma\cdot\rho' \sim \rho\cdot\gamma'$.
	 				Similarly, if $\rho',\gamma'\in\Gamma$ and $\im(\rho')\subset \Ypixel\in\Pf$ then there exists $\rho,\gamma\in\Gamma$ such that $\im(\rho)\subset \Xpixel\in\Pf$ and $\rho'\cdot \gamma \sim \gamma'\cdot \rho$.
				\item\label{deff:screen:discrete} \emph{$\Pf$ is discrete}: 
				For any path $\gamma\in\Gamma$, there is a finite partition $\{I_i\}_{i=1}^n$ of $[0,1]$ and a corresponding finite list $(\Xpixel_1,\ldots,\Xpixel_n)$ of pixels in $\Pf$ satisfying the following conditions.
				Each $I_i$ is a subinterval and if $t\in I_i$ then $\gamma(t)\in\Xpixel_i$.
				We allow the possibility that $\Xpixel_i=\Xpixel_j$ only if $|j-i|>1$.
				\item\label{deff:screen:path requirement} \emph{$\Pf$ maintains equivalences}:
				Assume $\gamma,\gamma'\in\Gamma$ with the same partitions $\{I_i\}_{i=1}^n=\{I'_i\}_{i=1}^n$ and lists $(\Xpixel_1,\ldots,\Xpixel_n)=(\Xpixel'_1,\ldots,\Xpixel'_n)$, from (\ref{deff:screen:discrete}).
				Assume also that $\gamma(0)=\gamma'(0)$, $\gamma(1)=\gamma'(1)$, and if $t\in I_i=I_i'$ then $\gamma(t),\gamma'(t)\in\Xpixel_i=\Xpixel'_i$.
				Then $\gamma\sim\gamma'$.
			\end{enumerate}
			If $\Pf$ is a screen we call its elements \emph{pixels}.
   		\end{deff}
   		
   		\begin{xmp}[running example]\label{xmp:screen of RR}
   			Let $\RR$ be the triple from Example~\ref{xmp:RR} and let $\Pf = \{[i,i+1)\mid i\in\ZZ\}$.
   			Then $\Pf$ is a screen of $\RR$.
   			
   			In fact, a short consideration of Definition~\ref{deff:screen} reveals that, for any screen $\Pf$ of $\RR$, every pixel $\Xpixel\in\Pf$ is an interval.
   			The discreteness requirement means that screens of $\RR$ are those partitions of $\RR$ (as a set) where, for any arbitrary bounded interval $I\subset\RR$, there are finitely-many pixels $\Xpixel\in\Pf$ such that $I\cap\Xpixel\neq\emptyset$.
   		\end{xmp}
   		
   		The ``one dimensional'' version of a screen, like the one in Example~\ref{xmp:screen of RR}, comes from the study of thread quivers and can be found in \cite[Definition 2.6]{PRY24}.
   		
   		For the rest of this section, we fix a triple $\Space$ in $\Xbf$.
   		
   		The following lemma is a useful reduction of Definition~\ref{deff:screen}(\ref{deff:screen:connected}) that we will use throughout the paper.
		
		\begin{lem}\label{lem:x two steps from y in a pixel}
			Let $\Pf$ be a screen of $\Space$ and let $\Xpixel\in\Pf$.
			For any $x,y\in \Xpixel$, there is a walk of 2 paths from $x$ to $y$ in $\Xpixel$.
		\end{lem}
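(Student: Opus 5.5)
The plan is to shorten the walk from Definition~\ref{deff:screen}(\ref{deff:screen:connected}) one step at a time, using the Ore condition (\ref{deff:screen:ore}) together with thinness (\ref{deff:screen:thin}). The target is a walk $\gamma\cdot\delta^{-1}$ from $x$ to $y$, with $\gamma,\delta\in\Gamma$ and both images contained in $\Xpixel$. Such a walk is a cospan $x\to z\leftarrow y$.

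\textbf{Step 1 (key swap).} Let $b\in\Xpixel$ and let $\rho,\gamma\in\Gamma$ with $\rho(0)=\gamma(0)=b$ and $\im(\rho),\im(\gamma)\subset\Xpixel$. Set $a=\rho(1)$ and $c=\gamma(1)$. I claim the span $a\leftarrow b\to c$ can be replaced by a cospan $a\to d\leftarrow c$ inside $\Xpixel$.
\begin{itemize}
\item Applying the Ore condition to the square with left side $\rho$ and top side $\gamma$ gives $\rho',\gamma'\in\Gamma$ with $\im(\rho')\subset\Ypixel$ for some $\Ypixel\in\Pf$, and $\gamma\cdot\rho'\sim\rho\cdot\gamma'$.
\item Since $\rho'(0)=c\in\Xpixel$ and $\Pf$ is a partition, we get $\Ypixel=\Xpixel$.
\item By closure under composition, $\gamma\cdot\rho'\in\Gamma$ and its image lies in $\Xpixel$. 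It has the same endpoints as $\rho\cdot\gamma'$ and is equivalent to it.
\item Thinness therefore forces $\im(\rho\cdot\gamma')\subset\Xpixel$, and in particular $\im(\gamma')\subset\Xpixel$.
\end{itemize}
So the cospan $a\xrightarrow{\gamma'}d\xleftarrow{\rho'}c$ lies in $\Xpixel$. This thinness argument is the only place where care is needed. It is the step I expect to be the main point: one must check that the Ore square's new sides stay inside $\Xpixel$, not merely in some pixel.

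\textbf{Step 2 (induction on $n$).} Write the given walk as $\gamma_0\cdot\gamma_1^{-1}\cdot\gamma_2\cdots\gamma_{2n-1}^{-1}\cdot\gamma_{2n}$, with all images in $\Xpixel$; constant paths are allowed, so any walk can be padded into this form. If $n=0$, append the constant path at $y$ to get a two-path walk. If $n\geq1$, apply Step 1 to the span $\gamma_1^{-1}\cdot\gamma_2$ to obtain a cospan $\gamma'\cdot\rho'^{-1}$ inside $\Xpixel$. The walk becomes
\[
(\gamma_0\cdot\gamma')\cdot(\gamma_3\cdot\rho')^{-1}\cdot\gamma_4\cdots\gamma_{2n}.
\]
When $n=1$ there is no $\gamma_3$, and the walk is $(\gamma_0\cdot\gamma')\cdot(\gamma_2'')^{-1}$ with the second path being $\rho'$ alone.

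The composites $\gamma_0\cdot\gamma'$ and $\gamma_3\cdot\rho'$ are in $\Gamma$ by Definition~\ref{deff:Gamma}(\ref{deff:Gamma:composition}), and their images lie in $\Xpixel$. The new walk therefore has the same form with $n$ replaced by $n-1$. By induction we reach a walk $\gamma\cdot\delta^{-1}$ of two paths in $\Xpixel$ from $x$ to $y$, which proves Lemma~\ref{lem:x two steps from y in a pixel}.
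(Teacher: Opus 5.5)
Your proof is correct and is essentially the paper's argument. The paper likewise uses the Ore condition to turn the span $\gamma_1^{-1}\cdot\gamma_2$ into a cospan, uses thinness to keep both new sides in $\Xpixel$, and replaces $\gamma_0\cdot\gamma_1^{-1}\cdot\gamma_2\cdot\gamma_3^{-1}$ by $(\gamma_0\cdot\gamma')\cdot(\gamma_3\cdot\rho')^{-1}$. Your observation that $\rho'$ starts at $\gamma_2(1)\in\Xpixel$, so its pixel must be $\Xpixel$, fills in a step the paper leaves implicit. The paper also notes, via the dual half of the Ore condition, the span form $x\leftarrow w\to y$; it relies on that form later, but the statement as written does not need it.
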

		\begin{proof}
			By Definition~\ref{deff:screen}(\ref{deff:screen:connected}), let $\gamma_0\cdot\gamma_1^{-1}\cdot \gamma_2\cdots \gamma_{2n-1}^{-1}\cdot \gamma_{2n}$ be finite walk where $\gamma_0(0)=x$, $\gamma_{2n}(1)=y$, and $\im(\gamma_i)\subset\Xpixel$ for each $0\leq i \leq 2n$.
			Consider $\gamma_1$ and $\gamma_2$.
			We know $\gamma_1(0)=\gamma_2(0)$ and so we can use Definition~\ref{deff:screen}(\ref{deff:screen:ore}) to obtain two new paths $\gamma'_1$ and $\gamma'_2$, where $\im(\gamma'_1)\subset\Xpixel$.
			This gives us the following picture in $\XX$:
			\begin{displaymath}
			\begin{tikzpicture}[scale=1.5]
			\draw (0,0) -- (0,1) -- (1,1) -- (1,0) -- (0,0);
			\draw[->] (0,1) -- (.55,1);
			\draw[->] (0,1) -- (0,.45);
			\draw[->] (0,0) -- (.55,0);
			\draw[->] (1,1) -- (1,.45);
			\draw (0,.5) node [anchor=east] {$\gamma_1$};
			\draw (.5,0) node [anchor=north] {$\gamma'_2$};
			\draw (.5,1) node [anchor=south] {$\gamma_2$};
			\draw (1,.5) node [anchor=west] {$\gamma'_1$};
			\end{tikzpicture}
			\end{displaymath}					
			And, we know $\gamma_2\cdot \gamma'_1\sim \gamma_1\cdot \gamma'_2$.
			So, $\im(\gamma_2\cdot \gamma'_1)\subset\Xpixel$.
			By Definition~\ref{deff:screen}(\ref{deff:screen:thin}), we must have $\im(\gamma_1\cdot\gamma'_2)\subset\Xpixel$.
			Now we have the following paths in $\XX$:
			\begin{displaymath}
			\begin{tikzpicture}
				\draw (0,1) -- (1,0) -- (2,1) -- (3,0) -- (4,1);
				\draw (1,0) -- (2,-1) -- (3,0);
				\foreach \x in {0,2,4}
					\draw[->] (\x,1) -- (\x+0.55,0.45);
				\foreach \x in {2,4}
					\draw[->] (\x,1) -- (\x-0.55,0.45);
				\draw[->] (1,0) -- (1.55,-0.55);
				\draw[->] (3,0) -- (2.45,-0.55);
				\draw (4.4,.7) node[anchor=north west] {$\ddots$};
				\draw (0.5,.5) node[anchor=east] {$\gamma_0$};
				\draw (1.5,.5) node[anchor=east] {$\gamma_1$};
				\draw (2.5,.5) node[anchor=east] {$\gamma_2$};
				\draw (3.5,.5) node[anchor=east] {$\gamma_3$};
				\draw (1.5,-0.5) node[anchor=east] {$\gamma'_2$};
				\draw (2.5,-0.5) node[anchor=east] {$\gamma'_1$};
			\end{tikzpicture}
			\end{displaymath}
			If we replace $\gamma_0\cdot(\gamma_1)^{-1}\cdot \gamma_2\cdot (\gamma-3)^{-1}$ with $(\gamma_0\cdot \gamma'_2)\cdot (\gamma_3\cdot \gamma'_1)^{-1}$, we have shortened our walk by two paths.
			We can repeat this process to get a walk $\gamma''_0(\gamma''_1)^{-1}$ from $x$ to $y$.
			Similarly, we may construct a walk $\gamma''_0(\gamma''_1)^{-1}\gamma''_2$ from $x$ to $y$ such that $\gamma''_0$ is a constant path, which is effectively a walk of two paths.
		\end{proof}
   		
   		\begin{deff}[$\Psc$]\label{deff:set of screens}
   			Denote by $\Psc$ the poset of all screens on $\Space$.
   			We say $\Pf \leq \Pf'$ if $\Pf$ refines $\Pf'$.
   		\end{deff}
   		
   		\begin{prop}\label{prop:maximal screen}
   			If $\Psc$ is nonempty then it has at least one maximal element.
   		\end{prop}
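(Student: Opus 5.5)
We apply Zorn's lemma to $\Psc$, ordered by refinement ($\Pf\leq\Pf'$ when $\Pf$ refines $\Pf'$), so a maximal element is a coarsest screen. Since $\Psc$ is nonempty, it suffices to show that every nonempty chain $\{\Pf_\alpha\}_{\alpha\in A}$ in $\Psc$ has an upper bound in $\Psc$. The natural candidate is the ``union'' partition $\Pf$. We declare $x\approx y$ when $x,y$ lie in a common pixel of some $\Pf_\alpha$. This relation is reflexive and symmetric. It is transitive because the family is a chain: if $x,y$ share a pixel of $\Pf_\alpha$ and $y,z$ share a pixel of $\Pf_\beta$, both pairs share a pixel in the coarser of $\Pf_\alpha,\Pf_\beta$. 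Let $\Pf$ be the set of $\approx$-classes. Each $\Pf_\alpha$ refines $\Pf$, so once $\Pf$ is shown to be a screen it is the required upper bound.

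The key technical step is a finiteness reduction. Any finite set of points that pairwise lie in one pixel $\Xpixel\in\Pf$ already lie in a single pixel of one $\Pf_\alpha$: each pair is identified in some member of the chain, and we take the coarsest of these finitely many members. Combined with discreteness of a fixed member $\Pf_{\alpha_0}$, this gives the following. If $\gamma\in\Gamma$ has $\im(\gamma)\subset\Xpixel\in\Pf$, then by Definition~\ref{deff:screen}(\ref{deff:screen:discrete}) for $\Pf_{\alpha_0}$ the image of $\gamma$ meets only finitely many $\Pf_{\alpha_0}$-pixels. We choose one point of $\im(\gamma)$ in each of them and find $\beta$ with all these points in one $\Pf_\beta$-pixel. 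Taking $\beta\geq\alpha_0$, the whole image $\im(\gamma)$ then lies in a single pixel of $\Pf_\beta$. The same argument works simultaneously for any finite collection of paths.

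With this reduction, each axiom of Definition~\ref{deff:screen} for $\Pf$ follows from the corresponding axiom for a suitable member of the chain.

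\emph{Thinness.} Given $\gamma,\gamma'$ with the same endpoints and $\im(\gamma)\subset\Xpixel\in\Pf$, pass to some $\Pf_\beta$ in which both $\im(\gamma)$ lies in one pixel and, when $\im(\gamma')\subset\Xpixel$, so does $\im(\gamma')$. For the direction $\gamma\sim\gamma'\Rightarrow\im(\gamma')\subset\Xpixel$, thinness in $\Pf_\beta$ gives that $\im(\gamma')$ lies in the $\Pf_\beta$-pixel containing $\im(\gamma)$, and hence in $\Xpixel$.

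\emph{Connectedness.} For $x,y\in\Xpixel$, find $\Pf_\beta$ with $x,y$ in a common pixel, and use the walk provided there; its pixel is contained in $\Xpixel$.

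\emph{Ore condition.} A path $\rho$ with $\im(\rho)\subset\Xpixel$ has image in a single $\Pf_\beta$-pixel. The Ore condition in $\Pf_\beta$ produces $\rho'$ inside a $\Pf_\beta$-pixel, which is contained in a $\Pf$-pixel. The dual statement is handled the same way.

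\emph{Discreteness.} Take the interval partition of $[0,1]$ for $\gamma$ given by $\Pf_{\alpha_0}$, replace each pixel by the $\Pf$-pixel containing it, and merge adjacent intervals whose pixels have become equal. This yields an admissible list with no two adjacent pixels equal.

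\emph{Maintaining equivalences.} Given $\gamma,\gamma'$ with the same $\Pf$-data $\{I_i\}$ and $(\Xpixel_i)$, apply the reduction to the finitely many restrictions $\gamma|_{I_i}$ and $\gamma'|_{I_i}$. This gives one $\Pf_\beta$ in which each $\gamma(I_i)\cup\gamma'(I_i)$ lies in a single pixel. To make this precise we pick, for each $i$, a point of $\gamma(I_i)$ and a point of $\gamma'(I_i)$, and also include the finitely many $\Pf_{\alpha_0}$-sample points used in the reduction. Then $\gamma$ and $\gamma'$ have common discreteness data for $\Pf_\beta$, after merging adjacent equal pixels identically for both paths. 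Hence $\gamma\sim\gamma'$ by the axiom for $\Pf_\beta$.

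I expect the main obstacle to be this last axiom, together with the converse direction of thinness. One has to check carefully that ``the same partition and list'' for $\Pf$ can be realized as ``the same partition and list'' for a single $\Pf_\beta$. In particular, the merging of adjacent intervals must be done identically for $\gamma$ and $\gamma'$, which the construction above is designed to ensure. Once $\Pf\in\Psc$ is established, Zorn's lemma yields a maximal element of $\Psc$.
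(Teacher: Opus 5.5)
Your proposal is correct and follows essentially the same route as the paper: Zorn's lemma with the union (set-theoretic colimit) partition of a chain as the upper bound, the same finiteness reduction as the paper's ``trick'' (a path whose image lies in a union pixel already lies in a single pixel of some member of the chain, via discreteness of one fixed member), and then an axiom-by-axiom transfer to a suitable $\Pf_\beta$. Your version is slightly more careful than the paper's in the discreteness and maintaining-equivalences steps, since you merge adjacent equal pixels and sample points instead of treating restrictions to half-open intervals as paths; the converse half of thinness that you left implicit is immediate, because both images then lie in the $\Pf_\beta$-pixel through the common starting point.
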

   		\begin{proof}
			We will use the Kuratowski–-Zorn lemma (commonly known as Zorn's lemma).
	  
	  		Let $\Tsc\subset\Psc$ be a chain.
	  		Define morphisms $\Pf\to\Pf'$ whenever $\Pf\leq \Pf'$ in $\Tsc$ by $\Xpixel\mapsto \Xpixel'$ if $\Xpixel\subset\Xpixel'$.
	  		In the category of sets, let $\Pf_\Tsc$ be the colimit of $\Tsc$, where we identity the elements $\overline{\Xpixel}\in\Pf_\Tsc$ with $\displaystyle\bigcup_{\Xpixel\in\Pf\in\Tsc,\ \Xpixel\mapsto \overline{\Xpixel}} \Xpixel$.
	  		Overloading notation, we set $\overline{\Xpixel}$ equal to this big union.
	  		
	  		First we show that $\Pf_\Tsc$ is a partition.
	  		Let $x\in\XX$. For each $\Pf\in\Tsc$, there exists a unique $\Xpixel\in\Pf$ such that $x\in\Xpixel$.
	  		Then, there is some $\overline{\Xpixel}\in\Pf_\Tsc$ such that $\Xpixel\subseteq\overline{\Xpixel}$ and so $x\in\overline{\Xpixel}$. Suppose $x\in\overline{\Xpixel}\cap\overline{\Ypixel}$, for $\overline{\Xpixel},\overline{\Ypixel}\in\Pf_\Tsc$.
	  		Then for any $\Pf\in\Tsc$ there are $\Xpixel,\Ypixel\in\Pf$ such that $x\in \Xpixel$, $x\in \Ypixel$, $\Xpixel\subseteq\overline{\Xpixel}$, and $\Ypixel\subseteq\overline{\Ypixel}$.
	  		Then $x\in\Xpixel\cap\Ypixel$ so $\Xpixel=\Ypixel$ which implies $\overline{\Xpixel}=\overline{\Ypixel}$. Therefore, $\Pf_{\Tsc}$ is a partition of $\XX$.
	  
	  		Now we will show that $\Pf_\Tsc$ is a screen.
	  		We start with Definition~\ref{deff:screen}(\ref{deff:screen:discrete}).
	  		Let $\gamma\in\Gamma$ and let $\Pf\in\Tsc$.
	  		We know there is a finite partition $\{I_i\}_{i=1}^n$ of $[0,1]$ and list $\Xpixel_1,\ldots, \Xpixel_n$ of pixels in $\Pf$ (possibly with repetition), such that, on each $I_i$, $\im(\gamma|_{I_i})$ is contained in the pixel $\Xpixel_i$.
	  		For each $1\leq i \leq n$, let $\overline{\Xpixel}_i$ be the pixel in $\Pf_\Tsc$ that contains $\Xpixel_i$.
	  		Then we see immediately that $\Pf_\Tsc$ is also discrete.
	  
	  		\underline{\textbf{Trick:}}\label{pf:trick}
	  		We will reuse the following trick.
	  		We now show that if $\im(\gamma)\subset\overline{\Xpixel}\in\Pf_\Tsc$, there exists $\Pf\in \Tsc$ such that $\im(\gamma)\subset\Xpixel\in\Pf$ where $\Xpixel\subseteq\overline{\Xpixel}\in\Pf_\Tsc$.
	  		Let $\overline{\Xpixel}\in\Pf_\Tsc$ and let $\gamma\in\Gamma$ be some path such that $\im(\gamma)\subset \overline{\Xpixel}$.
	  		Let $\Pf''\in\Tsc$ and notice that, since $\Pf''$ is discrete, we have the finite list of pixels $\Xpixel'_1,\ldots,\Xpixel'_{n''}$ and corresponding partition $\{I'_i\}_{i=1}^{n}$ of $[0,1]$ from the previous paragraph.
	  		Since $X'_i\subseteq\overline{\Xpixel}$ for each $1\leq i\leq n$, there is some $\Pf\in\Tsc$ such that $\bigcup_{i=1}^n \Xpixel_i\subseteq \Xpixel \in\Pf'$.
	  		Then $\im(\gamma)\subset\Xpixel\in\Pf$ and $\Xpixel\subseteq\overline{\Xpixel}$.
	  
	  		Now we show Definition~\ref{deff:screen}(\ref{deff:screen:thin}).
	  		Let $\gamma,\gamma'\in\Gamma$ and $\overline{\Xpixel}\in\Pf_\Tsc$ such that $\im(\gamma)\subset\overline{\Xpixel}$, $\gamma(0)=\gamma'(0)$, and $\gamma(1)=\gamma'(1)$.
	  		By our trick there is a partition $\Pf\in\Tsc$ and pixel $\Xpixel\in\Pf$ such that $\im(\gamma)\subset\Xpixel$.
	  		Then, since $\Pf$ is a screen, $\gamma\sim\gamma'$ if and only if $\im(\gamma')\subset\Xpixel$.
	  		Thus, if $\gamma\sim\gamma'$ then $\im(\gamma')\subset\overline{\Xpixel}$. If $\im(\gamma')\subset\overline{\Xpixel}$, then by our trick there is some $\Xpixel'\in\Pf'\in\Tsc$ such that $\im(\gamma')\subset\Xpixel'$.
	  		Either $\Pf\leq \Pf'$ or $\Pf'\leq \Pf$.
	  		Let $\Pf''$ be the larger of the two and $\Xpixel''$ the pixel corresponding to $\Xpixel\in\Pf$ if $\Pf$ is larger or $\Xpixel'\in\Pf'$ if $\Pf'$ is larger.
	  		Then $\im(\gamma)\cup\im(\gamma')\subset \Xpixel''$ and, since $\Pf''$ is a screen, we know $\gamma\sim \gamma'$.
	  
	  		Now we show Definition~\ref{deff:screen}(\ref{deff:screen:ore}).
	  		Let $\rho,\gamma\in\Gamma$ such that $\rho(0)=\gamma(0)$ and $\im(\rho)\subset\overline{\Xpixel}\in\Pf_{\Tsc}$.
	  		We only show this version as the other is similar.
	  		Then by our trick there is some $\Xpixel\in\Pf\in \Tsc$ such that $\im(\rho)\subset\Xpixel$.
	  		Then, since $\Pf$ is a screen, there exist paths $\gamma',\rho'$ where $\gamma\cdot\rho'\sim\rho\cdot\gamma'$ and $\im(\rho')\subset \Ypixel\in\Pf$.
	  		Then there is some $\overline{\Ypixel}\in\Pf_\Tsc$ such that $\Ypixel\subseteq\overline{\Ypixel}$.
	  	
	  		Now we show Definition~\ref{deff:screen}(\ref{deff:screen:connected}).
	  		Let $x,y\in\overline{\Xpixel}\in\Pf_\Tsc$.
	  		Let $\Pf'\in\Tsc$.
	  		Then $x\in\Xpixel'$ and $y\in\Ypixel'$, for $\Xpixel',\Ypixel'\in\Pf'$.
	  		Since $\Xpixel'\cup\Ypixel'\subset\overline{\Xpixel}$, there is some $\Pf\in\Tsc$ such that $\Xpixel\cup\Ypixel'\subseteq\Xpixel\in\Pf$.
	  		Then $x,y\in\Xpixel$ so there is a finite walk in $\Xpixel\subseteq\overline{\Xpixel}$ from $x$ to $y$.
	  		
	  		Finally, we show Definition~\ref{deff:screen}(\ref{deff:screen:path requirement}).
	  		Let $\gamma,\gamma'\in\Gamma$.
	  		Assume there is a partition $\{I_i\}_{i=1}^n$ of $[0,1]$ where each $I_i$ is an interval and if $t\in I_i$ then $\gamma(t),\gamma'(t)\in\overline{\Xpixel}_i$.
	  		Choose $\Pf\in\Tsc$.
	  		For each $1\leq i\leq n$ we have a partition $\{J_{ij}\}_{j=1}^{m_i}$ of $I_i$ such that each $J_{ij}$ is an interval and $\gamma(t)\in\Xpixel_{ij}$ if $t\in J_{ij}$. 
			We have a similar partition $\{J'_{ij}\}_{j=1}^{m'_i}$ for $\gamma'$.
			
			For each $1\leq i\leq n$, we have $(\bigcup_{j=1}^{m_i} \Xpixel_{ij})\cup (\bigcup_{j=1}^{m'_i} \Xpixel'_{ij})\subset \overline{\Xpixel}_i$.
	  		Thus, there is some $\Pf_i\in\Tsc$ with $\Xpixel_i$ such that $\Xpixel_{ij}\subset \Xpixel_i$ and $\Xpixel'_{ij'}\subset\Xpixel_i$ for each $1\leq j\leq m_i$ and $1\leq j'\leq m'_i$.
	  		Since $\Tsc$ is a chain, one of the $\Pf_a$ is maximal in $\{\Pf_i\}_{i=1}^n$.
	  		Thus, if $t\in I_i$ then $\gamma(t),\gamma'(t)\in \Xpixel_i\subset\overline{\Xpixel}_i$.
	  		Since $\Pf_a$ is a screen we know $\gamma\sim\gamma'$, satisfying Definition~\ref{deff:screen}(\ref{deff:screen:path requirement}).
	  		This completes the proof.
	  		\color{black}
	  		We have now shown that $\Pf_\Tsc$ is a screen.
	  	\end{proof}
   		
   		In the present paper, for any triple $\Space$ in $\Xbf$ that we consider, we will assume $\Psc\neq\emptyset$.
		
		\begin{rem}
			Definition~\ref{deff:screen}(\ref{deff:screen:thin}) implies that if $\gamma(0)=\gamma(1)$ and $\gamma$ is not a constant path, then there exists pixels $\Xpixel\neq \Ypixel\in\Pf$ with $\gamma(a)\in \Xpixel$ and $\gamma(b)\in \Ypixel$, for some $a,b\in[0,1]$.
		\end{rem}
			
		The following technical lemma is helpful to prove Lemma~\ref{lem:initial subpixel}.
		
		\begin{lem}\label{lem:pixels are directed}
			Let $\Pf$ and $\Pf'$ be screens of $\Space$ such that $\Pf$ refines $\Pf'$.
			Suppose $\Xpixel_1,\Xpixel_2\in\Pf$, $\Xpixel'\in\Pf'$ and $\Xpixel_1\cup\Xpixel_2\subset \Xpixel'$.
			Let $\gamma\in\Gamma$ such that $\gamma(0)\in\Xpixel_1$, $\gamma(1)\in\Xpixel_2$, and $\im(\gamma)\subset\Xpixel'$.
			Then there is no $\gamma'\in\Gamma$ such that $\gamma'(0)\in\Xpixel_2$, $\gamma'(1)\in\Xpixel_1$, and $\im(\gamma')\subset\Xpixel'$.
		\end{lem}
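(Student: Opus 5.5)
The plan is to argue by contradiction. From $\gamma$ and a hypothetical $\gamma'$ I will build a single path in $\Gamma$ that leaves $\Xpixel_1$, passes through $\Xpixel_2$, returns to $\Xpixel_1$, and stays inside $\Xpixel'$. I will then compare it with a path lying entirely in $\Xpixel_1$ that has the same endpoints. The $\sim$-thinness condition, Definition~\ref{deff:screen}(\ref{deff:screen:thin}), applied once for $\Pf$ and once for $\Pf'$, will give contradictory answers to whether the two paths are equivalent.

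\emph{Step 1 (gluing across $\Xpixel_2$).} Suppose $\gamma'$ exists with $\gamma'(0)\in\Xpixel_2$, $\gamma'(1)\in\Xpixel_1$ and $\im(\gamma')\subset\Xpixel'$. By Lemma~\ref{lem:x two steps from y in a pixel}, there are paths $\alpha,\beta\in\Gamma$ with images in $\Xpixel_2$ such that $\alpha(0)=\gamma(1)$, $\beta(0)=\gamma'(0)$ and $\alpha(1)=\beta(1)$. Now apply the Ore condition, Definition~\ref{deff:screen}(\ref{deff:screen:ore}), for $\Pf$, with $\rho=\beta$ (image in the pixel $\Xpixel_2$) and $\gamma'$ as the other path from $\beta(0)=\gamma'(0)$. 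This produces $\rho'$ and $\gamma''$ with $\gamma'\cdot\rho'\sim\beta\cdot\gamma''$, where $\im(\rho')$ lies in a single pixel of $\Pf$. Since $\rho'(0)=\gamma'(1)\in\Xpixel_1$, that pixel must be $\Xpixel_1$. Hence $\gamma'\cdot\rho'$ has image in $\Xpixel'$, and $\beta\cdot\gamma''$ has the same endpoints. Thinness for $\Pf'$ then gives $\im(\beta\cdot\gamma'')\subset\Xpixel'$. Set $\delta=\gamma\cdot\alpha\cdot\gamma''$. This is a path in $\Gamma$ by closure under composition, it has image in $\Xpixel'$, it runs from $x=\gamma(0)\in\Xpixel_1$ to $w=\gamma''(1)=\rho'(1)\in\Xpixel_1$, and it meets $\Xpixel_2$.

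\emph{Step 2 (closing up in $\Xpixel_1$).} Apply Lemma~\ref{lem:x two steps from y in a pixel} again, now to $x,w\in\Xpixel_1$. This gives paths $\mu,\nu$ with images in $\Xpixel_1$, with $\mu(0)=x$, $\nu(0)=w$ and $\mu(1)=\nu(1)=u$. Then $\mu$ and $\delta\cdot\nu$ are both paths from $x$ to $u$, and both have images in $\Xpixel'$.

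\emph{Step 3 (contradiction).} Thinness for $\Pf'$ applied to $\mu$ gives $\mu\sim\delta\cdot\nu$, because $\im(\delta\cdot\nu)\subset\Xpixel'$. Thinness for $\Pf$ applied to $\mu$ (whose image lies in $\Xpixel_1$) gives $\mu\sim\delta\cdot\nu$ if and only if $\im(\delta\cdot\nu)\subset\Xpixel_1$. This fails, since $\delta$ passes through $\Xpixel_2\neq\Xpixel_1$. The implicit assumption $\Xpixel_1\neq\Xpixel_2$ is needed here; if the two pixels were equal, $\gamma$ itself would serve as $\gamma'$ and the statement would be false. This contradiction proves the lemma.

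The main obstacle is Step 1. The Ore condition only produces squares up to $\sim$, so I must confirm that the new path $\gamma''$ stays inside $\Xpixel'$. The device for this is to transfer the image condition across the relation $\gamma'\cdot\rho'\sim\beta\cdot\gamma''$ using thinness for the coarser screen $\Pf'$. I also need to check that the pixel containing $\rho'$ is forced to be $\Xpixel_1$.
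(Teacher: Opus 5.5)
Your proof is correct and follows essentially the paper's argument. Both build a path inside $\Xpixel'$ that leaves $\Xpixel_1$, passes through $\Xpixel_2$ and re-enters $\Xpixel_1$, compare it with a path lying in $\Xpixel_1$ that has the same endpoints, and play thinness for $\Pf'$ (which forces equivalence) against thinness for $\Pf$ (which forbids it); the only difference is mirror-image bookkeeping (the paper takes a span in $\Xpixel_2$ and uses the dual Ore condition to extend backwards from $\gamma(0)$ inside $\Xpixel_1$, where you take a cospan in $\Xpixel_2$ and use the forward Ore condition to extend $\gamma'$ forwards from $\gamma'(1)$), and you are right that $\Xpixel_1\neq\Xpixel_2$ must be assumed.
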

		\begin{proof}
			For contradiction, suppose such a $\gamma'$ exists.
			The reader is encouraged to reference Figure~\ref{fig:pixels are directed} as a guide to the proof.
			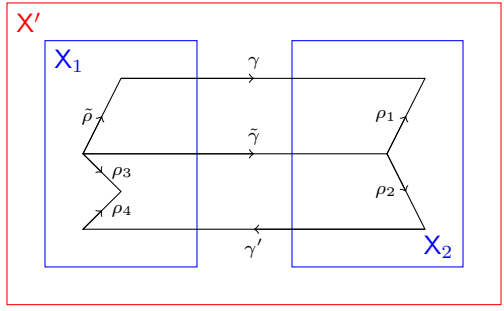
\begin{figure}
			\begin{center}
				\begin{tikzpicture}
					\draw[blue] (-0.5,-1.5) -- (-0.5,1.5) -- (1.5,1.5) -- (1.5,-1.5) -- cycle;
					\draw[blue] (2.75,-1.5) -- (2.75,1.5) -- (5,1.5) -- (5,-1.5) -- cycle;
					\draw[red] (-1,-2) -- (-1,2) -- (5.5,2) -- (5.5,-2) -- cycle;
					\draw (0,0) -- (0.5,1) -- (4.5,1) -- (4,0) -- (0,0) -- (0.5,-0.5) -- (0,-1) -- (4.5,-1) -- (4,0);
					\draw[blue] (-0.5,1.5) node[anchor=north west] {$\Xpixel_1$};
					\draw[blue] (5,-1.5) node[anchor=south east] {$\Xpixel_2$};
					\draw[red] (-1,2) node[anchor=north west] {$\Xpixel'$};
					\draw[->] (0,0) -- (.25,.5);
					\draw(.25,.5) node[anchor=east] {\scriptsize $\tilde{\rho}$};
					\draw[->] (0,0) -- (2.25,0);
					\draw (2.25,0) node[anchor=south] {\scriptsize $\tilde{\gamma}$};
					\draw[->] (0,0) -- (.25,-.25);
					\draw (.25,-.25) node[anchor=west] {\scriptsize $\rho_3$};
					\draw[->] (0,-1) -- (.25,-.75);
					\draw (.25,.-.75) node[anchor=west] {\scriptsize $\rho_4$};
					\draw[->] (4.5,-1) -- (2.25,-1);
					\draw (2.25,-1) node[anchor=north] {\scriptsize $\gamma'$};
					\draw[->] (.5,1) -- (2.25,1);
					\draw (2.25,1) node[anchor=south] {\scriptsize $\gamma$};
					\draw[->] (4,0) -- (4.25,.5);
					\draw (4.25,.5) node[anchor=east] {\scriptsize $\rho_1$};
					\draw[->] (4,0) -- (4.25,-.5);
					\draw (4.25,-.5) node[anchor=east] {\scriptsize $\rho_2$};
				\end{tikzpicture}
				\caption{Schematic for the proof of Lemma~\ref{lem:pixels are directed}.}\label{fig:pixels are directed}
			\end{center}
			\end{figure}
			
			By Lemma~\ref{lem:x two steps from y in a pixel} we have paths $\rho_1,\rho_2$ with $\rho_1(0)=\rho_2(0)$, $\im(\rho_1)\cup\im(\rho_2)\subset\Xpixel_2$, $\rho_1(1)=\gamma(1)$, and $\rho_2(1)=\gamma'(0)$.
			Since we have $\gamma(0)\in \Xpixel_1$, $\gamma(1)\in\Xpixel_2$, and $\im(\rho_1)\subset\Xpixel_2$, there is some $\tilde{\gamma}$ and $\tilde{\rho}$ with $\tilde{\gamma}(0)=\tilde{\rho}(0)$, $\tilde{\rho}(1)=\gamma(0)$, and $\tilde{\gamma}(1)=\rho_1(0)$ such that $\tilde{\rho}\cdot\gamma=\tilde{\gamma}\cdot \rho_1$ and $\im(\tilde{\rho})\subset\Xpixel_1$ (Definition~\ref{deff:screen}(\ref{deff:screen:ore})).
			Since $\tilde{\gamma}\cdot\rho_1\sim\tilde{\rho}\cdot\gamma$, we know $\im(\tilde{\gamma})\subset\Xpixel'$ by Definition~\ref{deff:screen}(\ref{deff:screen:thin}).
			Using Lemma~\ref{lem:x two steps from y in a pixel} again, we have paths $\rho_3,\rho_4$ with $\rho_3(1)=\rho_4(1)$, $\rho_3(0)=\tilde{\gamma}(0)$, $\rho_4(0)=\gamma'(0)$, and $\im(\rho_3)\cup\im(\rho_4)\subset\Xpixel_1$.
			
			Let $\gamma''=\tilde{\gamma}\cdot\rho_2\cdot\gamma'\cdot\rho_4$.
			Now we have $\im(\gamma'')\subset\Xpixel'$, $\im(\rho_3)\subset\Xpixel'$, $\gamma''(0)=\rho_3(0)$, and $\gamma''(1)=\rho_3(1)$.
			Therefore, by Definition~\ref{deff:screen}(\ref{deff:screen:thin}), we have $\gamma''\sim\rho_3$.
			But, since $\im(\rho_3)\subset\Xpixel_1$, we must have $\im(\gamma'')\subset\Xpixel_1$ also by Definition~\ref{deff:screen}(\ref{deff:screen:thin}).
			But $\gamma'(0)\in\Xpixel_2$, a contradiction.
			This compelets the proof.
		\end{proof}
		
		Now, Lemma~\ref{lem:initial subpixel} is useful in multiple places, especially in the proof of Proposition~\ref{prop:Init}.
		
		\begin{lem}\label{lem:initial subpixel}
			Suppose $\Pf$ and $\Pf'$ are screens of $\Space$ and $\Pf$ refines $\Pf'$.
			Let $\Xpixel'\in\Pf'$ and suppose there exist only finitely-many pixels $\{\Xpixel_i\}_{i=1}^n$ such that each $\Xpixel_i\subset\Xpixel'$.
			Then there is an initial $\Xpixel_j$ in the sense that for any $x_i\in\Xpixel_i$ there is an $x_j\in\Xpixel_j$ and a path $\gamma\in\Gamma$ with $\gamma(0)=x_j$ and $\gamma(1)=x_i$.
		\end{lem}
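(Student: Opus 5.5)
We will define a relation on the finite set $\{\Xpixel_i\}_{i=1}^n$ of $\Pf$-pixels contained in $\Xpixel'$: write $\Xpixel_a\preceq\Xpixel_b$ if there is a path $\gamma\in\Gamma$ with $\gamma(0)\in\Xpixel_a$, $\gamma(1)\in\Xpixel_b$ and $\im(\gamma)\subset\Xpixel'$. The plan is to show that $\preceq$ is a partial order with a least element $\Xpixel_j$, and then to upgrade ``some point of $\Xpixel_j$ reaches some point of $\Xpixel_i$'' to ``every $x_i\in\Xpixel_i$ is reached from some $x_j\in\Xpixel_j$''.

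\textbf{Partial order.} Reflexivity comes from constant paths (Definition~\ref{deff:Gamma}(\ref{deff:Gamma:constant})). For transitivity, take $\gamma$ from $\Xpixel_a$ to $\Xpixel_b$ and $\delta$ from $\Xpixel_b$ to $\Xpixel_c$, both inside $\Xpixel'$. Use Lemma~\ref{lem:x two steps from y in a pixel} to connect $\gamma(1)$ to $\delta(0)$ inside $\Xpixel_b$ by a two-path walk $\rho_1^{-1}\cdot\rho_2$ through $\Xpixel_b$. Then apply the Ore condition (Definition~\ref{deff:screen}(\ref{deff:screen:ore}), second form) to $\gamma$ and $\rho_1$, where $\rho_1$ lies in a pixel, to pull the backwards step back. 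This gives paths $\tilde\rho\subset\Xpixel_a$ and $\tilde\gamma$ with $\tilde\gamma\cdot\rho_1\sim\tilde\rho\cdot\gamma$. Thinness (Definition~\ref{deff:screen}(\ref{deff:screen:thin})), applied at the level of $\Pf'$, forces $\im(\tilde\gamma)\subset\Xpixel'$, exactly as in the proof of Lemma~\ref{lem:pixels are directed}. The composite $\tilde\gamma\cdot\rho_2\cdot\delta$ is then a genuine path in $\Xpixel'$ from $\Xpixel_a$ to $\Xpixel_c$. Antisymmetry is exactly Lemma~\ref{lem:pixels are directed}.

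\textbf{Least element.} I will show any two pixels $\Xpixel_a,\Xpixel_b$ have a common lower bound. Connect a point of $\Xpixel_a$ to a point of $\Xpixel_b$ by a walk inside $\Xpixel'$, using $\Gamma$-connectedness of $\Xpixel'$ and Lemma~\ref{lem:x two steps from y in a pixel}, so that the walk has the form $\alpha^{-1}\cdot\beta$ with $\alpha(0)=\beta(0)=z\in\Xpixel'$. By discreteness, $z$ lies in some $\Pf$-pixel inside $\Xpixel'$; since $\Pf$ refines $\Pf'$, that pixel is one of the $\Xpixel_k$. The paths $\alpha$ and $\beta$ then witness $\Xpixel_k\preceq\Xpixel_a$ and $\Xpixel_k\preceq\Xpixel_b$. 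A finite poset in which every pair has a lower bound has a least element, which is the desired $\Xpixel_j$. Finiteness is essential here.

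\textbf{Pointwise statement.} Fix $x_i\in\Xpixel_i$. From $\Xpixel_j\preceq\Xpixel_i$ take $\gamma$ from $\Xpixel_j$ ending at some $y\in\Xpixel_i$. Join $y$ to $x_i$ inside $\Xpixel_i$ by the two-path walk $\rho_1^{-1}\cdot\rho_2$ of Lemma~\ref{lem:x two steps from y in a pixel}. Apply the Ore condition to $\gamma$ and $\rho_1$ as in the transitivity step, producing $\tilde\gamma$ with $\tilde\gamma(0)=:x_j\in\Xpixel_j$, ending at $\rho_1(0)$. The path $\tilde\gamma\cdot\rho_2$ then runs from $x_j$ to $x_i$.

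The main obstacle is the repeated Ore/thinness manoeuvre. I must check carefully that the second form of Definition~\ref{deff:screen}(\ref{deff:screen:ore}) has the right orientation, placing the new pixel path at the start inside $\Xpixel_j$ (respectively $\Xpixel_a$). I must also check that thinness for the coarser screen $\Pf'$ keeps the new paths inside $\Xpixel'$. For the statement itself this containment is not strictly needed, but it is needed for transitivity.
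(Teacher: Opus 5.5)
Your proposal is correct and follows essentially the paper's route. Both arguments compare the finitely many subpixels of $\Xpixel'$ pairwise, use a common source in $\Xpixel'$ from Lemma~\ref{lem:x two steps from y in a pixel} when two subpixels are incomparable, invoke Lemma~\ref{lem:pixels are directed} for directedness, and let finiteness produce the initial pixel. Your poset formulation is more careful than the paper's elimination sketch: it proves the transitivity that the elimination procedure silently relies on, and it gives the Ore-condition argument that upgrades pixel-level reachability to the pointwise statement, which the paper leaves unstated (antisymmetry, incidentally, is not needed for existence).
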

		\begin{proof}
			We start with $\{\Xpixel_i\}_{i=1}^n\subset\Pf$ and systematically remove pixels $\Xpixel_i$ if there is a pixel $\Xpixel_j$ with a path $\gamma\in\Gamma$ such that $\gamma(0)\in\Xpixel_j$ and $\gamma(1)\in\Xpixel_i$.
			The last pixel remaining must be the initial pixel among those that are subsets of $\Xpixel'$, by Lemma~\ref{lem:pixels are directed}.
			
			First consider $\Xpixel_1$ and $\Xpixel_2$.
			If there is a path $\gamma\in\Gamma$ with $\gamma(0)\in\Xpixel_{i}$, $\gamma(1)\in\Xpixel_{3-i}$, and $\im(\gamma)\subset\Xpixel'$, then we keep $\Xpixel_{i}$ and remove $\Xpixel_{3-i}$.
			By Lemma~\ref{lem:pixels are directed}, we know there cannot be a path from $\Xpixel_{3-i}$ to $\Xpixel_{i}$.
			We now have a subset of $\Pf$ with $n-1$ pixels.
		
			If instead there is no such $\gamma$, let $x_1\in\Xpixel_1$ and $x_2\in\Xpixel_2$.
			Since $\Pf'$ is a screen, there is some $x_{i} \in \Xpixel_{i}\subset\Xpixel'$ and paths $\rho_1,\rho_2\in\Gamma$ with $\rho_1(0)=\rho_2(0)=x_{i}$ and $\im(\rho_1)\cup\im(\rho_2)\subset\Xpixel'$.
			Then we remove both $\Xpixel_1$ and $\Xpixel_2$.
			We now have a subset of $\Pf$ with $n-2$ pixels.
			
			In both cases we now have fewer pixels and repeat the process at most $n-1$ times.
			The last remaining pixel is the initial pixel as desired.
		\end{proof}
		
		We have the following equivalence relation on paths, relative to a screen $\Pf$.
		
		\begin{deff}[$\mathfrak{P}$-equivalent paths]\label{deff:P-equivalent paths}
			We say two paths $\gamma,\gamma'\in\Gamma$ are \emph{$\Pf$-equivalent} if there exists $\rho_1,\rho_2,\rho_3,\rho_4\in\Gamma$ where $\im(\rho_1)\cup\im(\rho_2)\subset \Xpixel\in\Pf$, $\im(\rho_3)\cup\im(\rho_4)\subset \Ypixel\in\Pf$, and $\rho_1\cdot\gamma\cdot\rho_3\sim\rho_2\cdot\gamma'\cdot\rho_4$.
			Notice that if $\gamma\sim\gamma'$ (in particular if $\gamma=\gamma'$) then $\gamma$ is $\Pf$-equivalent to $\gamma'$, by taking $\rho_i$ to be the identity for each $i$.
		\end{deff}
		
		Using Lemma~\ref{lem:x two steps from y in a pixel}, it is straight forward to show that $\Pf$-equivalence is indeed an equivalence relation.
		
		Sometimes we will consider a partition $\Pf$ of a set $\XX$ to be the induced surjection $\XX\twoheadrightarrow\Pf$ in the category of sets.
		This point of view is useful to define the following operations, for example.
		
		\begin{deff}(meet and join of partitions)\label{deff:partition operations}
			Let $\XX\twoheadrightarrow \Pf$	 and $\XX\twoheadrightarrow \Pf'$ be partitions of a set $\XX$.
		
			We define the meet of the partitions to be
			\[
				\Pf\sqcap \Pf' := \{\Xpixel\cap \Xpixel' \mid \Xpixel\in\Pf, \Xpixel'\in\Pf'\}.
			\]
			
			We define the join of the partitions to be the pushout in the following diagram (in the category of sets):
			\[
				\xymatrix{
					\XX \ar@{->>}[r] \ar@{->>}[d] & \Pf \ar@{->>}[d] \\
					\Pf' \ar@{->>}[r] & \Pf\sqcup\Pf'.
				}
			\]
		\end{deff}
		
		The following proposition follows from straightforward set theory.
		\begin{prop}\label{prop:lattice of partitions}
			The operations $\sqcap$ and $\sqcup$ make the set of partitions of a set $\XX$ into a lattice.
		\end{prop}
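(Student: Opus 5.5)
The plan is to identify partitions of $\XX$ with equivalence relations on $\XX$, equivalently with surjections $\XX\twoheadrightarrow\Pf$ up to isomorphism under $\XX$. Under this identification the refinement order becomes inclusion of equivalence relations: $\Pf\leq\Pf'$ exactly when $x\equiv_{\Pf}y$ implies $x\equiv_{\Pf'}y$. First I check that this is a partial order. Reflexivity and transitivity are clear. Antisymmetry holds because a partition is determined by its equivalence relation, so two partitions that refine each other are equal. It then suffices to show that $\sqcap$ and $\sqcup$ are the greatest lower bound and least upper bound for this order.

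\emph{The meet.} The blocks $\Xpixel\cap\Xpixel'$ (discarding empty intersections) cover $\XX$, since each $x$ lies in a unique $\Xpixel$ and a unique $\Xpixel'$. They are pairwise disjoint by the same uniqueness. So $\Pf\sqcap\Pf'$ is a partition, and its equivalence relation is the intersection $\equiv_\Pf\cap\equiv_{\Pf'}$. It refines both $\Pf$ and $\Pf'$. If $\Qf$ refines both, then each block of $\Qf$ lies in some $\Xpixel$ and in some $\Xpixel'$, hence in $\Xpixel\cap\Xpixel'$. This gives the greatest lower bound.

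\emph{The join.} The pushout in sets of $\Pf\twoheadleftarrow\XX\twoheadrightarrow\Pf'$ is the quotient of $\Pf\amalg\Pf'$ by the equivalence relation generated by $\Xpixel\approx\Xpixel'$ whenever $\Xpixel\cap\Xpixel'\neq\emptyset$. The composite $\XX\to\Pf\sqcup\Pf'$ is surjective because $\XX\to\Pf$ is. So $\Pf\sqcup\Pf'$ is a partition of $\XX$, and its equivalence relation is the transitive closure of $\equiv_\Pf\cup\equiv_{\Pf'}$. Both $\Pf$ and $\Pf'$ refine it, via the legs of the pushout square. If $\Qf$ is refined by both, the surjection $\XX\twoheadrightarrow\Qf$ factors through $\Pf$ and through $\Pf'$ compatibly with the maps from $\XX$. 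The universal property of the pushout then gives a map $\Pf\sqcup\Pf'\to\Qf$ under $\XX$, so $\Pf\sqcup\Pf'$ refines $\Qf$. This gives the least upper bound.

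Since every pair of partitions has a meet and a join, the partitions of $\XX$ form a lattice under $\sqcap$ and $\sqcup$. The lattice axioms (commutativity, associativity, absorption) follow formally from $\sqcap$ and $\sqcup$ being the infimum and supremum in a poset. The only point needing care is the join. One must check that the pushout really is a quotient of $\XX$ (surjectivity of the composite) and that its universal property in $\mathbf{Set}$ coincides with the "least coarser partition" property. Both follow because every map in the diagram is a surjection under $\XX$.
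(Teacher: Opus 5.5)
Your argument is correct and is exactly the routine set-theoretic verification the paper leaves implicit; the paper gives no proof, only the remark that the claim follows from straightforward set theory. You show that refinement is a partial order, that $\Pf\sqcap\Pf'$ is the greatest lower bound, and that the pushout $\Pf\sqcup\Pf'$, read as the partition of $\XX$ into the fibres of $\XX\to\Pf\sqcup\Pf'$, is the least upper bound by its universal property. Two minor points: discarding empty intersections from $\Pf\sqcap\Pf'$ is a needed correction to the paper's literal definition, and surjectivity of $\XX\to\Pf\sqcup\Pf'$ also uses that every block of $\Pf'$ meets some block of $\Pf$, so that the leg $\Pf\to\Pf\sqcup\Pf'$ is itself surjective.
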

		
		\begin{rem}\label{rem:meets and joins are generally not screens}
			In general, it is \emph{not} true that $\Pf,\Pf'\in\Psc$ implies either $\Pf\sqcap \Pf'\in\Psc$ or $\Pf\sqcup\Pf'\in\Psc$.
			This must be done on a case-by-case basis.
			
			However, if it is true that $\Pf,\Pf'\in\Psc$ implies $\Pf\sqcap\Pf',\Pf\sqcup\Pf'\in\Psc$, then $\Psc$ is a lattice and there is a unique maximal element in $\Psc$ (using Proposition~\ref{prop:maximal screen}).
		\end{rem}
		
		\begin{deff}(product of screens)\label{deff:product of screens}
			Let $\{\Spaces{i}\}_{i=1}^n$ be a finite collection of triples in $\Xbf$.
			For each $1\leq i \leq n$, let $\Pf_i$ be a screen of $\Spaces{i}$.
			The \emph{product of screens}, $\Pf:=\prod_{i=1}^n \Pf_i$ is defined as
			\[
				\left\{\left.\prod_{i=1}^n \Xpixel_i \right| \Xpixel_i \in \Pf_i\right\}.
			\]
		\end{deff}
		
		\begin{prop}\label{prop:product of screens is a screen}
			Let $\{\Spaces{i}\}_{i=1}^n$ be a finite collection of triples in $\Xbf$ and, for each $1\leq i \leq n$, let $\Pf_i$ be a screen of $\Spaces{i}$.
			\begin{enumerate}
				\item\label{prop:product of screens is a screen:product is screen} We have $\Pf=\prod_{i=1}^n \Pf_i$ is a screen of $\Space=\prod_{i=1}^n \Spaces{i}$.
				\item\label{prop:product of screens is a screen:screen is product} If $\Qf$ is a screen of $\Space$ then $\Qf=\prod_{i=1}^n \Qf_i$ where each $\Qf_i=\{\pi_i\Xpixel \mid \Xpixel\in\Qf\}$ is a screen of $\Spaces{i}$, for $1\leq i \leq n$.
			\end{enumerate}
		\end{prop}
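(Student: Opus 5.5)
For part (\ref{prop:product of screens is a screen:product is screen}), I will check the five conditions of Definition~\ref{deff:screen} for $\Pf=\prod_{i=1}^n\Pf_i$ coordinatewise. The basic observation is the following. A path $\gamma\in\Gamma$ has image in $\prod_i\Xpixel_i$ if and only if each $\pi_i\gamma$ has image in $\Xpixel_i$. Moreover, $\gamma\sim\gamma'$ if and only if $\pi_i\gamma\sim_i\pi_i\gamma'$ for all $i$. The two easy conditions go as follows.
\begin{itemize}
\item \emph{Thinness.} Condition (\ref{deff:screen:thin}) follows immediately from thinness in each factor.
\item \emph{Connectedness.} For (\ref{deff:screen:connected}), I use Lemma~\ref{lem:x two steps from y in a pixel} in each factor to get two-step walks $\alpha_i\cdot\beta_i^{-1}$ inside $\Xpixel_i$. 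I then assemble them into a walk in the product by moving one coordinate at a time, holding the other coordinates fixed by constant paths. Such paths lie in $\Gamma$ because constant paths are in each $\Gamma_j$.
\end{itemize}

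The Ore condition (\ref{deff:screen:ore}) is also coordinatewise. Given $\rho,\gamma$ with $\im(\rho)\subset\prod\Xpixel_i$, I apply the Ore condition in each factor to $\pi_i\rho,\pi_i\gamma$. This gives $\rho'_i,\gamma'_i$ with $\pi_i\gamma\cdot\rho'_i\sim_i\pi_i\rho\cdot\gamma'_i$ and $\im(\rho'_i)\subset\Ypixel_i$. Setting $\rho'=(\rho'_i)_i$ and $\gamma'=(\gamma'_i)_i$ works, since $\pi_i(\gamma\cdot\rho')=\pi_i\gamma\cdot\rho'_i$.

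Discreteness (\ref{deff:screen:discrete}) is handled by taking the common refinement of the $n$ finite interval partitions of $[0,1]$. On each piece every coordinate lies in a fixed pixel, so $\gamma$ lies in a fixed product pixel. I then merge adjacent pieces carrying the same product pixel, which enforces the rule that $\Xpixel_i=\Xpixel_j$ only if $|j-i|>1$. Finiteness of $n$ is used exactly here. For (\ref{deff:screen:path requirement}), if $\gamma,\gamma'$ share a product partition and list, then projecting gives $\pi_i\gamma,\pi_i\gamma'$ sharing a coarser-or-equal partition in factor $i$. After merging equal adjacent pixels these partitions agree with the factor's discreteness data, so each $\pi_i\gamma\sim_i\pi_i\gamma'$, and hence $\gamma\sim\gamma'$.

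Part (\ref{prop:product of screens is a screen:screen is product}) is the main obstacle. I must show that every pixel of $\Qf$ is a box and that the projections $\Qf_i$ partition $\XX_i$. The first step is to show that for $\Xpixel\in\Qf$ and $x,y\in\Xpixel$, every point agreeing with $x$ in all coordinates but one, where it agrees with $y$, also lies in $\Xpixel$. The plan is to take a two-step walk $x\to z\leftarrow y$ in $\Xpixel$ (Lemma~\ref{lem:x two steps from y in a pixel}). I then build product paths that run the $i$-th coordinate of one path while freezing the others, possibly using subpaths and reparametrization. Thinness (\ref{deff:screen:thin}) together with path equivalences that compare such a path with one lying in $\Xpixel$, as in the proof of Lemma~\ref{lem:pixels are directed}, should force these mixed paths to lie in $\Xpixel$. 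Once pixels are known to be products $\prod\pi_i\Xpixel$, I will show that two pixels whose $i$-th projections meet have equal $i$-th projections. I expect to argue this with constant-in-other-coordinates paths and the Ore condition. With that established, each $\Qf_i$ is a partition, and the screen axioms for $\Qf_i$ follow by applying the axioms of $\Qf$ to paths of the form $(\text{const},\dots,\gamma_i,\dots,\text{const})$. I anticipate the box-shape step as the delicate point, and may need an extra hypothesis (e.g.\ existence of the relevant mixed paths in $\Gamma$) if the argument stalls.
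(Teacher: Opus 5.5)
Your part (1) follows the paper's argument: the same coordinatewise check of the five axioms. The paper builds the connecting walk by moving all coordinates at once rather than one at a time, and it skips the merging of equal adjacent pieces in (4) and (5), which you correctly include.

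\textbf{Part (2) has a genuine gap that cannot be filled as stated, because (2) is false without a connectivity hypothesis on the factors.}

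A counterexample: take $\XX_1=\{0,1\}$ with $\Gamma_1$ the weakly increasing maps $[0,1]\to\{0,1\}$ (for $0<1$) and $\gamma\sim_1\gamma'$ iff they have the same endpoints. Take $\XX_2=\{u,v\}$ with $\Gamma_2$ consisting only of constant paths. In the product, consider $\Qf=\{\{0,1\}\times\{u\},\ \{(0,v)\},\ \{(1,v)\}\}$. This is a screen:
\begin{itemize}
\item every path stays in $\XX_1\times\{u\}$ or in $\XX_1\times\{v\}$;
\item a path with both endpoints in a pixel stays in that pixel, and any two paths with the same endpoints are equivalent, so thinness and (5) hold;
\item discreteness is clear;
\item Ore squares are completed through $(1,u)$ (dually $(0,u)$), or trivially when $\rho$ is constant.
\end{itemize}
Yet $\Qf_1=\{\{0,1\},\{0\},\{1\}\}$ is not even a partition. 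So your planned step ``two pixels whose $i$-th projections meet have equal $i$-th projections'' fails in general. The paper only says (2) follows by reversing the arguments, so it does not close this gap either.

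The step you flagged as delicate actually works in full generality. Write $c_p$ for the constant path at $p$, and view the product as two factors: coordinate $1$ and the rest. Given $x,y\in\Xpixel$, take the two-step walk $\alpha\colon x\to z$, $\beta\colon y\to z$ inside $\Xpixel$ from Lemma~\ref{lem:x two steps from y in a pixel}.
\begin{itemize}
\item Thinness applied to $(c_{x_1},\alpha_2)\cdot(\alpha_1,c_{z_2})\sim\alpha$ gives $(x_1,z_2)\in\Xpixel$.
\item The dual Ore condition applied to $\beta$ and $(\alpha_1,c_{z_2})$ gives $\rho\colon v\to(x_1,z_2)$ in $\Xpixel$ and $\gamma\colon v\to y$ with $\gamma\cdot\beta\sim\rho\cdot(\alpha_1,c_{z_2})$.
\item Comparing second coordinates, $\rho\sim(\rho_1,c_{v_2})\cdot(c_{x_1},\gamma_2\cdot\beta_2)$. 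This path passes through $(x_1,y_2)$, so thinness puts $(x_1,y_2)$ in $\Xpixel$.
\end{itemize}

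So the extra hypothesis you need is not the existence of mixed paths. Those always lie in the product $\Gamma$, since each $\Gamma_j$ contains the constant paths. What you need is zigzag ($\Gamma_j$-)connectedness of the other factors. Given a path $\delta\colon r\to s$ in the remaining coordinates, apply the Ore condition to $(\rho_1,c_r)$ and $(c_{\rho_1(0)},\delta)$ and then use thinness: an in-pixel path $(\rho_1,c_r)$ is carried to an in-pixel path $(\rho_1,c_s)$. The dual Ore condition handles $\delta\colon s\to r$. Connecting $r$ to $r'$ by a zigzag then shows $\pi_1\Xpixel=\pi_1\Xpixel'$ whenever the two projections meet. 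After that, your lifting argument for the axioms of each $\Qf_i$ goes through. This hypothesis holds for $\RR$, which covers the paper's later use of (2) for $\RR^n$.
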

		\begin{proof}
			We prove statement (\ref{prop:product of screens is a screen:product is screen}) in the proposition.
			For each of the items in Definition~\ref{deff:screen}, we can reverse the arguments presented to show statement (\ref{prop:product of screens is a screen:screen is product}) in the proposition.
			Thus, we suppress the proof of statement (\ref{prop:product of screens is a screen:screen is product}).			
			
			It follows immediately that $\Pf$ is a partition of $\XX$.
			We now show that $\Pf$ satisfies Definition~\ref{deff:screen}.
			
			\emph{Defintiion~\ref{deff:screen}(\ref{deff:screen:thin})}.
			Let $\gamma,\gamma'\in\Gamma$ such that $\gamma(0)=\gamma'(0)$, $\gamma(1)=\gamma'(1)$, and $\im(\gamma)\subset \Xpixel\in\Pf$, where $\Xpixel=\prod_{i=1}^n \Xpixel_i$ and each $\Xpixel_i\in\Pf_i$.
			Then we have
			\begin{align*}
				\gamma\sim\gamma' & \Leftrightarrow \pi_i\gamma \sim_i\pi_i\gamma',\ \ 1\leq i\leq n \\
				&\Leftrightarrow \im(\pi_i\gamma')\subset\Xpixel_i,\ \ 1\leq i \leq n \\
				&\Leftrightarrow \pi_i(\im\gamma')\subset\Xpixel_i,\ \ 1\leq i \leq n \\
				&\Leftrightarrow \im(\gamma')\subset\Xpixel.
			\end{align*}
			
			\emph{Defintiion~\ref{deff:screen}(\ref{deff:screen:connected})}.
			Let $x,y\in\Xpixel\in\Pf$, where $x=(x_1,\ldots,x_n)$ and $y=(y_1,\ldots, y_n)$.
			Then, for each $1\leq i \leq n$, we have $x_i,y_i\in\Xpixel_i$.
			By Lemma~\ref{lem:x two steps from y in a pixel}, we have a walk $(\rho_i)^{-1}\gamma_i$ where $\rho_i(1)=x_i$, $\gamma_i(1)=y_i$, $\rho_i(0)=\gamma_i(0)$, and $\im(\rho_i)\cup\im(\gamma_i)\subset\Xpixel_i$.
			Define $\rho,\gamma:[0,1]\rightrightarrows \XX$ to the unique function such that $\pi_i\rho=\rho_i$ and $\pi_i\gamma=\gamma_i$, for $1\leq i \leq n$.
			Then $\rho^{-1}\gamma$ is a finite walk from $x$ to $y$ in $\Xpixel$.
			
			\emph{Defintiion~\ref{deff:screen}(\ref{deff:screen:ore})}.
			We prove the statement where we start with $\rho$ and $\gamma$ as the proof of the dual statement is similar.
			Let $\rho,\gamma\in\Gamma$ such that $\rho(0)=\gamma(0)$ and $\im(\rho)\subset\Xpixel$.
			For each $1\leq i \leq n$, we have $\pi_i\rho(0)=\pi_i\gamma(0)$ and $\im(\pi_i\rho)\subset\Xpixel_i$.
			Then, we have $\rho'_i,\gamma'_i\in\Gamma_i$ such that $\pi_i\gamma\cdot\rho'_i\sim_i\pi_i\rho\cdot \gamma'_i$ and $\im(\rho'_i)\subset\Ypixel_i$, for some $\Ypixel_i\in\Pf_i$.
			Let $\rho',\gamma':[0,1]\rightrightarrows\XX$ be the unique functions such that $\pi_i\rho'=\rho'_i$ and $\pi_i\gamma'=\gamma'_i$.
			Then, $\gamma\cdot\rho'\sim\rho\cdot\gamma'$ and $\im(\rho')\subset\Ypixel=\prod_{i=1}^n\Ypixel_i$.
			
			\emph{Definition~\ref{deff:screen}(\ref{deff:screen:discrete})}.
			Let $\gamma\in\Gamma$.
			For each $1\leq i\leq n$, there is a partition $\mathfrak{I}_i=\{I_{ij}\}_{j=1}^{m_i}$ of $[0,1]$ where each $I_{ij}$ is an interval and $\im(\pi_i\gamma)|_{I_{ij}}$ is contained in $\Xpixel_{ij}\in \Pf_i$.
			Since we have finitely many partitions $\{I_{ij}\}_{j=1}^{m_i}$, we may use Proposition~\ref{prop:lattice of partitions} and obtain $\mathfrak{I}=\bigwedge_{i=1}^n \mathfrak{I}_i$.
			Since each $\mathfrak{I}_i$ is finite and we have a finite collection, $\mathfrak{I}$ is also a finite partition.
			Let $\bigcap_i^n I_{ij}$ be nonempty, where $1\leq j \leq m_i$ for each $1\leq i \leq n$.
			Then $\im(\pi_i\gamma)\subset \Xpixel_{ij}$ for each $I_{ij}$.
			So, $\im(\gamma)\subset\prod_{i=1}^n \Xpixel_{ij}$ on $\bigcap_i^n I_{ij}$.
			
			\emph{Definition~\ref{deff:screen}(\ref{deff:screen:path requirement})}.
			Let $\gamma,\gamma'\in\Gamma$ and assume there is a partition $\{I_j\}_{j=1}^m$ of $[0,1]$ such that each $I_j$ is an interval and if $t\in I_j$ then $\gamma(t),\gamma'(t)\in \Xpixel_j = \prod_{i=1}^n \Xpixel_{ij}$.
			Then, for each $1\leq i \leq n$, and each $1\leq j \leq m$, if $t\in I_j$ then $\pi_i\gamma(t),\pi_i\gamma'(t)\in \Xpixel_{ij}$.
			Thus, since each $\Pf_i$ is a screen, $\pi_i\gamma\sim_i\pi_i\gamma'$.
			By definition, we have $\gamma\sim\gamma'$.
		\end{proof}
		
		Notice that we needed $\{\Spaces{i}\}_{i=1}^n$ to be a finite collection near the end of the above proof.
		For those interested in screens without the dicrete requirement, one may drop the finite-ness requirement and instead work with $\{\Spaces{\alpha}\}_{\alpha}$ and $\{\Pf_{\alpha}\}$ for some indexing set $\{\alpha\}$.
		However, in the present paper, we use the finite-nes requirement.

		\begin{xmp}[running example]\label{xmp:screen of RR^n}
			Let $\RR^n$ be the triple from Example~\ref{xmp:RR^n}.
			Then any screen of $\RR^n$ is a product of screens of $\RR$ as in Example~\ref{xmp:screen of RR}.
			Specifically, if $\Pf$ is a screen of $\RR^n$ then each pixel $\Xpixel\in\Pf$ is a product of intervals $I_1\times I_2\times\cdots \times I_n$.
		\end{xmp}
		
		Although we defined $\Pf\sqcup\Pf'$ to be a pushout in the category of sets, there is an alternate construction that is useful for computations.
		
		\begin{deff}[join complex]\label{deff:join complex}
			Let $\Pf$ and $\Pf'$ be partitions of a set $\XX$.
			We now construct a CW-complex $\YY$.
			Let $\YY_0=\Pf\amalg\Pf'$ (where we take the disjoint union in the category of sets).
			For each $\Xpixel\in\Pf$ and $\Xpixel\in\Pf'$ such that $\Xpixel\cap\Xpixel'\neq\emptyset$, we add a 1-cell from $\Xpixel$ to $\Xpixel'$ in $\YY_1$.
			The \emph{join complex of $\Pf\sqcup\Pf'$} is the CW-complex $\YY$ whose 0-cells are $\YY_0$ and whose 1-cells are $\YY_1$.
		\end{deff}
		
		\begin{rem}\label{rem:connections in join complex}
			For each $\Xpixel\in\Pf$ there is at least one $\Xpixel'\in\Pf'$ such that $\Xpixel\cap\Xpixel'\neq\emptyset$ and vice verse.
		\end{rem}
		
		It is possible to form a join complex from any finite collection $\{\Pf_i\}_{i=1}^n$ of screens by taking $\YY_0$ to be $\coprod_{i=1}^n\Pf_i$ and adding a 1-cell for each pairwise intersection of pixels $\Xpixel_i$ and $\Xpixel_j$.
		
		Denote by $\homo_0(\XX)$ the 0th homotopy group of a topological space $\XX$, which is equivalently the set of connected components of $\XX$.
		
		The following proposition can be generalized to a finite join complex in the obvious way.
		
		\begin{prop}\label{prop:join complex is join}
			Let $\Pf$ and $\Pf'$ be partitions of a set $\XX$ and let $\YY$ be the join complex of $\Pf\sqcup\Pf'$.
			Then $\homo_0(\YY)$ is in bijection with the elements of $\Pf\sqcup\Pf'$.
		\end{prop}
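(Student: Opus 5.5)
The plan is to recall the standard description of the pushout of two surjections in the category of sets. The pushout of $\XX\twoheadrightarrow\Pf$ and $\XX\twoheadrightarrow\Pf'$ is the quotient of $\Pf\amalg\Pf'$ by the equivalence relation generated by $\Xpixel\approx\Xpixel'$ whenever $\Xpixel\in\Pf$ and $\Xpixel'\in\Pf'$ are images of a common $x\in\XX$. Since the pixel of $\Pf$ containing $x$ and the pixel of $\Pf'$ containing $x$ are exactly a pair with $\Xpixel\cap\Xpixel'\neq\emptyset$, the generating relation is precisely ``there is a $1$-cell of $\YY$ joining $\Xpixel$ and $\Xpixel'$''. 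So I will prove the proposition by showing that the generated equivalence relation on $\YY_0$ coincides with ``lying in the same path component of $\YY$''.

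The key steps, in order, are as follows.
\begin{enumerate}
\item Verify that the quotient $(\Pf\amalg\Pf')/{\approx}$, with the evident maps from $\Pf$ and $\Pf'$, satisfies the universal property of the pushout. The square commutes because $x\mapsto\Xpixel\ni x$ and $x\mapsto\Xpixel'\ni x$ become identified. Given $f:\Pf\to S$ and $f':\Pf'\to S$ agreeing on $\XX$, we get $f(\Xpixel)=f'(\Xpixel')$ whenever $\Xpixel\cap\Xpixel'\neq\emptyset$, so $f\amalg f'$ factors uniquely through the quotient. Since pushouts are unique up to unique isomorphism, this identifies $\Pf\sqcup\Pf'$ with this quotient.
\item Observe that in the $1$-dimensional CW-complex $\YY$, two $0$-cells lie in the same path component if and only if there is a finite sequence of $0$-cells, each consecutive pair joined by a $1$-cell. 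This is the standard fact that $\homo_0$ of a graph is computed by edge-paths: any path in a $1$-complex can be homotoped rel endpoints to an edge-path, and every point of an open $1$-cell lies in the component of its endpoints.
\item Conclude that the equivalence relation generated by the $1$-cells is exactly ``connected by an edge-path''. Every component of $\YY$ contains a $0$-cell, which is clear since each $1$-cell has endpoints. Therefore the map $\YY_0\to\homo_0(\YY)$ induces a bijection $(\Pf\amalg\Pf')/{\approx}\to\homo_0(\YY)$, and hence a bijection with $\Pf\sqcup\Pf'$.
\end{enumerate}

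The main obstacle is the mild topological input in the second step, namely that path components of a graph are edge-path components. I would handle it directly rather than by citation. The union of the closed cells in an edge-path class is a subcomplex that is open and closed in $\YY$, because the CW topology makes a subset closed exactly when it meets each closed cell in a closed set, and distinct classes share no cells. Each such union is also path connected, being a union of intervals glued along an edge-path-connected vertex set. Thus the classes are exactly the path components. Everything else is routine set theory. Remark~\ref{rem:connections in join complex} guarantees no isolated $0$-cells, though that is not even needed.
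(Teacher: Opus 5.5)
Your proof is correct and is essentially the paper's argument: both come down to the fact that the path components of the bipartite graph $\YY$ are exactly the classes of pixels linked by finite zigzags of nonempty intersections, and these classes are precisely what the pushout $\Pf\sqcup\Pf'$ identifies. The differences are organizational: the paper gets the map $\Pf\sqcup\Pf'\to\homo_0(\YY)$ from the universal property and extracts a zigzag from a continuous path (asserting that it traverses finitely many $1$-cells), whereas you model the pushout explicitly as a quotient of $\Pf\amalg\Pf'$ and justify the edge-path description of $\homo_0(\YY)$ by the more careful clopen-subcomplex argument.
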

		\begin{proof}
			Let $p:\XX\twoheadrightarrow\Pf$ and $p':\XX\twoheadrightarrow \Pf'$ be the surjections given by $x\mapsto \Xpixel\ni x$ and $x\mapsto \Xpixel'\ni x$, respectively.
			Let $f:\Pf\to\Pf\sqcup\Pf'$ and $f':\Pf'\to\Pf\sqcup\Pf'$ be the induced maps since $\Pf\sqcup\Pf'$ is a pushout.
			
			Notice that we have a surjection $h:\Pf\twoheadrightarrow \homo_0(\YY)$ where $\Xpixel$ is sent to the connected component of $\YY$ containing the point $\Xpixel\in\YY_0$ and similarly we have $h':\Pf'\twoheadrightarrow\homo_0(\YY)$ (follows immediately from Remark~\ref{rem:connections in join complex}).
			Let $x\in\XX$, $\Xpixel\in\Pf$, and $\Xpixel'\in\Pf'$ such that $x\in\Xpixel\cap\Xpixel'$.
			Then there is a 1-cell in $\YY_1$ from $\Xpixel$ to $\Xpixel'$.
			Thus, $hp=h'p'$ and so, since $\Pf\sqcup\Pf'$ is a colimit, there exists a unique map $g:\Pf\sqcup\Pf'\to\homo_0(\YY)$ such that $h=gf$ and $h'=gf'$:
			\[
				\xymatrix{
					\XX \ar@{->>}[r]^-p \ar@{->>}[d]_-{p'} & \Pf \ar@{->>}[d]^-f \ar@{->>}@/^2ex/[ddr]^-h \\
					\Pf' \ar@{->>}[r]_-{f'} \ar@{->>}@/_2ex/[drr]_-{h'} & \Pf\sqcup\Pf' \ar@{-->>}[dr]|-{\exists ! g} \\
					& & \homo_0(\YY).
				}
			\]
			The function $g$ must be surjective since $h$ and $h'$ are surjective.
			
			Let $\Xpixel\sqcup\Xpixel'$ and $\Ypixel\sqcup\Ypixel'$ be elements of $\Pf\sqcup\Pf'$ and suppose $g(\Xpixel\sqcup\Xpixel')=g(\Ypixel\sqcup\Ypixel')$.
			Then, since $h=gf$, there is a continuous path $\gamma:[0,1]\to\YY$ such that $\gamma(0)=\Xpixel$ and $\gamma(1)=\Ypixel$.
			Since $\YY$ is a CW-complex, $\gamma$ may only traverse finitely-many 1-cells.
			Let $t_0=0$ and let $s_1\in[0,1]$ such that $\gamma|_{[t_0,s_1]}$ traverses a 1-cell from $\Xpixel_0=\Xpixel$ to some $\Xpixel'_1$.
			Let $t_1\in[0,1]$ such that $\gamma|_{[s_1,t_1]}$ traverses a 1-cell from $\Xpixel'_1$ to $\Xpixel_1$.
			Proceed inductively until we arrive at $\gamma|_{[s_n,t_1=1]}$ traverses a 1-cell from $\Xpixel'_n$ to $\Xpixel_n=\Ypixel$.
			
			Now we have $f(\Xpixel_{i-1})=f'(\Xpixel_i)=f(\Xpixel_i)$ for each $1\leq i \leq n$.
			In particular, $f(\Xpixel)=f'(\Ypixel)$ and so $\Xpixel\sqcup\Xpixel'=\Ypixel\sqcup\Ypixel'$.
			Therefore $g$ is injective and so bijective.
		\end{proof}
		
		Notice that if $\Xpixel,\Xpixel'$ are $0$-cells that are in the same connected component of $\YY$ then $\Xpixel,\Xpixel'$ are subsets of the same pixel $\Xpixel''\in\Pf\sqcup\Pf'$.
		
	\section{Path Categories}\label{sec:path category}
	
		In this section we relate a triple $\Space$ in $\Xbf$ to a path category $C$ and $\Bbbk$-linear version $\Cc$, for a commutative ring $\Bbbk$ (Definition~\ref{deff:path category}).
		For the $\Bbbk$-linear version, we also allow a ideal $\Ic$ generated by paths in $\Space$ (Definition~\ref{deff:path based ideal}) and consider $\Ac=\Ic/\Cc$.
		We use the screens from Section~\ref{sec:paths and screens:screens} to construct special localizations of $C$ and of $\Ac$ called \emph{pixelations}, the titular construction of the present paper.
		After showing that pixelations are related to quotients of categories from quivers (Theorem~\ref{thm:barPlocal yields cat equivalent to cat from quiver}), we prove a few more useful properties about them.
		\bigskip
	
		Fix a commutative ring $\Bbbk$ for the rest of Section~\ref{sec:path category}.
	
	\subsection{Calculus of fractions and pixelation}\label{sec:path category:calculus of fractions}
		In this section we will define path categories (Definition~\ref{deff:path category}) and show how to obtain a calculus of fractions from a screen (Propositions~\ref{prop:Sigma yields a calculus of fractions}~and~\ref{prop:Sigma yields a calculus of fractions}).
		The localization with respect to this special calculus of fractions is the titular \emph{pixelation}.
		
		\begin{deff}[path category]\label{deff:path category}
			The \emph{path category} $C$ of $\Space$ is the category whose objects are $\XX$ and whose morphisms are given by
			\[
				\Hom_{C}(x,y)=\{[\gamma]\in\Gamma \mid \gamma(0)=x, \gamma(1)=y\}.
			\]
			
			The \emph{$\Bbbk$-linear path category} $\Cc$ of $\Space$ is the category whose objects are $\XX\coprod\{0\}$ and whose morphisms are given by
			\[
				\Hom_{\Cc}(x,y)=\begin{cases}
					\Bbbk\langle\{[\gamma]\in\Gamma\mid \gamma(0)=x,\gamma(1)=y\}\rangle & x,y\in\XX \\
					0 & x=0 \text{ or }y=0.
				\end{cases}
			\]
			That is, $\Hom_{\Cc}(x,y)$ is the $\Bbbk$-linearization of $C$ with a $0$ object.
		\end{deff}
		
		\begin{xmp}[running example]\label{xmp:RR as path categories}
			Let $\RR$ be as in Example~\ref{xmp:RR}.
			We consider $C=\RR$ as a path category where the objects are the real numbers and
			\[
				\Hom_C(x,y)=\begin{cases} \{*\} & x\leq y \\ \emptyset & \text{otherwise}. \end{cases}
			\]
			This is also an example of a continuous quiver of type $A$ as in \cite{IRT23}.
			
			For the $\Bbbk$-linear version $\Cc$, we have the same objects.
			The $\Hom$-modules are given by:
			\[
				\Hom_{\Cc}(x,y)=\begin{cases} \Bbbk & x\leq y \\ 0 & \text{otherwise}. \end{cases}
			\]
		\end{xmp}
		
		When $\Bbbk$ is a field, $\Cc$ is a spectroid if and only if $|\Hom_C(x,y)|<\infty$ for all ordered pairs $(x,y)\in\XX^2$.
		This leads to the following conjecture, which falls outside the scope of the present paper.
		
		\begin{conj}
			For every spectroid $\Cc$, there exists a choice of $\Space$ such that $\Cc$ is the $\Bbbk$-linear path category of $\Space$.
		\end{conj}
		
		\begin{prop}\label{prop:path category is a category}
			Given $\Space$, the $C,\Cc$ in Definition~\ref{deff:path category} are indeed categories.
		\end{prop}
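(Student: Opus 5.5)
To show that $C$ is a category, I will define composition by concatenation of representatives, $[\gamma']\circ[\gamma]:=[\gamma\cdot\gamma']$ whenever $\gamma(1)=\gamma'(0)$. Then I will check the four things needed: well-definedness, identities, associativity, and that each $\Hom_C(x,y)$ is a set. The last point is immediate, since $\Hom_C(x,y)$ is a set of equivalence classes of functions $[0,1]\to\XX$. Closure of composition is Definition~\ref{deff:Gamma}(\ref{deff:Gamma:composition}): $\gamma\cdot\gamma'\in\Gamma$, and it runs from $\gamma(0)$ to $\gamma'(1)$.

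For well-definedness, suppose $\gamma\sim\delta$ and $\gamma'\sim\delta'$. Then I apply the remark after Definition~\ref{deff:sim}, taking the outer paths to be constant. Concretely, Definition~\ref{deff:sim}(\ref{deff:sim:compose}) with $\rho$ and $\rho'$ constant gives $c_x\cdot\gamma\cdot\gamma'\sim c_x\cdot\delta\cdot\gamma'$. To strip off the constant paths I use reparameterization, Definition~\ref{deff:sim}(\ref{deff:sim:reparameterisation}): for suitable weakly order-preserving surjections $\phi$ of $[0,1]$ fixing $0$ and $1$, the path $c_x\cdot\gamma\cdot\gamma'$ equals $(\gamma\cdot\gamma')\phi$ (collapse the initial interval to $0$). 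Hence $c_x\cdot\gamma\cdot\gamma'\sim\gamma\cdot\gamma'$. I also note that concatenation $\gamma\cdot\gamma'\cdot\gamma''$ is unambiguous up to $\sim$, because $(\gamma\cdot\gamma')\cdot\gamma''$ and $\gamma\cdot(\gamma'\cdot\gamma'')$ differ by a piecewise-linear order-preserving reparameterization fixing endpoints. The same fact gives associativity of composition directly.

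Identities will be the classes $[c_x]$ of constant paths, which lie in $\Gamma$ by Definition~\ref{deff:Gamma}(\ref{deff:Gamma:constant}). For the identity laws, $c_x\cdot\gamma=\gamma\phi$ where $\phi$ is constant $0$ on $[0,\tfrac12]$ and linear from $0$ to $1$ on $[\tfrac12,1]$. This $\phi$ is weakly order-preserving, sends intervals to intervals, and fixes $0$ and $1$, so $c_x\cdot\gamma\sim\gamma$; the argument for $\gamma\cdot c_y$ is symmetric.

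The main subtlety is that every equivalence must come from the reparameterization axiom with $\phi$ sending intervals to intervals. The axiom requires this of $\phi$, not invertibility, so collapsing maps are allowed. I must also check that the reparameterizing $\phi$ has the correct direction, namely $\text{new}=\text{old}\circ\phi$; in each case above such a $\phi$ exists.

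For $\Cc$, I take the free $\Bbbk$-module on $\Hom_C(x,y)$ and extend composition $\Bbbk$-bilinearly. Composites involving the object $0$ are zero. Associativity and unitality then follow from those of $C$ by bilinearity, and $0$ is a zero object.
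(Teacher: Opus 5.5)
Your proposal is correct and follows the same route as the paper. Constant paths give the identities and concatenation gives composition. Well-definedness comes from Definition~\ref{deff:sim}(\ref{deff:sim:compose}), associativity and the unit laws come from reparameterization (Definition~\ref{deff:sim}(\ref{deff:sim:reparameterisation})), and $\Cc$ is handled by linearization; you simply write out details the paper leaves implicit.

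One small slip: the relation $c_x\cdot\gamma\cdot\gamma'\sim c_x\cdot\delta\cdot\gamma'$ comes from the compose axiom with $\rho=c_x$ and $\rho'=\gamma'$, not with both outer paths constant.
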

		\begin{proof}
			We prove the result for $C$ since $\Cc$ is the $\Bbbk$-linearization of $C$ with a $0$ object.
			By Definition~\ref{deff:Gamma}(\ref{deff:Gamma:constant}) we know that for each $x\in\XX$ the equivalence class of the constant path at $x$ is the identity on $x$ in $\Cc$.
			Using Definition~\ref{deff:Gamma}(\ref{deff:Gamma:composition}) and Definition~\ref{deff:sim}(\ref{deff:sim:compose}) we have that if $\gamma\cdot\gamma'=\gamma''$ then $[\gamma']\circ[\gamma]=[\gamma\cdot\gamma']$.
			By Definitions~\ref{deff:Gamma}(\ref{deff:Gamma:subpath})~and~\ref{deff:sim}(\ref{deff:sim:reparameterisation},\ref{deff:sim:compose}) we know that composition is associative.
		\end{proof}
		
		We will, of course, be interested in the category of path categories and functors between them.
		
		\begin{deff}[$\pathcat$]\label{deff:category of path categories}
			We define $\pathcat$ to be the subcategory of the category of small categories whose objects are path categories as in Definition~\ref{deff:path category} and whose morphisms are the functors between them.		
		\end{deff}
		One could also see $\pathcat$ as a 2-category but we will not need this in the present paper.
		
		\begin{prop}\label{prop:functor from X to pathcat}
			There is a functor $\Xbf\to \pathcat$ that takes a triple $\Space$ to its path category $C$ and we have an injection from $\Hom_{\Xbf}(\Spaces{1},\Spaces{2})$ into $\Hom_{\pathcat}(C_1,C_2)$.
		\end{prop}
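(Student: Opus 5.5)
The plan is to define the functor explicitly and then check the axioms directly. Call the functor $P:\Xbf\to\pathcat$. On objects, $P$ sends a triple $\Space$ to its path category $C$ from Definition~\ref{deff:path category}; by Proposition~\ref{prop:path category is a category} this is a genuine object of $\pathcat$. On morphisms, a morphism $f:\Spaces{1}\to\Spaces{2}$ in $\Xbf$ goes to the functor $P(f):C_1\to C_2$ with $x\mapsto f(x)$ on objects and $[\gamma]\mapsto[f\circ\gamma]$ on morphisms.

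The first step is to check that $P(f)$ is well defined.
\begin{itemize}
\item By Definition~\ref{deff:X category}(1), $f\circ\gamma\in\Gamma_2$.
\item Its endpoints are $f(\gamma(0))$ and $f(\gamma(1))$, so $[f\circ\gamma]\in\Hom_{C_2}(f(x),f(y))$.
\item By Definition~\ref{deff:X category}(2), if $\gamma\sim_1\gamma'$ then $f\circ\gamma\sim_2 f\circ\gamma'$, so the assignment does not depend on the representative.
\end{itemize}

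The second step is functoriality of $P(f)$.
\begin{itemize}
\item The constant path at $x$ is sent to the constant path at $f(x)$, so identities are preserved.
\item Post-composition with $f$ commutes with concatenation pointwise: $f\circ(\gamma\cdot\gamma')=(f\circ\gamma)\cdot(f\circ\gamma')$, directly from the piecewise formula. Hence $P(f)([\gamma']\circ[\gamma])=P(f)([\gamma'])\circ P(f)([\gamma])$.
\end{itemize}
Functoriality of $P$ itself is then immediate: $P(g\circ f)=P(g)\circ P(f)$ because $(g\circ f)\circ\gamma=g\circ(f\circ\gamma)$, and $P(\mathrm{id})=\mathrm{id}$.

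The last step, and the only one needing any care, is injectivity on Hom-sets. Suppose $P(f)=P(f')$ as functors. They then agree on objects, so $f(x)=f'(x)$ for all $x\in\XX_1$. Since $f$ and $f'$ are set functions $\XX_1\to\XX_2$, this gives $f=f'$ as morphisms in $\Xbf$.

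The subtle point is that a morphism in $\Xbf$ is determined by its underlying function alone. The conditions in Definition~\ref{deff:X category} are properties of $f$, not extra data, so equality on objects is enough and the map $\Hom_{\Xbf}(\Spaces{1},\Spaces{2})\to\Hom_{\pathcat}(C_1,C_2)$ is injective. I do not expect a substantive obstacle here: the only thing to verify carefully is the compatibility of $f\circ-$ with concatenation, and that follows from the definition of $\cdot$.
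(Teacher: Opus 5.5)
Your proposal is correct and follows essentially the same route as the paper: send $[\gamma]\mapsto[f\circ\gamma]$, use condition (2) of Definition~\ref{deff:X category} for independence of the representative, and obtain injectivity because a morphism in $\Xbf$ is determined by its underlying function, which is recovered from the object map of the induced functor. You are slightly more thorough than the paper, since you also check preservation of identities and functoriality of $P$ itself, which the paper leaves implicit.
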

		\begin{proof}
			Definition~\ref{deff:path category} and Proposition~\ref{prop:path category is a category} show us that the functor is well-defined on objects.
			
			We construct a functor $F:C_1\to C_2$ from a morphism $f:\Spaces{1}\to\Spaces{2}$.
			Suppose $f:\Spaces{1}\to\Spaces{2}$ is a morphism.
			For an object $x$ in $C_1$, define $F(x)=f(x)$.
			For a morphism $[\gamma]:x\to y$ in $C_1$, take a representative $\gamma$ and define $F([\gamma])=[f\circ \gamma]$.
			Since $\gamma\sim_1\gamma'$ implies $f\circ\gamma\sim_2 f\circ\gamma'$, we see that our choice of representative does not matter.
			Finally, since $[\gamma']\circ[\gamma]=[\gamma\cdot\gamma']$, we see that $F$ respects composition and is therefore a functor.
			
			Suppose $f, f':\Spaces{1}\to\Spaces{2}$ are morphisms in $\Xbf$ such that $f\neq f'$.
			Then there is either some $x\in\XX_1$ such that $f_1(x)\neq f_2(x)$ or there is some $\gamma$ such that $f\circ \gamma\neq f'\circ \gamma$.
			In the second case, there is some $x\in\XX_1$ such that $f\circ\gamma(x)\neq f'\circ\gamma(x)$.
			Then $F(x)\neq F'(x)$ from the above construction and so we have different functors, completing the proof.
		\end{proof}
		
		The injection on morphisms is sharp.
		The functor in Proposition~\ref{prop:Init} does not come from a morphism of triples in $\Xbf$.
		It is also possible to generate a path category from two different triples (see Remark~\ref{rem:different screen structures}).
		
		\begin{rem}\label{rem:preserved product}
			Notice the functor in Proposition~\ref{prop:functor from X to pathcat} takes products to products.
		\end{rem}
				
		\begin{deff}[path based ideal]\label{deff:path based ideal}
			We say an ideal $\Ic$ in $\Cc$ is \emph{path based} if $\Ic$ is generated by elements of the form $\bigoplus_{i=1}^m \lambda_i[\gamma_i]$, where each $\gamma_i\in\Gamma$ and each $\lambda_i\in\Bbbk$ is not a zero divisor.
			We explicitly allow the $0$ ideal also.
		\end{deff}
		
		\begin{note}[$\Ac$]\label{note:Ac}
			For a path based ideal $\Ic$, we denote by $\Ac$ the quotient category $\Cc/\Ic$.	
		\end{note}
		
		\begin{xmp}[running example]\label{xmp:RR with length relation}
			Let $\RR$ be as in Example~\ref{xmp:RR}.
			The ideal $\Ic$ generated by $f:x\to y$ where $f\neq 0$ and $y-x \geq 2$ is a path based ideal.
		\end{xmp}
	
		Fix a triple $\Space$ in $\Xbf$.
		
		\begin{deff}[pre-dead]\label{deff:pre-dead}
			Given a screen $\Pf$ of $\Space$ and path based ideal $\Ic$ in $\Cc$, a pixel $\Xpixel\in\Pf$ is called \emph{pre-dead} if there exists a path $\rho\in\Gamma$ with $\im(\rho)\subset\Xpixel$ and $[\rho]=0$ in $\Ac$.
		\end{deff}

		\begin{deff}[$\mathfrak{P}$-equivalent morphisms]\label{deff:P-equivalent morphisms}
			Let $\Pf$ be a a screen of $\Space$.
			We say two morphisms $[\gamma]$ and $[\gamma']$ in $\Hom_{C}(x,y)$ are \emph{$\Pf$-equivalent} if $\gamma$ and $\gamma'$ are $\Pf$-equivalent.
		
			Let $\Ic$ be a path based ideal in $\Cc$ and $\Pf$ a screen of $\Space$.
			We say two nonzero morphisms $\lambda[\gamma]$ and $\lambda'[\gamma']$ in $\Hom_{\Ac}(x,y)$ are \emph{$\Pf$-equivalent} if $\gamma$ and $\gamma'$ are $\Pf$-equivalent and $\lambda=\lambda'$.
			%Again notice that if $[\gamma]=[\gamma']$ then $[\gamma]$ and $[\gamma']$ are $\Pf$-equivalent.
			We say direct sums $\bigoplus_{i=1}^m\lambda_i[\gamma_i]$ and $\bigoplus_{j=1}^n\lambda'_j[\gamma'_j]$ are $\Pf$-equivalent if $m=n$ and, up to permutation, $[\gamma_i]$ is $\Pf$-equivalent to $[\gamma'_j]$ when $i=j$.
		\end{deff}
		
		\begin{deff}[$\Pf$-complete ideal]\label{deff:P-complete ideal}
			Given a screen $\Pf$ of $\Space$, we say an ideal $\Ic$ in $\Cc$ is \emph{$\Pf$-complete} if, for every $\Pf$-equivalent pair $f,g$, we have $f\in\Ic$ if and only if $g\in\Ic$.
			
			The \emph{$\Pf$-completion} of an ideal $\Ic$ is the ideal
			\[ \IbarP:=\left\langle \left\{\left. f\, \right| \, \exists  g\in\Ic\text{ that is }\Pf\text{-equivalent to }f \right\} \right\rangle.\]
  			In particular, $\Ic\subseteq\IbarP$.
  		\end{deff}
  		
  		Notice that if $\Ic=0$ then $\IbarP=0$.
  		
  		\begin{note}\label{note:AbarP}
			Set $\AbarP := \Cc / \IbarP$.
		\end{note}
		
		Notice that $\AbarP$ is also a quotient category of $\Ac$ by looking at the image of $\IbarP$ in $\Ac$ and then quotienting by it.
		
		Given a screen $\Pf$ of $\Space$ we wish to construct a class of morphisms $\Sigma_\Pf$ in $C$ that will induce a calculus of left and right fractions.
		Overloading notation, we also consider a class of morphisms $\Sigma_\Pf$ in $\Ac$, given path based ideal $\Ic$ in $\Cc$.
		
		First we recall a calculus of fractions.

		\begin{deff}[calculus of fractions]\label{deff:calculus of fractions}
			A class of morphisms $\Sigma$ in a category $\Dcal$ admits a \emph{calculus of fractions} if it satisfies the following 5 conditions.
			\begin{enumerate}
				\item\label{deff:calculus of fractions:wide} The class $\Sigma$ contains all identity morphisms and is closed under composition.
				\item\label{deff:calculus of fractions:ore} Given morphisms $\sigma:x\to x'$ and $f:x\to y$, with $\sigma\in\Sigma$, there exists an object $y'$ in $\Dcal$ with morphisms $\sigma':y\to y'$ and $f':x'\to y'$, with $\sigma'\in\Sigma$, such that $f'\sigma=\sigma' f$.
				\item\label{deff:calculus of fractions:cancellability} Given a morphism $\sigma:w\to x$ in $\Sigma$ and two morphisms $f,g:x\rightrightarrows y$ such that $f\sigma=g\sigma$, there exists $\sigma':y\to z$ in $\Sigma$ such that $\sigma'f=\sigma'g$.
				\item\label{deff:calculus of fractions:ore dual} Given morphisms $\sigma:y'\to y$ and $f:x\to y$, with $\sigma\in\Sigma$, there exists an object $x'$ in $\Dcal$ with morphisms $\sigma':x'\to x$ and $f':x'\to y'$, with $\sigma'\in\Sigma$, such that $\sigma f' = f\sigma'$.
				\item \label{deff:calculus of fractions:cancellability dual} Given a morphism $\sigma:y\to z$ in $\Sigma$ and two morphisms $f,g:x\rightrightarrows y$ such that $\sigma f=\sigma g$, there exists $\sigma':w\to x$ in $\Sigma$ such that $f\sigma' = g\sigma'$.
			\end{enumerate}
			
			Typically, a \emph{calculus of left fractions} requires (\ref{deff:calculus of fractions:wide}), (\ref{deff:calculus of fractions:ore}), and (\ref{deff:calculus of fractions:cancellability}) while a \emph{calculus of right fractions} requires (\ref{deff:calculus of fractions:wide}), (\ref{deff:calculus of fractions:ore dual}), and (\ref{deff:calculus of fractions:cancellability dual}), although the terminology of left versus right is not yet standardized.
		\end{deff}
		
		\begin{deff}[$\Sigma_\Pf$]\label{deff:Sigma}
			Let $\Pf$ be a screen of $\Space$.
			
			We say a morphism $[\rho]$ in $C$ is in $\Sigma_{\Pf}$ if and only if $\im(\rho)\subset \Xpixel$ for some $\Xpixel\in\Pf$.	
		
			We say a morphism $f$ in $\AbarP$ is in $\Sigma_{\Pf}$ if and only if it satisfies one of the following.
			\begin{itemize}
				\item $f=[\rho]\neq 0$, for some $\rho\in\Gamma$ such that $\im(\rho)\subset \Xpixel$ for some $\Xpixel\in\Pf$.
				\item $f=0\in\Hom_{\Cc}(x,y)$ where $x,y\in \Xpixel\in\Pf$ for $\Xpixel$ pre-dead.
			\end{itemize}
		\end{deff}
		
		First we show that the $\Sigma_{\Pf}$ in $C$ admits a calculus of fractions.
		
		\begin{prop}\label{prop:Sigma yields a calculus of fractions nonlinear}
			The class $\Sigma_{\Pf}$ in $C$ admits a calculus of fractions.
		\end{prop}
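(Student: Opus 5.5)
We verify the five conditions of Definition~\ref{deff:calculus of fractions} directly, translating each into a statement about paths and then applying the axioms of a screen (Definition~\ref{deff:screen}). Throughout, a morphism $[\rho]$ lies in $\Sigma_\Pf$ exactly when some (equivalently, by Definition~\ref{deff:screen}(\ref{deff:screen:thin}), every) representative $\rho$ has image inside a single pixel. Composition in $C$ is $[\gamma']\circ[\gamma]=[\gamma\cdot\gamma']$.

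\emph{Condition (\ref{deff:calculus of fractions:wide}).} Identities are classes of constant paths, which lie in one pixel. If $\im(\rho)\subset\Xpixel$ and $\im(\rho')\subset\Xpixel'$ with $\rho(1)=\rho'(0)$, then $\Xpixel\cap\Xpixel'\neq\emptyset$. Since $\Pf$ is a partition, $\Xpixel=\Xpixel'$ and $\im(\rho\cdot\rho')\subset\Xpixel$.

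\emph{Conditions (\ref{deff:calculus of fractions:ore}) and (\ref{deff:calculus of fractions:ore dual}).} These are precisely the two halves of the Ore condition, Definition~\ref{deff:screen}(\ref{deff:screen:ore}). For (\ref{deff:calculus of fractions:ore}), take $\sigma=[\rho]$ and $f=[\gamma]$ with common source $x$. The screen gives $\rho',\gamma'$ with $\rho'$ in a single pixel and $\gamma\cdot\rho'\sim\rho\cdot\gamma'$. In categorical notation this reads $[\rho']\circ[\gamma]=[\gamma']\circ[\rho]$. The dual condition is the second half of the Ore condition in the same way.

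\emph{Conditions (\ref{deff:calculus of fractions:cancellability}) and (\ref{deff:calculus of fractions:cancellability dual}).} These are where actual work is needed. For (\ref{deff:calculus of fractions:cancellability}), we are given $\rho$ in a pixel $\Xpixel$ from $w$ to $x$, and paths $\gamma,\gamma'$ from $x$ to $y$ with $\rho\cdot\gamma\sim\rho\cdot\gamma'$. The claim is that in fact $\gamma\sim\gamma'$, so the identity at $y$ (a constant path, hence in $\Sigma_\Pf$) serves as $\sigma'$. This should follow from Definition~\ref{deff:sim}(\ref{deff:sim:compose}), taking the pre-path to be $\rho$ and the post-path to be the constant path at $y$. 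Its ``if and only if'' gives exactly $\gamma\sim\gamma'$, after using reparameterization invariance (Definition~\ref{deff:sim}(\ref{deff:sim:reparameterisation})) to identify $\gamma\cdot c_y$ with $\gamma$ up to $\sim$. The dual condition (\ref{deff:calculus of fractions:cancellability dual}) is symmetric: put the constant path at $x$ in front and $\rho$ behind, and take $\sigma'$ to be the identity at $x$.

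The main obstacle is in these two cancellation steps: making sure Definition~\ref{deff:sim}(\ref{deff:sim:compose}) legitimately applies with a constant path on one side. That requires $\rho\cdot\gamma\cdot c_y\in\Gamma$, which holds by closure under composition together with Definition~\ref{deff:Gamma}(\ref{deff:Gamma:constant}). It also requires that composing with constants is compatible with $\sim$ via reparameterization. If this bookkeeping fails, the fallback is a direct argument. Extend by a path $\rho'$ inside the pixel of $y$ and use Definition~\ref{deff:screen}(\ref{deff:screen:path requirement}), comparing the pixel decompositions of $\gamma$ and $\gamma'$ from Definition~\ref{deff:screen}(\ref{deff:screen:discrete}). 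This fallback is more delicate, so the composition-axiom route is preferred.
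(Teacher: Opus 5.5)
Your proof is correct and follows the paper's argument. Condition (1) comes from the partition property, the two Ore conditions are read off directly from Definition~\ref{deff:screen}(\ref{deff:screen:ore}), and both cancellability conditions follow from Definition~\ref{deff:sim}(\ref{deff:sim:compose}), which gives $\gamma\sim\gamma'$, so $\sigma'$ can be an identity; your constant-path bookkeeping, which the paper leaves implicit, goes through because $(\rho\cdot\gamma)\cdot c_y=(\rho\cdot\gamma)\phi$ with $\phi(t)=\min(2t,1)$, so Definition~\ref{deff:sim}(\ref{deff:sim:reparameterisation}) applies and the fallback via Definition~\ref{deff:screen}(\ref{deff:screen:path requirement}) is never needed.
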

		\begin{proof}
			By the definition of $\Sigma_{\Pf}$, we see that each identity morphism is in $\Sigma_{\Pf}$.
			Moreover, if $[\rho],[\rho']\in\Sigma_{\Pf}$ such that $\rho\cdot\rho'\in\Gamma$, we must have $\im(\rho)\cup\im(\rho')\subset\Xpixel\in\Pf$.
			Thus, we have Definition~\ref{deff:calculus of fractions}(\ref{deff:calculus of fractions:wide}).
			
			Since Definition~\ref{deff:calculus of fractions}(\ref{deff:calculus of fractions:ore},\ref{deff:calculus of fractions:cancellability}) are dual to Definition~\ref{deff:calculus of fractions}(\ref{deff:calculus of fractions:ore dual},\ref{deff:calculus of fractions:cancellability dual}), we only prove the first two.
			
			Suppose we have $[\rho]:x\to x'$ in $\Sigma_{\Pf}$ and $[\gamma]:x\to y$ a morphism in $C$.
			By Definition~\ref{deff:screen}(\ref{deff:screen:ore}), we know there exists $\rho',\gamma'\in\Gamma$ such that $\rho'\cdot\gamma\sim\gamma\cdot\rho'$ and $\im(\rho')\subset\Ypixel\in\Pf$.
			Then we have $[\gamma]\circ[\rho']=[\rho]\circ[\gamma']$ in $C$, with $[\rho']\in\Sigma_{\Pf}$.
			This satisfies Definition~\ref{deff:calculus of fractions}(\ref{deff:calculus of fractions:ore}).
			
			Now suppose we have $[\rho]:x\to y$ in $\Sigma_{\Pf}$ and $[\gamma],[\gamma']:y\rightrightarrows z$ in $C$ such that $[\gamma]\circ[\rho]=[\gamma']\circ[\rho]$.
			Then $\rho\cdot\gamma\sim\rho\cdot\gamma'$.
			By Definition~\ref{deff:sim}(\ref{deff:sim:compose}) we know $\gamma\sim\gamma'$ and so $[\gamma]=[\gamma']$.
			Then, choosing $[\rho']=\boldsymbol{1}_z$, we have $[\rho']\circ[\gamma]=[\rho']\circ[\gamma']$ with $[\rho']\in\Sigma_{\Pf}$.
			This satisfies Definition~\ref{deff:calculus of fractions}(\ref{deff:calculus of fractions:cancellability}).
		\end{proof}
		
		To prove that $\Sigma_\Pf$ in $\AbarP$ induces a calculus of fractions, we need the following lemma.
		\begin{lem}\label{lem:pre-dead 0 morphism trick}
			Let $[\gamma]:x\to y$ be a morphism in $\AbarP$.
			If $\Xpixel\in\Pf$ is pre-dead and $x\in\Xpixel$, then $[\gamma]=0$.
			Dually, if $\Ypixel\in\Pf$ is pre-dead and $y\in\Ypixel$, then $[\gamma]=0$.
		\end{lem}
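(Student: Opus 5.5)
Since $\Xpixel$ is pre-dead, there is a path $\rho\in\Gamma$ with $\im(\rho)\subset\Xpixel$ and $[\rho]=0$ in $\Ac$. The plan is to show that $\gamma$ is $\Pf$-equivalent to a path with the same endpoints that factors through $\rho$. Then $[\gamma]$ is $\Pf$-equivalent to an element of $\Ic$, hence lies in $\IbarP$ and vanishes in $\AbarP$. The dual statement follows from the same argument with the roles of source and target swapped, using the second half of Definition~\ref{deff:screen}(\ref{deff:screen:ore}).

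\textbf{Step 1: build a path through $\rho$ that starts at $x$.} Apply Lemma~\ref{lem:x two steps from y in a pixel} inside $\Xpixel$ to get paths $\rho_1,\rho_2$ with images in $\Xpixel$, $\rho_1(0)=\rho_2(0)=z$, $\rho_1(1)=x$ and $\rho_2(1)=\rho(0)$. The path $\rho_2\cdot\rho$ is in $\Gamma$ and has image in $\Xpixel$. Since $\Ic$ is an ideal and $[\rho]\in\Ic$, the composite $[\rho_2\cdot\rho]=[\rho]\circ[\rho_2]$ also lies in $\Ic$. Next apply the Ore condition, Definition~\ref{deff:screen}(\ref{deff:screen:ore}), to the pair $\rho_2\cdot\rho$ (the pixel path) and $\rho_1\cdot\gamma$, which share the start $z$. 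This gives $\rho',\gamma'$ with $\im(\rho')$ in a single pixel $\Ypixel$ and
\[
  (\rho_1\cdot\gamma)\cdot\rho' \sim (\rho_2\cdot\rho)\cdot\gamma'.
\]
The right-hand side factors through $[\rho]$, so its class lies in $\Ic$.

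\textbf{Step 2: identify the endpoint pixel.} I want the left-hand side to exhibit $\Pf$-equivalence between $\gamma$ and a path in $\Ic$. This requires $\rho'$ to lie in the pixel containing $y$. Since $\rho'$ starts at $\gamma(1)=y$, its image is in $y$'s pixel, so $\Ypixel$ is that pixel. Unwinding Definition~\ref{deff:P-equivalent paths}, with $\rho_1$ and a constant path in $\Xpixel$ and with $\rho'$ and a constant path in $\Ypixel$, the path $\gamma$ is $\Pf$-equivalent to the path $\rho_1\cdot\gamma\cdot\rho'$. That path is $\sim$-equivalent to a path whose class lies in $\Ic$.

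\textbf{Step 3: conclude, and the main obstacle.} Here there is an endpoint mismatch, and I expect it to be the main obstacle. Definition~\ref{deff:P-equivalent morphisms} compares morphisms in the same Hom-set, while $\rho_1\cdot\gamma\cdot\rho'$ runs from $z$ to $\rho'(1)$ rather than from $x$ to $y$. My first attempt is to read $\Pf$-equivalence of paths, Definition~\ref{deff:P-equivalent paths}, as only requiring the endpoints to lie in the same pixels. In that reading $\rho_1\cdot\gamma\cdot\rho'$ is itself $\Pf$-equivalent to $\gamma$, and it represents a generator of $\Ic$ in the appropriate sense, so $[\gamma]\in\IbarP$. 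If the same-endpoint reading is forced, I would instead argue in the quotient category $\AbarP$. There $[\rho_1\cdot\gamma\cdot\rho']=0$, and I would combine this with the cancellation property of $\sim$ (Definition~\ref{deff:sim}(\ref{deff:sim:compose})) and the $\Pf$-completeness of $\IbarP$ to transport the vanishing back to $[\gamma]$. Making this transport rigorous, through the precise meaning of "$\Pf$-equivalent to an element of $\Ic$", is the delicate point.
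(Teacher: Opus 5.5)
Your argument is correct if you take the first reading in Step 3, and it is essentially the paper's. In both, you produce a path through the dead pixel-path that is $\sim$-equivalent to (pixel path)$\cdot\gamma\cdot$(pixel path), so $[\gamma]$ is $\Pf$-equivalent to an element of $\Ic$.

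Your construction is leaner than the paper's. The paper takes both a span $\delta_4,\delta_2$ and a cospan $\delta_5,\delta_3$ between $x$ and $\omega(1)$. It then uses two Ore squares, $\delta_0\cdot\omega\sim\delta_1\cdot\delta_2$ and $\delta_5\cdot\tilde{\gamma}\sim\gamma\cdot\rho_2$, together with thinness to replace $\delta_2\cdot\delta_3$ by $\delta_4\cdot\delta_5$. Your single span and single Ore square on $\rho_2\cdot\rho$ and $\rho_1\cdot\gamma$ already suffice. You also conclude straight from $[\rho]\in\Ic$ and the definition of $\IbarP$. The paper instead passes through $[\gamma']\in\IbarP$ and then appeals to $\Pf$-completeness of $\IbarP$, which it never proves.

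The endpoint mismatch you flag is present in the paper's proof as well. Its path $\gamma'=\delta_0\cdot\omega\cdot\delta_3\cdot\tilde{\gamma}$ does not run from $x$ to $y$. The paper silently uses your first reading, in which Definition~\ref{deff:P-equivalent morphisms} compares morphisms whose endpoints merely lie in the same pixels. It does the same in the proof of Proposition~\ref{prop:Sigma yields a calculus of fractions}, where $(f-g)[\omega]$ is called $\Pf$-equivalent to $f-g$.

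So commit to that reading and drop the fallback, which cannot work. Under a strict same-Hom-set reading the lemma is false. For instance, take $\RR$ with $\Pf=\{[i,i+1)\mid i\in\ZZ\}$ and $\Ic$ generated by the path $0.5\to 0.6$. Then $[0,1)$ is pre-dead. But every Hom-set of $\Cc$ has rank at most one, so $\IbarP=\Ic$, and the morphism $0.1\to 0.2$ is nonzero in $\AbarP$. Moreover, Definition~\ref{deff:sim}(\ref{deff:sim:compose}) only cancels $\sim$. It cannot turn $[\rho_1\cdot\gamma\cdot\rho']\in\Ic$ into $[\gamma]\in\Ic$.
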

		\begin{proof}
			We will only prove the statement where $\Xpixel\in\Pf$ is pre-dead and $x\in\Xpixel$, as the other statement is similar.
			
			Since $\Xpixel$ is pre-dead, there is some $\omega\in\Gamma$ with $\im(\omega)\subset\Xpixel$ and $[\omega]=0$ in $\Ac$.
			Then $[\omega]=0$ in $\AbarP$ also.
  			
  			\begin{figure}[h]
  			\begin{tikzpicture}
  				%\draw (0,0) -- (1,1) -- (3,1) -- (5,1) -- (6,0) -- (10,0);
  				\draw (0,0) -- (1,1) -- (3,1);
  				\draw (0,0) -- (2,0) -- (3,1) -- (4,0) -- (6,0) -- (10,0);
  				\draw (2,0) -- (3,-1) -- (4,0);
  				\draw (3,-1) -- (9,-1);
  				\draw[blue] (9,-1) -- (10,0);
  				\draw [->] (0,0) -- (.5,.5);
  				\draw [->] (0,0) -- (1,0);
  				\draw [->] (1,1) -- (2,1);
  				%\draw [->] (3,1) -- (4,1);
  				%\draw [->] (5,1) -- (5.5,.5);
  				\draw [->] (2,0) -- (2.5,.5);
  				%\draw [->] (6,0) -- (8,0);
  				\draw [->] (3,-1) -- (6,-1);
  				\draw [->] (3,1) -- (3.5,.5);
  				\draw [->] (3,-1) -- (3.5,-.5);
  				\draw [->] (2,0) -- (2.5,-.5);
  				\draw [->] (4,0) -- (7,0);
  				\draw [blue,->] (9,-1) -- (9.5,-.5);
  				\draw (3,-1) node[anchor=north] {$x$};
  				\draw (2,1) node[anchor=south] {$\omega$};
  				%\draw (4,1) node[anchor=south] {$\omega'$};
  				\draw (6,-1) node[anchor=north] {$\gamma$};
  				\draw (7,0) node[anchor=north] {$\tilde{\gamma}$};
  				\draw[blue,->] (0,-.1) -- (2,-.1) -- (2.9,-1);
  				\draw[blue] (1.8,-.2) node[anchor=north east] {$\rho_1$};
  				\draw[blue] (9.5,-.5) node[anchor=west] {$\rho_2$};
  				%\draw[red, ->] (0,.1) -- (1,1.1) -- (5,1.1) -- (6,.1) -- (10,.1);
  				\draw[red, ->] (0,.1) -- (1,1.1) -- (3,1.1) -- (4,.1) -- (10,.1);
  				\draw[red] (6,0) node[anchor=south west] {$\gamma'$};
  				\draw (.5,.5) node[anchor=east] {$\delta_0$};
  				\draw (1,0) node[anchor=south] {$\delta_1$};
  				\draw (2.5,.5) node[anchor=east] {$\delta_2$};
  				\draw (3.5,.5) node[anchor=west] {$\delta_3$};
  				\draw (2.5,-.5) node[anchor=west] {$\delta_4$};
  				\draw (3.5,-.5) node[anchor=west] {$\delta_5$};
  				%\draw (5,0) node[anchor=north] {$\delta_6$};
  				%\draw (5.5,.5) node[anchor=east] {$\delta_7$};
  				\draw (9,-1) node[anchor=north] {$y$};
  				\foreach \x in {3,9}
  					\filldraw[fill=black] (\x,-1) circle[radius=.4mm];
  			\end{tikzpicture}
  			\caption{In the proof of Lemma~\ref{lem:pre-dead 0 morphism trick}: showing $\gamma$ is $\Pf$-equivalent to some $\gamma'$, where $[\gamma']=0$ in $\AbarP$.}\label{fig:pre-dead 0 morphism trick}
  			\end{figure}
  			We construct the diagram of paths in Figure~\ref{fig:pre-dead 0 morphism trick}.
  			First we use Lemma~\ref{lem:x two steps from y in a pixel} and the comment after its proof with $x$ and $\omega(1)$ to obtain $\delta_2$, $\delta_3$, $\delta_4$ and $\delta_5$.
  			Then, starting with $\omega$ and $\delta_2$, we use Definition~\ref{deff:screen}(\ref{deff:screen:ore}) to obtain $\delta_0$ and $\delta_1$.
  			It is straight forward to show that any well-defined path composition of $\omega$ and/or $\delta$'s has its image inside $\Xpixel$.
  			Finally, starting with $\delta_5$ and $\gamma$, we use Definition~\ref{deff:screen}(\ref{deff:screen:ore}) to obtain $\tilde{\gamma}$ and $\rho_2$ (in blue).
  			
  			Notice $\im(\rho_2)\subset\Ypixel\ni z$.
  			Define $\gamma':=\delta_0\cdot\omega\cdot \delta_3\cdot \tilde{\gamma}$ (in red) and $\rho_1:=\delta_1\cdot\delta_4$ (in blue).
  			By construction, $\gamma'\sim\rho_1\cdot \gamma\cdot \rho_2$ and so $\gamma'$ is $\Pf$-equivalent to $\gamma$ (Definition~\ref{deff:P-equivalent paths}) which yields $[\gamma']$ is $\Pf$-equivalent to $[\gamma]$ (Definition~\ref{deff:P-equivalent morphisms}).
  			Moreover, since $[\omega]=0$ in $\AbarP$, we must have $[\gamma']=0$ in $\AbarP$ and so $[\gamma']\in\IbarP$.
  			Then we must have $[\gamma]\in\IbarP$ by Definition~\ref{deff:P-complete ideal}.
  			Therefore, we must have $[\gamma]=0$ in $\AbarP$.
		\end{proof}

		\begin{prop}\label{prop:Sigma yields a calculus of fractions}
			The class $\Sigma_{\Pf}$ in $\AbarP$ admits a calculus of fractions.
		\end{prop}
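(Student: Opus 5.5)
We follow the proof of Proposition~\ref{prop:Sigma yields a calculus of fractions nonlinear}, checking items (\ref{deff:calculus of fractions:wide}), (\ref{deff:calculus of fractions:ore}) and (\ref{deff:calculus of fractions:cancellability}) of Definition~\ref{deff:calculus of fractions}. Items (\ref{deff:calculus of fractions:ore dual}) and (\ref{deff:calculus of fractions:cancellability dual}) are dual. Throughout we split into two cases: pixels that are pre-dead and pixels that are not. Lemma~\ref{lem:pre-dead 0 morphism trick} reduces everything touching a pre-dead pixel to statements about zero morphisms.

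\textbf{Wideness.} If $\Xpixel$ is not pre-dead, then every path $\rho$ with $\im(\rho)\subset\Xpixel$ has $[\rho]\neq0$ in $\AbarP$. Hence the identity (the constant path) lies in $\Sigma_\Pf$, and so does a composite of two such paths, since its image stays in $\Xpixel$ as in the nonlinear case. If $\Xpixel$ is pre-dead, then every morphism out of an object of $\Xpixel$ is $0$ by Lemma~\ref{lem:pre-dead 0 morphism trick}. In particular the identity on $x\in\Xpixel$ is $0$, and it lies in $\Sigma_\Pf$ by the second bullet of Definition~\ref{deff:Sigma}. Composites of zero maps between points of $\Xpixel$ are again such zero maps. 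A composite of $\Sigma_\Pf$-morphisms stays inside one pixel, so mixing the two cases cannot occur.

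\textbf{Ore condition.} Take $\sigma:x\to x'$ in $\Sigma_\Pf$ and $f:x\to y$, where $f$ is a $\Bbbk$-linear combination $\sum\lambda_i[\gamma_i]$. If $x$ lies in a pre-dead pixel, then $f=0$ by Lemma~\ref{lem:pre-dead 0 morphism trick}, and we take $\sigma'=1_y$ and $f'=0$. Otherwise $\sigma=[\rho]$ with $\im(\rho)\subset\Xpixel$. For each $\gamma_i$, Definition~\ref{deff:screen}(\ref{deff:screen:ore}) gives $\rho'_i,\gamma'_i$ with $\gamma_i\cdot\rho'_i\sim\rho\cdot\gamma'_i$ and $\im(\rho'_i)$ in the pixel $\Ypixel\ni y$.

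The obstacle is that we need a single $\sigma'$ for all $i$. We plan to use Lemma~\ref{lem:x two steps from y in a pixel} together with the Ore condition inside $\Ypixel$ to find a common target $y'\in\Ypixel$. Concretely, we extend the endpoints of the $\rho'_i$ by paths in $\Ypixel$ to one point. By Definition~\ref{deff:screen}(\ref{deff:screen:thin}), all paths from $y$ to $y'$ inside $\Ypixel$ are $\sim$-equivalent, so they define the same morphism $\sigma'=[\rho']$. The $\gamma'_i$ are extended correspondingly, and we set $f'=\sum\lambda_i[\gamma'_i\cdot\text{ext}_i]$.

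If $\Ypixel$ is pre-dead, we take $\sigma'=0$ on $\Ypixel$ and $f'=0$. Finally, if $[\rho']=0$ in $\AbarP$ while $\Ypixel$ is not pre-dead, this contradicts the definition of pre-dead, so $\sigma'\in\Sigma_\Pf$.

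\textbf{Cancellability.} This is where the ideal $\IbarP$ matters, and we expect it to be the main obstacle. Suppose $f\sigma=g\sigma$ with $\sigma=[\rho]\neq0$ and $\im(\rho)\subset\Xpixel$. Then $(f-g)\sigma\in\IbarP$. We want to show $f-g\in\IbarP$, so that $\sigma'=1$ suffices.

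The key point is that $(f-g)\circ[\rho]$ and $f-g$ are $\Pf$-equivalent term by term. Each $\gamma_i$ and $\rho\cdot\gamma_i$ are $\Pf$-equivalent by Definition~\ref{deff:P-equivalent paths}, taking $\rho_1=\rho$ and $\rho_2$ constant. Since the coefficients agree and precomposition with $[\rho]$ is injective on $\sim$-classes by Definition~\ref{deff:sim}(\ref{deff:sim:compose}), no terms collapse. The $\Pf$-completeness of $\IbarP$ from Definition~\ref{deff:P-complete ideal} then gives $f-g\in\IbarP$.

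Some care is needed because $\IbarP$ is generated by, rather than equal to, the set of $\Pf$-equivalents. We would argue on generators: write $(f-g)\sigma$ in terms of generators, and lift each summand back along $\Pf$-equivalence.

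If instead $\sigma=0$ with $w,x$ in a pre-dead pixel, then $f=g=0$ by Lemma~\ref{lem:pre-dead 0 morphism trick}, and the condition holds trivially.
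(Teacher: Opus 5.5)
You follow the paper's proof. Closure under composition holds because composable in-pixel paths lie in one pixel. The Ore axiom is checked term by term from Definition~\ref{deff:screen}(\ref{deff:screen:ore}). Cancellability with $\sigma'=1$ comes from $(f-g)\sigma$ being termwise $\Pf$-equivalent to $f-g$. Your Ore step is more careful than the paper's, which only says to ``use the same $\rho'$'' for all summands although the screen's Ore condition produces a $\rho'$ depending on $\gamma_i$. Your common target is exactly what is needed: iterated cospans in $\Ypixel$ from Lemma~\ref{lem:x two steps from y in a pixel}, then thinness to identify all in-pixel paths $y\to y'$. The same goes for your remark that precomposition with $[\rho]$ is injective on $\sim$-classes.

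The genuine gap is in cancellability, and the paper shares it. $\IbarP$ is only the ideal \emph{generated} by $\Pf$-equivalents of elements of $\Ic$, and nothing in Definition~\ref{deff:P-complete ideal} says it is itself $\Pf$-complete. From $(f-g)\sigma\in\IbarP$ you only get $(f-g)\sigma=\sum_k a_k s_k b_k$ with generators $s_k$. ``Lifting each summand back'' is not meaningful as stated: $b_k$ need not factor through $\sigma$, and the summands may contain terms that cancel only in the total.

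A clean repair is to prove $\IbarP=J$, where $J$ is the set of $h$ with $[\rho']\,h\,[\rho]\in\Ic$ for some paths $\rho,\rho'$ each contained in a pixel.
\begin{itemize}
\item $J\subseteq\IbarP$, since such an $h$ is termwise $\Pf$-equivalent to $[\rho']h[\rho]$.
\item Every generator lies in $J$: by Lemma~\ref{lem:x two steps from y in a pixel} and thinness, the auxiliary paths in the termwise $\Pf$-equivalences can be chosen common to all terms.
\item $J$ is an ideal, by the same common-target Ore argument you used.
\end{itemize}
With this description, $(f-g)[\rho]\in\IbarP$ gives $f-g\in\IbarP$ at once. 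It also justifies your claims that, in a pixel which is not pre-dead, every in-pixel path stays nonzero in $\AbarP$; this needs proof because pre-dead is defined in $\Ac$, not in $\AbarP$.

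Alternatively, give up $\sigma'=1$. Use the screen's Ore condition three times to rewrite $T\,a_ks_kb_k=a'_ks'_kb'_k\,\sigma$, with $T\in\Sigma_{\Pf}$ independent of $k$ and $s'_k$ $\Pf$-equivalent to $s_k$, hence still a generator. Then cancel $[\rho]$ in $\Cc$ to get $T(f-g)\in\IbarP$. This is all that Definition~\ref{deff:calculus of fractions}(\ref{deff:calculus of fractions:cancellability}) asks for.
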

		\begin{proof}
			First we check Definition~\ref{deff:calculus of fractions}(\ref{deff:calculus of fractions:wide}).
  			We note that every identity morphism in $\AbarP$ is in $\Sigma_{\Pf}$, by Definition~\ref{deff:Sigma}.
  			For $\rho,\rho'\in\Gamma$, if $\im(\rho)\subset \Xpixel\in\Pf$, $\im(\rho')\subset \Ypixel\in \Pf$, and $\rho\cdot\rho'$ is defined, then $\Xpixel=\Ypixel$.
  			Then it is clear that $\Sigma_{\Pf}$ is closed under composition.
  
  			Next we prove Definition~\ref{deff:calculus of fractions}(\ref{deff:calculus of fractions:ore}). This is sufficient also for Definition~\ref{deff:calculus of fractions}(\ref{deff:calculus of fractions:ore dual}) as the statements and their proofs are dual.
  			Let $\sigma:x\to x'$ and $f:x\to y$ be morphisms in $\AbarP$ with $\sigma\in\Sigma_{\Pf}$.
  			Note $f=\bigoplus_{i=1}^m \lambda_i[\gamma_i]$.
  			If $f$ is $0$, then we may pick $\sigma'=\boldsymbol{1}_y$ and $f'=0$ (with target $y$).
  			If $\Xpixel$  or $\Ypixel$ is pre-dead, then, by Lemma~\ref{lem:pre-dead 0 morphism trick}, $[\gamma_i]=0$ for each $1\leq i \leq m$.
  			Thus, $f=0$ and we choose $\sigma'=\boldsymbol{1}_y$, $f'=0$ again.
  
 			Now suppose both $\sigma$ and $f$ are nonzero.
 			In particular, $\Xpixel$ and $\Ypixel$ are not pre-dead.
 			By Definition~\ref{deff:Sigma} we know $\sigma=[\rho]$, for $\rho\in\Gamma$ and $\im(\rho)\subset\Xpixel\in\Pf$.
 			Without loss of generality, we assume $f=\lambda[\gamma]$ for some $\lambda\in\Bbbk$ and $\gamma\in\Gamma$.
 			(This is because, for each summand $f_i$ of $f$, we would find the corresponding $f'_i$ to complete the square and use the same $\rho'$ for each $f'_i$. The result is that we take $f'$ to be the direct sum of the $f'_i$'s and we obtain the desired commutative square.)
 			Then, by Definition~\ref{deff:screen}(\ref{deff:screen:ore}), there are paths $\rho'\in\Gamma$ from $y$ to $y'$ with $\im(\rho')\subset \Ypixel\in\Pf$ and $\gamma'\in\Gamma$ from $x'$ to $y'$ such that $\gamma\cdot\rho'\sim\rho\cdot\gamma'$.
 			Then, $[\rho']\circ[\gamma]=[\gamma']\circ[\rho]$ (even if they are both $0$).
 			Thus, the axiom holds by multiplying by the appropriate scalars.
  
  			Now we prove Definition~\ref{deff:calculus of fractions}(\ref{deff:calculus of fractions:cancellability}).
  			Again, we do not write the proof of the dual statement, Definition~\ref{deff:calculus of fractions}(\ref{deff:calculus of fractions:cancellability dual}).
  			Suppose we have $\sigma:x\to y$ and $\bar{f},\bar{g}:y\rightrightarrows z$ in $\AbarP$ such that $\sigma\in\Sigma_{\Pf}$ and $\bar{f}\sigma=\bar{g}\sigma$ in $\AbarP$.
  			If $\Xpixel\ni x$ or $\Ypixel\ni y$ is pre-dead, then, by Lemma~\ref{lem:pre-dead 0 morphism trick}, we see that $\bar{f}=\bar{g}=0$.
  			Then we may choose $\sigma'=\boldsymbol{1}_z$ and we are done.

  			Now assume $\sigma\neq 0$ and at least one of $\bar{f}$ or $\bar{g}$ is nonzero.
  			In particular, $\Xpixel$ and $\Ypixel$ are not pre-dead and $\sigma=[\omega]$ with $\im(\omega)\subset\Xpixel$.
  			Since Hom modules in $\AbarP$ are additive quotients of Hom modules in $\Cc$, let $f,g:y\rightrightarrows z$ be morphisms in $\Cc$ such that the quotient maps $f$ to $\bar{f}$ and $g$ to $\bar{g}$.
  			Thus $f-g\mapsto \bar{f}-\bar{g}$.
  			
  			Since $\bar{f}\sigma - \bar{g}\sigma=0$, we have $(f\sigma-g\sigma)\in\IbarP$.
  			Since $f\sigma-g\sigma=(f-g)\sigma=(f-g)[\omega]$ is $\Pf$-equivalent to $f-g$, we have $f-g\in\IbarP$ and so $\bar{f}=\bar{g}$ in $\AbarP$.
  			Then take $\sigma'$ to be the identity and so $\sigma'\bar{f}=\sigma'\bar{g}$.
  			This completes the proof.
		\end{proof}
		
		\newpage
		\begin{deff}[pixelation]\label{deff:pixelation}
			Let  $\Pf$ be a screen of $\Space$.
			
			The \emph{pixelation of $C$ with respect to $\Pf$} is the localization $C[\Sigma_{\Pf}^{-1}]$, denoted $\PfC$.
			
			The \emph{pixelation of $\Ac$ with respect to $\Pf$} is the localization $\AbarPlocal$, denoted $\PfA$.
		\end{deff}
		
		\begin{note}[$p,\pi$]\label{note:pi}
			We denote by $p:C\to\PfC$ the canonical localization functor.
			
			We denote by $\pi:\Ac\to\PfA$ the canonical composition functor that factors as the quotient followed by the localization $\Ac\to \AbarP\to\PfA$.
		\end{note}
		
		Notice that $\PfA$ still has the same objects as $\Cc$ and $C$, although some of them might be isomorphic to each other or to $0$ now.
		
		\begin{xmp}[running example]\label{xmp:pixelation of RR}
			Let $\RR$ be as in Example~\ref{xmp:RR} and $\Pf\in\Psc$ as in Example~\ref{xmp:screen of RR^n}.
			Let $C$ be the path category of $\RR$ as in Example~\ref{xmp:RR as path categories}.
			Then, in $\PfC$, $x\cong y$ if and only if $i\leq x,y < i+1$ for some $i\in\ZZ$.
			
			Let $\Cc$ also be as in Example~\ref{xmp:RR as path categories} and let $\Ic$ be as in Example~\ref{xmp:RR with length relation}.
			Set $\Ac=\Cc/\Ic$ as in Notation~\ref{note:Ac}.
			As in $\PfC$, we have $x\cong y$ in $\PfA$ if and only if $i\leq x,y < i+1$, for some $i\in\ZZ$.
			For morphisms, $f:x\to y$ is nonzero in $\PfA$ if and only if there is $i\in\ZZ$ such that either (i) $i\leq x,y < i+1$ or (ii) $i\leq x < i+1\leq y < i+2$.
		\end{xmp}
		
		\begin{rem}\label{rem:different screen structures}
			Note that it is possible to have the same path category from two different triples $\Space$ and $\Spacep$.
			The difference in path structure changes the screens and thus the pixelations.
			An example for $\RR$ can be see in Example~\ref{xmp:alternate R}.
			
			This is partly explains why the injection on $\Hom$-sets in Proposition~\ref{prop:functor from X to pathcat} is not a bijection.
			There are morphisms in one $\Hom$-set of $\pathcat$ that may come from different $\Hom$-sets in $\Xbf$.
		\end{rem}
		
		\begin{quest}\label{quest:which localizations come from pixelations}
			Following Remark~\ref{rem:different screen structures}, how does one tell if an arbitrary localization of a path category $C$ via a calculus of fractions is a pixelation with respect to some triple $\Spacep$ and screen $\Pf'$?
		\end{quest}
		
		\begin{deff}[trivial morphism]\label{deff:trivial morphism}
			We say a morphism $f$ in $\PfC$ is \emph{trivial} if $f=[\rho]^{-1}[\rho]$, where $[\rho],[\rho']\in\Sigma_{\Pf}$.
			
			We say a nonzero morphism $f$ in $\PfA$ is \emph{trivial} if $f=\sigma^{-1}\sigma'$, where $\sigma,\sigma'\in\Sigma_{\Pf}$.
		\end{deff}
		
		\begin{lem}\label{lem:objects in same pixel become isomorphic}
			\begin{enumerate}
				\item If $x,y\in\Xpixel \in \Pf$ then $x\cong y$ in $\PfC$ and there are trivial morphisms $x\to y$ and $y\to x$ in $C$.
				\item If $x,y\in \Xpixel \in\Pf$, then $x\cong y$ in $\PfA$.
				Moreover, if $\Xpixel$ is not pre-dead, then there are trivial isomorphisms $x\to y$ and $y\to x$ in $\PfA$.
			\end{enumerate}
		\end{lem}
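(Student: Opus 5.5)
The plan is to use Lemma~\ref{lem:x two steps from y in a pixel} to reduce everything to a single zigzag of two paths inside $\Xpixel$, and then invert the backward leg in the localization. Fix $x,y\in\Xpixel\in\Pf$. By Lemma~\ref{lem:x two steps from y in a pixel}, in the form with a common source given in the proof of Proposition~\ref{prop:product of screens is a screen}, there are $\rho,\rho'\in\Gamma$ with $\rho(0)=\rho'(0)=w$, $\rho(1)=x$, $\rho'(1)=y$, and $\im(\rho)\cup\im(\rho')\subset\Xpixel$. By Definition~\ref{deff:Sigma}, both $[\rho]:w\to x$ and $[\rho']:w\to y$ lie in $\Sigma_\Pf$ in $C$. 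Hence $p([\rho])$ and $p([\rho'])$ are isomorphisms in $\PfC$. This gives
\[
p([\rho'])\circ p([\rho])^{-1}:x\to y
\quad\text{and}\quad
p([\rho])\circ p([\rho'])^{-1}:y\to x .
\]
Both are trivial morphisms in the sense of Definition~\ref{deff:trivial morphism}, and they are mutually inverse. So $x\cong y$ in $\PfC$, which proves (1). The phrase ``in $C$'' in the statement should be read as these morphisms of $\PfC$ built from morphisms of $C$. Equivalently, by the calculus of fractions (Proposition~\ref{prop:Sigma yields a calculus of fractions nonlinear}), they are the fractions $[\rho]^{-1}[\rho']$ and $[\rho']^{-1}[\rho]$.

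For (2) I will split into two cases according to whether $\Xpixel$ is pre-dead.

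\emph{$\Xpixel$ pre-dead.} By Lemma~\ref{lem:pre-dead 0 morphism trick}, every morphism of $\AbarP$ with source or target in $\Xpixel$ is zero. In particular $\boldsymbol{1}_x=0$ in $\AbarP$, so $x\cong 0$, and likewise $y\cong 0$ and $0\cong x$. These isomorphisms persist after localization, so $x\cong 0\cong y$ in $\PfA$. Alternatively, Definition~\ref{deff:Sigma} puts the zero morphism $x\to y$ into $\Sigma_\Pf$, so it becomes invertible, and since $\Hom(x,x)=0$ this again forces $x\cong y$.

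\emph{$\Xpixel$ not pre-dead.} No path with image in $\Xpixel$ is zero in $\Ac$. The step I expect to need care is showing such paths remain nonzero in $\AbarP$, i.e.\ that $[\rho]\notin\IbarP$. I would argue as follows. If $[\rho]$ were $\Pf$-equivalent to some generator of $\Ic$ that is a single path class, then that path is $\sim$-equivalent to $\rho_1\cdot\rho\cdot\rho_3$ with all pieces in $\Xpixel$. By Definition~\ref{deff:screen}(\ref{deff:screen:thin}), that path would itself lie in $\Xpixel$ and vanish in $\Ac$, making $\Xpixel$ pre-dead, which is a contradiction. Granting this, $[\rho],[\rho']$ are nonzero and lie in $\Sigma_\Pf$ in $\AbarP$, and the same two fractions as in (1) give trivial isomorphisms $x\to y$ and $y\to x$ in $\PfA$. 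If the nonvanishing argument proves delicate for sums of paths, I would fall back on the fact that $[\rho]$ and $[\rho']$ lie in $\Sigma_\Pf$ regardless. That suffices for $x\cong y$, and triviality only needs them nonzero.
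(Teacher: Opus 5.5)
Your argument is the paper's: a two-path walk inside $\Xpixel$ from Lemma~\ref{lem:x two steps from y in a pixel}, whose legs lie in $\Sigma_\Pf$ and are inverted by the localization, together with the zero morphisms of Definition~\ref{deff:Sigma} in the pre-dead case. The main difference is that the paper uses the common-target walk, which gives the left fraction $\sigma^{-1}\sigma'$ of Definition~\ref{deff:trivial morphism} directly, while your span $p([\rho'])\,p([\rho])^{-1}$ needs an Ore square plus $\sim$-thinness to be put in that form (your ``$[\rho]^{-1}[\rho']$'' is not that form as written).

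The paper simply asserts $\sigma,\sigma'\in\Sigma_\Pf$ in $\AbarP$, and the same nonvanishing is tacitly used for closure under composition in Proposition~\ref{prop:Sigma yields a calculus of fractions}, so you may grant it too. However, drop your fallback: by Definition~\ref{deff:Sigma} a zero morphism lies in $\Sigma_\Pf$ only between objects of a pre-dead pixel. Also, your thinness argument only rules out $[\rho]$ being $\Pf$-equivalent to a single path class of $\Ic$; it does not rule out $[\rho]$ lying in the ideal $\IbarP$ generated by $\Pf$-equivalents of arbitrary elements of $\Ic$.
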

		\begin{proof}
			We only prove (2) as the proof of (1) is essentially the same as the first part of (2).		
  			Suppose $\Xpixel$ is not pre-dead and let $x,y\in \Xpixel$.
  			Then, by Lemma~\ref{lem:x two steps from y in a pixel}, there is a walk $\gamma'\cdot\gamma^{-1}$ with $\gamma'(0)=x$, $\gamma(0)=y$, and $\im(\gamma')\cup\im(\gamma)\subset\Xpixel$.
			Set $\sigma=[\gamma]$ and $\sigma'=[\gamma']$.
			Then $\sigma,\sigma'\in\Sigma_{\Pf}$ and $\sigma^{-1}\sigma':x\to y$ is an isomorphism in $\PfA$.
			This is the desired isomorphism.
			Reverse the rolls of $x$ and $y$ to obtain the other isomorphism.
			If $\Xpixel\in\Pf$ is pre-dead then $x\cong 0\cong y$ for all $x,y\in \Xpixel$ since, by Definition~\ref{deff:Sigma}, we include the 0 morphisms between objects in a pre-dead pixel.
  		\end{proof}

		\begin{deff}[pseudo arrow]\label{deff:pseudo arrow}
			We say a morphism $[\rho]^{-1}[\gamma]$ in $\PfC$ is a \emph{pseudo arrow} if $\im(\gamma)$ intersects exactly two pixels $\Xpixel,\Ypixel$ of $\Pf$, including multiplicity.
			That is, there is a partition of $[0,1]$ for $\gamma$, as in Definition~\ref{deff:screen}(\ref{deff:screen:discrete}), with exactly two elements.
			
			We say a morphism $\sigma^{-1}f\neq 0$ in $\PfA$ is a \emph{pseudo arrow} if $f=\lambda[\gamma]$ and $\im(\gamma)$ intersects exactly two pixels $\Xpixel,\Ypixel$ of $\Pf$, including multiplicity.
			In particular, $\sigma\neq 0$ in $\AbarP$, $f=[\gamma]\neq 0$ in $\AbarP$, and there is a partition of $[0,1]$ for $\gamma$, as in Definition~\ref{deff:screen}(\ref{deff:screen:discrete}), with exactly two elements.
%			That is, there is some $a\in [0,1]$ such that the following are true.
%			\begin{itemize}
%				\item For all $s\in(0,a)$, we have $\gamma(s)\in \Xpixel$ and also $\gamma(0)\in \Xpixel$.
%				\item For all $t\in (a,1)$, we have $\gamma(t)\in \Ypixel$ and also $\gamma(1)\in \Ypixel$.
%				\item We have $\gamma(a)\in \Xpixel$ or $\gamma(a)\in \Ypixel$.
%			\end{itemize}
		\end{deff}
		
		Notice, by Lemma~\ref{lem:pre-dead 0 morphism trick}, if $\sigma^{-1}f$ is a pseudo arrow in $\PfA$, then the pixels $\Xpixel,\Ypixel$ must not be pre-dead.
		
		\begin{lem}\label{lem:nontrivial morphism in PfA is  are made of arrows}
			\begin{enumerate}
				\item Each morphism in $\PfC$ is either a trivial morphism or a finite composition of pseudo arrows.
				\item Each non-zero morphism in $\PfA$ is a finite sum of morphisms, each of which is a multiple of a trivial morphism or a composition of finitely-many pseudo arrows.
			\end{enumerate}
		\end{lem}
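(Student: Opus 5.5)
Since $\Sigma_{\Pf}$ admits a calculus of fractions (Propositions~\ref{prop:Sigma yields a calculus of fractions nonlinear} and~\ref{prop:Sigma yields a calculus of fractions}), every morphism of $\PfC$ can be written as a single roof $[\rho]^{-1}[\gamma]$ with $[\rho]\in\Sigma_{\Pf}$. Likewise every morphism of $\PfA$ can be written as $\sigma^{-1}f$ with $\sigma\in\Sigma_{\Pf}$ and $f$ a morphism of $\AbarP$. The plan is to decompose $\gamma$ (respectively each summand of $f$) along the pixel partition given by Definition~\ref{deff:screen}(\ref{deff:screen:discrete}), and to reinsert trivial morphisms between consecutive pieces so that each piece becomes a pseudo arrow.

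For (1), take $[\rho]^{-1}[\gamma]$ and a minimal partition $\{I_i\}_{i=1}^n$ of $[0,1]$ with pixels $\Xpixel_1,\ldots,\Xpixel_n$.
\begin{itemize}
\item If $n=1$, then $\im(\gamma)\subset\Xpixel_1$, so $[\gamma]\in\Sigma_{\Pf}$ and the morphism is trivial.
\item If $n\geq 2$, choose points $t_1<\cdots<t_{n-2}$ with $t_k$ in the interior of, or an endpoint belonging to, $I_{k+1}$. By Definition~\ref{deff:Gamma}(\ref{deff:Gamma:subpath}) and Definition~\ref{deff:sim}(\ref{deff:sim:reparameterisation},\ref{deff:sim:compose}) we can factor $[\gamma]=[\gamma_{n-1}]\circ\cdots\circ[\gamma_1]$, where $\gamma_k$ is the restriction of $\gamma$ to $[t_{k-1},t_k]$ reparameterized. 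Here $t_0=0$ and $t_{n-1}=1$.
\item Each $\gamma_k$ meets exactly the two pixels $\Xpixel_k,\Xpixel_{k+1}$, so $[\gamma_k]=\boldsymbol{1}^{-1}[\gamma_k]$ is a pseudo arrow.
\item Absorb $[\rho]^{-1}$ into the first factor: $[\rho]^{-1}[\gamma_1]$ is still a pseudo arrow by definition. This gives the required finite composition.
\end{itemize}

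The delicate point is the choice of the $t_k$ when some $I_i$ is a single point or a half-open interval. Each cut point must lie in a pixel adjacent to both neighbouring pieces, so that every subpath genuinely meets two pixels counted with multiplicity. I would cut at points inside the middle intervals, so that $\gamma_k$ covers the end of $I_k$ and the start of $I_{k+1}$. For the first and last pieces I would include the whole of $I_1$ and $I_n$ respectively.

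For (2), write $f=\sum_i\lambda_i[\gamma_i]$, so $\sigma^{-1}f=\sum_i\lambda_i\,\sigma^{-1}[\gamma_i]$; it suffices to treat one summand.
\begin{itemize}
\item If $\sigma=0$ or $[\gamma_i]=0$, the pixel at the source or target of $\gamma_i$ is pre-dead, and the summand vanishes by Lemma~\ref{lem:pre-dead 0 morphism trick}.
\item Otherwise, apply the argument of (1) to $\gamma_i$ to get $[\gamma_i]=[\gamma_{i,n-1}]\cdots[\gamma_{i,1}]$.
\item Each factor is nonzero in $\AbarP$ because their composite is nonzero and the category is $\Bbbk$-linear. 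Its endpoints therefore lie in pixels that are not pre-dead, again by Lemma~\ref{lem:pre-dead 0 morphism trick}.
\item Hence each factor, with $\sigma^{-1}$ absorbed into the first, is a pseudo arrow in the sense of Definition~\ref{deff:pseudo arrow}. If $n=1$, then $[\gamma_i]\in\Sigma_{\Pf}$ and the summand is $\lambda_i$ times a trivial morphism.
\end{itemize}

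The main obstacle I expect is that the roof representation in $\PfA$ and the sum decomposition interact badly: distinct summands may require different denominators. This is handled by the common-denominator property of a calculus of fractions, so a single $\sigma$ serves for all summands. A second issue is that a zero composite could contain nonzero factors, but that only matters for summands we discard anyway.
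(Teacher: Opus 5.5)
Your proposal is correct and follows essentially the paper's proof: reduce to a single roof $\sigma^{-1}\lambda[\gamma]$ via a common denominator, cut $\gamma$ at one point in each intermediate interval of the discreteness partition so that every piece meets exactly two pixels, and attach the denominator to an end piece. Two small fixes are needed. First, the denominator must be composed with the \emph{last} piece, the one ending at $\gamma(1)$ (so $[\rho]^{-1}[\gamma_{n-1}]$ in your indexing, matching the paper's $\sigma^{-1}\lambda[\gamma_n]$), not with $[\gamma_1]$. Second, discard summands that vanish in $\PfA$ and use bilinearity of composition in $\PfA$ (not $\AbarP$) to see that each factor is nonzero there, as Definition~\ref{deff:pseudo arrow} requires; this also removes the need for your incorrect claim that $[\gamma_i]=0$ forces an endpoint pixel to be pre-dead.
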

		\begin{proof}			
			We first prove (2).
  			Let $f = \displaystyle\bigoplus_{i=1}^m \sigma^{-1}_i f_i$ be some non-zero morphism in $\PfA$.
  			Each $\sigma_i^{-1} f_i$ can be seen as a floor $x\stackrel{f_i}{\rightarrow} y'_i \stackrel{\sigma_i}{\leftarrow}y$.
  			Since $\PfA$ is a localization with respect to a calculus of fractions, there is a $y''$, morphism $\sigma:y\to y''$, and morphisms $\sigma'_i:y'_i\to y''$ such that $f=\displaystyle \bigoplus_{i=1}^m \sigma^{-1}(\sigma'_if_i)$.
  			Choose some $i$ and consider $\sigma^{-1}(\sigma'_if)$.
  			We know $f_i=\displaystyle\bigoplus_{j=1}^{n_i}\lambda_{ij}[\gamma_{ij}]$ and so $\sigma'_i f_i$ is equal to $\displaystyle \bigoplus_{j=1}^{n_i} \sigma'_i\lambda_{ij}[\gamma_{ij}]$.
  			Since our localization is with respect to a calculus of fractions, we have $\displaystyle \sigma^{-1}(\sigma_i f_i)=\bigoplus_{j=1}^{n_i}\sigma^{-1}\sigma'_i\lambda_{ij}[\gamma_{ij}]$.
  			Thus $f=\displaystyle\bigoplus_{i=1}^m\bigoplus_{j=1}^{n_i}\sigma^{-1}\sigma'_i\lambda_{ij}[\gamma_{ij}]$. It remains to show that each $\sigma^{-1}\sigma'_i\lambda_{ij}[\gamma_{ij}]$ is a composition of finitely-many pseudo arrows or is a trivial morphism.
  
  			We may now finish proving (2) and prove (1) along the way.
  			We simplify our notation and consider $f=\sigma^{-1}\lambda[\gamma]$.
  			If $f$ is trivial, or a scalar multiple of a trivial morphism, we are done.
  			Suppose not.
  			We will show that $f$ is a composition of pseudo arrows.
  			Since we have assumed $f$ is not trivial, $\im(\gamma)$ must intersect at least two pixels of $\Pf$.
  			Since $\Pf$ is a screen, there is a partition $\{I_i\}_{i=0}^n$ of $[0,1]$ such that each $I_i$ is an interval and $\im(\gamma|_{I_i})\subset\Xpixel_i\in\Pf$ (Definition~\ref{deff:screen}(\ref{deff:screen:discrete})).
  			Without loss of generality we assume that $\Xpixel_{i-1}\neq\Xpixel_i$ for $1\leq i \leq n$.
  			
  			For each $0 < i < n$, choose some $a_i\in I_i$.
  			Set $a_0=\gamma(0)$ and $a_n=\gamma(1)$.
  			For $1\leq i \leq n$, let $\gamma_i = \gamma \phi_i$ where $\phi_i:[0,1]\to [0,1]$ is given by $\phi_i(t) = (a_i-a_{i-1})t + a_i$.
  			Then we have
  			\begin{displaymath}
  				\sigma^{-1}\lambda[\gamma] = \sigma^{-1}\lambda[\gamma_n]\circ \left(\bigcirc_{i=n-1}^1 (\boldsymbol{1}_{\gamma_i(1)})^{-1} [\gamma_i]\right).
  			\end{displaymath}
  			All $(\boldsymbol{1}_{\gamma_i(1)})^{-1} [\gamma_i]$ as well as $\sigma^{-1}\lambda[\gamma_n]$ are pseudo arrows.
  			This completes the proof.
		\end{proof}
		
		\begin{rem}\label{rem:description of morphisms in PfA}
			Using Lemma~\ref{lem:nontrivial morphism in PfA is  are made of arrows}, for an arbitrary nonzero morphism $f:x\to y$ in $\PfA$, we have	 the following description of $f$:
			\begin{align*}
				f &= \bigoplus_{i=0}^m f_i, &
				\qquad f_i &=
				\begin{cases}
 					\lambda_i \bigcirc_{j=n_i}^1 f_{ij} & 1\leq i \leq m \\
 					\lambda_0 \sigma^{-1}\sigma' & i=0 \text{ and } x\cong y \\
 					0 & i=0,\ x\in\Xpixel,\ y\in\Ypixel, \text{ and }\Xpixel\neq \Ypixel,
 				\end{cases}
			\end{align*}
			where each $f_{ij}$ is a pseudo arrow and $\sigma^{-1}\sigma'$ is trivial.
			To describe all morphisms in $\PfA$, we allow $m=0$ and/or $\lambda_0=0$.
		\end{rem}

		\begin{deff}[dead pixel]\label{deff:dead pixel}
			Let $\Xpixel\in\Pf$.
			We say $\Xpixel$ is \emph{dead} if there exists $x\in \Xpixel$ such that $x\cong 0$ in $\PfA$.
		\end{deff}

		\begin{rem}[pre-dead pixels are dead]\label{rem:pre-dead pixels are dead}
			If $\Pf$ is a screen, every pre-dead $\Xpixel\in\Pf$ is also a dead pixel.
			Furthermore, if $y\cong x$ in $\PfA$ and $x\in \Xpixel$, where $\Xpixel$ is a dead pixel, then $\Ypixel\ni y$ is also a dead pixel.
		\end{rem}
		
	\subsection{The categories from quivers}\label{sec:path category:equivalent cat from quiver}
		We now show that pixelations are related to quotients of categories obtained from quivers.
		\bigskip
		
		We keep our fixed triple $\Space$ in $\Xbf$.
		We also fix a screen $\Pf$ of $\XX$ and a path based ideal $\Ic$ in $\Cc$.
		
		\begin{deff}[sample]\label{deff:sample}
			A \emph{sample} of $\Pf$ is a pair $(S,\{\gamma_x\cdot \rho^{-1}_x\}_{x\in\XX})$ where $|S\cap\Xpixel|=1$ for each $\Xpixel\in\Pf$, every $\gamma_x,\rho_x\in\Gamma$, and $\im(\gamma_x)\cup\im(\rho_x)\subset \Xpixel\ni x$.
			We denote the unique element of each $S\cap\Xpixel$ by $s_{\Xpixel}$.
			Moreover, we have $\gamma_x(0)$, $\gamma_x(1)=\rho_x(1)$, and $\rho_x(0)=s_{\Xpixel}$ where $x\in\Xpixel\in\Pf$.
			Each of the $\gamma_x\cdot\rho^{-1}_x$'s exist by Lemma~\ref{lem:x two steps from y in a pixel}.
			\begin{itemize}
				\item In $\PfC$, we denote by $\varphi_x$ the morphism $[\rho]^{-1}\circ [\gamma]:x\to s_{\Xpixel}$.
				\item In $\PfA$, if $\Xpixel$ is not pre-dead we denote by $\varphi_x$ the morphism $[\rho]^{-1}\circ[\gamma]:x\to s_{\Xpixel}$.
					Otherwise, we denote by $\varphi_x$ the $0$ morphism $x\to s_{\Xpixel}$.
			\end{itemize}
		\end{deff}
		Notice that the $\varphi_x$'s are unique, up to equivalence.
		
		Notice also that the full subcategory of $\PfC$ whose objects are $S$ is a skeleton.
		The full subcategory of $\PfA$ whose objects the nonzero elements of $S$ also forms a skeleton.
		
		By Definition~\ref{deff:screen}(\ref{deff:screen:path requirement}), we see that any two pseudo arrows $f,f':\Xpixel\to\Ypixel$ in $\PfC$ must be equivalent.
		The similar statement is true for pseudo arrows in $\PfA$.
		
		\begin{deff}[$\QCP,\QAP$]\label{deff:QAP}\label{deff:QCP}
			We now define two quivers.
			\begin{itemize}
			\item We define a quiver $\QCP$ based on $\PfC$.
				Let $Q_0(C,\Pf) = \Pf$ and let $\mathrm{Arr}_{\PfC}(\Xpixel,\Ypixel)$ be the set of pseudo arrows $s_{\Xpixel}\to s_{\Ypixel}$, which has 1 or 0 elements.
				We set
				\[
					Q_1(C,\Pf)=\bigcup_{(\Xpixel,\Ypixel)\in \Pf^2}\mathrm{Arr}_{\boxed{C}^\Pf}(\Xpixel,\Ypixel).
				\]
				The source of an arrow $\alpha\in\mathrm{Arr}_{\PfC}(\Xpixel,\Ypixel)$ is $\Xpixel$ and the target is $\Ypixel$.
			\item We define a quiver $\QAP$ based on $\PfA$.
				Let $\displaystyle Q_0(\Ac,\Pf)=\Pf\setminus \mathrm{dead}(\Pf)$, where $\mathrm{dead}(\Pf)$ is the set of dead pixels in $\Pf$.
				For each $\Xpixel,\Ypixel \in Q_0(\Ac,\Pf)$ and let $\mathrm{Arr}_{\PfA}(\Xpixel,\Ypixel)$ be equivalence classes of pseudo arrows in $\PfA$ from $s_\Xpixel$ to $s_\Ypixel$ modulo nonzero scalar multiplication.
				Then $\mathrm{Arr}_{\PfA}(\Xpixel,\Ypixel)$ has 1 or 0 elements.
				I2f $\Hom_{\PfA}(x,y)=0$ for any $x\in\Xpixel$ and $y\in\Ypixel$ then $\mathrm{Arr}(\Xpixel,\Ypixel)=\emptyset$.
				Thus, let
				\[Q_1(\Ac,\Pf):= \displaystyle\bigcup_{(\Xpixel,\Ypixel)\in (Q_0(\Ac,\Pf))^2} \mathrm{Arr}_{\PfA}(\Xpixel,\Ypixel).\]
				For any $\alpha\in\mathrm{Arr}_{\PfA}(\Xpixel,\Ypixel)$, the source of $\alpha$ is $\Xpixel$ and the target of $\alpha$ is $\Ypixel$.
			\end{itemize}
		\end{deff}
		
		\begin{rem}
			Notice that, by Definition~\ref{deff:pseudo arrow}, it is not possible for $\QCP$ or $\QAP$ to have loops.
			It is still possible to have 2-cycles.
		\end{rem}
		
		We can immediately consider $\QCP$ as a category whose objects are $Q_0(C,\Pf)$ and whose morphisms are paths in $\QCP$.
		However, for $\QAP$ we want to consider $\Bbbk$-linearization.
		
		\begin{deff}[$\QcAP$]\label{deff:k-linearisation of QAP}
			Let $\QAP$ be as in Definition~\ref{deff:QAP}.
			We define $\QcAP$ to be the $\Bbbk$-linear category of $\QAP$.
			That is, the objects of $\QcAP$ are the vertices of $\QAP$.
			For morphisms, $\Hom_{\QcAP}(\Xpixel,\Ypixel)$ is the free $\Bbbk$-module whose basis is the paths from $\Xpixel$ to $\Ypixel$.
			That is,
			\[ \Hom_{\QcAP}(\Xpixel,\Ypixel) = \Bbbk\langle \{ \text{paths from }\Xpixel \text{ to }\Ypixel \text{ in }\QAP \} \rangle. \]
			We also include a $0$ object in $\QcAP$.
		\end{deff}
		
		Notice that, in general, neither $\QCP$ nor $\QcAP$ has the same relations as in $\PfC$ or $\PfA$, respectively.
		For example, if the composition of pseudo arrows $(\sigma')^{-1}[\gamma']\circ \sigma^{-1}[\gamma]=0$ in $\PfA$, the corresponding composition of arrows in $\QcAP$ do \emph{not} compose to $0$.
		We may have two morphisms $[\rho]^{-1}[\gamma]$ and $[\rho']^{-1}[\gamma']$ are identified in $\PfC$ but the corresponding composition of arrows may not be same in $\QCP$.
		A similar statement holds true for sums of morphisms in $\PfA$.
		
		Because of this, we define quotient categories $\KCP$ of $\QCP$ and $\KcAP$ of $\QcAP$ that we wish to use in the main theorem of this section.
		To do this, we define a functor $\Psi:\QCP\to\PfC$ and a $\Bbbk$-linear functor $\Phi:\QcAP\to \PfA$.

		Since the (nonzero) \emph{isomorphism classes} of objects in $\PfC$ and $\PfA$ are in bijection with the objects in $\QCP$ and $\QcAP$, respectively, we need to pick out a particular object in each isomorphism class in $\PfC$ and $\PfA$.
		\bigskip
		
		We fix a sample $(S,\{\gamma_x\cdot\rho_x^{-1}\}_{x\in\XX})$ of $\Pf$ for the rest of this section.
		\bigskip
				
		To define $\Psi$ and $\Phi$ on objects, let $\Psi(\Xpixel):=s_\Xpixel$ and $\Phi(\Xpixel):=s_\Xpixel$.
		
		Let $\Xpixel\stackrel{\alpha}{\to}\Ypixel$ be an arrow in $\QCP$.
		Then there is a corresponding pseudo arrow $[\rho]^{-1} [\gamma]$ in $\Hom_{\PfC}(s_\Xpixel,s_\Ypixel)$, by construction.
		Define $\Psi(\alpha)=[\rho]^{-1}[\gamma]$.
		
		For each arrow $\Xpixel\stackrel{\alpha}{\to}\Ypixel$ in $\QAP$ there is a corresponding $\Bbbk\langle[\rho]^{-1}[\gamma]\rangle$ in $\Hom_{\PfA}(s_\Xpixel,s_\Ypixel)$, where $[\rho]^{-1}[\gamma]$ is a pseudo arrow.
		Define $\Phi(\lambda\alpha):=\lambda[\rho]^{-1}[\gamma]$.
		
		We know $\Psi$ and $\Phi$ are well-defined on objects.
		Since every non-identity morphism in $\QCP$ is a composition of arrows, we extend $\Psi$ to a functor on all of $\QCP$ by using this composition.
		Since every morphism in $\QcAP$ is direct sum of compositions of arrows (and possibly the identity), we can extend $\Phi$ to all morphisms in $\QcAP$ $\Bbbk$-linearly to obtain a functor.
		
		\begin{deff}[$\KCP,\KcAP$]\label{deff:KcAP}\label{deff:KCP}
			We now define the quotients $\KCP$ and $\KcAP$.
			\begin{itemize}
				\item Let $\KCP$ be the category whose objects are $Q_0(C,\Pf)$ and whose morphisms are given by
					\[
						\Hom_{\KCP}(\Xpixel,\Ypixel)=\Hom_{\QCP}(\Xpixel,\Ypixel)/\{\Psi(f)=\Psi(g)\}.
					\]
				\item Let $\Jc$ be the ideal in $\QcAP$ defined simply as
			\[ \Jc := \{ f \in\mathrm{Mor}(\QcAP) \mid \Phi(f)=0 \}. \]
			Let $\KcAP := \QcAP / \Jc$ and $(/\Jc):\QcAP\to\KcAP$ be the quotient functor.
			\end{itemize}
		\end{deff}
		
		Notice that $(/\Jc)$ is $\Bbbk$-linear.
		
		Essentially, $\KCP$ and $\KcAP$ are equivalent to the images of $\QCP$ and $\QcAP$ in $\PfC$ and $\PfA$, respectively.
		The following theorem states that these images are actually equivalent to their respective target categories.

		\newpage
		\begin{thm}\label{thm:barPlocal yields cat equivalent to cat from quiver}
			Fix a triple $\Space$, a screen $\Pf$ of $\XX$, and a path based ideal $\Ic$ in $\Cc$.
			Then the following hold.
			\begin{enumerate}
				\item\label{thm:barPlocal yields cat equivalent to cat from quiver:C} The pixelation $\PfC$ is equivalent to $\KCP$.
				\item\label{thm:barPlocal yields cat equivalent to cat from quiver:A} The pixelation $\PfA$ is equivalent to $\KcAP$.
			\end{enumerate}
		\end{thm}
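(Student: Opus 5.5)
The plan is to show that the functors $\Psi$ and $\Phi$ descend to fully faithful, essentially surjective functors $\overline{\Psi}:\KCP\to\PfC$ and $\overline{\Phi}:\KcAP\to\PfA$. Both parts follow the same pattern, so I would prove (\ref{thm:barPlocal yields cat equivalent to cat from quiver:C}) in detail and then indicate the $\Bbbk$-linear modifications for (\ref{thm:barPlocal yields cat equivalent to cat from quiver:A}).

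First, $\Psi$ factors through $\KCP$ by construction. The relation defining $\Hom_{\KCP}$ identifies $f$ and $g$ exactly when $\Psi(f)=\Psi(g)$. This relation is compatible with composition because $\Psi$ is a functor, so $\KCP$ is a category and $\overline{\Psi}$ is faithful. Likewise $\Jc$ is the kernel of the $\Bbbk$-linear functor $\Phi$, so it is an ideal, and $\overline{\Phi}:\KcAP\to\PfA$ is $\Bbbk$-linear and faithful.

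Essential surjectivity comes from the sample. By Lemma~\ref{lem:objects in same pixel become isomorphic}, each $x\in\Xpixel$ is isomorphic via $\varphi_x$ to $s_\Xpixel=\Psi(\Xpixel)$. In the linear case, if $\Xpixel$ is dead then $x\cong 0$, which is the image of the zero object of $\QcAP$. If $\Xpixel$ is not dead, then $\Xpixel\in Q_0(\Ac,\Pf)$ and $x\cong s_\Xpixel$.

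Fullness is the main work, and I expect it to be the main obstacle. Take a morphism $s_\Xpixel\to s_\Ypixel$ in $\PfC$. By Lemma~\ref{lem:nontrivial morphism in PfA is  are made of arrows}(1) it is either trivial or a finite composition of pseudo arrows $f_k:x_{k-1}\to x_k$. A trivial morphism forces $\Xpixel=\Ypixel$, and I would show it is the identity of $s_\Xpixel$, so it is $\Psi(\boldsymbol{1}_\Xpixel)$. To check this, write it as $[\rho]^{-1}[\rho']$ with both paths in $\Xpixel$. Use Definition~\ref{deff:screen}(\ref{deff:screen:thin}) to get $\rho'\sim\rho$ after composing with a common path, which is produced by Definition~\ref{deff:screen}(\ref{deff:screen:ore}).

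For the composition case, insert $\varphi_{x_k}^{-1}\varphi_{x_k}$ between consecutive factors. This rewrites the morphism as a composition of the maps $\varphi_{x_k}f_k\varphi_{x_{k-1}}^{-1}:s_{\Xpixel_{k-1}}\to s_{\Xpixel_k}$. Each such map is again a pseudo arrow between sample points: compose the underlying path with the paths $\gamma_x,\rho_x$ inside the pixels, using the Ore condition, so that the path still meets exactly the two pixels $\Xpixel_{k-1},\Xpixel_k$. Since any two pseudo arrows between the same pair of pixels agree by Definition~\ref{deff:screen}(\ref{deff:screen:path requirement}), each such map equals $\Psi(\alpha_k)$ for the unique arrow $\alpha_k$. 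Hence the morphism is $\Psi(\alpha_n\cdots\alpha_1)$. The delicate point is making the transport of a pseudo arrow along $\varphi$'s precise: one must produce a single path from $s_\Xpixel$ to $s_\Ypixel$ whose discreteness partition has exactly two pieces.

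For (\ref{thm:barPlocal yields cat equivalent to cat from quiver:A}) I would use Lemma~\ref{lem:nontrivial morphism in PfA is  are made of arrows}(2) and Remark~\ref{rem:description of morphisms in PfA}. These write a morphism as a sum of scalar multiples of trivial morphisms and of compositions of pseudo arrows. Each summand lifts as in the previous paragraph, carrying its scalar, and the lifts are added $\Bbbk$-linearly. Summands passing through dead pixels vanish by Lemma~\ref{lem:pre-dead 0 morphism trick} together with Remark~\ref{rem:pre-dead pixels are dead}, so they impose no constraint. A fully faithful, essentially surjective functor is an equivalence, which gives both statements.
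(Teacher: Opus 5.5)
Your overall route is the paper's own: descend $\Psi$ and $\Phi$ to the quotients, then check faithfulness, fullness and essential surjectivity (this is how the paper treats $F$). Faithfulness, essential surjectivity and the trivial case are fine; for the trivial case, thinness alone gives $\rho\sim\rho'$ and no Ore step is needed.

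\textbf{The gap.} It is the step you flag as delicate: the claim that $\varphi_{x_k}f_k\varphi_{x_{k-1}}^{-1}$ is a pseudo arrow between sample points.
\begin{itemize}
\item On the target side the claim holds. Ore applied to $\rho$ and $\gamma_y$ gives a path that thinness (Definition~\ref{deff:screen}(\ref{deff:screen:thin})) confines to $\Ypixel$.
\item On the source side it does not. Writing $\varphi_x^{-1}=[\gamma_x]^{-1}[\rho_x]$, you must turn $[\gamma][\gamma_x]^{-1}$ into a left fraction. Definition~\ref{deff:screen}(\ref{deff:screen:ore}) only gives some $\delta$ with $\gamma_x\cdot\delta\sim\gamma\cdot\tau$.
\item Nothing in the axioms controls which pixels $\delta$ visits. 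Thinness only constrains paths equivalent to a path lying in a single pixel.
\item Lemma~\ref{lem:x two steps from y in a pixel} gives only a common source or target of $x$ and $s_\Xpixel$, not a path $s_\Xpixel\to x$ inside $\Xpixel$.
\end{itemize}
The paper asserts the same thing without justification: when it defines $H$ on a pseudo arrow $x\to y$, in the existence of $\hat f$ in Definition~\ref{deff:G functor}, and in the fullness of $F$. So it does not supply the missing idea.

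\textbf{A counterexample.} The step, and in fact fullness of $\overline{\Psi}$, can fail for some samples.
\begin{itemize}
\item Let $\XX=\{c\}\cup\{u_k,v_k\mid k\geq 1\}$. Let $\Gamma$ be the interval-parametrized step paths along finite chains of the edges $u_k\to v_k$, $u_k\to c$, $v_k\to u_{k+1}$, as in Section~\ref{sec:sites:lattice}. Set $\gamma\sim\gamma'$ iff they have the same endpoints; the graph is acyclic, so this is a triple.
\item $\Pf=\{\{c\}\}\cup\{\{u_k,v_k\}\mid k\geq1\}$ is a screen. The edge $u_k\to v_k$ is the only path between its endpoints. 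Every path out of $u_k$ other than $u_k\to c$ passes through $v_k$, and $u_k\to c$ is completed by $v_k\to u_{k+1}\to c$. The dual Ore condition and the remaining axioms are immediate.
\item Choose the sample $s_{\{u_k,v_k\}}=v_k$. The only element of $\Sigma_\Pf$ out of $c$ is $\boldsymbol{1}_c$, and every path $v_k\to c$ meets at least three pixels. So there is no pseudo arrow $v_k\to c$, and $\{c\}$ is an isolated vertex of $\QCP$.
\item Hence $\Hom_{\KCP}(\{u_1,v_1\},\{c\})=\emptyset$ while $\Hom_{\PfC}(v_1,c)\neq\emptyset$. Concretely, your transported map $\varphi_c\circ[u_2\to c]\circ\varphi_{u_2}^{-1}$ is the class of $v_2\to u_3\to c$, which is not a pseudo arrow.
\item Taking $\Ic=0$ breaks part (2) in the same way.
\end{itemize}

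\textbf{How to repair it.} The argument goes through if each $s_\Xpixel$ is initial in $\Xpixel$, meaning there is $\beta_x:s_\Xpixel\to x$ inside $\Xpixel$ for every $x\in\Xpixel$. Then thinness gives $\varphi_x^{-1}=[\beta_x]$, and $[\gamma][\beta_x]=[\beta_x\cdot\gamma]$ still meets only two pixels. Alternatively, let $\mathrm{Arr}(\Xpixel,\Ypixel)$ consist of all conjugates $\varphi_y f\varphi_x^{-1}$ of pseudo arrows $f:x\to y$; with that quiver your proof works as written.
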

		
		Before the proofs of the two parts of Theorem~\ref{thm:barPlocal yields cat equivalent to cat from quiver}, we return to our running example to guide our intuition.
		
		\begin{xmp}[running example]\label{xmp:a pixelation of RR is A_ZZ}
			Let $\RR$ be as in Examples~\ref{xmp:RR} with $\PfC$ and $\PfA$ as in Example~\ref{xmp:pixelation of RR}.
			In the case of $C$, the category $\PfC$ is equivalent to the categorification of the quiver $A_{\ZZ}$:
			\[
				\xymatrix@C=8ex{ \cdots \ar[r]^-{\alpha_{{-}3}} & {-}2 \ar[r]^-{\alpha_{{-}2}} & {-}1 \ar[r]^-{\alpha_{{-}1}} & 0 \ar[r]^-{\alpha_0} & 1 \ar[r]^-{\alpha_1} & 2 \ar[r]^-{\alpha_2} & \cdots. }
			\]
			
			In the case of $\Ac$, the category $\PfA$ is equivalent starting the same quiver $A_{\ZZ}$, taking the $\Bbbk$-linearization, and adding the relation that $\alpha_{i+1}\alpha_i=0$ for all $i\in\ZZ$.
		\end{xmp}
		
		The proofs of Theorem~\ref{thm:barPlocal yields cat equivalent to cat from quiver}(\ref{thm:barPlocal yields cat equivalent to cat from quiver:C}) and Theorem~\ref{thm:barPlocal yields cat equivalent to cat from quiver}(\ref{thm:barPlocal yields cat equivalent to cat from quiver:A}) are different and so are treated separately.
		
		\begin{proof}[Proof of Theorem~\ref{thm:barPlocal yields cat equivalent to cat from quiver}(\ref{thm:barPlocal yields cat equivalent to cat from quiver:C})]\label{pf:nonlinear pixelation to quiver}
			We will define a functor $H:\PfC\to \KCP$ and show that it has a quasi inverse.
			For each $x\in\Xpixel\in\Pf$, set $H(x)=\Xpixel\in Q_0(C,\Pf)$.
			
			Let $f=[\rho]^{-1}\circ[\gamma]:x\to y$ be a morphism in $\PfC$.
			If $f$ is trivial then $x\cong y$ in $\PfC$ and so define $H(f)=\boldsymbol{1}_{\Xpixel}$, where $x,y\in\Xpixel\in\Pf$.
			If $f$ is a pseudo arrow, there is an arrow $\alpha:\Xpixel\to\Ypixel$ in $\QCP$ that corresponds to $f$, where $x\in\Xpixel$, $y\in\Ypixel$, and $\Xpixel,\Ypixel\in\Pf$.
			So, define $H(f)=\alpha$.
			
			By Lemma~\ref{lem:nontrivial morphism in PfA is  are made of arrows}(1), every nontrivial morphism in $\PfC$ is a fintie composition of arrows.
			Thus, if $f$ is neither trivial nor a psudeo arrow, $f=f_n\circ\cdots\circ f_1$ where each $f_i$ is a pseudo arrow.
			So, define $H(f)=H(f_n) \circ\cdots\circ H(f_1)$.
			We know that $\alpha_m\cdots\alpha_1=\beta_n\cdots\beta_1$ in $\KCP$ if $\Psi(\alpha_m\cdots\alpha_1)=\Psi(\beta_n\cdots\beta_1)$.
			This means the corresponding compositions of pseudo arrows in $\PfC$ are the same.
			Thus, $H$ is a functor.
			
			Now, we define $H^{-1}:\KCP\to \PfC$.
			Let $H^{-1}(\Xpixel)=s_\Xpixel$.
			For each arrow $\alpha:\Xpixel\to\Ypixel$, define $H^{-1}(\alpha)$ to be the corresponding pseudo arrow in $\PfC$.
			Then, for any path $\alpha_m\cdots\alpha_1$ in $\KCP$, we define $H^{-1}(\alpha_m\cdots\alpha_1)=H^{-1}(\alpha_n)\circ \cdots \circ H^{-1}(\alpha_1)$.
			Again, by construction of $\KCP$, if $\alpha_m\cdots\alpha_1=\beta_n\cdots\beta_1$ in $\KCP$ then the compositions of the corresponding pseudo arrows in $\PfC$ are the same.
			Therefore, $H^{-1}$ is also a functor.
			
			It is clear by construction that $H H^{-1}$ is the identity on $\KCP$.
			We see $H^{-1} H$ is bijective on $\Hom$ sets and $H^{-1}H(x)=s_\Xpixel\cong x$, for any $x\in\Xpixel\in\Pf$.
		\end{proof}
		
		In order to prove (\ref{thm:barPlocal yields cat equivalent to cat from quiver:A}) in Theorem~\ref{thm:barPlocal yields cat equivalent to cat from quiver}, we will define $\Bbbk$-linear functors $\KcAP\to\PfA$ and $\PfA\to\KcAP$ that are quasi inverses of each other.
		These are Definitions~\ref{deff:F functor}~and~\ref{deff:G functor}, respectively.
		
		\begin{deff}[$F:\KcAP\to\PfA$]\label{deff:F functor}
			Given $\KcAP$ and $\PfA$, we define a $\Bbbk$-linear functor $F:\KcAP\to\PfA$.
			For each $\Xpixel$ in $\mathrm{Ob}(\KcAP)=Q_0(\Ac,\Pf)$, let $F(\Xpixel):=s_\Xpixel$.
			For each nonzero $f\in \mathrm{Mor}(\KcAP)$ there is an $\tilde{f}\in\mathrm{Mor}(\QcAP)$ such that $(/\Jc)\tilde{f}=f$.
			Let $F(f):=\Phi(\tilde{f})$.
		\end{deff}
		
		\begin{lem}\label{lem:Phi = F pi}
			The $F$ in Definition~\ref{deff:F functor} is a well-defined functor and $\Phi=F(/\Jc)$.
		\end{lem}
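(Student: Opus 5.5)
The plan is to check well-definedness of $F$ directly from the construction of $\Jc$, and then deduce functoriality from that of $\Phi$. The only real content is that $F(f)$ does not depend on the choice of lift $\tilde f$; once that is settled, the identity $\Phi=F(/\Jc)$ is immediate from Definition~\ref{deff:F functor}.

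\emph{Step 1 (independence of the lift).} Let $f\in\Hom_{\KcAP}(\Xpixel,\Ypixel)$ and let $\tilde f,\tilde f'\in\Hom_{\QcAP}(\Xpixel,\Ypixel)$ satisfy $(/\Jc)\tilde f=(/\Jc)\tilde f'=f$. The quotient functor $(/\Jc)$ is $\Bbbk$-linear on $\Hom$-modules, so $\tilde f-\tilde f'\in\Jc$. By definition of $\Jc$ this means $\Phi(\tilde f-\tilde f')=0$. Since $\Phi$ is $\Bbbk$-linear, we get $\Phi(\tilde f)=\Phi(\tilde f')$. For the zero morphism we set $F(0)=0$; this agrees with $\Phi(\tilde 0)$ for any lift $\tilde 0\in\Jc$. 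Hence $F$ is well defined on all morphisms. Surjectivity of $(/\Jc)$ on $\Hom$-modules guarantees that a lift exists. On objects, $F(\Xpixel)=s_\Xpixel=\Phi(\Xpixel)$, and the zero object goes to $0$.

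\emph{Step 2 (functoriality and linearity).} For composable $f,g$ in $\KcAP$, choose lifts $\tilde f,\tilde g$. Because $(/\Jc)$ is a functor, $\tilde g\tilde f$ is a lift of $gf$. By Step 1,
\[
F(gf)=\Phi(\tilde g\tilde f)=\Phi(\tilde g)\Phi(\tilde f)=F(g)F(f).
\]
Identities are handled the same way: $\boldsymbol{1}_\Xpixel$ lifts the identity, and $\Phi(\boldsymbol{1}_\Xpixel)=\boldsymbol{1}_{s_\Xpixel}$. Sums and scalar multiples lift to sums and scalar multiples, so $\Bbbk$-linearity of $F$ follows from that of $\Phi$.

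\emph{Step 3 (factorization).} For any $\tilde f$ in $\QcAP$, the morphism $\tilde f$ is itself a lift of $(/\Jc)\tilde f$. Therefore $F((/\Jc)\tilde f)=\Phi(\tilde f)$, which is exactly $\Phi=F(/\Jc)$.

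The main obstacle is a hidden prerequisite rather than either step above: the argument needs $\Phi$ to be a genuine $\Bbbk$-linear functor, so that $\Jc$ is a two-sided ideal and the quotient $\QcAP/\Jc$ makes sense. I would state this explicitly, relying on the paper's extension of $\Phi$ from arrows to paths by composition and then linearly. Under that definition, $\Phi(\tilde g\tilde f)=\Phi(\tilde g)\Phi(\tilde f)$ holds by construction for paths and extends bilinearly to linear combinations. The ideal property of $\Jc$ then follows: if $\Phi(h)=0$, then $\Phi(ahb)=\Phi(a)\Phi(h)\Phi(b)=0$.
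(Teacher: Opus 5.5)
Your proof is correct and follows the paper's argument. As in the paper, well-definedness comes from $\tilde f-\tilde f'\in\Jc$ together with $\Bbbk$-linearity of $\Phi$. The differences are small but in your favor. You verify functoriality and the zero morphism explicitly, where the paper only says ``thus a functor''. For $\Phi=F(/\Jc)$ you simply note that $\tilde f$ is itself a lift of $(/\Jc)\tilde f$. The paper instead reduces by $\Bbbk$-linearity to arrows $\alpha$ and uses $\Phi(\alpha)\neq 0$ to see $(/\Jc)\alpha\neq 0$, which quietly assumes $F$ is already a linear functor.
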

		\begin{proof}
			First we show $F$ is well-defined.
			Let $f$ be a nonzero morphism in $\KcAP$ and suppose $(/\Jc)\tilde{f}=f$, $(/\Jc)\tilde{f}'=f$, and $\tilde{f}'\neq \tilde{f}$.
			Then $(/\Jc)(\tilde{f}-\tilde{f}')=0$ which means $\Phi(\tilde{f}-\tilde{f}')=0$ and so $\Phi(\tilde{f})=\Phi(\tilde{f}')$.
			Thus, any choice of $\tilde{f}$ such that $(/\Jc)\tilde{f}=f$ yields the same $F(f)$, showing that $F$ is indeed well-defined and thus a functor.
			
			Now we show $\Phi=F(/\Jc)$.
			We see immediately that $F(/\Jc)(\Xpixel)=F(\Xpixel)=s_\Xpixel=\Phi(\Xpixel)$.
			Now consider an arbitrary morphism $f:\Xpixel\to\Ypixel$ in $\QcAP$.
			We know
			\[
				f = \bigoplus_{i=1}^m \lambda_i\left(\bigcirc_{j=n_i}^{1} \alpha_{ij}\right),
			\]
			where each $\alpha_{ij}$ is an arrow in $\QcAP$ and we may have an additional summand $\lambda_0 \boldsymbol{1}_\Xpixel$ if $\Xpixel=\Ypixel$.
			
			Since all of $\Phi$, $F$, and $(/\Jc)$ are $\Bbbk$-linear functors it suffices to show $\Phi(\alpha)=F(/\Jc)(\alpha)$ for any arrow $\alpha$ in $\QcAP$.
			Notice that $\Phi(\alpha)\neq 0$, for each arrow $\alpha$, since $\mathrm{Arr}_{\PfA}(\Xpixel,\Ypixel)$ contains an element precisely when there is a pseudo arrow $s_{\Xpixel}\to s_{\Ypixel}$ in $\PfA$.
			Then $(/\Jc)(\alpha)\neq 0$ for any arrow $\alpha$ in $\QcAP$.
			By Definition~\ref{deff:F functor}, $F(/\Jc)(\alpha)$ is defined to be $\Phi(\alpha)$.
			This concludes the proof.
		\end{proof}
		
		\begin{deff}[$G:\PfA\to\KcAP$]\label{deff:G functor}
			Given $\KcAP$ and $\PfA$, we define a functor $G:\PfA\to\KcAP$.
			For each object $s_\Xpixel\not\cong 0$ in $\PfA$, let $G(s_\Xpixel):= \Xpixel\in Q_0(\Ac,\Pf)=\mathrm{Ob}(\KcAP)$.
			For each non-dead $\Xpixel\in Q_0(\Ac,\Pf)$ and object $x\in\Xpixel$ in $\PfA$, let $G(x):= G(s_\Xpixel)$ and $G(\varphi_x):=\boldsymbol{1}_\Xpixel$.
			For all $x\in\XX$ such that $x\cong 0$ in $\PfA$, define $G(x)=0$.
			
			Let $f:s_\Xpixel\to s_\Ypixel$ be a morphism in $\PfA$.
			If $f=0$ then set $G(f):=0$.
			If $f=\lambda \boldsymbol{1}_{s_{\Xpixel}}$, for some $\lambda\in\Bbbk$, then set $G(f):=\lambda \boldsymbol{1}_{\Xpixel}$.
			For other morphisms, we may assume, using Lemma~\ref{lem:nontrivial morphism in PfA is  are made of arrows} and Remark~\ref{rem:description of morphisms in PfA}, and without loss of generality, that $f$ is a pseudo arrow.
			I.e., $f=[\rho]^{-1}\lambda[\gamma]$.
			
			Then there is an arrow $\alpha:\Xpixel\to\Ypixel$ in $\QcAP$ that corresponds to the the copy of $\Bbbk$ given by $\Bbbk\langle f\rangle$ in $\Hom_{\PfA}(s_\Xpixel,s_\Ypixel)$.
			Let $G(f):=(/\Jc)(\lambda\alpha)$.
			
			For an arbitrary morphism $f:x\to y$ we again assume that either $f=0$, $f$ is trivial, or $f$ is a pseudo arrow.
			In the first case, $G(f)=0$.
			In the second case, $x\cong y\cong s_\Xpixel$ for some $\Xpixel\in\Pf$.
			By Definition~\ref{deff:screen}(\ref{deff:screen:thin}) and our definition of the $\varphi$'s, we have that $f=\varphi^{-1}_y \lambda \varphi_x$.
			Then let $G(f):=\lambda \boldsymbol{1}_\Xpixel$.
			
			Suppose $f:x\to y$ is a pseudo arrow.
			We know $x\cong s_\Xpixel$, $y\cong s_\Ypixel$, for $\Xpixel,\Ypixel\in\Pf$.
			Then by Definition~\ref{deff:screen}(\ref{deff:screen:ore}) there exists an pseudo arrow $\hat{f}:s_\Xpixel\to s_\Ypixel$ such that $f=\varphi_y^{-1}\hat{f}\varphi_x$.
			Let $G(f):=G(\hat{f})$.
			
			Then extend the definition of $G$ by composition and $\Bbbk$-linearity.
		\end{deff}
		
		\begin{lem}\label{lem:G is a functor}
			The $G$ in Definition~\ref{deff:G functor} is a well-defined functor.
		\end{lem}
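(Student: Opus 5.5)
The plan is to show three things: that $G$ is independent of the choices made in Definition~\ref{deff:G functor}, that it is $\Bbbk$-linear, and that it respects identities and composition. The strategy is to reduce everything to the skeleton on $S$. Then I would show that $G$ agrees with a quasi-inverse of $F$ on that skeleton.

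\textbf{Step 1: well-definedness on generators.} For a pseudo arrow $f:x\to y$, the pseudo arrow $\hat{f}$ with $f=\varphi_y^{-1}\hat{f}\varphi_x$ is forced, since $\hat{f}=\varphi_y f\varphi_x^{-1}$ and the $\varphi$'s are isomorphisms. It is therefore unique, and by Definition~\ref{deff:screen}(\ref{deff:screen:path requirement}) any two pseudo arrows $s_\Xpixel\to s_\Ypixel$ agree up to scalar. Hence $G(f)$ does not depend on the representative $[\rho]^{-1}\lambda[\gamma]$. For trivial morphisms, the thinness condition Definition~\ref{deff:screen}(\ref{deff:screen:thin}) gives that every trivial $x\to y$ equals $\varphi_y^{-1}\varphi_x$, so $G(f)=\lambda\boldsymbol{1}_\Xpixel$ is unambiguous. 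Dead pixels are sent to $0$ consistently because by Remark~\ref{rem:pre-dead pixels are dead} isomorphism preserves deadness.

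\textbf{Step 2: the main obstacle.} The hard part is that the definition extends $G$ ``by composition and $\Bbbk$-linearity''. A morphism of $\PfA$ may admit several decompositions as in Remark~\ref{rem:description of morphisms in PfA}, with different sums of composites of pseudo arrows, and we must show these give the same element of $\KcAP$. I would handle this by writing $G$ as a composite. Let $\widehat{G}(f)=\varphi_y f\varphi_x^{-1}:s_\Xpixel\to s_\Ypixel$; this is conjugation into the full skeletal subcategory on $S$, which is a well-defined $\Bbbk$-linear functor. The remaining claim is that on this subcategory, $G$ is inverse to $F$. Concretely, suppose two decompositions $\sum_i\lambda_i\bigcirc_j f_{ij}$ and $\sum_k\mu_k\bigcirc_l g_{kl}$ define the same morphism. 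Lift each pseudo arrow to its arrow $\alpha$ in $\QcAP$; the two lifted sums then have the same image under $\Phi$. By Lemma~\ref{lem:Phi = F pi}, their difference lies in $\Jc=\ker\Phi$, so the two sums coincide in $\KcAP=\QcAP/\Jc$. The one point to check carefully is that a composite of pseudo arrows $s_\Xpixel\to s_\Ypixel\to s_\Zpixel$ in $\PfA$ equals $\Phi$ of the composite of arrows. This follows by inserting $\varphi_y^{-1}\varphi_y=\boldsymbol{1}$ and using that $\Phi$ sends arrows to the chosen pseudo arrows between sample points.

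\textbf{Step 3: functoriality.} Identities go to identities by construction. Composition is respected because $G=\overline{G}\circ\widehat{G}$, where $\widehat{G}$ is a functor and $\overline{G}$ is well-defined and multiplicative on sums of composites of pseudo arrows by Step 2. Additivity holds for the same reason. The zero cases are handled by Lemma~\ref{lem:pre-dead 0 morphism trick}: any morphism through a dead pixel is $0$, and it is sent to the zero object.
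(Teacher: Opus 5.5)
Your proposal is correct and is essentially the paper's proof. In both, $\hat f$ is unique because the $\varphi$'s are isomorphisms, trivial morphisms are unambiguous by thinness, and two decompositions of the same morphism lift to elements of $\QcAP$ with the same image under $\Phi$, so they agree modulo $\Jc=\ker\Phi$. Your factorization $G=\overline{G}\circ\widehat{G}$ through conjugation by the $\varphi$'s just makes explicit the compatibility with composition that the paper leaves implicit. For the zero cases, Lemma~\ref{lem:pre-dead 0 morphism trick} is the wrong citation, since it concerns pre-dead pixels; what you need is that every object of a dead pixel is isomorphic to $0$.
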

		\begin{proof}
			By Definition~\ref{deff:G functor} directly, $G$ is well-defined on objects.
			By Lemma~\ref{lem:nontrivial morphism in PfA is  are made of arrows}, we know any morphism $f\neq 0$ in $\PfA$ is a finite direct sum of compositions of pseudo arrows, with one summand possibly a trivial morphism.
			We will show that $G$ is well-defined on trivial morphisms and pseudo arrows, then show that sums and compositions must be respected.
			
			If $f$ is a morphism between elements of the sample $S$, we know $G$ is well-defined by the definition.
			Thus, we consider $f:x\to y$ with two cases: either $f$ is trivial or $f$ is a pseudo arrow.
			In the case $f$ is trivial there is no ambiguity in the definition of $G$ since pixels of $\Pf$ are $\sim$-thin (Definition~\ref{deff:screen}(\ref{deff:screen:thin})).
			So we consider $f$ to be an pseudo arrow.
						
			We use the fact that an $\hat{f}$ exists such that $f=\varphi_y^{-1}\hat{f}\varphi_x$ (Definition~\ref{deff:G functor}).
			Suppose some $\hat{f}'$ exists such that $f=\varphi_y^{-1}\hat{f}'\varphi_x$ is also true.
			Then, since $\varphi_y^{-1}$ and $\varphi_x$ are isomorphisms, we have $\hat{f}' = \varphi_y f \varphi_x^{-1} = \hat{f}$.
			Thus, the $\hat{f}$ is unique and so $G$ is indeed well-defined.
			
			Let $f:x\to y$ be a morphism in $\PfA$.
			Using a description of $f$ as in Remark~\ref{rem:description of morphisms in PfA}, we may identify $f$ with a morphism in $\QcAP$ given by
			\begin{align*}
				\tilde{f} &= \bigoplus_{i=0}^m \tilde{f}_i, &
				\qquad \tilde{f}_i &=
				\begin{cases}
 					\lambda_i \bigcirc_{j=n_i}^1 \alpha_{ij} & 1\leq i \leq m \\
 					\lambda_0 \boldsymbol{1}_\Xpixel & i=0 \text{ and } \Xpixel=\Ypixel \\
 					0 & i=0 \text{ and }\Xpixel\neq \Ypixel,
 				\end{cases}
			\end{align*}
			where each $\alpha_{ij}$ is an arrow in $\QAP$.
			If there is a different description of $f$ we also have a morphism $\tilde{f}'$ in $\QcAP$, which may be different from $\tilde{f}$.
			
			However, if both $\tilde{f}$ and $\tilde{f}'$ are morphisms in $\QcAP$ defined by a description of $f$ as in Remark~\ref{rem:description of morphisms in PfA}, then $\Phi(\tilde{f})=\Phi(\tilde{f}')$.
			Thus, $(/\Jc)\tilde{f}'=(/\Jc)\tilde{f}=G(f)$.
			The $\Bbbk$-linearity is then also apparent and the proof is complete.
		\end{proof}
		
		We are now ready to prove Theorem~\ref{thm:barPlocal yields cat equivalent to cat from quiver}(\ref{thm:barPlocal yields cat equivalent to cat from quiver:A}).

		\begin{proof}[Proof of Theorem~\ref{thm:barPlocal yields cat equivalent to cat from quiver}(\ref{thm:barPlocal yields cat equivalent to cat from quiver:A})]\label{pf:thm:AbarPlocal is equivalent to the path category of a quiver mod relations}
  			We will show that $F$ and $G$ from Definitions~\ref{deff:F functor}~and~\ref{deff:G functor}, respectively, are quasi-inverses of each other.
  			It follows from the definitions that $FG(s_\Xpixel)=s_\Xpixel$ and $GF(\Xpixel)=\Xpixel$.
  			For an arbitrary nonzero object $x$ in $\PfA$, there is some $s_\Xpixel$ such that $x\cong s_\Xpixel$ and so $FG(x)=s_\Xpixel\cong x$.
			It remains to show that $F$ and $G$ are both fully faithful.
  			
			Let $f$ be a morphism in $\KcAP$ such that $F(f)=0$.
			Let $\tilde{f}$ be any morphism in $\QcAP$ such that $(/\Jc)\tilde{f}= f$.
			Since $\Phi=F(/\Jc)$ and $F(f)=0$, we know $\Phi(\tilde{f})=0$.
			But then $\tilde{f}\in\Jc$ and so $(/\Jc)(\tilde{f})=0=f$.
			Thus, $F$ is faithful.
			
			Now let $f:s_\Xpixel\to s_\Ypixel$ be a nonzero morphism in $\PfA$.
			By Lemma~\ref{lem:nontrivial morphism in PfA is  are made of arrows}, we know $f$ is the direct sum of finitely-many summands, each of which is finite composition of arrows, except maybe one summand is trivial.
			If $\Xpixel\neq \Ypixel$ then there are no trivial summands of $f$.
			If $\Xpixel=\Ypixel$ then the trivial summand of $f$ must be scalar multiple of the identity, i.e. $\lambda_0\boldsymbol{1}_{s_\Xpixel}$ for some $\lambda_0\in\Bbbk$.
			Let $\bar{f}_0\in\End_{\KcAP}(\Xpixel)$ be $\lambda_0 \boldsymbol{1}_\Xpixel$ if $f$ has a scalar multiple of the identity as a summand and let $\bar{f}_0=0$ in $\Hom_{\KcAP}(\Xpixel,\Ypixel)$ otherwise.
			
			Index the non-trivial summands of $f$ from $1$ to $m$.
			Let $f_i$ be a non-trivial summand of $f$.
			Again by Lemma~\ref{lem:nontrivial morphism in PfA is  are made of arrows}, $f_i=\lambda_if_{in_i}f_{i(n_i-1)}\cdots f_{i2}f_{i1}$, where each $f_{ij}$ is a pseudo arrow.
			Without loss of generality, we collect all the scalars on the left in $\lambda_i$ so that each $f_{ij}=[\rho_{ij}]^{-1}\circ[\gamma_{ij}]$.
			Each $f_{ij}$ corresponds to some $\alpha_{ij}$ in $\QcAP$.
			So, for each $1\leq i \leq m$, let
			\[
				\bar{f}_i = \lambda_i(/\Jc)\left(\alpha_{i,n_i}\circ\alpha_{i,(n_i-1)}\circ\cdots \circ\alpha_{i,2}\circ\alpha_{i,1}\right)\in \Hom_{\KcAP}(\Xpixel,\Ypixel).
			\]
			Then we let $\bar{f} = \bigoplus_{i=0}^m \bar{f}_i$.
			By construction, $F(\bar{f})=f$ and so $F$ is full and thus fully faithful.
  
			Let $g:x\to y$ be a morphism in $\PfA$ such that $G(g)=0$.
			Construct a morphism $\tilde{g}$ in $\QcAP$ as in the proof of Lemma~\ref{lem:G is a functor}.
			Then $G(g)=(/\Jc)(\tilde{g})$.
			If $(/\Jc)(\tilde{g})=0$ then $g=0$, by construction.
			Thus, $G$ is faithful.
			
			Let $f:\Xpixel\to \Ypixel$ be a morphism in $\KcAP$.
			Then there is $\tilde{f}$ in $\QcAP$ such that $(/\Jc)(\tilde{f})=f$.
			By Definition~\ref{deff:G functor}, $GF(\tilde{f})=f$.
			Thus, $G$ is full.
			
			We have shown that $F$ and $G$ are both fully faithful, $GF(\Xpixel)=\Xpixel$, and $FG(x)=s_\Xpixel\cong x$.
			Thus, $F$ and $G$ are quasi-inverses of each other.
		\end{proof}
		
		Theorem~\ref{thm:barPlocal yields cat equivalent to cat from quiver} allows us to work directly with a skeleton of a $\PfC$ or $\PfA$.
		
		Recall $\KCP$ from Definition~\ref{deff:KcAP}.
		
		\begin{thm}\label{thm:KCP is a path category}
			The category $\KCP$ isomorphic to a category in $\pathcat$.
		\end{thm}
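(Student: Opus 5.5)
We will construct an explicit triple $\Spacep$ whose path category is isomorphic to $\KCP$. Take $\XX':=\Pf=Q_0(C,\Pf)$. The paths $\Gamma'$ will be the functions $\delta:[0,1]\to\Pf$ for which there is a finite partition $\{I_i\}_{i=0}^n$ of $[0,1]$ into consecutive intervals, constant on each $I_i$ with value $\Xpixel_i$, such that consecutive values are either equal or joined by an arrow $\Xpixel_{i-1}\to\Xpixel_i$ of $\QCP$. Such a $\delta$ has a \emph{reduced word}: the path $\alpha_n\cdots\alpha_1$ in $\QCP$ read off after collapsing repeated values (the empty path if $\delta$ is constant). We declare $\delta\sim'\delta'$ if they have the same endpoints and the images of their words in $\KCP$ coincide, i.e.\ $\Psi(\alpha_n\cdots\alpha_1)=\Psi(\beta_m\cdots\beta_1)$. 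There is a subtlety: since $\QCP$ has no loops but may have 2-cycles, the word is determined by the sequence of values. It is well defined because there is at most one arrow between any ordered pair of pixels (the set $\mathrm{Arr}_{\PfC}(\Xpixel,\Ypixel)$ has at most one element). This is exactly why the theorem holds for $C$ and parallel arrows do not obstruct it.

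Next we verify Definitions~\ref{deff:Gamma} and~\ref{deff:sim}. Closure under concatenation and constant paths is immediate. Closure under subpaths $\delta\phi$ holds because $\phi$ is order preserving and sends intervals to intervals, so $\delta\phi$ visits a consecutive subsequence of the values, possibly with repetitions, and each consecutive pair is still equal or an arrow. Reparameterization fixing $0$ and $1$ visits the same value sequence, so it gives the same word and hence $\delta\phi\sim'\delta$. Compatibility with composition, Definition~\ref{deff:sim}(\ref{deff:sim:compose}), follows because the word of $\rho\cdot\gamma\cdot\rho'$ is the concatenation of the three words and $\Psi$ is a functor. For the ``only if'' direction we need cancellation: $\Psi(\rho)\Psi(\gamma)\Psi(\rho')=\Psi(\rho)\Psi(\gamma')\Psi(\rho')$ must imply $\Psi(\gamma)=\Psi(\gamma')$. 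We plan to get this from Definition~\ref{deff:sim}(\ref{deff:sim:compose}) in the original triple. Lift each pseudo-arrow composition to an honest path in $\Gamma$ through the sample, using Definition~\ref{deff:screen}(\ref{deff:screen:ore}) to clear denominators. Equality in $\PfC$ then becomes, by the calculus of fractions (Proposition~\ref{prop:Sigma yields a calculus of fractions nonlinear}), a $\sim$-relation after pre- and post-composing with paths inside pixels, and these can be cancelled.

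We then check that a constant path is only equivalent to itself. If $\Psi(w)=\boldsymbol{1}$ for a nonempty word $w$, then $w$ is a closed walk of pseudo arrows leaving its pixel. Lifting it as above gives a closed path $\gamma$ in $\Gamma$, up to pixel-internal paths, that is $\sim$-equivalent to a path inside a single pixel. By Definition~\ref{deff:screen}(\ref{deff:screen:thin}) this forces $\im(\gamma)$ to lie in one pixel, a contradiction.

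Finally, define $\KCP\to C'$ as the identity on objects. It sends a path $\alpha_n\cdots\alpha_1$ to the class of the step function visiting the vertices of that path on $n+1$ equal subintervals. This is well defined and injective by the definition of $\sim'$, surjective since every class has such a representative, and functorial since concatenation matches composition up to reparameterization. This gives the isomorphism $\KCP\cong C'$, and combined with Theorem~\ref{thm:barPlocal yields cat equivalent to cat from quiver}(\ref{thm:barPlocal yields cat equivalent to cat from quiver:C}) it shows $\PfC$ is equivalent to a path category. We expect the main obstacle to be the two cancellation and non-triviality facts in $\PfC$, namely left/right cancellation of pseudo-arrow words and the fact that no nonempty word is the identity. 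Both require carefully transporting equalities in the localization back to $\sim$ in $\Gamma$ via the Ore and thinness axioms.
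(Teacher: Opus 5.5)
Your proposal is correct and follows essentially the same route as the paper. You take $\XX'=\Pf$, let $\Gamma'$ consist of step functions tracing paths in $\QCP$, call two such paths equivalent when they give the same morphism of $\KCP$, and obtain the ``only if'' half of Definition~\ref{deff:sim}(\ref{deff:sim:compose}) by transporting the equality back to $C$. Your thinness argument that no nonempty word becomes an identity is more careful than the paper, which only asserts this. For the cancellation step, the cleanest execution is to show that every morphism of $\PfC$ is monic and epic: denominators are invertible, and after passing to a common denominator the calculus of fractions reduces everything to cancellation in $C$. This spares you from having to match the Ore-modified lifts of $\rho'$ on the two sides.
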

		\begin{proof}
			We will construct a triple $({\XX'},{\Gamma'}{/}{\sim'})$ from $\KCP$ and show that its path category is isomorphic to $\KCP$.
			Recall that $\KCP$ is a small category; in particular, $\Mor(\KCP)$ is a set.
			From Definition~\ref{deff:QCP}, every nonidentity $f\in\Mor(\KCP)$ is a composition of arrows $\alpha_n\circ\cdots\circ\alpha_1$, where $\alpha_i$ is an arrow from $\Xpixel_{i-1}$ to $\Xpixel_i$.
			Notice there may be more than one such composition for $f$.
			However, it is not possible to compose arrows and obtain an identity map.
			
			Let $\XX'=Q_0=\Pf$ and define, for each nonidentity $f\in\Mor(\KCP)$,
			\[
				\Gamma'_f := \left\{\gamma:[0,1]\to\XX' \left| \begin{array}{l} \exists\bigcirc_{i=n}^1 \alpha_i=f \\ (s\leq t\in[0,1]) \Leftrightarrow (\gamma(s)=\Xpixel_i,\ \gamma(t)=\Xpixel_j,\text{ and } i\leq j)\end{array} \right.\right\}.
			\]
			For each $\Xpixel\in\Pf$, we define $\Gamma'_{\boldsymbol{1}_{\Xpixel}}$ to contain only the constant path at $\Xpixel$.
			Now define
			\[			
				\Gamma' := \bigcup_{f\in\Mor(\KCP)} \Gamma'_f.
			\]
			
			We now show that $\Gamma'$ satisfies Definition~\ref{deff:Gamma}.
			By construction, Definition~\ref{deff:Gamma}(\ref{deff:Gamma:constant}) is satisfied.
			Let $f,f'\in\Mor(\KCP)$, let $\gamma\in\Gamma'_f$, and let $\gamma'\in\Gamma'_{f'}$.
			Then $\gamma\cdot\gamma'\in\Gamma'_{f'\circ f}$, by construction.
			This satisfies Definition~\ref{deff:Gamma}(\ref{deff:Gamma:composition}).
			By Definition~\ref{deff:QAP}, we know that, for each $f\in\Mor(\KCP)$, we have $f=\alpha_n\circ\cdots\circ\alpha_1$ where each $\alpha_i$ is an arrows from $\Xpixel_{i-1}$ to $\Xpixel_i$.
			Thus, $\Gamma'$ is immediately closed under subpaths (Definition~\ref{deff:Gamma}(\ref{deff:Gamma:subpath})).
			
			We say $\gamma\sim'\gamma'$ if and only if $\gamma,\gamma'\in\Gamma'_f$, for some $f\in\Mor(\KCP)$.
			Now we check Definition~\ref{deff:sim}.
			If $\gamma\in\Gamma'_{\boldsymbol{1}_{\Xpixel}}$, for some $\Xpixel\in\Pf$, then $\gamma$ must be the constant path at $\Xpixel$ and so Definition~\ref{deff:sim}(\ref{deff:sim:constant}) is satisfied.
			By our construction of each $\Gamma'_f$, we see that equivalence classes are indeed closed under reparameterization (Definition~\ref{deff:sim}(\ref{deff:sim:reparameterisation})).
			
			Next we let $\gamma,\gamma',\rho,\rho'\in\Gamma'$ such that $\rho\cdot\gamma\cdot\rho'$ and $\rho\cdot\gamma'\cdot\rho'$ are in $\Gamma'$.
			Let $g,g'\in\Mor(\KCP)$ such that $\rho\in\Gamma'_g$ and $\rho'\in\Gamma'_{g'}$.
			Assume $\gamma\sim'\gamma'$.
			Then $\gamma,\gamma'\in\Gamma'_f$, for some $f\in\Mor(\KCP)$.
			So, $\rho\cdot\gamma\cdot\rho'$ and $\rho\cdot\gamma'\cdot\rho'$ are both in $\Gamma'_{g'\circ f\circ g}$.
			
			Now assume $\rho\cdot\gamma\cdot\rho'\sim'\rho\cdot\gamma'\cdot\rho'$.
			Let $f,f'\in\Mor(\KCP)$ such that $\gamma\in\Gamma'_f$ and $\gamma'\in\Gamma'_{f'}$.
			We see $H^{-1}(g'\circ f\circ g)$ is some $[\sigma]^{-1}[\delta]$ and $H^{-1}(g'\circ f'\circ g)$ is some $[\sigma']^{-1}[\delta']$.
			Without loss of generality, we may assume $[\sigma]=[\sigma']$ since both $[\delta]$ and $[\delta']$ coincide for the part associated to $H^{-1}(g')$.
			By assumption $[\sigma]^{-1}[\delta]=[\sigma]^{-1}[\delta']$ so $[\delta]=[\delta']$.
			We know $\delta=\tilde{\rho}\cdot\tilde{\gamma}\cdot\tilde{\rho'}$ and $\delta'=\tilde{\rho}\cdot\tilde{\gamma}'\cdot\tilde{\rho}'$, for $\tilde{\rho},\tilde{\rho'},\tilde{\gamma},\tilde{\gamma'}\in\Gamma$, such that $\delta\sim\delta'$.
			Specifically: $Hp(\tilde{\gamma})=f$, $Hp(\tilde{\gamma}')=f'$, $Hp(\tilde{\rho})=g$, and $Hp(\tilde{\rho}')=g'$.
			Since $\sim$ of $\Space$ satisfies Definition~\ref{deff:sim}(\ref{deff:sim:compose}) we have $\tilde{\gamma}=\tilde{\gamma}'$.
			Thus $g=g'$, satisfying Definition~\ref{deff:sim}(\ref{deff:sim:compose}).
			Therefore, $\KCP$ is (isomorphic to) the path category of $({\XX'},{\Gamma'}{/}{\sim'})$.
		\end{proof}
	
		\begin{deff}[finitary refinement]\label{deff:finitary refinement}
			Let $\Pf,\Pf'\in\Psc$ such that $\Pf$ refines $\Pf'$.
			We say $\Pf$ is a \emph{finitary refinement} of $\Pf'$ if, for each $\Xpixel'\in\Pf'$, there are at most finitely-many $\Xpixel\in\Pf$ such that $\Xpixel\subset\Xpixel'$.
		\end{deff}
		
		\begin{prop}\label{prop:Init}
			Let $\Pf,\Pf\in\Psc$ such that $\Pf$ is a finitary refinement of $\Pf'$.
			There there is a functor $\Init:\KC{\Pf'}\to\KCP$.
		\end{prop}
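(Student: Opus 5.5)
Here $\Pf$ is a finitary refinement of $\Pf'$, so each pixel of $\Pf'$ is a finite union of pixels of $\Pf$. I will build $\Init$ directly at the level of the quivers: define it on the vertices of $Q(C,\Pf')$ and on the arrows, check that it respects the relations defining $\KC{\Pf'}$, and then extend to compositions.

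\textbf{Objects.} Let $\Xpixel'\in\Pf'$. By Lemma~\ref{lem:initial subpixel}, applied to the finitely many pixels $\Xpixel_1,\dots,\Xpixel_n\in\Pf$ contained in $\Xpixel'$, there is an initial pixel $\Xpixel_j$. Lemma~\ref{lem:pixels are directed} forbids paths in both directions between distinct such subpixels, so this initial pixel should be unique. I set $\Init(\Xpixel')=\Xpixel_j$.

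\textbf{Arrows.} Let $\alpha:\Xpixel'\to\Ypixel'$ be an arrow of $Q(C,\Pf')$, realized by a pseudo arrow $[\rho]^{-1}[\gamma]$ in $\boxed{C}^{\Pf'}$. Write $x_0$ for the sample point of $\Init(\Xpixel')$ and $y_0$ for that of $\Init(\Ypixel')$.
\begin{enumerate}
\item Precompose $\gamma$ with a path inside $\Xpixel'$ starting at $x_0$; initiality and Lemma~\ref{lem:x two steps from y in a pixel} provide such a path.
\item Use the Ore condition, Definition~\ref{deff:screen}(\ref{deff:screen:ore}), to land the endpoint so that it is reachable by a $\Sigma_{\Pf'}$-path from $y_0$.
\item The result is a path $\tilde\gamma$ from $x_0$, together with a path $\tilde\rho$ inside $\Ypixel'$ from $y_0$, having the same endpoint.
\end{enumerate}
Since $\tilde\rho$ lies in $\Ypixel'$, it crosses only finitely many $\Pf$-pixels, by discreteness. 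So $\tilde\rho$ is a finite composition of $\Pf$-pseudo arrows starting at $\Init(\Ypixel')$. The delicate point is that $\tilde\rho$ is not invertible in $\boxed{C}^{\Pf}$.

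To handle this, I will instead \emph{choose} $\tilde\gamma$ to end at a point of the $\Pf$-pixel containing $y_0$. That such a choice exists is exactly what initiality of $\Init(\Ypixel')$ must supply: every point of $\Ypixel'$ is reachable from $\Init(\Ypixel')$, and one should be able to reroute within $\Ypixel'$ using thinness, Definition~\ref{deff:screen}(\ref{deff:screen:thin}). Then $\Init(\alpha):=Hp([\tilde\gamma])$, a morphism $\Init(\Xpixel')\to\Init(\Ypixel')$ in $\KCP$, where $H$ is the functor from the proof of Theorem~\ref{thm:barPlocal yields cat equivalent to cat from quiver}(\ref{thm:barPlocal yields cat equivalent to cat from quiver:C}) and $p$ is the localization functor.

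\textbf{Well-definedness.} I need to check two things.
\begin{enumerate}
\item The construction is independent of the choices of $\tilde\gamma$. Any two choices start at $x_0$, end in the pixel of $y_0$, and are $\Pf'$-equivalent. I expect them to be $\Pf$-equivalent by Definition~\ref{deff:screen}(\ref{deff:screen:path requirement}) combined with thinness. This is where finiteness of the subpixel sets, through Lemma~\ref{lem:pixels are directed}, is used.
\item Compositions of arrows that agree under $\Psi$ in $\boxed{C}^{\Pf'}$ map to equal morphisms of $\KCP$. I will reduce this to concatenating the lifts: the lift of a composite is the concatenation of lifts, up to $\sim$, and then apply item~1 again.
\end{enumerate}
Functoriality then follows by construction. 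Identities go to identities, since the lift of the constant path at $s_{\Xpixel'}$ can be taken inside $\Init(\Xpixel')$.

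\textbf{Main obstacle.} The crux is the rerouting step: showing that a $\Pf'$-pseudo arrow can be represented by a path from the initial subpixel of the source to the initial subpixel of the target, and that this representative is unique up to $\sim$ in $\boxed{C}^{\Pf}$. I will attack it by a diagram chase like the one in Lemma~\ref{lem:pixels are directed}, using the Ore square and thinness in $\Pf'$.
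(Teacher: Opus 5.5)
\textbf{There is a genuine gap: the lift that carries the proof is never constructed.} Your outline matches the paper's: initial subpixel on objects, lift a representative to a path between initial subpixels, set $\Init:=Hp$ of the lift, then check independence of choices and composition. But the step that carries all the content, producing the lift, is left open, and the route you sketch for it cannot work.

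Pushing $\gamma$ forward with the first half of Definition~\ref{deff:screen}(\ref{deff:screen:ore}) lands the endpoint in $\Ypixel'$ downstream of $y_0$. As you note, it is connected to $y_0$ by a path in $\Sigma_{\Pf'}$, not in $\Sigma_{\Pf}$. Since $\Init(\Ypixel')$ is initial, a path inside $\Ypixel'$ starting in another subpixel never enters it (Lemma~\ref{lem:pixels are directed}). So no further forward rerouting can land you there. Your fallback, choosing $\tilde\gamma$ to end in the $\Pf$-pixel of $y_0$ by rerouting with thinness, restates the goal; thinness alone gives nothing.

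Step~(1) is also false as stated: initiality gives no path from the sample point $x_0$ to $\gamma(0)$. In $\RR$ with $\Pf'=\{[2i,2i+2)\mid i\in\ZZ\}$ and $\Pf=\{[i,i+1)\mid i\in\ZZ\}$, take $\Xpixel'=[0,2)$, $x_0=1/2\in[0,1)=\Init(\Xpixel')$ and $\gamma(0)=1/4$; paths in $\RR$ are nondecreasing, so none runs from $1/2$ to $1/4$.

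\textbf{The missing idea is to pull back, using the second half of the Ore condition.}
\begin{enumerate}
\item By Lemma~\ref{lem:x two steps from y in a pixel}, choose $y$ with paths $\rho_3\colon y\to\rho(0)$ and $\rho_4\colon y\to y'$ inside $\Ypixel'$, for some $y'\in\Init(\Ypixel')$. Since $\rho_4$ runs inside $\Ypixel'$ into the initial subpixel, Lemma~\ref{lem:pixels are directed} forces $y\in\Init(\Ypixel')$.
\item Likewise obtain $x\in\Init(\Xpixel')$ and $\rho_1\colon x\to\gamma(0)$ inside $\Xpixel'$.
\item Now $\rho_3\cdot\rho$ lies in $\Ypixel'$ and ends at $\gamma(1)$. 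Apply the second half of Definition~\ref{deff:screen}(\ref{deff:screen:ore}) for $\Pf'$ to $\rho_3\cdot\rho$ and $\rho_1\cdot\gamma$. This yields $\tilde\rho$ inside $\Xpixel'$ ending at $x$, and $\tilde\gamma$ ending \emph{exactly} at $y$, with $\tilde\gamma\cdot\rho_3\cdot\rho\sim\tilde\rho\cdot\rho_1\cdot\gamma$.
\item Directedness again puts $\tilde\rho(0)$ in $\Init(\Xpixel')$. Then $H'p'([\tilde\gamma])=H'([\rho]^{-1}[\gamma])$, and you set $\Init(f):=Hp([\tilde\gamma])$.
\end{enumerate}
Nothing needs to be inverted in $\boxed{C}^{\Pf}$. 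Note that the lift starts somewhere in $\Init(\Xpixel')$, not at $x_0$.

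The same bridge (a common source from Lemma~\ref{lem:x two steps from y in a pixel}, followed by the dual Ore square) is what compatibility with composition needs. The end of one lift and the start of the next only share a $\Pf$-pixel, so lifts cannot literally be concatenated.

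For independence of choices you need neither Definition~\ref{deff:screen}(\ref{deff:screen:path requirement}) nor finiteness. Join two lifts to common endpoints by paths inside the initial subpixels; the resulting paths have the same image under $H'p'$. Every morphism of $C$ is monic and epic by Definition~\ref{deff:sim}(\ref{deff:sim:compose}), so $p'$ is faithful, and $H'$ is an equivalence. Hence the two joined paths are $\sim$ and have the same image under $Hp$.
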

		\begin{proof}
			By Lemma~\ref{lem:initial subpixel}, for each $\Xpixel'\in\Pf'$ there is an initial $\Xpixel\in\Pf$ such that $\Xpixel\subset\Xpixel'$.
			On objects, define $\Init(\Xpixel')$ to be this initial $\Xpixel$.
			
			Let $f$ be a morphism in $\KC{\Pf'}$.
			Then there is some morphism $[\rho]^{-1}[\gamma]$ in $\boxed{C}^{\Pf'}$ such that $H'([\rho]^{-1}[\gamma])=f$.
			The reader may follow the next part of the proof using Figure~\ref{fig:Init:first pass}.
			Choose $x'\in\Xpixel$.
			By Lemma~\ref{lem:x two steps from y in a pixel}, there is $x\in\Xpixel$ and $\rho_1,\rho_2\in\Gamma$ such that $\rho_1(0)=\rho_2(0)=x$, $\rho_1(1)=\gamma(0)$, $\rho_2(0)=x'$, and $\im(\rho_1)\cup\im(\rho_2)\subset\Xpixel$.
			Since $\Xpixel$ is initial in $\Xpixel'$, we know $x\in\Xpixel$ also.
			\begin{figure}
			\begin{center}
			\begin{tikzpicture}[scale=1.8]
				\draw[red] (-1.85,-.5) -- (1.35,-.5) -- (1.35,1.65) -- (-1.85,1.65) -- cycle;
				\draw[red] (2.25,-.5) -- (5,-.5) -- (5,2) -- (2.25,2) -- cycle;
				\draw[blue] (-1.35,.15) -- (-.35,.15) -- (-.35,1.15) -- (-1.35,1.15) -- cycle;
				\draw[blue] (2.65,0.5) -- (3.65,0.5) -- (3.65,1.5) -- (2.65,1.5) -- cycle;
				\draw[blue] (-1.35,1.15) node[anchor=north west] {\scriptsize $\Xpixel$};
				\draw[blue] (2.65,1.5) node[anchor=north west] {\scriptsize $\Ypixel$};
				\draw[red] (-1.85,1.65) node[anchor=north west] {$\Xpixel'$};
				\draw[red] (2.25,2) node[anchor=north west] {$\Ypixel'$};
				\filldraw (0,0) circle[radius=.3mm];
				\draw (0,0) node[anchor=north] {\scriptsize $\gamma(0)$}; 
				\filldraw (4,0) circle[radius=.3mm];
				\draw (4,0) node[anchor=west] {\scriptsize $\gamma(1)$};
				\filldraw (4,.5) circle[radius=.3mm];
				\draw (4,.5) node[anchor=west] {\scriptsize $\rho(0)$};
				\filldraw (-1,.5) circle[radius=.3mm];
				\draw (-1,.5) node[anchor=east] {\scriptsize $x$};
				\filldraw (-.5,.5) circle[radius=.3mm];
				\draw (-.5,.5) node[anchor=south] {\scriptsize $x'$};
				\filldraw (3,1) circle[radius=.3mm];
				\draw (3,1) node[anchor=south] {\scriptsize $y$};
				\filldraw (3.5,1) circle[radius=.3mm];			The 
				\draw (3.5,1) node[anchor=south] {\scriptsize $y'$};
				\filldraw (-.5,1) circle[radius=.3mm];
				\draw (-.5,1) node[anchor=east] {\scriptsize $\tilde{\rho}(0)$};
				\draw (0,0) -- (4,0);
				\draw[->] (0,0) -- (2,0);
				\draw (2,0) node[anchor=north] {\scriptsize $\gamma$};
				\draw (4,.5) -- (4,0);
				\draw[->] (4,.5) -- (4,.25);
				\draw (4,.25) node[anchor=west] {\scriptsize $\rho$};
				\draw (-1,.5) -- (0,0);
				\draw[->] (-1,.5) -- (-.5,.25);
				\draw (-.5,.25) node[anchor=east] {\scriptsize $\rho_1$};
				\draw (-1,.5) -- (-.5,.5);
				\draw[->] (-1,.5) -- (-.75,.5);
				\draw (-.75,.45) node[anchor=south] {\scriptsize $\rho_2$};
				\draw (3,1) -- (4,.5);
				\draw[->] (3,1) -- (3.5,.75);
				\draw (3.5,.75) node[anchor=east] {\scriptsize $\rho_3$};
				\draw (3,1) -- (3.5,1);
				\draw[->] (3,1) -- (3.25,1);
				\draw (3.25,1) node[anchor=south] {\scriptsize $\rho_4$};
				\draw (-1,.5) -- (-.5,1);
				\draw[->] (-1,.5) -- (-.75,.75);
				\draw (-.75,.75) node[anchor=east] {\scriptsize $\tilde{\rho}$};
				\draw (-.5,1) -- (3,1);
				\draw[->] (-.5,1) -- (1.25,1);
				\draw (1.25,1) node[anchor=south] {\scriptsize $\tilde{\gamma}$};
			\end{tikzpicture}
			\caption{The first schematic used in the proof of Proposition~\ref{prop:Init}.
			The red boxes represent pixels in $\Pf'$.
			The blue boxes represent pixels in $\Pf$.
			The labels are the names of the pixels used in the proof of Proposition~\ref{prop:Init}.
			Points are labeled and paths are labeled near the arrows indicating their directions.}\label{fig:Init:first pass}
			\end{center}
			\end{figure}

			Let $\Ypixel'\ni\gamma(1)$ and $y'\in\Ypixel=\Init(\Ypixel')$.
			Again by Lemma~\ref{lem:x two steps from y in a pixel}, there is $y\in\Ypixel$ and $\rho_3,\rho_4\in\Gamma$ such that $\rho_3(0)=\rho_4(0)=y$, $\rho_3(1)=\rho(0)$, $\rho_4(1)=y'$, and $\im(\rho_3)\cup\im(\rho_4)\subset\Ypixel$.
			Again since $\Ypixel$ is initial in $\Ypixel'$, we know $y\in\Ypixel$ also.
			
			Let $\gamma'=\rho_1\cdot\gamma$ and $\rho'=\rho_3\cdot\rho$.
			Notice that $\im(\rho_3\cdot\rho)\subset\Ypixel'$.
			So we have $\gamma'(0)\in\Xpixel'$, $\gamma'(1)\in\Ypixel'$, $\rho'(1)=\gamma'(1)$, and $\im(\rho')\subset\Ypixel'$.
			By Definition~\ref{deff:screen}(\ref{deff:screen:ore}), there are $\tilde{\rho},\tilde{\gamma}\in\Gamma$ such that $\im(\tilde{\rho})\subset\Xpixel'$, $\tilde{\rho}(0)=\tilde{\gamma}(0)$, $\tilde{\rho}(1)=x$, $\tilde{\gamma}(1)=y$, and $\tilde{\rho}\cdot\gamma\sim\tilde{\gamma}\cdot\rho$.
			Since $\Xpixel$ is initial in $\Xpixel'$, we know $\tilde{\rho}(0)=\tilde{\gamma}(0)\in\Xpixel$.
			Thus,
			\[
				H'([\tilde{\gamma}])=H'([\tilde{\gamma}\cdot\rho'])=H'([\tilde{\rho}\cdot\gamma'])=H'([\gamma'])=H'([\rho']^{-1}[\gamma'])=H'([\rho]^{-1}[\gamma]).
			\]
			
			Since $[\tilde{\gamma}]$ is also a morphism in $C$, we have $H'([\tilde{\gamma}])=H'p'([\tilde{\gamma}])$.
			Now we have a path $\tilde{\gamma}$ in $C$ with $\tilde{\gamma}(0)\in\Xpixel$ an			The d $\tilde{\gamma}(1)\in\Ypixel$.
			Define $\Init(f)$ to be $Hp([\tilde{\gamma}])$.
			
			Suppose $\tilde{\gamma}'\in\Gamma$ such that $H'p'([\tilde{\gamma}'])=f$, $\tilde{\gamma}'(0)\in\Xpixel$, and $\tilde{\gamma}'(1)\in\Ypixel$.
			By Lemma~\ref{lem:x two steps from y in a pixel}, we have $\rho_1,\rho_2,\rho_3,\rho_4\in\Gamma$ satisfying the following.
			We have $\rho_1(0)=\rho_2(0)$, $\rho_1(1)=\tilde{\gamma}(0)$, $\rho_2(1)=\tilde{\gamma}'(0)$, and $\im(\rho_1)\cup\im(\rho_2)\subset\Xpixel$.
			We also have $\rho_3(1)=\rho_4(1)$, $\rho_3(0)=\tilde{\gamma}(1)$, $\rho_4(0)=\tilde{\gamma}'(1)$, and $\im(\rho_3)\cup\im(\rho_4)\subset\Ypixel$.
			Then $H'p'([\rho_1\cdot\tilde{\gamma}\cdot\rho_3])=H'p'([\rho_2\cdot\tilde{\gamma}'\cdot\rho_4])$.
			Up to reparameterization, by Definition~\ref{deff:screen}(\ref{deff:screen:path requirement}) we have $\rho_1\cdot\tilde{\gamma}\cdot\rho_3\sim\rho_2\cdot\tilde{\gamma}'\cdot\rho_4$.
			Thus, $Hp[\tilde{\gamma}']=Hp[\tilde{\gamma}]$ and so $\Init$ is well-defined on morphisms.
			
			Suppose $f$ and $g$ are morphisms in $\KC{\Pf'}$ such that $g\circ f$ is defined.
			The reader may use Figure~\ref{fig:Init:second pass} to follow the next part of the proof.
			Let $[\gamma]$ and $[\delta]$ be respective morphisms in $C$ such that $Hp([\gamma])=\Init(f)$ and $Hp([\delta])=\Init(g)$.
			Let $\Xpixel\ni\gamma(0)$, $\Ypixel\ni\gamma(1),\delta(0)$, and $\Zpixel\ni\delta(1)$, for $\Xpixel,\Ypixel,\Zpixel\in\Pf$.
			Then, in $\KCP$, $Hp([\delta]\circ[\gamma])$ is defined.
			\begin{figure}
			\begin{center}
			\begin{tikzpicture}[scale=1.8]
				\draw[red] (-.6,-.35) -- (.35,-.35) -- (.35,.85) -- (-.6,.85) -- cycle;
				\draw[red] (.35,.85) node[anchor=north west] {$\Xpixel'$};
				\draw[red] (1.65,-.35) -- (2.85,-.35) -- (2.85,.85) -- (1.65,.85) -- cycle;
				\draw[red] (2.85,.85) node[anchor=north west] {$\Ypixel'$};
				\draw[red] (4.15,-.35) -- (5.35,-.35) -- (5.35,.35) -- (4.15,.35) -- cycle;
				\draw[red] (4.15,0.35) node[anchor=south west] {$\Zpixel'$};
				\draw[blue] (-.5,-.25) -- (.25,-.25) -- (.25,.75) -- (-.5,.75) -- cycle;
				\draw[blue] (-.5,.25) node[anchor=west] {\scriptsize $\Xpixel$};
				\draw[blue] (1.75,-.25) -- (2.75,-.25) -- (2.75,.75) -- (1.75,.75) -- cycle;
				\draw[blue] (2.75,.75) node[anchor=north east] {\scriptsize $\Ypixel$};
				\draw[blue] (4.25,-.25) -- (5.25,-.25) -- (5.25,.25) -- (4.25,.25) -- cycle;
				\draw[blue] (5.25,0) node[anchor=east] {\scriptsize $\Zpixel$};
				\filldraw (0,0) circle[radius=.3mm];
				\draw (0,0) node[anchor=east] {\scriptsize $\gamma(0)$};
				\filldraw (2,0) circle[radius=.3mm];
				\draw (2,0) node[anchor=north] {\scriptsize $\gamma(1)$};
				\filldraw (2.5,0) circle[radius=.3mm];
				\draw (2.5,0) node[anchor=north] {\scriptsize $\delta(0)$};
				\filldraw (4.5,0) circle[radius=.3mm];
				\draw (4.5,0) node[anchor=west] {\scriptsize $\delta(1)$};
				\filldraw (2,.5) circle[radius=.3mm];
				\draw (2,.5) node[anchor=west] {\scriptsize $\rho_1(0)$};
				\filldraw (0,.5) circle[radius=.3mm];
				\draw (0,.5) node[anchor=east] {\scriptsize $\rho'(1)$};
				\draw (0,0) -- (2,0);
				\draw[->] (0,0) -- (1,0);
				\draw (1,0) node[anchor=north] {\scriptsize $\gamma$};
				\draw (2.5,0) -- (4.5,0);
				\draw[->] (2.5,0) -- (3.5,0);
				\draw (3.5,0) node[anchor=north] {\scriptsize $\delta$};
				\draw (0,0) -- (0,0.5);
				\draw[->] (0,0) -- (0,0.25);
				\draw (0,0.25) node[anchor=east] {\scriptsize $\rho'$};
				\draw (2,0.5) -- (2,0);
				\draw[->] (2,0.5) -- (2,.25);
				\draw (2,.25) node[anchor=east] {\scriptsize $\rho_1$};
				\draw (2,0.5) -- (2.5,0);
				\draw[->] (2,0.5) -- (2.25,.25);
				\draw (2.25,.25) node[anchor=west] {\scriptsize $\rho_2$};
				\draw (0,.5) -- (2,.5);
				\draw[->] (0,.5) -- (1,.5);
				\draw (1,.5) node[anchor=south] {\scriptsize $\gamma'$};
			\end{tikzpicture}
			\caption{The second schematic used in the proof of Proposition~\ref{prop:Init}.
			The red boxes represent pixels in $\Pf'$ and the blue boxes represent pixels in $\Pf$.
			The labels are the labels used in the proof of Proposition~\ref{prop:Init}.
			Points are labeled and paths are labeled near the arrows indicating their directions.}\label{fig:Init:second pass}
			\end{center}
			\end{figure}
			
			Again by Lemma~\ref{lem:x two steps from y in a pixel}, we have $\rho_1,\rho_2\in\Gamma$ such that $\rho_1(0)=\rho_2(0)$, $\rho_1(1)=\gamma(1)$, $\rho_2(1)=\delta(0)$, and $\im(\rho_1)\cup\im(\rho_2)\subset\Ypixel$.
			By Definition~\ref{deff:screen}(\ref{deff:screen:ore}), there is a $\rho',\gamma'\in\Gamma$ with $\rho'(0)=\gamma'(0)$, $\rho'(1)=\gamma(0)$, $\gamma'(1)=\rho_1(0)$, $\im(\rho')\subset\Xpixel$, and $\rho'\cdot\gamma\sim\gamma'\cdot\rho_1$.
			Then $Hp([\gamma'\cdot\rho_1])=Hp([\rho'\cdot\gamma])=Hp([\gamma])$ and so $\Init(f)=Hp([\gamma'\cdot\rho_1])$.
			Thus, $Hp([\gamma'\cdot\rho_1\cdot\delta])=Hp([\delta]\circ[\gamma])$ and, moreover, $H'p'([\gamma'\cdot\rho_1\cdot\delta])=H'p'([\delta]\circ[\gamma])$.
			Therefore, $\Init(g)\circ\Init(f)=\Init(g\circ f)$ and so $\Init$ is a functor.
	\end{proof}
		
		One can make the dual lemma to Lemma~\ref{lem:initial subpixel} that instead picks out a terminal pixel and define a functor $\mathrm{Term}:\KC{\Pf'}\to\KCP$.
		However, in the present paper, the initial pixel serves us better, especially in Section~\ref{sec:sites:pathed sites}.
		
		\begin{rem}\label{rem:Init is injective on hom spaces}
			Notice that if $f\neq f'\in\Hom_{\KC{\Pf'}}(\Xpixel',\Ypixel')$, then, using techniques in the proof of Proposition~\ref{prop:Init}, we see that any pair $\gamma,\gamma'$ such that $H'p'(\gamma)=f$ and $H'p'(\gamma')=f'$, we must have $\gamma\not\sim\gamma'$ and in particular $Hp(\gamma)\neq Hp(\gamma')$.
			That is, $\Init$ is injective on $\Hom$-spaces.
		\end{rem}

	\section{Representations}\label{sec:representations}
		This section is dedicated to representations of $\Ac$ (Notation~\ref{note:Ac}).
		In Section~\ref{sec:representations:reps and screens}, we recall the definition of a representation of a category and prove some results about how screens and representations interact.
		We say a representation is pixelated if there is a screen that is compatible with it in a particular way (Definition~\ref{deff:pixelated representation}).
		In Section~\ref{sec:representations:abelian subcategories and exact structures} we discuss abelian categories of pixelated representations and exact structures on these categories.
	
		For all of Section~\ref{sec:representations}, we fix the following.
		\begin{itemize}
			\item a triple $\Space$ in $\Xbf$ (Definitions~\ref{deff:Gamma},\ref{deff:sim}, and \ref{deff:X category}) and
			\item a path based ideal $\Ic$ of $\Cc$. (Definitions~\ref{deff:path category}~and~\ref{deff:path based ideal}).
		\end{itemize}
		We also assume that $\mathscr{P}$ is nonempty (that a screen $\Pf$ of $\Space$ exists, Definition~\ref{deff:screen}).
		Recall $\Ac$ (Notation~\ref{note:AbarP}), $\IbarP$ (Definition~\ref{deff:P-complete ideal}), $\AbarP$ (Notation~\ref{note:AbarP}), and $\PfA$ (Definition~\ref{deff:Sigma} and Propostion~\ref{prop:Sigma yields a calculus of fractions}).
		\bigskip
		
		Recall that $\Bbbk$ is a commutive ring.
		We fix a $\Bbbk$-linear abelian category $\Kcal$ for this section.
		The reader may choose $\Kcal=\kMod$ to guide their intuition.
		
		\subsection{Representations and screens}\label{sec:representations:reps and screens}
		\begin{deff}[representation]\label{deff:representation}
			Let $D$ be a category and let $\Dcal$ be a $\Bbbk$-linear category.
			A \emph{representation $M$ of $D$ with values in $\Kcal$} is a functor $M:D\to \Kcal$.
			A \emph{representation $M$ of $\Dcal$ with values in $\Kcal$} is a $\Bbbk$-linear functor $M:\Dcal\to\Kcal$.
			
			In both cases, the representation $M$ is \emph{pointwise finite-length} (\emph{pwf}) if $M$ factors through finite-length objects in $\Kcal$.
			Also in both cases, the support of $M$, denoted $\supp M$, is the class of objects in $D$ or $\Dcal$ defined by $x\in\supp M$ if and only if $M(x)\neq 0$.
		\end{deff}
		
		If $\Bbbk$ is a field and $\Kcal=\kMod$, then finite-length $\Bbbk$-modules are finite-dimensional vector spaces.
		In the literature, a pwf representation in this case is called \emph{pointwise finite-dimensional}.
		See, for example, \cite{BCB20,HR24}.
		
		\begin{note}\label{note:rep categories}
			We denote by $\RepK(\Dcal)$ (respectively, $\RepK(D)$) the category of representations of $\Dcal$ (respectively, of $D$) with values in $\Kcal$.
			We denote by $\rpwfK(\Dcal)$ (respectively, $\rpwfK(D)$) the full subcategory of $\Rep(\Dcal)$ (respectively, of $\RepK(D)$) whose objects are functors that factor through finite-length objects in $\Kcal$.
		\end{note}
		
		Recall the functors $H:\PfC\to\KCP$ and $H^{-1}:\KCP\to\PfC$ (proof of Theorem ~\ref{thm:barPlocal yields cat equivalent to cat from quiver}(\ref{thm:barPlocal yields cat equivalent to cat from quiver:C})).
		Recall also the functors $F:\KcAP\to \PfA$, and $G:\PfA\to\KcAP$ (Definitions~\ref{deff:F functor} and \ref{deff:G functor}, respectively).
\bigskip
		
		\begin{rem}\label{rem:Fstar and Gstar are equivalences}
			Since $F$ and $G$ are equivalences of categories, the induced functors
			\begin{align*}
				(H^{-1})^*:\RepK\left(\PfC\right) &\to \RepK\left(\KCP\right) &
				(H^{-1})^*(M) &= M\circ H^{-1} \\
				H^*:\RepK\left(\KCP\right)&\to \RepK\left(\PfC)\right) &
				H(M) &= M\circ H \\
				F^*:\RepK\left(\PfA\right)&\to \RepK\left(\KcAP\right) &
				F(M) &= M\circ F \\
				G^*:\RepK\left(\KcAP\right)&\to\RepK\left(\PfA\right) &
				G(M) &= M\circ G
			\end{align*}
			are also equivalences of categories.
			In particular, they are exact.
		\end{rem}
		
		If the category $\RepK(\Ac)$ is idempotent complete and has enough compact objects, then it has the Krull--Remak--Schmidt--Azumaya property \cite[Theorem 4.1]{braez}.
		Since $\Ac$ is small, if $\Bbbk$ is a field and $\Kcal=\kVec$ then $\rpwfK(\Ac)$ has the Krull--Remak--Schmidt--Azumaya property by \cite[Theorem 1.1]{BCB20}.
		The same statements are true for $\RepK(C)$ and $\rpwfK(C)$, respectively.
		
		Since, for each $\Pf\in\mathscr{P}$, we have that $\PfC$ and $\PfA$ are equivalent to path categories, we wish to study representations that ``play nice'' with screens.
	  	
		\begin{deff}[pixelated]\label{deff:pixelated representation}
			Let $M$ be a representation in $\RepK(\Ac)$ (respectively, $\RepK(C)$) and $\Pf\in\Psc$.
			We say $\Pf$ \emph{pixelates} $M$ if $M(\sigma)$ is an isomorphism, for all $\sigma\in\Sigma_{\Pf}$ (respectively, $M([\rho])$ is an isomorphism for all $[\rho]\in\Sigma_{\Pf}$).
			If such a $\Pf$ exists we say $M$ is \emph{pixelated} or \emph{pixelated by $\Pf$}.
			
			We denote by $\Psc_M$ the screens of $\Space$ that pixelate $M$.
		\end{deff}
		
		The set $\Psc_M$ inherits its partial order from $\Psc$ (Definition~\ref{deff:set of screens}).
		
		\begin{rem}\label{rem:pixelating partitions}
			We make two statements regarding partitions that pixelate a representation $M$.
			\begin{itemize}
				\item Let $M$ be a representation in $\RepK(\Ac)$.
					If $x,y$ are in a dead pixel of $\Pf\in\Psc_M$, then $M(0:x\to y)$ is an isomorphism.
					So, $M(x)=0$ for all dead pixels $\Xpixel\in\Pf$ and all $x\in\Xpixel$.
				\item Let $M$ be a representation in either $\RepK(\Ac)$ or $\RepK(C)$.
					Notice that if $\Pf,\Pf'\in\Psc$, $\Pf'$ pixelates $M$, and $\Pf$ refines $\Pf'$, then $\Pf$ pixelates $M$.
					Thus, $\Psc_M$ is closed under refinements.
			\end{itemize}
		\end{rem}
	
		\begin{prop}\label{prop:existence of pixelating partition implies best fit partition}
			Let $M$ be pixelated in $\Rep(\Ac)$ or in $\RepK(C)$.
			Then there exists a screen $\Pf_M\in\Psc_M$ such that $\Pf_M$ is maximal in $\Psc_M$.
		\end{prop}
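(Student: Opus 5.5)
The plan is to mimic the proof of Proposition~\ref{prop:maximal screen} and apply the Kuratowski--Zorn lemma, now to the poset $\Psc_M$ rather than $\Psc$. By hypothesis $M$ is pixelated, so $\Psc_M\neq\emptyset$. Let $\Tsc\subseteq\Psc_M$ be a chain. Since $\Tsc$ is in particular a chain in $\Psc$, the construction in the proof of Proposition~\ref{prop:maximal screen} gives the colimit partition $\Pf_\Tsc$, which is a screen. Each pixel $\overline{\Xpixel}$ of $\Pf_\Tsc$ is the union of the pixels of members of $\Tsc$ that map to it, so every $\Pf\in\Tsc$ refines $\Pf_\Tsc$. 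Hence $\Pf_\Tsc$ is an upper bound for $\Tsc$ in $\Psc$. The remaining task is to show $\Pf_\Tsc\in\Psc_M$; once that is done, Zorn's lemma yields a maximal element $\Pf_M$ of $\Psc_M$.

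For the case $M\in\RepK(C)$, take $[\rho]\in\Sigma_{\Pf_\Tsc}$, so $\im(\rho)\subset\overline{\Xpixel}$ for some $\overline{\Xpixel}\in\Pf_\Tsc$. The ``Trick'' in the proof of Proposition~\ref{prop:maximal screen} gives some $\Pf\in\Tsc$ and $\Xpixel\in\Pf$ with $\im(\rho)\subset\Xpixel\subseteq\overline{\Xpixel}$. Then $[\rho]\in\Sigma_\Pf$, and since $\Pf$ pixelates $M$, $M([\rho])$ is an isomorphism.

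The linear case $M\in\RepK(\Ac)$ is the main obstacle, because $\Sigma_\Pf$ lives in $\AbarP$ and depends on $\Pf$ through the completion $\IbarP$ and through pre-dead pixels. I read ``$M(\sigma)$ is an isomorphism for $\sigma\in\Sigma_\Pf$'' as a condition on lifts to $\Ac$: $M([\rho])$ is invertible when $\im(\rho)$ lies in a pixel, and $M(x)=0$ on pre-dead pixels, using Remark~\ref{rem:pixelating partitions}. There are two kinds of morphisms to handle.

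First, for a nonzero $[\rho]$ with $\im(\rho)\subset\overline{\Xpixel}$, the Trick again places $\rho$ inside a pixel of some $\Pf\in\Tsc$, and the argument is as in the nonlinear case.

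Second, suppose $\overline{\Xpixel}$ is pre-dead for $\Pf_\Tsc$, witnessed by $\omega$ with $\im(\omega)\subset\overline{\Xpixel}$ and $[\omega]=0$ in $\Ac$. The Trick puts $\omega$ in a pixel $\Xpixel$ of some $\Pf\in\Tsc$, so $\Xpixel$ is pre-dead for $\Pf$ and $M$ vanishes on $\Xpixel$. To get $M(x)=0$ for an arbitrary $x\in\overline{\Xpixel}$, use the connectedness argument in the proof of Proposition~\ref{prop:maximal screen}: $x$ and $\omega(0)$ lie in a common pixel of some finer member of the chain, and that pixel is pre-dead. Hence $M(x)=0$. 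Zero maps between zero objects are isomorphisms, so $M(\sigma)$ is invertible for every $\sigma\in\Sigma_{\Pf_\Tsc}$.

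Finally, I would check that $M$ is compatible with the passage to $\overline{\Ic}^{\Pf_\Tsc}$, meaning $M$ kills that ideal. A $\Pf_\Tsc$-equivalence is witnessed by finitely many paths, so by the Trick it is already a $\Pf$-equivalence for some $\Pf\in\Tsc$. Since $M$ factors through $\AbarP$ for that $\Pf$, it factors through $\overline{\Ac}^{\Pf_\Tsc}$ as well. With this, $\Pf_\Tsc\in\Psc_M$, and Zorn's lemma finishes the proof.
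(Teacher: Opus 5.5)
Your proposal is correct and is essentially the paper's proof: Zorn's lemma on $\Psc_M$, with the colimit screen $\Pf_\Tsc$ from Proposition~\ref{prop:maximal screen} as the upper bound, and the Trick to place each path giving a morphism of $\Sigma_{\Pf_\Tsc}$ inside a pixel of a single member of $\Tsc$; your extra treatment of the zero morphisms on pre-dead pixels and of $\overline{\Ic}^{\Pf_\Tsc}$ fills in cases that the paper's proof passes over, since it only considers nonzero $\sigma$. One wording slip: in the pre-dead step you need a member of $\Tsc$ \emph{coarser} than (i.e.\ above) the one whose pixel contains $\im(\omega)$, not a finer one, so that the common pixel of $x$ and $\omega(0)$ contains $\im(\omega)$ and is therefore pre-dead.
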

		\begin{proof}
			We prove the case with $\Rep(\Ac)$ as the proofs of both cases are nearly identical.
			Again, we use the Kuratowski--Zorn lemma as we did in the proof of Proposition~\ref{prop:maximal screen}.
			Let $\Tsc$ be a chain in $\Psc_M$.
	  		Compute $\Pf_\Tsc$ as in the proof of Proposition~\ref{prop:maximal screen}.
	  		We need to show that $\Pf_\Tsc$ pixelates $M$.
	  		
	  		Let $\sigma\in\Sigma_{\Pf_\Tsc}$ be nonzero in $\Ac$.
	  		Then $\sigma=\lambda[\gamma]$ for some $[\gamma]\in\mathrm{Mor}(\Cc)$.
	  		Then there is some $\overline{\Xpixel}\in\Pf_\Tsc$ such that $\im(\gamma)\subseteq\overline{\Xpixel}$.
	  		By our trick from the proof of Proposition~\ref{prop:maximal screen} (page~\pageref{pf:trick}), there is some $\Xpixel\in\Pf\in\Tsc$ such that $\im(\gamma)\subset \Xpixel$.
	  		We see then that $\sigma=\lambda[\gamma]\in\Sigma_{\Pf}$.
	  		Thus, since $\Pf$ pixelates $M$, we have that $M(\sigma)$ is an isomorphism.
	  		Therefore, $\Pf_\Tsc\in\Psc_M$ and so, by the Kuratowski--Zorn lemma, $\Psc_M$ has a maximal element.
	  	\end{proof}
	  	
		Recall the functors $\pi:\Ac\to\PfA$ (Notation~\ref{note:pi}) and $G:\PfA\to \KcAP$ (Definition~\ref{deff:G functor}).
		
		\begin{thm}\label{thm:pixelated rep comes from rep of pixelation}
			Given a pixelated representation $M$ in $\RepK(\Ac)$ or in $\RepK(C)$ and $\Pf\in\Psc_M$, there exists a representation $\overline{M}$ of $\KcAP$ such that $\pi^*(G^*(\overline{M}))\cong M$.
		\end{thm}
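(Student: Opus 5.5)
The plan is to use the universal property of localization, followed by the equivalence of Theorem~\ref{thm:barPlocal yields cat equivalent to cat from quiver}. I treat the $\Bbbk$-linear case $M\in\RepK(\Ac)$ in detail; the case $M\in\RepK(C)$ is the same argument with $p$, $H$ and $\KCP$ in place of $\pi$, $G$ and $\KcAP$, and with no quotient step.

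\textbf{Step 1: factor $M$ through $\AbarP$.} We must show that $M$ kills $\IbarP$, or more precisely its image in $\Ac$. The ideal $\IbarP$ is generated by morphisms $f$ that are $\Pf$-equivalent to some $g\in\Ic$. By Definitions~\ref{deff:P-equivalent paths} and~\ref{deff:P-equivalent morphisms}, such an $f$ is a sum of scalar multiples $\lambda[\gamma]$, and each $\gamma$ satisfies $\rho_1\cdot\gamma\cdot\rho_3\sim\rho_2\cdot\gamma'\cdot\rho_4$ with every $\rho_i$ lying inside a single pixel. Since $\Pf$ pixelates $M$, each $M([\rho_i])$ is an isomorphism. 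Hence $M(f)$ equals a composite of isomorphisms with $M(g)$, and $M(g)=0$ because $g\in\Ic$. So $M(f)=0$.

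This step needs care when some $[\rho_i]$ is already $0$ in $\Ac$, that is, when the pixel is pre-dead. In that case $M([\rho_i])=0$ is an isomorphism, which forces $M$ to vanish on that entire pixel; this is Remark~\ref{rem:pixelating partitions}. The relevant term then vanishes for that reason instead. The outcome is a functor $M':\AbarP\to\Kcal$ with $M=M'\circ q$, where $q:\Ac\to\AbarP$ is the quotient.

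\textbf{Step 2: pass to the localization.} Every $\sigma\in\Sigma_\Pf$ in $\AbarP$ is sent by $M'$ to an isomorphism. For $\sigma=[\rho]$ with $\rho$ inside a pixel, this is the pixelation hypothesis. For $\sigma$ a zero morphism inside a pre-dead pixel, it holds because $M'$ vanishes on that pixel, again by Remark~\ref{rem:pixelating partitions}. The universal property of $\AbarPlocal$ then gives a $\Bbbk$-linear functor $\widehat{M}:\PfA\to\Kcal$ with $\widehat{M}\circ\pi=M$. This is an equality, not just an isomorphism. Additivity is preserved because the localization comes from a calculus of fractions (Proposition~\ref{prop:Sigma yields a calculus of fractions}).

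\textbf{Step 3: transport along the equivalence.} Set $\overline{M}:=\widehat{M}\circ F=F^*(\widehat{M})$, a representation of $\KcAP$. Then $G^*(\overline{M})=\widehat{M}\circ F\circ G$. Since $F$ and $G$ are quasi-inverse (the proof of Theorem~\ref{thm:barPlocal yields cat equivalent to cat from quiver}(\ref{thm:barPlocal yields cat equivalent to cat from quiver:A})), there is a natural isomorphism $FG\cong\mathrm{id}_{\PfA}$. Its components are the $\varphi_x$, or $0$ on dead objects. Whiskering with $\widehat{M}$ gives $G^*(\overline{M})\cong\widehat{M}$, and so $\pi^*G^*(\overline{M})\cong\pi^*\widehat{M}=M$.

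\textbf{Main obstacle.} I expect the hardest part to be checking that $FG\cong\mathrm{id}$ is genuinely natural. The paper only states $FG(x)=s_\Xpixel\cong x$ together with full faithfulness. I would handle this by the standard fact that a fully faithful, essentially surjective functor admits a natural isomorphism to the identity after composing with a quasi-inverse. Equivalently, one can check naturality directly using $f=\varphi_y^{-1}\hat f\varphi_x$ from Definition~\ref{deff:G functor}. The pre-dead bookkeeping in Step 1 is the secondary technical point.
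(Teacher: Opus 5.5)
Your argument is correct. The representation it produces coincides with the paper's: $\widehat{M}\circ F$ sends $\Xpixel$ to $M(s_{\Xpixel})$, and sends an arrow to $M$ evaluated on the corresponding pseudo arrow with $M$ inverted on the sample paths. That is exactly the paper's assignment; the difference lies in how it is verified. The paper never uses the universal property of $\PfA$. It defines $\overline{M}$ arrow by arrow and asserts, without further check, that this gives a representation of $\KcAP=\QcAP/\Jc$. It then builds the isomorphism $\pi^*(G^*(\overline{M}))\cong M$ by hand, with components made from $M([\rho_x])$ and $M([\gamma_x])$, and checks naturality by cutting each path into two-pixel pieces. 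Showing that the arrow-wise assignment kills $\Jc$ actually requires that $M$ annihilate $\IbarP$ and invert $\Sigma_{\Pf}$, which is precisely what your Steps~1 and~2 prove. After that, well-definedness, functoriality and the identity $\widehat{M}\circ\pi=M$ come for free, and the final isomorphism is formal. So your route makes explicit a step the paper leaves implicit, while the paper's is more hands-on.

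Two refinements. In Step~1, if $f=\bigoplus_i\lambda_i[\gamma_i]$ has several summands, the witnesses $\rho_1,\dots,\rho_4$ may vary with $i$. You therefore need that $\theta_X=M([\rho_2])\circ M([\rho_1])^{-1}\colon M(x)\to M(x')$, and likewise $\theta_Y=M([\rho_3])^{-1}\circ M([\rho_4])$, do not depend on that choice. This follows by passing to a common source (resp.\ target) inside the pixel via Lemma~\ref{lem:x two steps from y in a pixel} and using $\sim$-thinness. Then $M(f)=\theta_Y\circ M(g)\circ\theta_X=0$ holds uniformly across summands.

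In Step~3, the obstacle you anticipate can be sidestepped. Since $G$ is an equivalence, $G^*$ is essentially surjective, so $\overline{M}=\widehat{M}\circ G'$ works for any quasi-inverse $G'$ of $G$. If you want $\overline{M}=\widehat{M}\circ F$ specifically, note that the definitions give $GF=\mathrm{id}_{\KcAP}$. For a quasi-inverse $F'$ of $F$ this yields $G\cong GFF'=F'$, hence $FG\cong\mathrm{id}$. The standard fact you quote only guarantees this for some chosen quasi-inverse. Objectwise isomorphisms $FG(x)\cong x$ together with full faithfulness would not suffice on their own.
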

		\begin{proof}
			As in Proposition~\ref{prop:existence of pixelating partition implies best fit partition}, we only prove the case with $\RepK(\Ac)$ as the other proof is similar.
			
			Let $M$ be pixelated in $\Rep(\Ac)$ and let $\Pf\in\Psc_M$.
			Let $(S,\ \{\gamma_x\cdot \rho^{-1}_x\}_{x\in\XX})$ be a sample of $\Pf$ (Definition~\ref{deff:sample}).
			For each $x\in\Xpixel\in\Pf$, write $\varphi_x$ as $[\rho_x]^{-1}[\gamma_x]$.
			Recall we have the equivalence $G:\PfA\to \KcAP$ (Theorem~\ref{thm:barPlocal yields cat equivalent to cat from quiver}), which induces an equivalence $G^*:\Rep(\KcAP)\to\Rep(\PfA)$.
			
			We construct a representation $\overline{M}$ of $\KcAP$ directly and then show that $M\cong\pi^*(G^*(\overline{M}))$.
			For each vertex $\Xpixel$ of $\QAP$, set $\overline{M}(\Xpixel):=M(s_{\Xpixel})$.
			
			Given $s_{\Xpixel}$ and $s_{\Ypixel}$, we have the set $\mathrm{Arr}(s_{\Xpixel},s_{\Ypixel})$ of pseudo arrows from $s_{\Xpixel}$ to $s_{\Ypixel}$, modulo scalar multiplication, which contains 1 or 0 elements.
			If $\mathrm{Arr}(s_{\Xpixel},s_{\Ypixel})$ is nonempty, $\alpha\in\mathrm{Arr}(s_{\Xpixel},s_{\Ypixel})$.
			Choose a representative $\sigma_{\alpha}^{-1}[\gamma_\alpha]:s_{\Xpixel}\to s_{\Ypixel}$ and let $y_\alpha=\gamma_\alpha(1)$.
			Then, $\sigma_{\alpha}^{-1}[\gamma_\alpha]$ is equivalent to $\varphi_{y_\alpha}^{-1}[\gamma_\alpha]$, which we can write as $([\rho_{y_\alpha}]^{-1}[\gamma_{y_\alpha}])^{-1}[\gamma_\alpha]$.
			Recall that $M([\rho_{y_\alpha}])$ and $M([\gamma_{y_{\alpha}}])$ are isomorphisms since $\Pf$ pixelates $M$.
			Then, define
			\[
				\overline{M}(\alpha)
				:=
				(M([\gamma_{y_{\alpha}}]))^{-1}
				\circ
				M([\rho_{y_\alpha}])
				\circ
				M([\gamma_\alpha]).
			\]
			Since every morphism in $\KcAP$ is a finite sum of idempotents and compositions of arrows, this defines a representation $\overline{M}$ of $\KcAP$.
			For a pseudo arrow $[\gamma]$ in $\PfA$, we have, by Definition~\ref{deff:G functor},
			\[
				G^*(\overline{M})([\gamma]) =M([\gamma_y])^{-1}\circ
				M([\rho_y])\circ
				M([\gamma])\circ
				M([\rho_x])^{-1}\circ
				M([\gamma_x]).
			\]
			
			Let $\widehat{M}:=\pi^*(G^*(\overline{M}))$.
			By construction, $\widehat{M}(x)\cong M(x)$ for all $x\in\XX$.
			We define an isomorphism $f:\widehat{M}\to M$ in the following way.
			For each $s_\Xpixel\in S$, $\widehat{M}(s_{\Xpixel})=M(s_{\Xpixel})$.
			So, let $f(s_{\Xpixel})$ be the identity.
			
			Now, let $x\in \Xpixel\in\Pf$.
			In $\Kcal$ we have the following commutative diagram of isomorphisms:
			\begin{displaymath}
				\xymatrix{
					\widehat{M}(s_{\Xpixel}) \ar@{=}[r] \ar@{=}[d] &
					\widehat{M}(x') \ar@{=}[r] &
					\widehat{M}(x) \ar[d]^-{M(\rho_x)^{-1}\circ M(\gamma_x)} \\
					M(s_{\Xpixel}) \ar[r]_-{M(\gamma_x)} &
					M(x') \ar[r]_-{M(\rho_x)^{-1}} &
					M(x) .
				}
			\end{displaymath}
			So, define $f(x):=M(\rho_x)^{-1} \circ M(\gamma_x)$.
			
			To show that $f$ is a morphism of representations, we need to show that, for any morphism $g:x\to y$ in $\Ac$, we have $f(y)\widehat{M}(g)=M(g)f(x)$.		
			Since every morphism in $\Ac$ is a sum of elements of the form $\lambda[\gamma]$, where one $[\gamma]$ may be the constant path, it suffices to show that $f$ is a morphism of representations by restricting our attention to $[\gamma]$'s.
			
			Let $[\gamma]:x\to y$ be a morphism in $\Ac$.
			If the partition of $[0,1]$ from Definition~\ref{deff:screen}(\ref{deff:screen:discrete}) has one or two pixels, then we know
			\begin{align*}
				f(y)\circ\widehat{M}([\gamma])
				&= f(y)\circ \left(M([\gamma_y])^{-1}\circ
				M([\rho_y])\right)\circ
				M([\gamma])\circ
				\left( M([\rho_x])^{-1}\circ
				M([\gamma_x])\right) \\
				&= f(y) \circ f(y)^{-1} \circ M[\gamma] \circ f(x) \\
				&= M[\gamma] \circ f(x).
			\end{align*}
			
			Now suppose the partition of $[0,1]$ from Definition~\ref{deff:screen}(\ref{deff:screen:discrete}) has more than 2 pixels.
			Then, $[\gamma]=[\gamma_n]\circ\cdots\circ[\gamma_1]$, where each $[\gamma_i]$ has a partition of $[0,1]$ from Definition~\ref{deff:screen}(\ref{deff:screen:discrete}) with exactly two pixels.
			For each $1\leq i \leq n$, let $x_{i-1}=\gamma_i(0)$ and $x_{i+1}=\gamma_i(1)$.
			Notice $x_0=x$ and $x_n=y$.
			Then we have the following diagram where each square commutes by the argument in the previous paragraph:
			\begin{displaymath}
				\xymatrix@C=10ex{
					\widehat{M}(x=x_0) \ar[r]^-{\widehat{M}([\gamma_1])} \ar[d]_-{f(x=x_0)} &
					\widehat{M}(x_1) \ar[r]^-{\widehat{M}([\gamma_2])} \ar[d]^-{f(x_1)} &
					{\cdots} \ar[r]^-{\widehat{M}([\gamma_n])} & 
					\widehat{M}(x_n=y) \ar[d]^-{f(x_n=y)} \\
					M(x=x_0) \ar[r]_-{M([\gamma_1])} &
					M(x_1) \ar[r]_-{M([\gamma_2])} &
					{\cdots} \ar[r]_-{M([\gamma_n])} &
					M(x_n=y).
				}
			\end{displaymath}
			Then entire diagram commutes and so $f$ is indeed a morphism of representations.
			Thus, $f$ is an isomorphism and $\pi^*(G^*(\overline{M}))\cong M$ as desired.
		\end{proof}
		
		Given Theorem~\ref{thm:pixelated rep comes from rep of pixelation}, we want to study all representations for which we can leverage the theorem.

		The abelian category of quasi-noise free representations of a thread quiver from \cite{PRY24} is the category of all pixelated representations in the sense of the present paper.
		However, it is not currently known to the author whether or not the category of ``all'' pixelated representations is abelian in full generality.
		The author suspects not.
		Nevertheless, progress can be made to understand abelian categories of pixelated representations.
		
		Recall that if $\cdots\to A_{-1} \to A_0\to A_1\to\cdots$ is exact in $\RepK(\Ac)$ then $\cdots\to A_{-1}(x)\to A_0(x)\to A_1(x)\to\cdots$ is exact in $\Kcal$ for every $x\in\XX$.
		This is true because representations of $\Ac$ are only the \emph{$\Bbbk$-linear} functors.
		The similar statement is true for $\RepK(C)$ because we can consider every functor $C\to \Kcal$ as a unique $\Bbbk$-linear functor $\Cc\to\Kcal$ and vice versa.
		
		\begin{lem}\label{lem:pixelating exact sequence}
			Let $A$, $B$, and $M$ be representations in $\RepK(\Ac)$ and let $\Pf$ be a screen that pixelates both $A$ and $B$.
			If (1), (2), or (3) hold, then $\Pf$ pixelates $M$:
			\begin{enumerate}
				\item\label{lem:pixelating exact sequence:cokernel}
					$A \stackrel{f}{\rightarrow} B \stackrel{g}{\rightarrow} M\to 0$ is exact in $\RepK(\Ac)$,
				\item\label{lem:pixelating exact sequence:kernel}
					$0\to M\stackrel{f}{\rightarrow} A\stackrel{g}{\rightarrow} B$ is exact in $\RepK(\Ac)$, or
				\item\label{lem:pixelating exact sequence:extension}
					$0\to A\stackrel{f}{\rightarrow} M\stackrel{g}{\rightarrow} B\to 0$ is exact in $\RepK(\Ac)$.
			\end{enumerate}
			The same is true if all the representations are in $\RepK(C)$.
		\end{lem}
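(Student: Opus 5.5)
The approach is pointwise, via naturality and the fact that exactness in $\RepK(\Ac)$ is detected objectwise. Fix $\sigma\colon x\to y$ in $\Sigma_{\Pf}$, viewed in $\Ac$. If $\sigma=[\rho]$ with $\im(\rho)$ in a pixel, this is a genuine morphism of $\Ac$. If $\sigma$ is a zero morphism inside a pre-dead pixel, note that $\Pf$ pixelating $A$ and $B$ forces $A(x)=A(y)=0=B(x)=B(y)$, by Remark~\ref{rem:pixelating partitions}. More generally, for every pixel, pixelation of $A,B$ means $A(\sigma)$ and $B(\sigma)$ are isomorphisms. For each of the three sequences, naturality of $f$ and $g$ gives a commutative ladder in $\Kcal$: the top row is the sequence evaluated at $x$, the bottom row is the sequence evaluated at $y$, and the vertical maps are $A(\sigma)$, $B(\sigma)$, $M(\sigma)$. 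Both rows are exact in $\Kcal$ because exactness in $\RepK(\Ac)$ is computed pointwise, as recalled just before the lemma.

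\emph{Case (1).} Here $M(x)=\operatorname{coker}(f_x)$ and $M(y)=\operatorname{coker}(f_y)$. Since $A(\sigma)$ and $B(\sigma)$ are isomorphisms compatible with $f$, the induced map on cokernels, which is $M(\sigma)$ by naturality of $g$ and the fact that $g_x$ is epi, is an isomorphism. Concretely, apply the five lemma to the ladder $A(x)\to B(x)\to M(x)\to 0\to 0$ over the corresponding row at $y$.

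\emph{Case (2).} This is dual: $M(x)=\ker(g_x)$, so the five lemma applied to $0\to 0\to M\to A\to B$ shows $M(\sigma)$ is an isomorphism.

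\emph{Case (3).} The short five lemma in the abelian category $\Kcal$ directly gives that $M(\sigma)$ is an isomorphism.

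The only point requiring care is the pre-dead case, where $\sigma$ is a zero morphism $x\to y$. There I would argue directly: since $\Pf$ pixelates $A$ and $B$, the values $A$ and $B$ vanish at $x$ and $y$. Exactness then forces $M(x)=M(y)=0$ in each of the three cases, so $M(0)$ is an isomorphism $0\to 0$. The version for $\RepK(C)$ is identical: regard functors $C\to\Kcal$ as $\Bbbk$-linear functors on $\Cc$ and run the same ladder argument with $\sigma=[\rho]$. I expect no real obstacle; the main thing to check is that exactness is indeed objectwise, which the paper records just before the statement.
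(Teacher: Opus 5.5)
Your proof is correct and follows the paper's argument. Naturality of $f,g$ gives a pointwise commutative ladder with exact rows, and the five lemma (short five lemma in case (3)) shows $M(\sigma)$ is an isomorphism. The only differences are cosmetic. In case (1) the paper uses the four lemma for monicity and then $M(\sigma)g_x=g_yB(\sigma)$ for epicity, whereas you pad with zeros and apply the five lemma directly. Your separate handling of zero morphisms in pre-dead pixels is valid but unnecessary, since the ladder argument already works for any $\sigma$ with $A(\sigma)$ and $B(\sigma)$ isomorphisms.
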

		
		\begin{proof}
			We only prove (\ref{lem:pixelating exact sequence:cokernel}) and (\ref{lem:pixelating exact sequence:extension}) since the proofs of (\ref{lem:pixelating exact sequence:cokernel}) and (\ref{lem:pixelating exact sequence:kernel}) are similar.
			Moreover, the proofs of the cases for $\RepK(\Ac)$ and $\RepK(C)$ are nearly identical so we prove the case with $\RepK(\Ac)$.
			
			First we prove (\ref{lem:pixelating exact sequence:cokernel}).
			Let $x,y\in\Xpixel\in\Pf$ with $\sigma:x\to y$ in $\Sigma_{\Pf}$.
			Since $f$ and $g$ are maps of representations and the sequence is exact we have the following commutative diagram,
			\begin{displaymath}
				\xymatrix{
					A(x) \ar[r]^-{f_x} \ar[d]_-{A(\sigma)} &
					B(x) \ar[r]^-{g_x} \ar[d]|-{B(\sigma)}
					& M(x) \ar[d]^-{M(\sigma)} \ar[r] & 0 \ar[d] \\
					A(y) \ar[r]_-{f_y} & B(y) \ar[r]_-{g_y} & M(y) \ar[r] & 0,
				}
			\end{displaymath}
			where the rows are exact.
			Since $\Pf$ pixelates both $A$ and $B$, we know $A(\sigma)$ and $B(\sigma)$ are isomorphisms.
			Then $A(x)\cong A(y)$ and $B(x)\cong B(y)$ and thus $M(x)\cong M(y)$.
			Then, by the four lemma, $M(\sigma)$ is mono.
			But $M(\sigma)\circ g_x = g_y \circ B(\sigma)$ is epic and so $M(\sigma)$ must be epic.
			Since $\Kcal$ is abelian, this means $M(\sigma)$ is an isomorphism.
			Therefore, $\Pf$ pixelates $M$.
			
			Now we prove (\ref{lem:pixelating exact sequence:extension}).
			Again let $x,y\in\Xpixel\in\Pf$ with $\sigma:x\to y$ in $\Sigma_{\Pf}$.
			Again $f$ and $g$ are maps of reprsentations so we have the commutative diagram in $\Kcal$:
			\begin{displaymath}
				\xymatrix{
					0 \ar[r] \ar[d] & A(x) \ar[r]^-{f_x} \ar[d]_-{A(\sigma)} &
					M(x) \ar[d]|-{M(\sigma)} \ar[r]^-{g_x} &
					B(x) \ar[r] \ar[d]^-{B(\sigma)} & 0 \ar[d] \\
					0 \ar[r] & A(y) \ar[r]_-{f_y} & M(y) \ar[r]_-{g_y} & B(y) \ar[r] & 0,
				}
			\end{displaymath}
			where the rows are exact.
			Since $\Pf$ pixelates both $A$ and $B$ we know that $A(\sigma)$ and $B(\sigma)$ are isomorphisms.
			Then, by the five lemma, $M(\sigma)$ is an isomorphism.
			Therefore, $\Pf$ pixelates $M$.
		\end{proof}
		
		Recall the functors $p:C\to\PfC$ and $\pi:\Ac\to\PfA$ (Notation~\ref{note:pi}).
		Using Lemma~\ref{lem:pixelating exact sequence}, we have the following statement about $p^*:\RepK(\PfC)\to\RepK(C)$ and $\pi^*\RepK(\PfA)\to\RepK(\Ac)$.

		\begin{prop}\label{prop:pi star is an exact embedding}
			%Suppose $\Rep(\Ac)$ has the property that if $0\to A\to B\to C\to 0$ is exact in $\Rep(\Ac)$ then $0\to A(x)\to B(x)\to C(x)\to 0$ is exact in $\kMod$, for each $x\in \XX$.
			For any $\Pf\in\Psc$, the functors $p^*:\RepK(\PfC)\to\RepK(C)$ and $\pi^*:\RepK(\PfA)\to \RepK(\Ac)$ are exact embeddings that restrict, respectively, to exact embeddings $\rpwfK(\PfC)\to\rpwfK(C)$ and $\rpwfK(\PfA)\to \rpwfK(\Ac)$.
		\end{prop}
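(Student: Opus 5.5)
The plan is to use the universal property of localization together with the fact that exactness in functor categories is computed pointwise. I treat $\pi^*$; the case of $p^*$ is the same argument with the quotient step omitted.

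\textbf{Step 1: $\pi^*$ is fully faithful.} Write $\pi$ as the quotient functor $q:\Ac\to\AbarP$ followed by the localization $\ell:\AbarP\to\PfA$.
\begin{itemize}
\item For $q^*$: since $q$ is full and bijective on objects, a natural transformation between functors $M\circ q$ and $N\circ q$ is the same thing as a natural transformation between $M$ and $N$. Hence $q^*$ is fully faithful.
\item For $\ell^*$: the universal property of the localization $\AbarP[\Sigma_\Pf^{-1}]$ identifies $\RepK(\PfA)$ with the full subcategory of $\RepK(\AbarP)$ consisting of the functors that invert $\Sigma_\Pf$. This uses Proposition~\ref{prop:Sigma yields a calculus of fractions}. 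Objects are unchanged, and every morphism of $\PfA$ has the form $\sigma^{-1}f$. So a natural transformation of composites $M\ell\Rightarrow N\ell$ is automatically natural with respect to $\sigma^{-1}f$, because naturality along $\sigma$ gives naturality along $\sigma^{-1}$.
\end{itemize}
Composing, $\pi^*$ is fully faithful, hence an embedding. The essential image consists of the representations $M$ of $\Ac$ that vanish on $\IbarP$ and invert $\Sigma_\Pf$.

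\textbf{Step 2: $\pi^*$ is exact.} Kernels and cokernels in $\RepK(\PfA)$ and in $\RepK(\Ac)$ are both computed objectwise in $\Kcal$. Precomposition by $\pi$ does not change the objects, because $\PfA$ has the same objects as $\Cc$. Therefore $\pi^*$ sends objectwise exact sequences to objectwise exact sequences, and so it is exact.

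There is one subtlety. One must check that limits and colimits in $\RepK(\PfA)$ really are pointwise, that is, that the pointwise kernel of a map between $\Sigma_\Pf$-inverting functors is again $\Sigma_\Pf$-inverting. This is where I would invoke Lemma~\ref{lem:pixelating exact sequence}(\ref{lem:pixelating exact sequence:kernel}) and its cokernel analogue. I would also check the parallel fact that the pointwise kernel and cokernel kill $\IbarP$, which holds because they are subquotients. Together these show that the essential image is closed under kernels and cokernels computed in $\RepK(\Ac)$, which confirms both exactness and that $\pi^*$ preserves them.

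\textbf{Step 3: restriction to pwf representations.} If $M$ factors through finite-length objects, then so does $M\circ\pi$, since they take the same values on objects. Hence $\pi^*$ restricts to $\rpwfK(\PfA)\to\rpwfK(\Ac)$. It remains fully faithful because the pwf subcategories are full. It remains exact because exact sequences in the pwf subcategories are again pointwise: these are abelian subcategories, closed under kernels and cokernels since subobjects and quotients of finite-length objects have finite length.

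\textbf{Main obstacle.} I expect the hardest point to be the careful justification in Step 1 that naturality over $\Ac$ implies naturality over $\PfA$. This needs the explicit description of morphisms in $\PfA$ as roofs $\sigma^{-1}f$ from the calculus of fractions. It also needs care with the zero morphisms in pre-dead pixels. For those, both functors send the relevant objects to $0$, as in Remark~\ref{rem:pixelating partitions}, so naturality holds trivially there.
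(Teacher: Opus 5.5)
Your argument is correct, and for the ``embedding'' half it takes a different and sturdier route than the paper. The paper aims only at injectivity on isomorphism classes. It asserts that if $\bar A\not\cong\bar A'$ in $\RepK(\PfA)$, then $\bar A(s_{\Xpixel})\not\cong\bar A'(s_{\Xpixel})$ for some sample object, and concludes because $\pi^*$ does not change values. That assertion is false as stated, although its conclusion is true. For example, over the pixelation of $\RR$ by $\{[i,i+1)\}$, the representations coming from $\Bbbk\xrightarrow{1}\Bbbk$ and $\Bbbk\xrightarrow{0}\Bbbk$ on two adjacent vertices agree on every object but are not isomorphic.

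Your factorization $\pi=\ell q$ avoids this problem. Here $q^*$ is fully faithful because $q$ is full and the identity on objects, and $\ell^*$ is fully faithful by the universal property of the localization, which you make explicit by transferring naturality from $\sigma$ to $\sigma^{-1}$. This gives full faithfulness of $\pi^*$. Full faithfulness reflects isomorphisms, so it yields the paper's claim. It also identifies the essential image as the representations killing $\IbarP$ and inverting $\Sigma_{\Pf}$, which is exactly the pixelation condition of Definition~\ref{deff:pixelated representation}.

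For exactness, both proofs are pointwise. The paper transports exactness along $A(s_{\Xpixel})\cong A(x)$, which is unnecessary since $\pi$ is the identity on objects, as you observe. Your ``subtlety'' is likewise unnecessary: $\RepK(\PfA)$ is itself a category of functors from a small category into the abelian category $\Kcal$, so its kernels and cokernels are pointwise automatically. What Lemma~\ref{lem:pixelating exact sequence} together with your $\IbarP$ remark really shows is a separate fact: the essential image is closed under kernels and cokernels computed in $\RepK(\Ac)$. That is harmless here, and it is the content the paper later uses in Theorem~\ref{thm:downward closed subsets in V give abelian categories}.
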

		\begin{proof}
			As before, we only prove the versions with $\RepK(\Ac)$ as the other case has similar proofs.
			
			Recall the functor $\pi^*$ is defined on objects by taking a representation $M:\PfA\to \kMod$ and precomposing with $\pi$ to obtain $M\circ \pi:\Ac\to \kMod$.
			Let $0\to \bar{A}\to \bar{B}\to \bar{C}\to 0$ be exact in $\RepK(\PfA)$ and let $A=\pi^* \bar{A}$, $B=\pi^* \bar{B}$, and $C=\pi^* \bar{C}$.
			
			We consider the sequence $0\to A\to B\to C\to 0$ in $\rpwfK(\Ac)$.
			Choose any $x\in\XX$ and let $\Xpixel\in \Pf$ such that $x\in\Xpixel$.
			We then have the following commutative diagram in $\Kcal$:
			\[
				\xymatrix{
					0 \ar[r] & A(s_\Xpixel) \ar[r] \ar[d]_-{\cong} & B(s_\Xpixel) \ar[r] \ar[d]|-{\cong} & C(s_\Xpixel) \ar[r] \ar[d]^-{\cong} & 0 \\
					0 \ar[r] & A(x) \ar[r] & B(x) \ar[r] & C(x) \ar[r] & 0,
				}
			\]
			where the top row is exact.
			Then, the bottom row is also exact.
			
			Thus, since $0\to A\to B\to C\to 0$ is exact at every $x\in\XX$, we know $0\to A\to B\to C\to 0$ is exact in $\RepK(\Ac)$.
			Therefore, $\pi^*$ is exact.
			
			Notice that if $\bar{A}\not\cong \bar{A}'$ in $\RepK(\PfA)$ then there is some $s_\Xpixel$ such that $\bar{A}(s_\Xpixel)\not\cong\bar{A}'(s_\Xpixel)$.
			Then $\pi^*\bar{A}(s_\Xpixel)\not\cong \pi^*\bar{A}'(s_\Xpixel)$.
			Therefore, $\pi^*$ is an embedding.
			
			Since the usual exact structure on $\RepK(\Ac)$ restricts to $\rpwfK(\Ac)$, so does the exact embedding $\pi^*$.
		\end{proof}
		
		\subsection{Abelian subcategories and exact structures}\label{sec:representations:abelian subcategories and exact structures}
		{~}
		
		Choose some representations $A$ and $B$ in $\RepK(\Ac)$ or in $\RepK(C)$.
		Notice that if $\Pf_A$ and $\Pf_B$ are screens that pixelate $A$ and $B$, respectively, then any screen $\Pf$ that refines both $\Pf_A$ and $\Pf_B$ pixelates both $A$ and $B$.
		Thus, we want to consider some subcategory of $\RepK(\Ac)$ where, for any finite collection of screens, each of which pixelates some representation in the subcategory, there is a screen that refines all of them.
		Notice we are not necessarily assuming that this set of screens is closed under $\sqcap$ (Defnition~\ref{deff:partition operations}).
		
		Recall we are assuming $\Psc\neq\emptyset$.
		Define a subset $\Sbold\subset\boldsymbol{2}^{\Psc}$\label{deff:Sbold} where $\Lsc\in\Sbold$ if and only if, for any finite collection $\{\Pf_i\}_{i=1}^n\subset\Lsc$, there is a $\Pf\in\Lsc$ that refines each $\Pf_i$.
		(Since $\Psc$ is nonempty, $\Sbold$ is also nonempty.)
		By a routine argument leveraging the Kuratowski--Zorn lemma, $\Sbold$ has at least one maximal element.
		
		Given $\Lsc\in\Sbold$, we denote by $\RpLK(\Ac)$ and $\rpLK(\Ac)$ the respective full subcategories of $\RepK(\Ac)$ and $\rpwfK(\Ac)$ whose objects are representations $M$ such that $\Psc_M\cap\Lsc\neq\emptyset$.
		
		Then we have the following theorem describing some abelian categories of pixelated representations.
		
		\begin{thm}\label{thm:downward closed subsets in V give abelian categories}
			For each $\Lsc\in\Sbold$, the categories $\RpLK(C)$, $\rpLK(C)$, $\RpLK(\Ac)$, and $\rpLK(\Ac)$ are abelian.
			The embeddings into $\RepK(C)$, $\rpwf(C)$, $\RepK(\Ac)$, and $\rpwf(\Ac)$, respectively, are exact.
		\end{thm}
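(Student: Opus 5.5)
The strategy is to show that each of the four full subcategories contains $0$ and is closed under kernels, cokernels and finite direct sums, all computed in the ambient functor category. A full subcategory of an abelian category with these closure properties is itself abelian, and its inclusion is exact, because kernels and cokernels in the subcategory coincide with the ambient ones. I will write the argument for $\RpLK(\Ac)$ only. The case $\RpLK(C)$ is word for word the same, using the ``same is true'' clause of Lemma~\ref{lem:pixelating exact sequence}. The pwf versions follow by combining this with the fact that $\rpwfK(\Ac)$ is closed under subobjects, quotients and finite sums. That fact holds because finite-length objects of $\Kcal$ are closed under these operations, and exactness in $\RepK(\Ac)$ is checked pointwise, as recalled just before Lemma~\ref{lem:pixelating exact sequence}.

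The key step is to find a common screen. Let $M,N$ be objects of $\RpLK(\Ac)$ and $h\colon M\to N$ a morphism. Choose $\Pf_M\in\Psc_M\cap\Lsc$ and $\Pf_N\in\Psc_N\cap\Lsc$. Since $\Lsc\in\Sbold$, there is a $\Pf\in\Lsc$ refining both. By the second item of Remark~\ref{rem:pixelating partitions}, $\Psc_M$ and $\Psc_N$ are closed under refinement, so $\Pf$ pixelates both $M$ and $N$.

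Next I would apply Lemma~\ref{lem:pixelating exact sequence} to this common screen. Apply part~(\ref{lem:pixelating exact sequence:kernel}) to the exact sequence $0\to\ker h\to M\to N$, and part~(\ref{lem:pixelating exact sequence:cokernel}) to $M\to N\to\operatorname{coker}h\to 0$. This shows that $\Pf$ pixelates $\ker h$ and $\operatorname{coker}h$. Since $\Pf\in\Lsc$, both lie in $\RpLK(\Ac)$. For the direct sum $M\oplus N$, I would use part~(\ref{lem:pixelating exact sequence:extension}) with the split sequence $0\to M\to M\oplus N\to N\to 0$; alternatively, $(M\oplus N)(\sigma)=M(\sigma)\oplus N(\sigma)$ is an isomorphism directly. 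The zero representation is pixelated by every screen, and $\Lsc$ is nonempty, so $0$ belongs to the subcategory.

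It then remains to conclude. Images and coimages are built from kernels and cokernels, so they also stay inside the subcategory, and the canonical comparison map coim$\to$im is an isomorphism there because it is one in $\RepK(\Ac)$. Hence $\RpLK(\Ac)$ is abelian and the inclusion preserves kernels and cokernels, i.e.\ it is exact. I expect the only delicate point to be the choice of a single screen that pixelates both $M$ and $N$: Lemma~\ref{lem:pixelating exact sequence} requires one screen for all the terms at once. This is exactly where the defining property of $\Sbold$ (directedness of $\Lsc$) and closure of $\Psc_M$ under refinement are needed. The rest is routine.
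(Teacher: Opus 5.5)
Your proposal is correct and follows the paper's proof. Directedness of $\Lsc$, together with closure of $\Psc_M$ under refinement, gives a single screen in $\Lsc$ that pixelates both objects, and Lemma~\ref{lem:pixelating exact sequence} then gives closure under kernels, cokernels and extensions (hence finite direct sums); the pwf cases follow from closure of finite-length objects under subobjects and quotients. Your explicit check of the zero object (which tacitly uses $\Lsc\neq\emptyset$) and your derivation of exactness from kernels and cokernels being computed in the ambient category are slightly more careful versions of steps the paper leaves implicit or cites from Proposition~\ref{prop:pi star is an exact embedding}.
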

		\begin{proof}
			As before, we prove the version with $\RpLK(\Ac)$ and $\rpLK(\Ac)$ as the version with $\RpLK(C)$ and $\rpLK(C)$ is similar.
			
			First, let $\Lsc\in\Sbold$.
			Let $A$ and $B$ be objects in $\RpL(\Ac)$.
			Then there are screens $\Pf_A$ and $\Pf_B$ in $\Lsc$ that pixelate $A$ and $B$, respectively.
			Since $\Lsc\in\Sbold$, there is a $\Pf\in\Lsc$ that refines both $\Pf_A$ and $\Pf_B$.
			Thus, $\Pf$ pixelates both $A$ and $B$.
			
			We have shown that for any two objects in $\RpLK(\Ac)$ there is a screen in $\Lsc$ that pixelates both.
			Thus, we may apply Lemma~\ref{lem:pixelating exact sequence}.
			Specifically, Lemma~\ref{lem:pixelating exact sequence}(\ref{lem:pixelating exact sequence:cokernel}) tells us $\RpLK(\Ac)$ is closed under cokernels.
			Lemma~\ref{lem:pixelating exact sequence}(\ref{lem:pixelating exact sequence:kernel}) tells us $\RpLK(\Ac)$ is closed under kernels.
			Finally, Lemma~\ref{lem:pixelating exact sequence}(\ref{lem:pixelating exact sequence:extension}) tells us $\RpLK(\Ac)$ is closed under extensions.
			
			Therefore, $\RpLK(\Ac)$ is abelian.
			By restricting our attention to representations in $\rpwfK(\Ac)$, we see that $\rpLK(\Ac)$ is also abelian by combining properties of finite-length modules with Lemma~\ref{lem:pixelating exact sequence}.
			
			The exactness of the embeddings follows the same argument presented in Proposition~\ref{prop:pi star is an exact embedding}.
		\end{proof}
		
		If $\Lsc=\{\Pf\}$, for some screen $\Pf$, then $\RpLK(C)\simeq p^*(\RepK(C))$ and $\RpLK(\Ac)\simeq\pi^*(\RepK(\PfA))$.
		
		\begin{rem}\label{rem:decomposition}
			Since the embeddings in Theorem~\ref{thm:downward closed subsets in V give abelian categories} are all exact, we may do the following.
			Consider a pixelated $M$ in, for example, $\RepK(\Ac)$ and a $\Pf\in\Psc$ that pixelates $M$.
			Then $M$ comes from some $\overline{M}$ in $\RepK(\KcAP)$ (Theorem~\ref{thm:pixelated rep comes from rep of pixelation}).
			If $\overline{M}$ is isomorphic to a direct sum $\bigoplus_{\alpha} \overline{M}_\alpha$, then $M$ is isomorphic to a direct sum $\bigoplus_{\alpha} M_\alpha$, where each $M_\alpha$ comes from $\overline{M}_\alpha$.
			This means that if we understand the decomposition of representations of $\KcAP$, then we understand the decomposition of representations in $\pi^*(\RepK(\PfA))\subset \RepK(\Ac)$.
			This is perspective and technique applied in \cite{HR24,PRY24}.
		\end{rem}
		
		We only have ``the'' catgory of pixelated (pwf) representations if $\Sbold$ has a unique maximal element.
		For example, this happens when $\Psc$ is closed under $\sqcup$ (Definition~\ref{deff:partition operations}).
		Then, the maximal element of $\Sbold$ is $\Psc$.
		If $\Sbold$ has multiple maximal elements, then some choices must be made.
		\bigskip
		
		Recall that for $\Dcal$ an abelian category, we have the additive xfunctor $\Ext^1:\Dcal^{op}\times\Dcal\to \mathrm{Ab}$, which takes a pair $(A,B)$ to group of extensions of the form $0\to B\to E\to A\to 0$.
		Then we may consider any additive subfunctor $\EE\subset \Ext^1$ as an exact structure on $\Dcal$.
		
		Since $\RpLK(\Ac)$ is an abelian subcategory of $\RepK(\Ac)$ whose embedding is exact, for each $\Lsc\in\Sbold$, we may restrict any exact structure $\EE\subset \Ext^1$ on $\RepK(\Ac)$ to $\RpLK(\Ac)$.
		
		\begin{note}[$\EE|_{\Lsc}$]\label{note:restricted exact structure}
			Let $\Lsc\in\Sbold$ and let $\EE$ be an exact structure on $\RepK(C)$, $\rpwfK(C)$, $\RepK(\Ac)$, or $\rpwfK(\Ac)$.
			We denote by $\EE|_{\Lsc}$ the restriction of $\EE$ to $\RpLK(C)$, $\rpLK(C)$, $\RpLK(\Ac)$, or $\rpLK(\Ac)$, respectively.
		\end{note}
		%Denote by $p^*|_{\mathrm{pwf}}$ and $\pi^*|_{\mathrm{pwf}}$ the exact embeddings $\rpwf(\PfC)\to\rpwf(C)$ and $\rpwf(\PfA)\to\rpwf(\Ac)$, respectively, from Proposition~\ref{prop:pi star is an exact embedding}.
		
		\begin{rem}\label{rem:pi star factors}
			Let $\Lsc\in\Sbold$ and let $\EE$ be an exact structure on $\RepK(\Ac)$.
			\begin{itemize}
				\item Then we have the exact structure $\EE|_{\Lsc}$ on $\RpLK(\Ac)$.
				Choose $\Pf\in\Lsc$.
				Then, $\pi^*$ factors through $\RpLK(\Ac)$ as an exact embedding.
				One can see this by noting that $\Pf$ pixelates $\pi^*(\overline{M})$, for each $\overline{M}$ in $\RepK(\PfA)$.
				
				Similar statements are true for $\EE|_{\Lsc}$ on $\rpwfK(\Ac)$, $\RpLK(C)$, and $\rpwfK(C)$.
			
				\item Moreover, notice that, for each $\Pf\in\Psc$, we have $\{\Pf\}\in\Sbold$.
					If $\Lsc=\{\Pf\}$ then $\EE|_{\{\Pf\}}$ is an exact structure on $\RepK(\PfA)$, $\rpwfK(\PfA)$, $\RepK(\PfC)$, or $\rpwfK(\PfC)$.
			\end{itemize}
		\end{rem}
	
		\begin{cor}\label{cor:exact restrictions}
			Let $\Lsc\in\Sbold$ and let $\Pf\in\Lsc$.
			Then, any exact structure $\EE$ on $\RpLK(\Ac)$, $\rpLK(\Ac)$, $\RpLK(C)$, or $\rpLK(C)$ restricts to an exact structure on $\RepK(\PfA)$, $\rpwfK(\PfA)$, $\RepK(\PfC)$, or $\rpwfK(\PfC)$, respectively.
		\end{cor}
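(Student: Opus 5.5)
The plan is to realize $\RepK(\PfA)$ (and its three siblings) as a full subcategory of $\RpLK(\Ac)$ via the exact embedding $\pi^*$, and then to show that the image is closed under extensions in $\RpLK(\Ac)$. Once that is done, a restriction of $\EE$ makes sense. Recall that an exact structure here is an additive subfunctor $\EE\subseteq\Ext^1$, in the sense used just before Notation~\ref{note:restricted exact structure}. The restriction to $\RepK(\PfA)$ will be the subfunctor of $\Ext^1_{\RepK(\PfA)}$ consisting of those classes whose image under $\pi^*$ lies in $\EE$. I will work out only the case of $\RpLK(\Ac)$ and $\RepK(\PfA)$; the pwf case and the case of $C$ go the same way, using $p^*$ and the restricted embeddings of Proposition~\ref{prop:pi star is an exact embedding}.

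First, by Remark~\ref{rem:pi star factors}, $\Pf$ pixelates $\pi^*(\overline{M})$ for every $\overline{M}$, and $\Pf\in\Lsc$. Hence $\pi^*$ factors through $\RpLK(\Ac)$, and this factorization is exact by Proposition~\ref{prop:pi star is an exact embedding}. Its essential image is exactly the class of representations pixelated by $\Pf$. One inclusion is the remark itself. For the other, a representation pixelated by $\Pf$ inverts $\Sigma_\Pf$, so it factors through the localization, and Theorem~\ref{thm:pixelated rep comes from rep of pixelation} gives this explicitly. Moreover, $\pi^*$ is fully faithful, because $\pi$ is a quotient followed by a localization: natural transformations between functors that factor through $\pi$ are the same as natural transformations of the factored functors.

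Second, I show the image is extension-closed. Take a short exact sequence $0\to\pi^*\bar A\to M\to\pi^*\bar B\to 0$ in $\RpLK(\Ac)$. This sequence is exact in $\RepK(\Ac)$ by Theorem~\ref{thm:downward closed subsets in V give abelian categories}, so Lemma~\ref{lem:pixelating exact sequence}(\ref{lem:pixelating exact sequence:extension}) says $\Pf$ pixelates $M$. Thus $M\cong\pi^*\overline{M}$ for some $\overline{M}$. By full faithfulness and exactness, the sequence lifts to a sequence in $\RepK(\PfA)$. That lifted sequence is exact: exactness is tested pointwise at the $s_\Xpixel$, where the values agree with those of the original sequence. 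Consequently $\pi^*$ induces an isomorphism $\Ext^1_{\RepK(\PfA)}(\bar B,\bar A)\cong\Ext^1_{\RpLK(\Ac)}(\pi^*\bar B,\pi^*\bar A)$, natural in both variables.

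Third, I define $\EE|_{\{\Pf\}}(\bar B,\bar A)$ as the preimage of $\EE(\pi^*\bar B,\pi^*\bar A)$ under this isomorphism. It is an additive subfunctor of $\Ext^1$ because $\pi^*$ is an additive exact functor and the isomorphism is natural, so it is an exact structure.

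The main obstacle is the second step, specifically making sure that exactness in $\RepK(\PfA)$ is detected after applying $\pi^*$. Lifting morphisms and objects is routine once full faithfulness is in hand. I would handle the exactness by the pointwise argument of Proposition~\ref{prop:pi star is an exact embedding} run in reverse, using that every object of $\PfA$ is isomorphic to some $s_\Xpixel$ or to $0$ (Lemma~\ref{lem:objects in same pixel become isomorphic}).
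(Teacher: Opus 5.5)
Your proof is correct and takes the paper's route. The paper's proof just combines the exact embedding $\pi^*$ of Proposition~\ref{prop:pi star is an exact embedding} with its factorization through $\RpLK(\Ac)$ from Remark~\ref{rem:pi star factors}, and pulls $\EE$ back along it. Your extra checks are sound and supply what the paper leaves implicit: full faithfulness of $\pi^*$, and extension-closedness of its essential image via Lemma~\ref{lem:pixelating exact sequence}(\ref{lem:pixelating exact sequence:extension}) and Theorem~\ref{thm:pixelated rep comes from rep of pixelation}. They show the result is a genuine restriction of $\EE$ to a full extension-closed subcategory, although pulling back along an exact functor alone would already give an exact structure.
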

		\begin{proof}
			Combine Proposition~\ref{prop:pi star is an exact embedding} with Remark~\ref{rem:pi star factors}.
		\end{proof}

	\section{Sites and pathed sites}\label{sec:sites}
	In this section we prove results about how (small) sites and pixelation interact.
	In Section~\ref{sec:sites:general results}, we consider general small sites that are also path categories as in Definition~\ref{deff:path category}.
	We show that any collection of screens $\Lsc\subset \Psc$ with products is itself a site.
	We also show that the canonical quotient functor $C\to \KCP$ is both continuous and cocontinuous when $C$ is a site, which means sheaves of $\KCP$ lift to sheaves of $C$ and sheaves of $C$ may be pushed down to sheaves of $\KCP$ (Theorem~\ref{thm:pixelation is continuous}).
	
	In Section~\ref{sec:sites:lattice}, we turn our attention to distributive lattices and show that a distributive lattice is both a site and a path category.
	In particular, we consider a distributive lattice $C$ that is sublattice of larger distributive lattice $L$.
	We provide a general type of screen $\Pf_Y$ for each $Y\in L$ (Definition~\ref{deff:lattice screen}) and prove that $\overline{Q(C,\Pf_Y)}$ is equivalent to the lattice $C_Y=\{U\wedge Y\mid U\in C\}$ (Theorem~\ref{thm:pixelation is equivalent to subspace}).
	In Corollary~\ref{cor:pixelated lattice}, we put the theorem into the context of topological spaces and, more specifically, $\Spec{R}$ for a commutative ring $R$.
	
	In Section~\ref{sec:sites:pathed sites} we tell a parallel story to ringed spaces and their modules in the form of pathed sites $(D,\mathcal{O}_D)$ (Definition~\ref{deff:pathed site}) and $\mathcal{O}_D$-representations (Definition~\ref{deff:OE-representation}).
	In particular, we provide the ``standard'' example of a pathed site.
	Let $C$ be a path category.
	We use a particular subcategory $\Pscfin$ of $\Psc$, with the same objects, and define a sheaf of pathed sites $\OP$ that sends a screen $\Pf$ to a path category isomorphic to $\KCP$ (Definition~\ref{deff:OP}).
	We end the section with an example of an $\OP$-representation (Example~\ref{xmp:OP-representation}).
	
	\subsection{General results}\label{sec:sites:general results}	
	We first recall some basic definitions and known facts and then recall the defnition of a (small) site (Definition~\ref{deff:site}).
	
	Recall that a \emph{pullback} or \emph{fibre product} of the diagram $x\stackrel{f}{\to} z \stackrel{g}{\leftarrow} y$ in a category $\Dcal$ is an object $x\times_z y$ with morphisms $f':x\times_z y\to y$ and $g':x\times_z y\to x$ such that $gf'=fg'$.
	Moreover, for any $w$ and morphisms $f'':w\to y$ and $g'':w\to x$ such that $gf''=fg''$ there is a unique $h:w'\to x\times_z y$ such that $f''=f'h$ and $g''=g'h$:
	\[
		\xymatrix{
			\forall w \ar@/^2ex/[drr]^-{g''} \ar@/_2ex/[ddr]_-{f''} \ar@{-->}[dr]|-{\exists!h} \\
			& x\times_z y \ar[r]^-{g'} \ar[d]_-{f'} & y \ar[d]^-g \\
			& x \ar[r]_-f & z.
		}
	\]
	
	We state the following well-known lemma without proof.
	\begin{lem}\label{lem:preserve limits}
		Let $\Dcal$ be a small category and $\Sigma$ in $\Dcal$ a class of morphisms that admits a calculus of left and right fractions.
		Then the canonical quotient functor $\Dcal\to \Dcal[\Sigma^{-1}]$ preserves finite limits and finite colimits.
	\end{lem}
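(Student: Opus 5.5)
The plan is to reduce everything to the explicit description of morphisms in the localization $\Dcal[\Sigma^{-1}]$ supplied by the calculus of fractions. We write $q:\Dcal\to\Dcal[\Sigma^{-1}]$ for the quotient functor. Using the left calculus, every morphism is a roof $s^{-1}f$, with $f:x\to y'$ and $s:y\to y'$ in $\Sigma$. Two roofs are equal exactly when they become equal after post-composing with suitable morphisms of $\Sigma$. Finite limits and colimits are dual, since the hypotheses are symmetric under passing to $\Dcal^{op}$. So it suffices to treat finite limits, and for those it suffices to treat a terminal object and pullbacks, because every finite limit is built from these.

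For the terminal object, let $t$ be terminal in $\Dcal$ and consider $q(t)$. Any morphism $x\to q(t)$ is a roof $s^{-1}f$ with $s:t\to y'$. We would use the right calculus, Definition~\ref{deff:calculus of fractions}(\ref{deff:calculus of fractions:ore dual},\ref{deff:calculus of fractions:cancellability dual}). With it we rewrite the roof as $f'\sigma'^{-1}$, where $f':x'\to t$. Any two such right roofs into $t$ can be brought to a common $\sigma''$, and then they agree, because $\Hom_\Dcal(x'',t)$ is a singleton. Existence is clear, so $q(t)$ is terminal.

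For a pullback $P=x\times_z y$, let $w$ be an object and let $a:w\to q(x)$ and $b:w\to q(y)$ be morphisms in $\Dcal[\Sigma^{-1}]$ with $q(f)a=q(g)b$. We express $a$ and $b$ as right fractions $a=a'\sigma^{-1}$ and $b=b'\sigma^{-1}$ with a common denominator $\sigma:w'\to w$; the Ore condition lets us pass to a common denominator. The equality $q(f a')=q(g b')$ holds in the localization. By the description of equality of fractions together with cancellability (\ref{deff:calculus of fractions:cancellability dual}), there is some $\tau:w''\to w'$ in $\Sigma$ with $fa'\tau=gb'\tau$ already in $\Dcal$. The universal property in $\Dcal$ then gives $h:w''\to P$, and we set $h\,(\sigma\tau)^{-1}$ as the candidate factorization. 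For uniqueness, suppose two factorizations $h_1\tau_1^{-1}$ and $h_2\tau_2^{-1}$ agree after composing with both projections. Again we pass to a common denominator and use (\ref{deff:calculus of fractions:cancellability dual}) to equalize each projection on the nose after precomposing with one more element of $\Sigma$. The uniqueness in $\Dcal$ then forces $h_1$ and $h_2$ to agree after that precomposition, so the fractions coincide.

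The main obstacle is the step translating "$q(u)=q(v)$" into "$u\rho=v\rho$ for some $\rho\in\Sigma$". This requires care that both calculi, left and right, give compatible descriptions of equality of fractions. We would invoke the standard Gabriel--Zisman description, which identifies the localization with the category of right fractions. Smallness of $\Dcal$ ensures that the Hom-sets are sets.
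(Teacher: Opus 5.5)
The paper states this lemma without proof, citing it as well known, so there is no argument of the paper's to compare with. Judged on its own, your terminal-object and pullback arguments are correct. They rightly use only the right calculus, i.e.\ (1), (4), (5) of Definition~\ref{deff:calculus of fractions}. The colimit half follows by passing to $\Dcal^{op}$, where the left calculus becomes a right calculus.

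The genuine gap is the reduction ``it suffices to treat a terminal object and pullbacks, because every finite limit is built from these.'' This is valid only when $\Dcal$ actually has a terminal object and all pullbacks. In that case every finite limit can be constructed from them, and a functor preserving them preserves the constructed limit. The lemma does not assume this, and the categories it is applied to do not satisfy it: the path category of $\RR$ (Example~\ref{xmp:RR as path categories}) has no terminal object. In such a $\Dcal$ a finite limit, say a binary product or an equalizer, may exist without being constructible from a terminal object and pullbacks. Preservation of the latter then says nothing about it. The paper itself only ever uses the pullback case, which you handle directly, but the lemma as stated needs more.

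The repair is to run your own pullback argument for an arbitrary finite diagram $D:J\to\Dcal$ with limit $L$.
\begin{itemize}
\item Start from a cone $(a_j:w\to qD(j))_{j\in J}$ over $qD$. By repeated use of the right Ore condition, put the finitely many $a_j$ over a single denominator $\sigma\in\Sigma$, with numerators $a'_j$.
\item For each of the finitely many arrows $\alpha:i\to j$ of $J$ you have $q(D(\alpha)a'_i)=q(a'_j)$. Hence some element of $\Sigma$ equalizes these two morphisms.
\item Precompose successively: after the first, the remaining $q$-equalities still hold, so you can apply the criterion again. This yields one $\tau\in\Sigma$ for which $(a'_j\tau)_j$ is an honest cone in $\Dcal$.
\item Factor $(a'_j\tau)_j$ through $L$ and divide by $\sigma\tau$.
\item For uniqueness, equalize the finitely many legs with a single $\tau$ in the same way.
\end{itemize}
This bookkeeping is already needed in your pullback uniqueness step. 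There you get one element of $\Sigma$ equalizing the $p_x$-legs and another for the $p_y$-legs, and you must compose them as above to get one element working for both; ``one more element of $\Sigma$'' glosses over this.

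Two smaller remarks.
\begin{itemize}
\item The ``main obstacle'' you identify is not one. The right calculus alone gives the Gabriel--Zisman right-fraction model of $\Dcal[\Sigma^{-1}]$, in which $q(u)=q(v)$ if and only if $u\rho=v\rho$ for some $\rho\in\Sigma$. No compatibility with the left-fraction description is needed.
\item Your description of equality of left roofs is imprecise. $s^{-1}f=t^{-1}g$ holds when there are $u,u'$ with $us=u't\in\Sigma$ and $uf=u'g$; the morphisms $u,u'$ themselves need not lie in $\Sigma$. You never actually use this description, so it does no harm.
\end{itemize}
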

	
	We now put the lemma into our context.
	Recall the definition of our triples $\Space$ (Definition~\ref{deff:Gamma}~and~\ref{deff:sim}), screens $\Pf$ (Definition~\ref{deff:screen}), and path categories $C$ (Definition~\ref{deff:path category}).
	Recall also $\Sigma_{\Pf}$ (Definition~\ref{deff:Sigma}) and $p:C\to\PfC$ (Notation~\ref{note:pi}).
	
	Let $\Space$ be such a triple and let $C$ be its path category.
	Let $\Pf$ be a screen of $\Space$.
	Since $\Sigma_\Pf$ (Definition~\ref{deff:Sigma}) admits a (left and right) calculus of fractions (Proposition~\ref{prop:Sigma yields a calculus of fractions nonlinear}), the quotient map $p:C\to \PfC$ preserves all finite limits and colimits.
	In particular, if the left diagram below is a pullback diagram in $C$, then the right diagram below is a pullback diagram in $\PfC$:
	\[
		\xymatrix{
			x \times_z y \ar[r]^-{f'} \ar[d]_-{g'} & y \ar[d]^-g & & p(x \times_z y) \ar[r]^-{p(f')} \ar[d]_-{p(g')} & p(y) \ar[d]^-{p(g)} \\ 
			x \ar[r]_-f & z & & p(x) \ar[r]_-{p(f)} & p(z).
		}
	\]
	That is, given the diagram $p(x)\stackrel{p(f)}{\rightarrow} p(z) \stackrel{p(g)}{\leftarrow} p(y)$ in $\PfC$, the object $p(x \times_z y)$ is canonically isomorphic to $p(x) \times_{p(z)} p(y)$.
	\bigskip
	
	A \emph{covering} is a set $\{f_i:x_i\to x\}_{i\in I}$ of morphisms in $C$ which all have the same target.
	The empty set with chosen target $x$ is also considered a covering.
	A \emph{coverage} $\Cov(C)$ of a small category $C$ is a set of coverings.
	
	\begin{deff}[site]\label{deff:site}
		A small category $C$ is a \emph{(small) site} if there exists a coverage $\Cov(C)$ satisfying the following conditions.
		\begin{enumerate}
			\item\label{deff:site:iso} If $f:x\to y$ is an isomorphism then $\{f:x\to y\}\in \Cov(C)$.
			\item\label{deff:site:compose} If $\{f_i:x_i\to x\}_{i\in I}\in \Cov(C)$ and for each $i\in I$ we have $\{g_{ij}:y_{ij}\to x_i\}_{j\in J_i}\in \Cov(C)$ then $\{f_ig_{ij}: y_{ij}\to x\}_{i\in I,j\in J_i} \in \Cov(C)$.
			\item\label{deff:site:pullback} If $\{f_i:x_i\to x\}_{i\in I}\in \Cov(C)$ and $g:y\to x$ is a morphism in $C$ then the pullback $x_i\times_x y$ exists for each $i\in I$ and $\{x_i\times_x y\to y\}_{i\in I}\in \Cov(C)$, where the maps $x_i\times_x y\to y$ are the induced maps by taking the pullback.
		\end{enumerate}
	\end{deff}
	From now on we omit the word `small' as all our sites are small.
	
	Let $P$ be a poset.
	Consider $P$ as a category whose objects are $P$ and Hom sets are given by
	\[
		\Hom_{P}(x,y) = \begin{cases}
			\{*\} & x\leq y \\
			\emptyset & \text{otherwise}.
		\end{cases}
	\]
	If $\Hom_{P}(x,y)$ is nonempty we write the unique morphism as $x\to y$.
	
	The coverage $\Cov(P)$ is given by the following.
	\begin{itemize}
		\item The empty covering of each $x\in P$ is in $\Cov(P)$.
		\item Each collection $\{x_i\to x\}_{i\in I}$ is in $\Cov(P)$.
	\end{itemize}
	
	If the reader is unfamiliar, we have the following result.
	
	\begin{prop}\label{prop:site of screens}
		If a poset $P$ has finite products, then $P$ is a site with coverage $\Cov(P)$ defined above.
	\end{prop}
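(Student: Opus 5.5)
We check the three axioms of Definition~\ref{deff:site} directly for the coverage $\Cov(P)$, which consists of all collections $\{x_i\to x\}_{i\in I}$ (the empty one included) with common target $x$. Because $P$ is a poset viewed as a category, every Hom set has at most one element. Consequently any diagram in $P$ commutes, and the whole argument reduces to bookkeeping about which relations $\leq$ hold.

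For axiom (\ref{deff:site:iso}), an isomorphism $f:x\to y$ in $P$ is a single morphism with target $y$, so $\{f\}$ is a collection of the required form and lies in $\Cov(P)$. Since $P$ is a poset, $x\leq y\leq x$ forces $x=y$ and $f=\boldsymbol{1}_x$, though this is not even needed. For axiom (\ref{deff:site:compose}), take $\{x_i\to x\}_{i\in I}\in\Cov(P)$ and, for each $i$, $\{y_{ij}\to x_i\}_{j\in J_i}\in\Cov(P)$. The composites $y_{ij}\to x_i\to x$ exist by transitivity of $\leq$ and form a collection with target $x$. Empty index sets cause no trouble: the resulting family is simply empty or smaller, and it is still in $\Cov(P)$.

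Axiom (\ref{deff:site:pullback}) carries the only content, namely the existence of the pullbacks $x_i\times_x y$. Here we invoke the hypothesis that $P$ has finite products, which in a poset are meets. We claim that for $x_i\leq x$ and $y\leq x$, the product $x_i\times y$ (that is, $x_i\wedge y$) together with its projections is a pullback of $x_i\to x\leftarrow y$. The square commutes automatically because Hom sets are singletons. Given any $w$ with $w\to x_i$ and $w\to y$, the universal property of the product yields a unique $w\to x_i\times y$, and uniqueness in a poset is automatic. The compatibility condition over $x$ is vacuous for the same reason. So $x_i\times_x y=x_i\times y$ exists, and the induced maps $x_i\times_x y\to y$ form a collection with target $y$, which lies in $\Cov(P)$. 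If the original covering is empty, the pulled-back family is the empty covering of $y$, which is also in $\Cov(P)$.

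The main (mild) obstacle is this identification of pullbacks with products. It needs to be stated carefully, noting that the finite-product hypothesis supplies binary meets and that commutativity over $x$ imposes no extra condition in a thin category. Once that is done, every step is immediate.
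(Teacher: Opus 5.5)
Your proposal is correct and takes the same route as the paper. Axioms (1) and (2) are immediate because every family with a common target lies in $\Cov(P)$, and for (3) you identify the pullback $x_i\times_x y$ with the product (meet) $x_i\times y$, using that Hom sets in $P$ have at most one element, exactly as the paper does; your extra remarks (empty coverings pulling back to empty coverings, and only binary meets being needed) are harmless refinements of that argument.
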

	\begin{proof}
		Since $P$ has finite products, for all $x,y\in P$ there is a unique $x\times y\in P$ such that $x\times y\leq x$ and $x\times y\leq y$ and if $z\leq x$ and $z\leq y$ then $z\leq x\times y$.	
	
		The only isomorphisms in $P$ are the identity morphisms.
		By definition, $\{x\to x\}$ is in $\Cov(P)$.
		Thus, Definition~\ref{deff:site}(\ref{deff:site:iso}) is satisfied.
		
		Suppose $\{x_i\to x\}_{i\in I}$ is a covering in $\Cov(P)$ and, for each $i\in I$, there is a covering $\{y_{ij}\to x_i\}_{j\in J_i}$ in $\Cov(P)$.
		We know $y_{ij}\leq x$ for each $i\in I$ and $j\in J_i$.
		Thus, $\{y_{ij}\to x\}_{i\in I,j\in J_i}$ is also a covering in $\Cov(P)$ and so Definition~\ref{deff:site}(\ref{deff:site:compose}) is satisfied.
		
		Finally, suppose $\{x_i \to x\}_{i\in I}$ is a covering in $\Cov(P)$ and $y\to x$ is a morphism in $P$.
		For each $i\in I$, by assumption, we have the product $y\times x_i$.
		Because Hom sets in $P$ are either singletons or empty, we see that $y\times x_i$ is also the pullback $y'\times_{x} x_i$.
		Moreover, for each $i\in I$, we have $y\times x_i\leq y$.
		Thus, $\{y\times x_i\to y\}_{i\in I}$ is also in $\Cov(P)$ and so Definition~\ref{deff:site}(\ref{deff:site:pullback}) is satisfied.
		Therefore, $P$ with coverage $\Cov(P)$ is a site.
	\end{proof}
	
	In particular, if a subset $\Lsc\subset\Psc$ is a poset with finite products then Proposition~\ref{prop:site of screens} applies.
	
	In Definition~\ref{deff:localized coverage} and Proposition~\ref{prop:localized site to a site}, we generalize the functors $p$ and $H$, from Notation~\ref{note:pi} and the proof of Theorem~\ref{thm:barPlocal yields cat equivalent to cat from quiver}(1) on page~\pageref{pf:nonlinear pixelation to quiver}, respectively.
	In particular, we consider the composition $Hp$.
	
	First, our set up.
	Let $C$ a site with coverage $\Cov(C)$ such that the only isomorphisms in $C$ are the identity maps.
	Let $\Sigma$ be a class of morphisms in $C$ such that $\Sigma$ induces a calculus of left and right fractions and let $p:C\to C[\Sigma^{-1}]$ be the canonical localization functor.
	Moreover, assume $\mathcal{S}$ is a skeleton of $C[\Sigma^{-1}]$ and there is a quotient functor $H:C[\Sigma^{-1}]\to \mathcal{S}$ such that the canonical inclusion $H^{-1}:\mathcal{S}\to C[\Sigma^{-1}]$ is a left quasi-inverse and a right inverse.
	I.e.,\ $HH^{-1}$ is the identity on $\mathcal{S}$ and $H^{-1}H$ is an auto-equivalence on $C[\Sigma^{-1}]$.
	
	\begin{deff}\label{deff:localized coverage}
		With the setup above, we define the coverage $\Cov(\mathcal{S})$ to be sets $\{Hp(f_i):Hp(x_i)\to Hp(x)\}$, for each covering $\{f_i:x_i\to x\}$ in $\Cov(C)$, including the empty covering.
	\end{deff}
	
	The author was unable to find a proof of the following proposition in the literature.
		
	\begin{prop}\label{prop:localized site to a site}
		Let $C$ with $\Cov(C)$, $\Sigma$, and $\mathcal{S}$ with $\Cov(\mathcal{S})$ be as in the setup above.
		Then $\mathcal{S}$ is a site with coverage $\Cov(\mathcal{S})$.
	\end{prop}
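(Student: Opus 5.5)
I plan to verify the three axioms of Definition~\ref{deff:site} for $\Cov(\mathcal{S})$ directly. Write $q=Hp:C\to\mathcal{S}$. Every object of $\mathcal{S}$ has the form $q(x)$, since $H$ is surjective on objects ($HH^{-1}$ is the identity). Lemma~\ref{lem:preserve limits} gives that $p$ preserves finite limits. $H$ is an equivalence: it has the quasi-inverse $H^{-1}$ with $HH^{-1}=\boldsymbol{1}$ and $H^{-1}H\cong\boldsymbol{1}$. Hence $H$ also preserves finite limits, and so $q$ preserves pullbacks. The whole argument then amounts to transporting each axiom from $C$ along $q$. The one subtlety is that a given object or morphism of $\mathcal{S}$ may have many lifts to $C$, and the lifts that are convenient for one axiom need not be the ones used to define a given covering.

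\textbf{Axiom (\ref{deff:site:iso}).} Let $u:q(x)\to q(y)$ be an isomorphism in $\mathcal{S}$. Since $\mathcal{S}$ is a skeleton, $q(x)=q(y)$. I will argue that the identity covering $\{\boldsymbol{1}_x\}\in\Cov(C)$ maps to $\{\boldsymbol{1}_{q(x)}\}$. For a nonidentity automorphism $u$ I would write $u=q(\sigma)^{-1}q(f)$ using the calculus of fractions. Here $q(\sigma)$ is an isomorphism, and I expect to need that $u$ is itself of the form $q(g)$ for some $g$ in $C$. The route is to use that $p$ is essentially full onto $\Sigma$-classes: any morphism $p(x)\to p(y)$ composed with the inverse of some $p(\sigma)$ becomes a $p$ of a genuine morphism. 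I am least sure of this step. If it fails, I will read Definition~\ref{deff:localized coverage} as closed under precomposition with isomorphisms, which is the intended meaning.

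\textbf{Axiom (\ref{deff:site:compose}).} Take a covering $\{q(f_i):q(x_i)\to q(x)\}$ and, for each $i$, a covering $\{q(g_{ij})\}$ of $q(x_i)$. The lift of the second covering may be a covering of some $x_i'$ with $q(x_i')=q(x_i)$, but not necessarily of $x_i$ itself. I will use the pullback axiom in $C$ along a lift of the canonical isomorphism, namely $g:x_i\to x_i'$ built from the Ore condition, to replace it with a covering of $x_i$. Its image under $q$ is again $\{q(g_{ij})\}$ up to this identification, since $q$ preserves pullbacks and pulling back along an isomorphism changes nothing. Composing in $C$ by axiom (\ref{deff:site:compose}) and applying $q$ then gives $\{q(f_i)q(g_{ij})\}\in\Cov(\mathcal{S})$.

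\textbf{Axiom (\ref{deff:site:pullback}).} Take a covering $\{q(f_i)\}$ of $q(x)$ and a morphism $h:q(y)\to q(x)$. By Definition~\ref{deff:calculus of fractions}(\ref{deff:calculus of fractions:ore dual}), after replacing $y$ by an isomorphic source I can write $h=q(g)$ with $g:y'\to x$ in $C$. The pullbacks $x_i\times_x y'$ exist in $C$ and $\{x_i\times_x y'\to y'\}\in\Cov(C)$. Since $q$ preserves pullbacks, $q(x_i\times_x y')=q(x_i)\times_{q(x)}q(y)$, and the image covering is exactly the required pulled-back covering. I expect the main obstacle to be the bookkeeping of lifts in axioms (\ref{deff:site:iso}) and (\ref{deff:site:compose}). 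I will handle it uniformly by first proving a small lemma: $\Cov(\mathcal{S})$ is independent of the choice of lifts up to pulling back along isomorphisms, which are identities in the skeleton.
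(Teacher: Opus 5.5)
Your plan matches the paper's: verify each axiom by pushing covers forward along $q=Hp$, using that $q$ preserves pullbacks. You also correctly isolate the two weak points, but neither of your patches works.

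\textbf{Axiom~(\ref{deff:site:iso}).} Writing an automorphism $u$ as $q(g)$ gains nothing. You would need $\{g\}\in\Cov(C)$, and since the only isomorphisms of $C$ are identities, $\Cov(C)$ is only forced to contain identity singletons. What is actually needed is that every automorphism in $\mathcal{S}$ is an identity. Your step $h=q(g)$ in axiom~(\ref{deff:site:pullback}) needs the same fact, since there $q(\sigma)$ must be an identity. The paper simply asserts this ``since $\mathcal{S}$ is a skeleton'', and your lift lemma assumes it too, but skeletality does not imply it. Take $C$ the one-object category of the monoid $\NN$, $\Sigma$ all morphisms, and $\Cov(C)$ the identity cover alone. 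Then $\mathcal{S}$ is the group $\ZZ$, and the automorphism $1\in\ZZ$ violates the axiom because $\Cov(\mathcal{S})=\{\{0\}\}$. For $\mathcal{S}=\KCP$ the fact does hold, via Definition~\ref{deff:screen}(\ref{deff:screen:thin}). If $[\rho]^{-1}[\gamma]$ is invertible in $\PfC$, the calculus of fractions turns $vu=\boldsymbol{1}$ into a path beginning with $\gamma$ that is $\sim$-equivalent to a path inside one pixel. So $\gamma$ stays in that pixel, and $H$ sends the morphism to an identity. Reading Definition~\ref{deff:localized coverage} as closed under isomorphisms changes the statement rather than proving it.

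\textbf{Axiom~(\ref{deff:site:compose}).} The morphism $g\colon x_i\to x_i'$ you want to pull back along need not exist. An isomorphism $p(x_i)\cong p(x_i')$ is only a roof of morphisms of $C$. Even when some $g$ exists, $q(g)$ is just an automorphism, so pulling back returns $\{q(g_{ij})\}$ only up to automorphisms.

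More seriously, your lift-independence lemma is false, and so is the proposition as stated. Let $C$ be the chain $y<x_1'<x_1<x$, and let $\Sigma$ consist of the identities and $x_1'\to x_1$; it admits both calculi. Let $\Cov(C)$ consist of the identity covers together with $\{x_1\to x\}$ and $\{y\to x_1'\}$; one checks this is a site. This is even the path category of a four-element lattice with screen $\{\{y\},\{x_1',x_1\},\{x\}\}$. Then $\mathcal{S}$ is the chain $y<x_1<x$, and both $\{x_1\to x\}$ and $\{y\to x_1\}$ lie in $\Cov(\mathcal{S})$. But $\{y\to x\}$ does not, so axiom~(\ref{deff:site:compose}) fails.

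The paper's proof has the same hole. It writes ``$\{f_ig_{ij}\}\in\Cov(C)$ since $C$ is a site'' as though each inner cover were the image of a cover of $x_i$ itself. No bookkeeping will rescue this step. You need extra hypotheses, for instance that the set of $q$-images of covers of $x$ depends only on $q(x)$, together with trivial automorphisms in $\mathcal{S}$. Alternatively, replace $\Cov(\mathcal{S})$ by the coverage it generates.
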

	\begin{proof}
		First we check Definition~\ref{deff:site}(\ref{deff:site:iso}).
		Since $\mathcal{S}$ is a skeleton, the only isomorphisms in $\mathcal{S}$ are the identity morphisms.
		And, since $Hp(1_{x})=1_{Hp(x)}$, we have $\{1_{x}:x\to x\}$ in $\Cov(\mathcal{S})$ for each object $x$ in $\mathcal{S}$.
		Thus, Definition~\ref{deff:site}(\ref{deff:site:iso}) is satisfied.
		
		Next we check Definition~\ref{deff:site}(\ref{deff:site:compose}).
		Suppose we have the covering $\{Hp(f_i):Hp(x_i)\to Hp(x)\}_{i\in I}$ in $\Cov(\mathcal{S})$.
		And, for each $i\in I$, suppose we have $\{Hp(g_{ij}):Hp(y_{ij})\to Hp(x_i)\}_{j\in J_i}$ in $\Cov(\mathcal{S})$.
		We know there exists $\{f_ig_{ij}:y_{ij}\to x\}_{i\in I,j\in J_i}$ in $\Cov(C)$ since $C$ is a site.
		Then $\{Hp(f_ig_{ij}):Hp(y_{ij})\to Hp(x)\}_{i\in I,j\in J_i}$ is in $\Cov(\mathcal{S})$ by definition.
		Therefore, Definition~\ref{deff:site}(\ref{deff:site:compose}) is satisfied.
		
		Finally we check Definition~\ref{deff:site}(\ref{deff:site:pullback}).
		Let $\{Hp(f_i):Hp(x_i)\to Hp(x)\}_{i\in I}$ be in $\Cov(\mathcal{S})$ and let $g:y\to Hp(x)$ be a morphism in $\mathcal{S}$.
		Then there is a $\bar{g}:y\to x$ in $C[\Sigma^{-1}]$ such that $H(\bar{g})=g$, where $\bar{g}=\sigma^{-1} \tilde{g}$ for some morphism $\tilde{g}$ in $C$.
		Let $\tilde{y}$ be the source of $\tilde{g}$, which is also an object of $C$.
		Then, since $\sigma\in\Sigma$, we know that $H(\sigma^{-1})=1_{Hp(x)}$.
		So, $Hp(\tilde{y})= y$ and $Hp(\tilde{g})=g$.
		
		Since $C$ is a site, and by Lemma~\ref{lem:preserve limits}, the pullback diagram in $C$ on the left becomes a pullback diagram in $\mathcal{S}$ on the right, for each $i\in I$:
		\[
			\xymatrix{
				x_i \times_x \tilde{y} \ar[r]^-{f'_i} \ar[d]_-{g'_i} & \tilde{y} \ar[d]^-{\tilde{g}} & & Hp(x_i \times_x \tilde{y}) \ar[r]^-{Hp(f'_i)} \ar[d]_-{Hp(g'_i)} & Hp(\tilde{y})=y \ar[d]^-{g=Hp(\tilde{g})} \\
				x_i \ar[r]_-{f_i} & x & & Hp(x_i) \ar[r]_-{Hp(f_i)} & Hp(x),
			}
		\]
		where $Hp(x_i \times_x \tilde{y})$ is canonically isomorphic to $Hp(x_i)\times_{Hp(x)} y$.
		Moreover, we know that $\{f'_i:x_i\times_x y\to y\}_{i\in I}\in \Cov(C)$ since $C$ is a site.
		Thus, in $\Cov(\mathcal{S})$ we have $\{Hp(f'_i):Hp(x_i)\times_{Hp(x)} y \to y\}_{i\in I}$.
		Therefore, Definition~\ref{deff:site}(\ref{deff:site:pullback}) is satisfied and so $\mathcal{S}$ is a site with coverage $\Cov(\mathcal{S})$.
	\end{proof}
	
	\begin{deff}[continuous functor]\label{deff:continuous functor}
		Let $C$ and $D$ be sites and let $F:C\to D$ be a functor.
		We say $F$ is \emph{continuous} if for every $\{f_i:x_i\to x\}$ in $\Cov(C)$ the following hold.
		\begin{enumerate}
			\item The set $\{F(f_i):F(x_i)\to F(x)\}$ is in $\Cov(D)$.
			\item For any morphism $g:y\to x$, the canonical morphism $F(y \times_x x_i)\to F(y)\times_{F(x)} F(x_i)$ is an isomorphism.
		\end{enumerate}
	\end{deff}
	
	\begin{prop}\label{prop:continuous localization}
		Given the setup in Proposition~\ref{prop:localized site to a site}, the quotient functor $Hp:C\to \mathcal{S}$ is continuous.
	\end{prop}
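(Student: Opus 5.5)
We verify the two conditions of Definition~\ref{deff:continuous functor} directly, using the definition of $\Cov(\mathcal{S})$ (Definition~\ref{deff:localized coverage}) and Lemma~\ref{lem:preserve limits}.

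\textbf{Condition (1).} This is immediate from Definition~\ref{deff:localized coverage}. For every covering $\{f_i:x_i\to x\}$ in $\Cov(C)$, including the empty covering, the set $\{Hp(f_i):Hp(x_i)\to Hp(x)\}$ is by definition a member of $\Cov(\mathcal{S})$.

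\textbf{Condition (2).} Fix a covering $\{f_i:x_i\to x\}\in\Cov(C)$ and a morphism $g:y\to x$ in $C$. By Definition~\ref{deff:site}(\ref{deff:site:pullback}), each pullback $y\times_x x_i$ exists in $C$. The plan is to show that $Hp$ carries this pullback square to a pullback square in $\mathcal{S}$; the canonical comparison map is then an isomorphism by uniqueness of limits up to unique isomorphism.

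I would split $Hp$ into its two factors.

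First, $p:C\to C[\Sigma^{-1}]$ is localization at a class admitting a calculus of left and right fractions. By Lemma~\ref{lem:preserve limits}, $p$ preserves finite limits, so $p(y\times_x x_i)$, together with the images of the projections, is a pullback of $p(y)\to p(x)\leftarrow p(x_i)$. Equivalently, the canonical map $p(y\times_x x_i)\to p(y)\times_{p(x)}p(x_i)$ is an isomorphism.

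Second, $H:C[\Sigma^{-1}]\to\mathcal{S}$ is part of an equivalence of categories: $HH^{-1}=1_{\mathcal{S}}$ and $H^{-1}H$ is an auto-equivalence. An equivalence preserves all limits that exist. Hence $H$ sends the pullback square above to a pullback square in $\mathcal{S}$, and the canonical map $Hp(y\times_x x_i)\to Hp(y)\times_{Hp(x)}Hp(x_i)$ is an isomorphism.

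\textbf{Expected obstacle.} The main point needing care is the second factor. The setup describes $H$ only as a ``quotient functor'' with the stated inverse properties, not explicitly as an equivalence. So I would first check that $H$ is fully faithful and essentially surjective, deducing this from $HH^{-1}=1_{\mathcal{S}}$ together with $H^{-1}H$ being an auto-equivalence. Once that is in hand, preservation of pullbacks follows from the general fact that equivalences preserve limits.

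I would also note explicitly that the comparison morphism in Definition~\ref{deff:continuous functor}(2) is the one induced by the universal property in $\mathcal{S}$. That morphism agrees with the isomorphism produced above, as was already used in the proof of Proposition~\ref{prop:localized site to a site}.
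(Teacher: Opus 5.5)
Your proof is correct and follows the paper's argument: condition (1) of Definition~\ref{deff:continuous functor} is immediate from the definition of $\Cov(\mathcal{S})$, and condition (2) comes from Lemma~\ref{lem:preserve limits}. The only difference is that you make explicit, and correctly check using $HH^{-1}=1_{\mathcal{S}}$ and that $H^{-1}H$ is an auto-equivalence, that $H$ is an equivalence and so also preserves the pullback; the paper's two-line proof leaves this step implicit.
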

	\begin{proof}
		Definition~\ref{deff:continuous functor}(1) follows from Definition~\ref{deff:localized coverage}.
		Definition~\ref{deff:continuous functor}(2) follows from Definition~\ref{lem:preserve limits}.
	\end{proof}
	
	We now put Propositions~\ref{prop:localized site to a site}~and~\ref{prop:continuous localization} into our context.
	
	\begin{thm}\label{thm:pixelation is continuous}
		Let $C$ be a path category from $\Space$ and also a site with coverage $\Cov(C)$.
		Let $\Pf$ be a screen of $\Space$ and give $\KCP$ the coverage $\Cov(\KCP)$ as in Proposition~\ref{prop:localized site to a site}.
		Then any sheaf on $\KCP$ lifts to a sheaf on $C$.
	\end{thm}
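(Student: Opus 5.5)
The plan is to reduce the statement to Propositions~\ref{prop:localized site to a site} and~\ref{prop:continuous localization}, and then to use the standard fact that precomposition with a continuous functor sends sheaves to sheaves.

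First, I will check that the setup preceding Definition~\ref{deff:localized coverage} holds with $\Sigma=\Sigma_{\Pf}$ and $\mathcal{S}=\KCP$.

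\begin{itemize}
\item By Proposition~\ref{prop:Sigma yields a calculus of fractions nonlinear}, $\Sigma_{\Pf}$ admits a calculus of left and right fractions.
\item By the proof of Theorem~\ref{thm:barPlocal yields cat equivalent to cat from quiver}(\ref{thm:barPlocal yields cat equivalent to cat from quiver:C}), $H:\PfC\to\KCP$ and $H^{-1}$ satisfy $HH^{-1}=\mathrm{id}$, and $H^{-1}H$ is an autoequivalence. Also $\KCP$ is a skeleton, with objects $\Pf$ and $H^{-1}(\Xpixel)=s_{\Xpixel}$.
\item The setup assumes that the only isomorphisms in $C$ are identities. In a path category, an isomorphism $[\gamma]$ with inverse $[\gamma']$ gives $\gamma\cdot\gamma'\sim$ constant, so $\gamma\cdot\gamma'$ is constant by Definition~\ref{deff:sim}(\ref{deff:sim:constant}), hence $\gamma$ is constant. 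So this assumption holds automatically.
\end{itemize}

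Proposition~\ref{prop:localized site to a site} then makes $\KCP$ a site with coverage $\Cov(\KCP)$. Proposition~\ref{prop:continuous localization} makes $Hp:C\to\KCP$ continuous.

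Second, given a sheaf $\mathcal{F}$ on $\KCP$, I define the lift as $(Hp)^*\mathcal{F}=\mathcal{F}\circ(Hp)^{\mathrm{op}}$. This is clearly a presheaf on $C$. For the sheaf condition, take a covering $\{f_i:x_i\to x\}$ in $\Cov(C)$. By continuity, $\{Hp(f_i)\}$ is a covering in $\Cov(\KCP)$. The equalizer diagram
\[
\mathcal{F}(Hp(x))\to\prod_i\mathcal{F}(Hp(x_i))\rightrightarrows\prod_{i,j}\mathcal{F}(Hp(x_i)\times_{Hp(x)}Hp(x_j))
\]
is exact because $\mathcal{F}$ is a sheaf. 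Condition (2) of Definition~\ref{deff:continuous functor}, which comes from Lemma~\ref{lem:preserve limits}, identifies $Hp(x_i\times_x x_j)$ with $Hp(x_i)\times_{Hp(x)}Hp(x_j)$. Under this identification the displayed diagram is exactly the sheaf diagram of $(Hp)^*\mathcal{F}$ for the covering $\{f_i\}$, so $(Hp)^*\mathcal{F}$ is a sheaf.

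The main obstacle is the pullback comparison. The sheaf diagram for $C$ needs the pullbacks $x_i\times_x x_j$, where both legs come from the covering. Definition~\ref{deff:site}(\ref{deff:site:pullback}) guarantees pullbacks only along an arbitrary $g:y\to x$. I would apply it with $g=f_j$, so $x_i\times_x x_j$ exists in $C$.

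It must also be checked that $p$ preserves these particular pullbacks. Lemma~\ref{lem:preserve limits} covers $p$ for finite limits. The functor $H$ is an equivalence, so it also preserves them.

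If the identification of the two equalizer diagrams turns out to be delicate, the fallback is the general fact that continuous functors have pullback of sheaves $u^s=u^p$ (restriction). That fact would be cited, with the verification above supplying its hypotheses.
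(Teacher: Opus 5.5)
Your proposal is correct and follows the paper's route: take $\Sigma=\Sigma_{\Pf}$ and $\mathcal{S}=\KCP$, obtain continuity of $Hp$ from Propositions~\ref{prop:localized site to a site} and~\ref{prop:continuous localization}, and conclude because precomposition with a continuous functor preserves sheaves. The paper simply cites that last fact from the Stacks Project, whereas your additional checks fill in details the paper leaves implicit: that path categories have only identity isomorphisms, that $x_i\times_x x_j$ exists by the pullback axiom, and the direct equalizer comparison.
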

	\begin{proof}
		In the setup from Propositions~\ref{prop:localized site to a site}~and~\ref{prop:continuous localization}, any sheaf on $\mathcal{S}$ lifts to a sheaf on $C$ and any sheaf on $C$ can be pushed down to a sheaf on $\mathcal{S}$.	
		We know that if a functor $F:C\to D$ of sites is continuous then any sheaf on $D$ lifts to a sheaf on $C$ (see, for example, \cite[00WU]{stacks}).
		
		To see the result: the class of morphisms $\Sigma$ is $\Sigma_\Pf$ for a screen $\Pf$.
		The skeleton $\mathcal{S}$ of $C[\Sigma_{\Pf}^{-1}]=\PfC$ is $\KCP$.
		The functors $p$ and $H$ have the same name and are from Notation~\ref{note:pi} and the proof of Theorem~\ref{thm:barPlocal yields cat equivalent to cat from quiver}(1) on page~\pageref{pf:nonlinear pixelation to quiver}, respectively.	
	\end{proof}

	\subsection{Distributive lattices}\label{sec:sites:lattice}
	We now consider $C$ to be a distributive lattice and a category where $\Hom_C(U,V)$ has a unique element if $U\leq V$ and is empty otherwise.
	Since $C$ has finite products as a category (the joins as a lattice), we can reuse the proof of Proposition~\ref{prop:site of screens} to see that $C$ is indeed a site
	
	For example, classically, we could consider $\Top{X}$, for a topological space $X$.
	Finite joins and finite products correspond to finite intersections; meets and coproducts correspond to unions.
	
	We define $\Gamma$ (Definition~\ref{deff:Gamma}) as follows.
	Denote by $s(f)$ the source of a morphism $f$ and by $t(f)$ the target of a morphism $f$.
	For any $n\in\NN_{>0}$ we consider a finite partition $\{I_0,\ldots,I_n\}$ of $[0,1]$, where each $I_i$ is a subinterval, such that if $s\in I_i$, $t\in I_j$, and $i<j$, then $s<t$.
	For any such partition and a finite composition $f_n\circ\cdots \circ f_1$ of morphisms, define $\gamma:[0,1]\to \Ob(C)$ by
	\[
		\gamma(t) = \begin{cases}
			s(f_1) & t\in I_0 \\
			t(f_i) & t\in I_i, 1\leq i \leq n.
		\end{cases}
	\]
	Let $\Gamma$ be the set of all possible $\gamma$ constructed in this way.
	It is straightforward to check that our $\Gamma$ satisfies Definition~\ref{deff:Gamma} (\ref{deff:Gamma:composition},\ref{deff:Gamma:subpath},\ref{deff:Gamma:constant}).
	
	Now, we define $\gamma\sim\gamma'$ if and only if $\gamma(0)=\gamma'(0)$ and $\gamma(1)=\gamma'(1)$.
	Then, $\sim$ satisfies Definition~\ref{deff:sim}(\ref{deff:sim:constant}).
	By construction, $\sim$ also satisfies Definition~\ref{deff:sim}(\ref{deff:sim:reparameterisation},\ref{deff:sim:compose}).
	
	From now on, we assume $C$ is a sublattice of $L$, for some distributive lattice $L$.
	In the $\Top{X}$ example, $C$ is the open subsets of $X$ and $L=\boldsymbol{2}^X$, ordered by inclusion.
	
	\begin{deff}[$\Pf_Y$]\label{deff:lattice screen}
		Let $Y\in L$.
		Define
		\[
			\Pf_Y :=\left( \{A_Z := \{U\in C \mid U\wedge Y=Z\}\}_{Z\leq Y\in L}\right)\setminus \{\emptyset\}.
		\]
	\end{deff}
	It follows immediately that $\Pf_Y$ partitions $C$.
	
	\begin{prop}\label{prop:Pf_Y is a screen}
		The partition $\Pf_Y$ is a screen of $(C,\Gamma{/}{\sim})$.
	\end{prop}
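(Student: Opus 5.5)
The plan is to verify the five conditions of Definition~\ref{deff:screen} directly for $\Pf_Y$. Two structural facts make this manageable. First, every $\gamma\in\Gamma$ is, up to reparameterization, a finite chain $U_0\leq U_1\leq\cdots\leq U_n$ in $C$. Second, $\gamma\sim\gamma'$ holds exactly when the endpoints agree. The basic tool will be the monotone map $U\mapsto U\wedge Y$ from $C$ to $L$. The pixels of $\Pf_Y$ are precisely its nonempty fibres $A_Z$, so $U\leq V$ implies $U\wedge Y\leq V\wedge Y$. I will also use that $C$ is a sublattice of $L$, so meets and joins of elements of $C$ stay in $C$ and can be computed in the distributive lattice $L$.

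\emph{Screen conditions (\ref{deff:screen:thin}), (\ref{deff:screen:discrete}), (\ref{deff:screen:path requirement}).} Since $\sim$ only sees endpoints, condition~(\ref{deff:screen:path requirement}) is immediate. For condition~(\ref{deff:screen:thin}), suppose $\im(\gamma)\subset A_Z$ and $\gamma'$ has the same endpoints $U_0\leq U_n$. Then $\gamma\sim\gamma'$ automatically, so I must show $\im(\gamma')\subset A_Z$. Every $V$ in the image of $\gamma'$ satisfies $U_0\leq V\leq U_n$. Monotonicity then gives
\[
Z=U_0\wedge Y\leq V\wedge Y\leq U_n\wedge Y=Z,
\]
so $V\in A_Z$; the converse direction of the biconditional is trivial. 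For condition~(\ref{deff:screen:discrete}), the image of a path is a finite chain, so the intervals $I_i$ from the construction of $\gamma$ already give a finite partition. I then merge consecutive intervals lying in the same pixel. By monotonicity the sequence $U_i\wedge Y$ is weakly increasing, so after merging no two adjacent pixels coincide.

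\emph{Connectedness (\ref{deff:screen:connected}).} Given $U,V\in A_Z$, set $W=U\wedge V\in C$. Then $W\wedge Y=(U\wedge Y)\wedge(V\wedge Y)=Z$, so $W\in A_Z$. The walk consisting of the path $W\to U$ traversed backwards followed by $W\to V$ stays inside $A_Z$ by the thinness computation above.

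\emph{Ore condition (\ref{deff:screen:ore}).} I expect this to be the main step, since it is the one place distributivity is genuinely needed. Let $\rho\colon U\to U'$ with $\im(\rho)\subset A_Z$, and let $\gamma\colon U\to V$. I take $\rho'\colon V\to V\vee U'$ and $\gamma'\colon U'\to V\vee U'$; the square commutes up to $\sim$ because the endpoints agree. It remains to check that $V\vee U'$ lies in the pixel of $V$. By distributivity,
\[
(V\vee U')\wedge Y=(V\wedge Y)\vee(U'\wedge Y)=(V\wedge Y)\vee Z,
\]
and $Z=U\wedge Y\leq V\wedge Y$, so this equals $V\wedge Y$. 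Thinness then gives $\im(\rho')$ in that pixel.

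The dual statement is handled the same way. Given $\rho'\colon V'\to V$ inside a pixel and $\gamma'\colon U'\to V$, I take $W=U'\wedge V'$ with paths $W\to U'$ and $W\to V'$. The computation is $W\wedge Y=(U'\wedge Y)\wedge(V'\wedge Y)=U'\wedge Y$, which uses $V'\wedge Y=V\wedge Y\geq U'\wedge Y$. The only care needed is to match the paper's conventions on which operation it calls products, and to check that every chosen path really lies in $\Gamma$; both follow from $C$ being a sublattice.
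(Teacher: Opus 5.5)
Your proposal is correct and follows essentially the paper's route: you verify the five screen axioms directly, getting thinness from monotonicity of $U\mapsto U\wedge Y$, the Ore square from $V\vee U'$ via distributivity and its dual from a meet, and the remaining axioms from the facts that $\sim$ only sees endpoints and paths have finite image. The only differences are cosmetic. For connectedness you use just $U\wedge V$, whereas the paper also exhibits $U\vee V$. You also spell out the merging of adjacent intervals needed for discreteness, which the paper leaves implicit.
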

	\begin{proof}
		We begin with Definition~\ref{deff:screen}(\ref{deff:screen:thin}).
		Let $\gamma,\gamma'\in \Gamma$ such that $\gamma(i)=\gamma'(i)$ for $i\in\{0,1\}$.
		Suppose $\im(\gamma)\subset A_Z$ for some $Z\leq Y$ in $L$.
		Then, $\gamma(1)\wedge Y = \gamma(0)\wedge Y=Z$.
		So, for all $t\in[0,1]$ we must have $\gamma(t)\wedge Y=Z$.
		But the same must also be true for $\gamma'$.
		Therefore, each nonempty $A_Z$ is $\sim$-thin.
		
		Now we show Definition~\ref{deff:screen}(\ref{deff:screen:connected}).
		Since $C$ and $L$ are distributive, for any $U,V\in A_Z$, we have both $U\vee V$ and $U\wedge V$ in $A_Z$.
		Thus the following commutative square exists in $C$:
		\[
			\xymatrix{
				U\wedge V \ar[r] \ar[d] & U \ar[d] \\
				V \ar[r] & U\vee V.
			}
		\]
		Thus, each $A_Z$ is $\Gamma$-connected.
		
		Next, we show Definition~\ref{deff:screen}(\ref{deff:screen:ore}).
		Let $\gamma$ and $\rho$ be paths in $\Gamma$ such that $\gamma(0)=\rho(0)$ and $\im(\rho)\subset A_Z$ for some $Z\leq Y$.
		Let $U=\gamma(0)=\rho(0)$, $U'=\rho(1)$, $V=\gamma(1)$ and $W=V\wedge Y$.
		Notice that since $U\leq V$ we have $Z=U\wedge Y \leq V\wedge Y=W$ and $Z\vee W=W$.
		Set $V'=U'\vee V$.
		Since $C$ and $L$ are distributive, we have $(U'\vee V)\wedge Y=(U'\wedge Y)\vee (V\wedge Y)=Z\vee W=W$.
		Thus we have the desired square.
		
		Let $\gamma$ and $\rho$ be paths in $\Gamma$ such that $\gamma(1)=\rho(1)$ and $\im(\rho)\subset A_Z$ for some $Z\leq Y$.
		Let $U=\gamma(1)=\rho(1)$, $U'=\rho(0$, $V=\gamma(0)$, and $W=V\wedge Y$.
		Now $V\leq U$ and so we have $W\leq Z$ and $W\wedge Z=W$.
		Set $V'=U'\wedge V$.
		Then $(U'\wedge V)\wedge Y=(U'\wedge Y)\wedge (V\wedge Y)=Z\wedge W=W$.
		Thus, we have the desired square.
		
		Since, for all $\gamma\in\Gamma$ we have $|\im(\gamma)|<\infty$, we see that Definition~\ref{deff:screen}(\ref{deff:screen:discrete}) is automatically satisfied.
		Finally, we know that if $\gamma(0)=\gamma'(0)$ and $\gamma(1)=\gamma'(1)$, for $\gamma,\gamma'\in\Gamma$, then $\gamma\sim\gamma'$.
		Thus, Definition~\ref{deff:screen}(\ref{deff:screen:path requirement}) is also satisfied.
		This completes the proof.
	\end{proof}
	
	\begin{xmp}\label{xmp:alternate R}
		Here we show an explicit example of Remark~\ref{rem:different screen structures}.
		Consider $\RR$ as a (somewhat trivial) lattice.
		Then the construction of the path category $C$ from this perspective produces a different collection of screens.
		Here, we are taking the opposite order on $\RR$ \emph{as a lattice} so that morphisms in the path category still move ``up'' with respect to the standard order of $\RR$.
		
		We now show an explicit example of a screen of $\RR$ in the lattice perspective that is not a screen of $\RR$ as in Example~\ref{xmp:RR}.
		Set $L=C=\RR$, as lattices, and consider the element $Y=0\in\RR$.
		Then, for any $Z\in\RR$, we have the set $A_Z$ given by
		\[
			A_Z=\begin{cases}
				\emptyset & Z< 0=Y \\
				(-\infty,0] & Z=0=Y \\
				\{Z\} & Z > 0=Y,
			\end{cases}
		\]
		where the order in our case statements is the standard order in $\RR$.
		The pixels in $\Pf_0=\Pf_Y$ are in bijection with the set $\RR_{\geq 0}$, where $0$ comes from the pixel $(\infty,0]$.
		Any path $0\to 1$ in the lattice interpretation of $\RR$ only passes through finitely-many pixels.
		However, in the structure of $\RR$ from Example~\ref{xmp:RR}, any path from $0$ to $1$ would pass through infinitely-many pixels and so $\Pf_0$ is not a screen in that perspective.
	\end{xmp}
	
	If $L$ contains an element $Y$ such that $Y\leq U$ for all $U\in\Ob(C)$, then $\Pf_Y$ has exactly one pixel.
	If $L$ contains an element $Y$ such that $U\leq Y$ for all $U\in\Ob(C)$, then the pixels of $\Pf_Y$ are singletons containing precisely the elements of $\Ob(C)$.
	
	In the $\Top{X}$ example, $\Pf_{\emptyset}$ has exactly one pixel and $\Pf_X$ has one pixel per open subset of $X$.
	
	In general, if $Y\leq Y'$ in $L$ then $\Pf_{Y'}$ refines $\Pf_{Y}$.
	Given $Y,Y'\in L$, we see that $\Pf_{Y\vee Y'}$ is the product of $\Pf_Y$ and $\Pf_{Y'}$ in $\{\Pf\in \Psc\mid \Pf=\Pf_Y, Y\in L\}\subset\Psc$.
	Thus, by Proposition~\ref{prop:site of screens}, we see $\{\Pf\in \Psc\mid \Pf=\Pf_Y, Y\in L\}$ is a site.
	
	Recall that if $C$ is a sublattice of $L$ then the meets and joins of $C$ coincide with those in $L$.
	
	\begin{thm}\label{thm:pixelation is equivalent to subspace}
		Let $C$ and $L$ be distributive lattices such that $C$ is a sublattice of $L$.
		For $Y\in L$, denote by $C_Y\subset L$ the distributive lattice $\{U\wedge Y\mid U\in C\}$.
		Then $\overline{Q(C,\Pf_Y)}$ is canonically isomorphic to $C_Y$ as categories.
	\end{thm}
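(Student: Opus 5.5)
The plan is to exhibit the isomorphism directly through the map $U\mapsto U\wedge Y$. On objects, the vertices of $Q(C,\Pf_Y)$, and hence the objects of $\overline{Q(C,\Pf_Y)}$, are the nonempty pixels $A_Z$. By Definition~\ref{deff:lattice screen}, $A_Z\neq\emptyset$ exactly when $Z=U\wedge Y$ for some $U\in C$. So $A_Z\mapsto Z$ is a bijection $\Pf_Y\to C_Y$, and I will call this the object part of a functor $\Theta:\overline{Q(C,\Pf_Y)}\to C_Y$. Since $C_Y$ is a poset viewed as a category, it suffices to show two things: for pixels $A_Z,A_W$, the set $\Hom_{\overline{Q(C,\Pf_Y)}}(A_Z,A_W)$ has at most one element, and it is nonempty if and only if $Z\leq W$. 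Any bijection on objects with this property is automatically a functor and an isomorphism.

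For nonemptiness, I will use Theorem~\ref{thm:barPlocal yields cat equivalent to cat from quiver}(1), which identifies $\overline{Q(C,\Pf_Y)}$ with a skeleton of $\PfC$ via $H$. Morphisms $A_Z\to A_W$ are then images under $Hp$ of roofs $[\rho]^{-1}[\gamma]$, with $\gamma$ a path in $C$ from some $U\in A_Z$ to some $V'$ and $\rho$ a path inside $A_W$. If such a morphism exists, then $U\leq V'$, so $Z=U\wedge Y\leq V'\wedge Y=W$. Conversely, suppose $Z\leq W$, and pick $U\in A_Z$ and $V\in A_W$. Then $U\vee V\in C$, and by distributivity $(U\vee V)\wedge Y=Z\vee W=W$, so $U\vee V\in A_W$. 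This gives a path $U\to U\vee V$ in $C$ whose image under $Hp$ is a morphism $A_Z\to A_W$.

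For uniqueness, recall that $\sim$ identifies any two paths with the same endpoints. So all morphisms $U\to V$ in $C$ coincide, and any two roofs $U\to V'\leftarrow V$ and $U\to V''\leftarrow V$ can be compared after composing to the common target $V'\vee V''$. This target lies in $A_W$, again by distributivity. Using the calculus of fractions (Proposition~\ref{prop:Sigma yields a calculus of fractions nonlinear}) together with thinness of $C$, the two roofs become equal in $\PfC$. Representatives in different objects of $A_Z$ are related by the trivial isomorphisms of Lemma~\ref{lem:objects in same pixel become isomorphic}, so $\PfC$ is thin and so is its skeleton. Canonicity then holds because the object map is forced by $U\mapsto U\wedge Y$.

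The step I expect to be the main obstacle is the uniqueness claim, more precisely reconciling the two descriptions of $\overline{Q(C,\Pf_Y)}$: the quiver quotient by $\Psi$-equality and the localization. I would handle it by routing everything through $H$ and $\Psi$, so that equality in $\overline{Q(C,\Pf_Y)}$ is literally equality in $\PfC$. Thinness of $\PfC$ then settles uniqueness, as long as I check that every pseudo-arrow composite is indeed a roof of the form above.
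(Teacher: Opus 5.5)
Your proposal is correct and takes essentially the same route as the paper: the object bijection $A_Z\mapsto Z$, reading $Z\leq W$ off a roof $[\rho]^{-1}[\gamma]$, and proving uniqueness by pushing two roofs to the join of their targets, which stays in the pixel by distributivity. Your existence step is more careful than the paper's: the paper simply asserts $U'\leq V'$ for its chosen lifts, and that claim needs exactly your replacement of the target by a join such as $U\vee V$.
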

	\begin{proof}
		Notice that the condition $A_Z=\emptyset$ is equivalent to $Z\notin C_Y$.
		I.e., there does not exist $U\in C$ such that $Z=U\wedge Y$.
		Thus, we have a canonical bijection of sets $\overline{Q(C,\Pf_Y)}\stackrel{\cong}{\to} C_Y$ given by $A_U \mapsto U$.
		
		Let $U\leq V$ in $C_Y$.
		Then there are $U',V'\in C$ such that, $U=U'\wedge Y$ and $V=V'\wedge Y$.
		Moreover, $U'\leq V'$.
		Then there is a morphism $f:U'\to V'$ in $C$ and so a morphism $1^{-1}f:U'\to V'$ in $\PfC$ and thus a morphism $A_U\to A_V$ in $\overline{Q(C,\Pf_Y)}$.
		
		Suppose there is a morphism $A_U\to A_V$ in $\overline{Q(C,\Pf_Y)}$.
		Then there is a morphism $\sigma^{-1}f:U'\to V'$ where $U'\in A_U$ and $V'\in A_V$.
		Let $V''$ be the target of $f$.
		Then there is a morphism $f:U'\to V''$ in $C$ and so $U'\leq V''$ in $C$.
		We know $U'\wedge Y=U$.
		Since $\sigma\in \Sigma_{\Pf}$, we have $V''\in A_V$ and so $V''\wedge Y=V$.
		Thus $U\leq V$ and so there is a morphism $U\to V$ in $C_Y$.
		
		We have shown a morphism exists $U\to V$ in $C_Y$ if and only if there is a morphism $A_U\to A_V$ in $\overline{Q(C,\Pf_Y)}$.
		We will now show that there can be at most one morphism $A_U\to A_V$.
		Let $\sigma_1^{-1}f_1,\sigma_2^{-1}f_2:U\rightrightarrows V$ be morphisms in $\PfC$.
		Let $V_1$ and $V_2$ be the targets of $f_1$ and $f_2$, respectively, in $C$ and let $V'=V\wedge Y$.
		Then $V_1\wedge Y = V_2\wedge Y = V'$ and so $(V_1\vee V_2)\wedge Y=V'$.
		Thus, $V_1\vee V_2\in A_{V'}$ and $V\leq V_1\vee V_2$.
		We also have $U\leq V_1\vee V_2$.
		Let $\sigma':V\to V_1\vee V_2$ and $f':U\to V_1\vee V_2$ be the unique maps that exist in their respective $\Hom$ sets.
		This yields a morphism $(\sigma')^{-1}f':U\to V$ which is equivalent to $\sigma_1^{-1}f_1$ and $\sigma_2^{-1}f_2$.
		Therefore, there is at most one morphism in $\overline{Q(C,\Pf_Y)}$ between any pair $A_U$ and $A_V$.
		
		We now have a bijection between the sets $\overline{Q(C,\Pf_Y)}$ and $C_Y$ and we have shown that $\Hom_{\overline{Q(C,\Pf_Y)}}(A_U,A_V)\cong \Hom_{C_Y}(U,V)$ for all $U,V\in C_Y$.
		Therefore, the two categories are canonically isomorphic.
		Since the only choice was the canonical map of sets $\overline{Q(C,\Pf_Y)}\to C_Y$, we see that the isomorphism is itself canonical.
	\end{proof}
	
	To make the following corollary easier to write down, we write $\TS(R)$ to mean $\Top{\Spec{R}}$, for a commutative ring $R$.		
	
	\begin{cor}[to Theorem~\ref{thm:pixelation is equivalent to subspace}]\label{cor:pixelated lattice}
		In increasing specificity:
		\begin{enumerate}
			\item Let $X$ be a topological space and $Y$ a subset of $X$.
				Then $\Top{Y}$, where $Y$ has the subspace topology, is canonically isomorphic to $\overline{Q(\Top{X},\Pf_Y)}$.
			\item Let $R$ be a commutative ring, let $S\subset R$ be a multiplicative set, and let $Y(S)=\{\pf\in\Spec{R}\mid S\cap\pf=\emptyset\}$.
				Then $\TS(S^{-1}R)$ is canonically isomorphic to $\overline{Q(\TS(R),\Pf_{Y(S)})}$.
			\item Let $R$ be a commutative ring, let $\pf$ be a prime ideal in $R$, and let $S=R\setminus \pf$.
			Let $Y(\pf)=\{\mathfrak{q}\in\Spec{R}\mid \mathfrak{q}\subset\pf\}$.
			Then $\TS(R_{\pf})$ is canonically isomorphic to $\overline{Q(\TS(R),\Pf_{Y(\pf)})}$.
		\end{enumerate}
	\end{cor}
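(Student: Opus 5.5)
The plan is to deduce all three statements from Theorem~\ref{thm:pixelation is equivalent to subspace}, which already gives a canonical isomorphism $\overline{Q(C,\Pf_Y)}\cong C_Y$ for any sublattice $C$ of a distributive lattice $L$ and any $Y\in L$. In each case the work is to choose $C$, $L$ and $Y$ correctly and then identify the lattice $C_Y=\{U\wedge Y\mid U\in C\}$ with the relevant category of open sets.

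For (1), take $C=\Top{X}$ (open sets under inclusion) and $L=\boldsymbol{2}^X$, both distributive. Since finite intersections and arbitrary unions of opens are open, $C$ is a sublattice of $L$. Then $C_Y=\{U\cap Y\mid U \text{ open in } X\}$, and this is by definition the set of open sets of the subspace topology on $Y$, ordered by inclusion; that is, $C_Y=\Top{Y}$ as posets, hence as categories. Composing with the isomorphism of Theorem~\ref{thm:pixelation is equivalent to subspace} gives (1), and canonicity is inherited from that theorem.

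For (2), I would first apply (1) with $X=\Spec{R}$ and $Y=Y(S)$, which yields $\overline{Q(\TS(R),\Pf_{Y(S)})}\cong \Top{Y(S)}$. I would then invoke the standard fact that the localization map $R\to S^{-1}R$ induces a homeomorphism of $\Spec{S^{-1}R}$ onto the subspace $Y(S)$ of $\Spec{R}$, via $\qf\mapsto \qf\cap R$ on prime ideals. A homeomorphism induces an isomorphism of the lattices of open sets, so $\Top{Y(S)}\cong\TS(S^{-1}R)$ canonically, and composing the two isomorphisms proves (2).

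The main obstacle is this homeomorphism with the subspace topology, as opposed to a mere bijection of points. I would handle it by checking basic opens: the image of $D(r/s)$ is $D(r)\cap Y(S)$, and conversely every basic open $D(r)\cap Y(S)$ arises this way, so the topologies agree. Part (3) is then the special case $S=R\setminus\pf$ of (2): here $\qf\cap S=\emptyset$ holds exactly when $\qf\subseteq\pf$, so $Y(S)=Y(\pf)$ and $S^{-1}R=R_\pf$.
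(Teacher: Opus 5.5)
Your proposal is correct and matches the paper's proof: (1) comes from Theorem~\ref{thm:pixelation is equivalent to subspace} with $C=\Top{X}\subseteq L=\boldsymbol{2}^X$, (2) comes from (1) together with the homeomorphism $\Spec{S^{-1}R}\cong Y(S)$, and (3) is the case $S=R\setminus\pf$ of (2). Your extra checks fill in details the paper leaves implicit: that $C_Y$ is exactly the set of opens of the subspace topology, the basic-open argument showing the homeomorphism respects the subspace topology, and $Y(R\setminus\pf)=Y(\pf)$.
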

	\begin{proof}
		Item 1 follows directly from Theorem~\ref{thm:pixelation is equivalent to subspace}.
		Item 2 follows from item 1 and the fact that the induced map $\Spec{S^{-1}R}\to \{\pf\in\Spec{R}\mid \pf\cap S=\emptyset\}$ is a homeomorphism.
		Item 3 follows directly from item 2.
	\end{proof}
	
\subsection{Pathed sites and sheaf of representations}\label{sec:sites:pathed sites}
	
	The goal of this section is to provide the ``standard'' example of a parallel story to sheaves of rings and their modules.
	\bigskip
	
	Recall $\pathcat$, the category of path categories, and recall that it is a full subcategory of $\cats$, the category of small categories.
	Recall also that all our sites are small categories.
	
	\begin{deff}[pathed site]\label{deff:pathed site}
		Let $E$ be a site and let $\mathcal{O}_E:E^{op}\to\pathcat$ be a sheaf of path categories.
		Then we say $(D,\OcE)$ is a \emph{pathed site}.
	\end{deff}
	
%	A sheaf is a (contravariant) functor with an equalizer condition on specific products.
%	Since $\pathcat$ is equivalent to the category $\Xbf$ (Definition~\ref{deff:X category}), which has all set-sized products (Proposition~\ref{prop:product of triples is a triple}), Definition~\ref{deff:pathed site} is justified.
	
	The standard example of a ringed space $(X,\mathcal{O}_X)$ is where $X$ is a scheme and $\mathcal{O}_X$ is its structure sheaf.
	We suggest that the standard example of a pathed site come from a path category $C$ and its pixelations, if $\Psc$ is closed under finite $\sqcap$ operations (Definition~\ref{deff:partition operations}).
	
	Suppose $\Psc$ always has the operation $\sqcap$ over finitely-many screens.
	Notice that $\bigsqcup_{i=1}^1 \Pf_i = \Pf_1$ for all $\Pf=\Pf_1\in\Psc$.
	Recall that the set of partitions of $\XX$ form a distributive lattice.
	So, for any $\Qf\in\Psc$, if the operation $\sqcup$ exists over some finite collection $\{\Pf_i\}_{i=1}^n\subset\Psc$, then $\bigsqcup_{i=1}^n (\Qf\sqcap \Pf_i) = \Qf\sqcap (\bigsqcup_{i=1}^n\Pf_i)\in\Psc$.
	
	\begin{deff}[$\Pscfin$]\label{deff:Pscfin}
		Let $\Pscfin$ be the subcategory of $\Psc$ with the same objects and whose morphisms $\Pf\to\Pf'$ only exist for finitary refinements (Definition~\ref{deff:finitary refinement}).
		Let $\Cov(\Pscfin)$ have the empty coverings of each $\Pf_i$ and $\{\Pf_i\to\bigsqcup_{i=1}^n\Pf\}_{i=1}^n$ if each $\Pf_i$ is a finitary refinement of $\bigsqcup_{i=1}^n \Pf_i$.
	\end{deff}
	
	\begin{prop}\label{prop:pscfin is a site}
		The category $\Pscfin$ with coverage $\Cov(\Pscfin)$ is a site.
	\end{prop}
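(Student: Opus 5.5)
The plan is to check the three axioms of Definition~\ref{deff:site} directly, using the standing assumption that $\Psc$ is closed under finite $\sqcap$. I will first record that $\Pscfin$ really is a category. Identities are finitary refinements, since each pixel contains exactly one pixel of itself. Finitary refinements compose: if $\Pf\leq\Pf'\leq\Pf''$ are both finitary, then each $\Xpixel''\in\Pf''$ contains finitely many $\Xpixel'\in\Pf'$, and each of those contains finitely many pixels of $\Pf$. Because $\Psc$ is a poset, every hom-set of $\Pscfin$ has at most one element. In particular the only isomorphisms are identities, and Definition~\ref{deff:site}(\ref{deff:site:iso}) follows from the case $n=1$ of the coverings, since $\bigsqcup_{i=1}^1\Pf_1=\Pf_1$ and identities are finitary.

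For Definition~\ref{deff:site}(\ref{deff:site:compose}), take a covering $\{\Pf_i\to\Pf\}_{i=1}^n$ with $\Pf=\bigsqcup_i\Pf_i$, together with coverings $\{\Pf_{ij}\to\Pf_i\}_{j}$ with $\Pf_i=\bigsqcup_j\Pf_{ij}$ (empty coverings are handled trivially). I would show two things.
\begin{enumerate}
\item $\bigsqcup_{i,j}\Pf_{ij}=\bigsqcup_i\bigsqcup_j\Pf_{ij}=\Pf$. This is associativity of joins in the lattice of partitions (Proposition~\ref{prop:lattice of partitions}).
\item Each $\Pf_{ij}\to\Pf$ is finitary, since it is the composite $\Pf_{ij}\to\Pf_i\to\Pf$ of finitary refinements.
\end{enumerate}
Hence the composite family is again in $\Cov(\Pscfin)$.

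For Definition~\ref{deff:site}(\ref{deff:site:pullback}), let $\{\Pf_i\to\Pf\}$ be a covering and $\Qf\to\Pf$ a finitary refinement. The candidate pullback is $\Qf\sqcap\Pf_i$, which lies in $\Psc$ by assumption. It is the meet in the poset of partitions, so it is the pullback in $\Psc$.

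The main obstacle is showing that the pullback actually lives in $\Pscfin$: the maps $\Qf\sqcap\Pf_i\to\Qf$ and $\Qf\sqcap\Pf_i\to\Pf_i$ must be finitary, and the resulting square must still be a pullback in the subcategory. The plan for finitarity is as follows. A pixel $\Xpixel\in\Qf$ lies in a pixel $\Xpixel'$ of $\Pf$. The pixels of $\Pf_i$ meeting $\Xpixel$ are among the finitely many pixels of $\Pf_i$ inside $\Xpixel'$, because $\Pf_i\to\Pf$ is finitary. So $\Xpixel$ is split into finitely many pieces $\Xpixel\cap\Ypixel$, and the same argument works with the roles of $\Qf$ and $\Pf_i$ swapped. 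For the pullback property in $\Pscfin$, take any cone $\Rf$ with finitary maps to $\Qf$ and $\Pf_i$. The factoring map $\Rf\to\Qf\sqcap\Pf_i$ exists in $\Psc$, and it is finitary because each pixel of $\Qf\sqcap\Pf_i$ sits inside a pixel of $\Qf$, which contains only finitely many pixels of $\Rf$.

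Finally, $\{\Qf\sqcap\Pf_i\to\Qf\}_i$ must be a covering, i.e.\ $\bigsqcup_i(\Qf\sqcap\Pf_i)=\Qf$. This follows from the distributivity identity recorded before Definition~\ref{deff:Pscfin}: $\bigsqcup_i(\Qf\sqcap\Pf_i)=\Qf\sqcap\bigsqcup_i\Pf_i=\Qf\sqcap\Pf=\Qf$, where the last equality holds because $\Qf$ refines $\Pf$. The empty covering pulls back to the empty covering. This completes all three axioms.
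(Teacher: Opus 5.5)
You follow the same route as the paper: a direct check of the three axioms of Definition~\ref{deff:site}. Your extra verifications are correct: finitary refinements compose, both legs of $\Qf\sqcap\Pf_i$ are finitary, and the meet is still a pullback inside $\Pscfin$. However, two steps do not go through as written, and the paper's own proof has the same two weak points.

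\textbf{Empty coverings in Definition~\ref{deff:site}(\ref{deff:site:compose}).} They are not handled trivially. Because $\Cov(\Pscfin)$ contains the empty covering of \emph{every} object, you may place the empty covering over some members of a covering $\{\Pf_i\to\Pf\}$ and identity coverings over the others. The composite is then a proper subfamily, and it lies in $\Cov(\Pscfin)$ only if its join is still $\Pf$. This already fails for $\RR$. Take $\Pf'=\{(-\infty,0),[0,\infty)\}$. The family $\{\boldsymbol{1}_{\{\RR\}},\,\Pf'\to\{\RR\}\}$ is a covering. Putting the empty covering over $\{\RR\}$ and $\{\boldsymbol{1}_{\Pf'}\}$ over $\Pf'$ yields $\{\Pf'\to\{\RR\}\}$, which is not a covering since $\Pf'\neq\{\RR\}$. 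So, with the coverage as literally defined, this axiom is false. The paper's proof, which only treats nonempty families, makes the same omission. Your argument for nonempty families is fine, but it proves the statement only after the empty coverings are removed or restricted.

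\textbf{Distributivity in Definition~\ref{deff:site}(\ref{deff:site:pullback}).} The equality $\bigsqcup_i(\Qf\sqcap\Pf_i)=\Qf\sqcap\bigsqcup_i\Pf_i$ is the distributive law. The lattice of partitions of a set with at least three elements is not distributive. On $\{1,2,3\}$ take the partitions $\Pf_1=\{\{1,2\},\{3\}\}$, $\Pf_2=\{\{1\},\{2,3\}\}$ and $\Qf=\{\{1,3\},\{2\}\}$. Then $\Qf\sqcap\Pf_1$ and $\Qf\sqcap\Pf_2$ are both discrete, so the left side is discrete while the right side is $\Qf$. Hence the assertion before Definition~\ref{deff:Pscfin} that you invoke, and which the paper's proof also uses, does not justify this step.

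Only $\bigsqcup_i(\Qf\sqcap\Pf_i)\le\Qf$ is automatic. For the reverse inequality you must show, from the screen axioms together with finitarity, that each pixel $\Xpixel\in\Qf$ is connected by chains of pieces $\Xpixel\cap\Ypixel$ with $\Ypixel\in\Pf_i$, and these chains must never leave $\Xpixel$. By contrast, the chains witnessing that $\Xpixel$ lies in one pixel of $\bigsqcup_i\Pf_i$ are allowed to pass outside $\Xpixel$. The needed connectivity does hold in the running example $\RR$: there, screens are exactly the locally finite partitions into intervals, and these form a distributive lattice, since join and meet correspond to intersection and union of cut sets. No argument of this kind is given in general, so you have not shown that $\{\Qf\sqcap\Pf_i\to\Qf\}_i$ is a covering.
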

	\begin{proof}
		We check each of the items in Definition~\ref{deff:site}.
		
		\emph{Definition~\ref{deff:site}(\ref{deff:site:iso})}. The only isomorphisms in $\Psc$ are the identity maps, and $\Psc$ is trivially a finitary refinement of itself.
		
		\emph{Definition~\ref{deff:site}(\ref{deff:site:compose})}. If $\Pf_{ij}$ is a finitary refinement of $\Pf_i=\bigsqcup_{j=1}^{n_i}\Pf_{ij}$, for each $1\leq i \leq n$ and $1\leq j \leq n_i$, then $\Pf_{ij}$ is also a finitary refinement of $\sqcup_{i=1}^n\Pf_i$.
		
		\emph{Definition~\ref{deff:site}(\ref{deff:site:pullback})}. If $\Qf$ is a finitary refinement of $\bigsqcup_{i=1}^n \Pf_i$, then each $\Qf\sqcap\Pf_i$ is a finitary refinement of $\Qf$.
			Since the partitions of $\XX$ form a lattice, we know $\bigsqcup_{i=1}^n (\Qf\sqcap \Pf_i) = \Qf\sqcap(\bigsqcup_{i=1}^n P_i)=\Qf$.
	\end{proof}
	
	Notice that if $\Pf_i$ and $\Pf_j$ are both finitary refinements of $\Pf$ then $\Pf_i\sqcap\Pf_j$ is a finitary refinement $\Pf$, $\Pf_i$, and $\Pf_j$.
	
	Recall the $\Init$ functor (Proposition~\ref{prop:Init}) that uses Lemma~\ref{lem:initial subpixel} to pick out an initial subpixel of a refinement screen.
	For a screen $\Pf$, denote by $C_{\Pf}$ the path category constructed in Proposition~\ref{thm:KCP is a path category} that is isomorphic to $\KCP$.
	
	\begin{deff}[$\mathcal{O}_{\Pscfin}$]\label{deff:OP}
		We define $\OP:\Pscfin^{op}\to \pathcat$.
		For each $\Pf\in\Psc$, let $\OP(\Pf)=C_{\Pf}$.
		For each finitary refinement $\Pf\leq \Pf'$, let $\OP(\Pf'\to\Pf)$ be the functor $\Init:C_{\Pf'}\to C_{\Pf}$ that comes from $\Init:\KC{\Pf'}\to\KCP$.
	\end{deff}
	
	It is straightforward to show that the composition of two $\Init$ functors is itself an $\Init$ functor and so $\OP$ is a functor (presheaf).
	
	Let $\Pf=\bigsqcup_{i=1}^n \Pf_i$ such that each $\Pf_i$ is a finitary refinement of $\Pf$.
	Let $\Init_i:C_{\Pf}\to C_{\Pf_i}$ be the $\Init$ functor for each $1\leq i \leq n$.
	For each $1\leq j,k,\leq n$ let $\Init_{jk0}:C_{\Pf_j}\to C_{\Pf_j\sqcap\Pf_k}$ and $\Init_{jk1}:C_{\Pf_k}\to C_{\Pf_j\sqcap\Pf_k}$ be the respective $\Init$ functors.
	
	Define $pr_0$ be the functor $\prod_{i=1}^n C_{\Pf_i}\to \prod_{j=1}^n\prod_{k=1}^n C_{\Pf_j\sqcap\Pf_k}$ where, for each $1\leq i \leq n$, the functor $C_{\Pf_i}\to\prod_{k=1}^n C_{\Pf_i\sqcap\Pf_k}$ is $\prod_{k=1}^n \Init_{ik0}$.
	Define $pr_1$ to be the functor where, for each $1\leq i \leq n$, the functor $C_{\Pf_i}\to\prod_{j=1}^n C_{\Pf_j\sqcap\Pf_i}$ is $\prod_{j=1}^n \Init_{ji1}$.
	
	Recall the join complex of $\bigsqcup_{i=1}^n \Pf_i$ (Definition~\ref{deff:join complex}).
	
	\begin{thm}\label{thm:sheaf of pixelations}
		The functor $\OP$, defined above, makes $(\Pscfin,\OP)$ a pathed site.
		That is, if $\{\Pf_i\}\subset\Psc$ such that $\Pf=\bigsqcup_{i=1}^n\Pf_i\in\Psc$ and $\Pf_i$ is a finitary refinement of $\Pf$, for $1\leq i \leq n$, then the following diagram is an equalizer diagram in $\pathcat$:
		\[
			\xymatrix@C=13ex{
				C_{\Pf}\ar[r]^-{\prod_{i=1}^n \Init_i} &
				\prod_{i=1}^n C_{\Pf_i} \ar@<1ex>[r]^-{pr_0} \ar@<-1ex>[r]_-{pr_1} &
				\prod_{j=1}^n\prod_{k=1}^n C_{\Pf_j\sqcap\Pf_k}.
			}
		\]
	\end{thm}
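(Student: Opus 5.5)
To prove the theorem I would verify the equalizer property directly in $\cats$, and then observe that the equalizer is a path category. In $\cats$ the equalizer of $pr_0,pr_1$ is the subcategory $E\subseteq\prod_{i=1}^n C_{\Pf_i}$ on those objects and morphisms $(a_i)_i$ with $\Init_{jk0}(a_j)=\Init_{jk1}(a_k)$ for all $j,k$. Since $\pathcat$ is a full subcategory of $\cats$, it suffices to do three things. First, show that $\prod_i\Init_i$ equalizes $pr_0$ and $pr_1$. Second, show that $\prod_i\Init_i\colon C_\Pf\to E$ is an isomorphism of categories. Then any competing cone in $\pathcat$ factors uniquely, because it already does so in $\cats$.

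\textbf{Step 1: equalizing.} I would use the remark after Definition~\ref{deff:OP} that a composite of $\Init$ functors is again an $\Init$ functor. This gives $\Init_{jk0}\circ\Init_j=\Init_{\Pf\to\Pf_j\sqcap\Pf_k}=\Init_{jk1}\circ\Init_k$. This relies on Lemma~\ref{lem:initial subpixel}: the initial subpixel of an initial subpixel is the initial subpixel in the common refinement $\Pf_j\sqcap\Pf_k$, which is finitary over $\Pf$ by the note after Proposition~\ref{prop:pscfin is a site}. Morphisms are handled similarly, since $\Init$ is defined by choosing a representative path $\tilde\gamma$ that starts in the initial pixel.

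\textbf{Step 2: faithfulness and injectivity on objects.} Remark~\ref{rem:Init is injective on hom spaces} already gives that each $\Init_i$ is injective on $\Hom$-sets, so the product is faithful. For objects, $\Init_i(\Xpixel)$ is a pixel of $\Pf_i$ contained in $\Xpixel$. Because the pixels of $\Pf$ are disjoint, distinct $\Xpixel$ give distinct tuples, so the functor is injective on objects.

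\textbf{Step 3: surjectivity onto $E$.} This is the main obstacle. On objects, take a compatible tuple $(\Xpixel_i)_i$ with $\Xpixel_i\in\Pf_i$. I would use the join complex (Definition~\ref{deff:join complex}) and Proposition~\ref{prop:join complex is join}. The condition $\Init_{jk0}(\Xpixel_j)=\Init_{jk1}(\Xpixel_k)$ forces $\Xpixel_j\cap\Xpixel_k\neq\emptyset$, since it is a nonempty pixel of $\Pf_j\sqcap\Pf_k$. Hence all $\Xpixel_i$ lie in a single connected component of the join complex, that is, in a single pixel $\Xpixel$ of $\Pf$. I then still need $\Xpixel_i=\Init_i(\Xpixel)$. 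My first attempt is to take $j=k$: the condition then says $\Xpixel_j=\Init(\Xpixel_j)$, which gives nothing. So instead I would use the compatibility with the $\Init_i(\Xpixel)$ themselves and argue via Lemma~\ref{lem:pixels are directed}. Paths from the true initial subpixel into each $\Xpixel_j$, combined with the pairwise agreement of initial pieces in $\Pf_j\sqcap\Pf_k$, should force each $\Xpixel_j$ to be initial.

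If this argument fails, the fallback is to interpret the equalizer condition as cutting out exactly the image tuples. For morphisms, given a compatible tuple $(f_i)$, each $f_i$ is represented by a path $\tilde\gamma_i$ starting in $\Init_i(\Xpixel)$. Compatibility in $\Pf_j\sqcap\Pf_k$, together with Definition~\ref{deff:screen}(\ref{deff:screen:path requirement}), shows that all $\tilde\gamma_i$ are $\Pf$-equivalent. The common class then defines $f$ in $C_\Pf$ with $\Init_i(f)=f_i$.
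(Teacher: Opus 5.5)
\textbf{There is a genuine gap: surjectivity on objects is never proved, and your outline cannot prove it as written.} Your framework does match the paper's: identify the equalizer with the subcategory of compatible tuples, get uniqueness from injectivity of $\Init$ on objects and on $\Hom$-sets (Remark~\ref{rem:Init is injective on hom spaces}), and use that $\pathcat$ is full in $\cats$. Steps 1 and 2 are fine. Your morphism argument is also fine once objects are settled. One correction there: the $\tilde\gamma_i$ agree in $C_{\Pf}$ not because of Definition~\ref{deff:screen}(\ref{deff:screen:path requirement}), but because $\Sigma_{\Pf_j\sqcap\Pf_k}\subseteq\Sigma_{\Pf}$, so equality in $C_{\Pf_j\sqcap\Pf_k}$ implies equality in $C_{\Pf}$.

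The gap is the one you flagged. You never show that a compatible tuple $(\Xpixel_i)_i$ equals $(\Init_i(\Xpixel))_i$. ``Should force'' is not an argument. The fallback is circular, because ``the equalizer condition cuts out exactly the image tuples'' is precisely what must be proved.

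The outline also cannot be completed as written. Your only use of $\Pf=\bigsqcup_i\Pf_i$ is to place the $\Xpixel_i$ in a common pixel of $\Pf$. That holds for any common coarsening, and for a mere common coarsening the conclusion is false. In $\RR$ take $\Pf_1=\Pf_2=\{[i,i+1)\mid i\in\ZZ\}$ and $\Pf=\{[i,i+2)\mid i\in\ZZ\text{ even}\}$. Both are finitary refinements of $\Pf$, and $\Pf_1\sqcap\Pf_2=\Pf_1$, so every $\Init_{jk0},\Init_{jk1}$ is the identity. The compatible tuples are therefore exactly the pairs $(\Xpixel_1,\Xpixel_1)$. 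For $i$ even, $([i+1,i+2),[i+1,i+2))$ is compatible and lies in the pixel $[i,i+2)$. It is not in the image of $\Init_1\times\Init_2$, since $\Init([i,i+2))=[i,i+1)$. The only hypothesis that fails here is $\Pf=\Pf_1\sqcup\Pf_2$.

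\textbf{The missing idea is to use the join hypothesis a second time, to produce \emph{paths}.} For $j\neq k$, compatibility says more than $\Xpixel_j\cap\Xpixel_k\neq\emptyset$. It says $\Xpixel_j\cap\Xpixel_k$ is the initial subpixel of both $\Xpixel_j$ and $\Xpixel_k$ in $\Pf_j\sqcap\Pf_k$. By Lemma~\ref{lem:pixels are directed}, a path inside $\Xpixel_j$ that ends in this initial subpixel must also start in it. The paper's argument then runs as follows.
\begin{enumerate}
\item Fix $z\in\Xpixel$. Connectedness of the join complex (Proposition~\ref{prop:join complex is join}) gives a finite chain of pairwise intersecting pixels from $\Xpixel_j$ to one containing $z$.
\item Propagate along this chain a path that starts in $\bigcap_i\Xpixel_i$. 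At each step, Lemma~\ref{lem:x two steps from y in a pixel} links the current endpoint to a point of the next pixel.
\item The dual Ore condition of Definition~\ref{deff:screen}(\ref{deff:screen:ore}) then pulls the starting point back along a path inside a single pixel. The initiality coming from compatibility keeps that starting point in $\bigcap_i\Xpixel_i$.
\item This gives a path from $\bigcap_i\Xpixel_i$ to every $z\in\Xpixel$. So $\bigcap_i\Xpixel_i$ is initial in $\Xpixel$, and by Lemma~\ref{lem:pixels are directed} again $\Xpixel_i=\Init_i(\Xpixel)$ for all $i$.
\end{enumerate}
Some argument of this kind is indispensable. Without it, surjectivity on objects, and hence the theorem, is not established.
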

	\begin{proof}
		If $pr_j(\Xpixel_1,\ldots,\Xpixel_n)=pr_k(\Xpixel_1,\ldots,\Xpixel_n)$ then, for each $1\leq j,k\leq n$, we have $\Init_{jk0}(\Xpixel_j)=\Init_{jk1}(\Xpixel_k)$.
		That is, $\Xpixel_j\cap \Ypixel_k = \Ypixel'_j \cap \Xpixel_k=\Xpixel_j\cap\Xpixel_k$.
		Considering $\Pf_j\sqcap\Pf_k$ as a finitary refinement of $\Pf_j$ and of $\Pf_k$, this means $\Xpixel_j\cap\Xpixel_k$ is initial in both $\Xpixel_j$ and $\Xpixel_k$.
		
		Since there are only finitely-many $\Pf_i$, we see that $\bigcap_{i=1}^n X_i\neq\emptyset$ and so there is some $\Xpixel\in\Pf$ such that $\Xpixel\supset\bigcup_{i=1}^n\Xpixel_i$.
		Let $z\in\Xpixel$ and suppose $z\notin\bigcap_{i=1}^n\Xpixel$.
		Then there is $1\leq j\leq n$ such that $z\notin\Xpixel_j$.
		However, $z\in Y_k\in\Pf_k$ for some $1\leq k\leq n$.
		Then there is some finite sequence $(\Xpixel_j=\Ypixel_{i_0,0}, \Ypixel_{i_1,1},\ldots,\Ypixel_{i_m,m}=\Ypixel_k)$, where $1\leq i_\ell\leq n$ for each $0\leq \ell\leq m$, $Y_{i_\ell,\ell}\in\Pf_{i_\ell}$ for each $0\leq \ell\leq m$, and $\Ypixel_{i_{\ell-1},\ell-1}\cap\Ypixel_{i_\ell,\ell}\neq\emptyset$ for $1\leq \ell\leq m$.
		Moreover, we may assume the sequence is minimal in the following sense: if $\Ypixel_{i_\ell,\ell}\cap\Ypixel_{i_{\ell'},\ell'}\neq\emptyset$ then $\ell=\ell'\pm 1$.
		
		Let $x_0\in\bigcap_{i=1}^n\Xpixel_i$.
		Since $\Ypixel_{i_1,1}\neq\Xpixel_{i_1}$ and $\Xpixel_{i_1}\cap\Xpixel_j$ is initial in $\Xpixel_j$, there is some path $\gamma'_1\in\Gamma$ with $\gamma'_1(0)\in\Xpixel_{i_1}\cap\Xpixel_j$ and $\gamma'_1(1)\in\Ypixel_{i_1,1}\cap\Xpixel_j$.
		By Lemma~\ref{lem:x two steps from y in a pixel}, we may find some $x_1$ and two paths in $\Gamma$ that start at $x_1$ whose image is contained in $\Xpixel_j$, one of which ends at $x_0$ and the other ends at $\gamma'_1(0)$.
		Now we have a path $\gamma_1$ from $x_1$ to $\gamma'_1(0)=\gamma_1(0)\in\Ypixel_{i_1,1}\cap\Xpixel_j$.
		Since $x_0\in\bigcap_{i=1}^n \Xpixel_i$, which is initial in $\Xpixel_j$, we know $x_1\in\bigcap_{i=1}^n\Xpixel_i$ as well.
		
		Suppose we have a path $\gamma_\ell\in\Gamma$ with $\gamma_\ell(0)=x_\ell\in\bigcap_{i=1}^n\Xpixel_i$ and $\gamma_\ell(1)\in\Ypixel_{i_{\ell-1},\ell-1}\cap\Ypixel_{i_\ell,\ell}$.
		Choose $x\in\Ypixel_{i_\ell,\ell}\cap\Ypixel_{i_{\ell+1},\ell+1}$.
		By Lemma~\ref{lem:x two steps from y in a pixel}, we have $\rho,\rho'\in\Gamma$ with $\im(\rho)\cup\im(\rho')\subset\Ypixel_{i_\ell,\ell}$, $\rho(1)=\rho'(1)=:x'$, $\rho(0)=x$, and $\rho'(0)=\gamma_\ell(1)$.
		Then we have paths $\gamma_\ell\cdot\rho'$ and $\rho$ with $\rho(1)=\gamma_\ell\cdot\rho'(1)$ and $\im(\rho)\subset\Ypixel_{i_\ell,\ell}$.
		By Definition~\ref{deff:screen}(\ref{deff:screen:ore}) there are paths $\tilde{\rho},\tilde{\gamma}\in\Gamma$ such that $\tilde{\rho}(0)=\tilde{\gamma}(0)$, $\tilde{\rho}(1)=\gamma_\ell(0)$, $\tilde{\gamma}(1)=\rho(0)$, and $\tilde{\rho}\cdot\gamma_\ell\cdot\rho'\sim\tilde{\gamma}\cdot\rho$.
		
		Again, since $\bigcap_{i=1}^n\Xpixel_i$ is initial in $\Xpixel_{i_\ell}$, we have $x_{\ell+1}=\tilde{\rho}(0)\in\bigcap_{i=1}^n \Xpixel_i$.
		Since $x'\in\Ypixel_{i_{\ell+1},\ell+1}$, we have a path $\gamma_{\ell+1}=\tilde{\gamma}\cdot\rho$ from $\bigcap_{i=1}^n \Xpixel_i$ to $\Ypixel_{i_{\ell+1},\ell+1}$.
		By induction, we have a path $\gamma_n$ from some $x_n\in\bigcap_{i=1}^n\Xpixel_0$ to $\Ypixel_n=\Ypixel_{i_m,m}$.
		
		Now we again use Lemma~\ref{lem:x two steps from y in a pixel} to create paths $\rho,\rho'\in\Gamma$ with $\im(\rho)\cup\im(\rho')\subset\Ypixel_n$, $\rho(0)=z$, $\rho'(0)=\gamma_n(1)$, and $\rho(1)=\rho'(1)$.
		By Definition~\ref{deff:screen}(\ref{deff:screen:ore}), we have paths $\tilde{\rho},\tilde{\gamma}\in\Gamma$ with $\tilde{\rho}(0)=\tilde{\gamma}(0)$, $\tilde{\rho}(1)=\gamma_n(1)$, $\tilde{\gamma}(1)=\rho(0)=z$, and $\tilde{\rho}\cdot\gamma_n\cdot \rho'\sim\tilde{\gamma}\cdot\rho$.
		
		For each $1\leq i \leq n$, let $\Xpixel'_i\in\Pf_i$ be the pixel containing $z$.
		We know that $\Xpixel'_j\neq\Xpixel_j$.
		Now we have a path $\tilde{\gamma}\in\Gamma$ with $\tilde{\gamma}(0)\in\bigcap_{i=1}^n\Xpixel_i$ and $\tilde{\gamma}(1)=z\in\bigcap_{i=1}^n \Xpixel'_i$.
		Thus, by Lemma~\ref{lem:pixels are directed}, there are no paths $\gamma\in\Gamma$ with $\gamma(0)\in\bigcap_{i=1}^n\Xpixel'_i$ and $\gamma(1)\in\bigcap_{i=1}^n\Xpixel_i$.
		Thus, $\bigcap_{i=1}^n\Xpixel_i$ is initial in $\Xpixel$, with respect to $\Pf_1\sqcap\cdots\sqcap \Pf_n$, and so each $\Xpixel_i$ is initial in $\Xpixel$, with resepct to $\Pf_i$.
		Therefore, if $pr_0(\Xpixel_i)_i=pr_1(\Xpixel_i)_i$ then there is $\Xpixel\in\Pf$ such that $\prod_{i=1}^n\Init_i (\Xpixel)=(\Xpixel_i)_i$.
		Since $\Init$ is injective on objects by construction, we see that the $\Xpixel$ is unique.
		
		Suppose $(f_i)_i: (\Xpixel_i)_i \to (\Ypixel_i)_i$ is a morphism in $\prod_{i=1}^n C_{\Pf_i}$ such that $pr_0((f_i)_i)=pr_1((f_i)_i)$.
		So, $\Init_{jk0}(f_j)=\Init_{jk1}(f_k)$ in $C_{\Pf_j\sqcap\Pf_k}$, for $1\leq j,k\leq n$.
		Denote by $\Init_{i\Qf}$ the $\Init$ functor $C_{\Pf_i}\to C_{\Qf}$, where $\Qf=\Pf_1\sqcap\cdots\sqcap \Pf_n$.
		Then we have $\bar{f}=\Init_{j\Qf}(f_j)=\Init_{k\Qf}(f_k)$ for $1\leq j,k\leq n$.
		So, there exists a path $\gamma\in\Gamma$ such that $H_{\Qf}p_{\Qf}([\gamma])=\bar{f}$.
		We now have $H_ip_i([\gamma])=f_i$ and $Hp([\gamma])=\Init_i(f_i)$ for $1\leq i \leq n$.
		Since $\Init$ is injective on $\Hom$-sets (Remark~\ref{rem:Init is injective on hom spaces}), this $f$ must be unique.
		
		Therefore, the image of $C_{\Pf}$ in $\prod_{i=1}^n C_{\Pf_i}$ is precisely the subcategory on which $pr_0$ and $pr_1$ agree.
	\end{proof}
	
	\begin{xmp}[running example]\label{xmp:OP for RR^n}
		Let $C$ be $\RR$ from Example~\ref{xmp:RR}.
		
		Consider $\Pf=\{[i,i+1)\mid i\in\ZZ\}$ from Example~\ref{xmp:screen of RR} and let
		\[
			\Qf=\{ [i,i+1/2], (i+1/2,i+2) \mid i\in\ZZ \text{ even}\}.
		\]
		The reader is encouraged to verify that $\Qf$ is also a screen (Definition~\ref{deff:screen}).
		Then
		\begin{align*}
			\Pf\sqcup\Qf &= \{[i,i+2)\mid i\in\ZZ \text{ even} \} \\
			\Pf\sqcap\Qf &= \{[i,i+1/2],(i+1/2,i+1),[i+1,i+2) \mid i\in\ZZ \text{ even}\}.
		\end{align*}
		The pixel $[i,i+1/2]\in\Pf\sqcap\Qf$, for an even integer $i$, is initial in both $[i,i+1)$ (for $\Pf$) and itself (for $\Qf$).
		It is also initial in $[i,i+2)$ (for $\Pf\sqcup\Qf$).
		The pixel $[i,i+1)$ is also initial in $[i,i+2)$ when $i$ is even.
		There are no other pixels that work this way.
		The reader is encouraged to verify this.
		
		Thus, $\Init_{\Pf\Qf 0}(\Xpixel_\Pf)=\Init_{\Pf\Qf 1}(\Xpixel_\Qf)$ precisely when $\Xpixel_{\Pf}=[i,i+1)$ and $\Xpixel_{\Qf}=[i,i+2]$, for an even integer $i$.
		Moreover, $\Init_{\Pf}([i,i+2))=[i,i+1)$ and $\Init_{\Qf}([i,i+2))=[i,i+1/2]$.
		This yields the equalizer diagram below (with the names of maps suppressed):
		\[
			\xymatrix@C=8ex{
				C_{\Pf\sqcup\Qf} \ar[r] & C_{\Pf}\times C_{\Qf} \ar@<0.75ex>[r]\ar@<-0.75ex>[r] & (C_{\Pf}\times C_{\Pf\sqcap\Qf})\times(C_{\Pf\sqcap\Qf}\times C_{\Qf}).
			}
		\]
	\end{xmp}
	
	Now that we have a standard example of a pathed site, we may consider representations.
	Recall that all path categories are small categories and thus $\Ob(C)$ is a set, for each path category $C$.
	
	\begin{deff}[$\OcE$-representation]\label{deff:OE-representation}
		Let $(E,\OcE)$ be a pathed site.
		An \emph{$\OcE$-representation $M$ with values in $\Kcal$} is a sheaf $M:E^{op}\to\Kcal$ and a set of functors $\{_eM:\OcE(e)\to \Kcal\}_{e\in\Ob(E)}$ satisfying the following.
		\begin{enumerate}
			\item\label{deff:OE-representation:action} For each $e\in\Ob(E)$, we have
				\[
					M(e)=\bigoplus_{x\in\Ob(\OcE(e))}{}_eM(x).
				\]
			\item\label{deff:OE-representation:compatibility} For each morphism $f:e\to e'$ in $E$ and each $x\in\Ob(\OcE(e))$, denote by $f_x$ the composition
			\[
				\xymatrix@C=10ex{
					{}_eM(x) \ar[r]^-{\iota_x} \ar@/_3ex/[rrr]_-{f_x} & M(e) \ar[r]^-{M(f)} & M(e') \ar[r]^-{\pi_{\OcE(f)(x)}} & {}_{e'}M(\OcE(f)(x)).
				}
			\]
			Then, for each $f:e\to e'$ in $E$ and each $[\gamma]:x\to y$ in $\OcE(e)$, the following diagram commutes:
			\[
				\xymatrix@C=18ex{
					{}_eM(x) \ar[d]_-{f_x} \ar[r]^-{{}_eM([\gamma])} & {}_eM(y) \ar[d]^-{f_y} \\
					{}_{e'}M(\OcE(f)(x)) \ar[r]_-{{}_{e'}M(\OcE(f)([\gamma]))} & {}_{e'}M(\OcE(f)(y)).
				}
			\]
		\end{enumerate}
	\end{deff}
	
	We compare our story to that for ringed sites.
	In Definition~\ref{deff:pathed site}, we have a sheaf from a site $E$ into $\pathcat$, which is parallel to a sheaf into the category of rings.
	Given a ringed site $(X,\mathcal{O}_X)$, an $\mathcal{O}_X$-module is a sheaf $F:X^{op}\to \Ab$ such that each $F(x)$ is an $\mathcal{O}_X(x)$-module.
	Moreover, for each $f:x\leftarrow y$ in $X$ we have the following commutative diagram in $\Ab$:
	\[
		\xymatrix{
			\mathcal{O}_X(x)\times F(x) \ar[r] \ar[d]_-{\mathcal{O}_X(f)\times F(f)} & F(x) \ar[d]^-{F(f)}\\
			\mathcal{O}_X(y)\times F(y) \ar[r] & F(y),
		}
	\]
	where the horizontal arrows are the rings' actions.
	Definition~\ref{deff:OE-representation}(\ref{deff:OE-representation:action}) is parallel to requiring that $F(x)$ is an $\mathcal{O}_X(x)$-module.
	Definition~\ref{deff:OE-representation}(\ref{deff:OE-representation:compatibility}) is parallel to requiring that rings' actions are compatible with the sheaf.
	
	We first give a simple\footnote{As simple as sheaves get, anyway.} example of an $\OcE$-representation.
	\begin{xmp}[``classical'' example]\label{xmp:classical OE-representation}
		Let $E$ be $\Top{\{1,2\}}$, where $\{1,2\}$ has the discrete topology.
		Let $\OcE(\emptyset)=*$, where $*$ is the path category with one object and only the identity morphism.
		Let $\OcE(\{1\})=\OcE(\{2\})=C$, for a path category $C$ different from $*$.
		Let $\OcE(\{1,2\})=C\times C$.
		Since $*$ is terminal in both $\pathcat$ and $cats$, we have no choice for functors $C\to*$ and $C\times C\to *$.
		Let $\OcE(\{1,2\}\to\{1\})$ be the projection $C\times C\to C$ on the first coordinate and similarly for $\OcE(\{1,2\}\to\{2\})$ on the second coordinate.
		Then $\OcE$ is a sheaf of path categories.
		
		Let ${}_{\emptyset}M$ be the $0$-representation.
		Choose a representation ${}_{\{1\}}M={}_{\{2\}}M$ of $C$.
		Let ${}_{\{1,2\}}M(x_1,x_2)={}_{\{1\}}M(x_1)\oplus{}_{\{2\}}M(x_2)$ and similarly for morphisms.
		
		Let $M:E\to\Kcal$ be a functor where $M(X)={}_XM$, for each $X\subset\{1,2\}$.
		For $X=*$, we have $M(Y\leftarrow X)=0$ for all $Y\subset\{1,2\}$.
		The only possible nonzero morphisms are $M(\{1,2\}\leftarrow\{1\})$ and $M(\{1,2\}\leftarrow\{2\})$.
		Define
		\begin{align*}
			M(\{1,2\}\leftarrow\{1\}) &= \pi_1:M(\{1,2\})\to M(\{1\}) \\
			M(\{1,2\}\leftarrow\{2\}) &= \pi_2:M(\{1,2\})\to M(\{2\}).
		\end{align*}
		Thus, $M$ is a functor and indeed a sheaf.
		It is left to the reader as an exercise to verify that $M$ is indeed an $\OcE$-representation.\footnote{\emph{Hint: this is essentially a constant sheaf of modules.}}
	\end{xmp}
	
	We use the following definition for Example~\ref{xmp:OP-representation}.
	\begin{deff}
		Let $C$ be a path category, $x$ an object in $C$, and $K$ an object in $\Kcal$.
		The \emph{representation at $K$ concentrated at $x$} is the functor $M:C\to\Kcal$ where $M(x)=K$ and $M(y)=0$ for all objects $y\neq x$ in $C$.
		On morphisms, $M([\gamma])=0$, for any morphism $[\gamma]\neq \boldsymbol{1}_x$ in $C$.
		As required, $M(\boldsymbol{1}_x)=\boldsymbol{1}_{K}$.
	\end{deff}
	
	In the literature, when $\Kcal=\kVec$ and $K=\Bbbk$, for some field $\Bbbk$, the representation at $\Bbbk$ concentrated at $x$ is referred to as the \emph{simple representation at $x$}.
	
	\begin{xmp}[running example]\label{xmp:OP-representation}
		Notice that, for $\RR^n$ as in Example~\ref{xmp:RR^n}, the set $\Psc$ has unique maximal element $\{\RR^n\}$.
		Let $\Qf=\{\RR^n\}$ and let $\Zpixel=\RR^n$.
		The following construction works for any triple $\Space$ in $\Xbf$ for which $\Psc$ has a unique maximal element and $\Pscfin$ exists.
		
		Let $F:\Pscfin^{op}\to\Kcal$ be a sheaf.
		For each $\Pf\in\Psc$ such that $\Pf$ is a finitary refinement of $\Qf$, let $M(\Pf)=F(\Pf)$ and let ${}_{\Pf}M$ be the representation at $F(\Pf)$ concentrated at $\Init(\Zpixel)\in\Pf$.
		If $\Pf\in\Psc$ is not a finitary refinement of $\Qf$, the let $M(\Pf)=0$ and ${}_{\Pf}M$ be the $0$ representation.
		If $\Pf'$ is a finitary refinement of $\Pf$ and $\Pf$ is a finitary refinement of $\Qf$, define $M(\Pf\leftarrow\Pf')=F(\Pf\leftarrow\Pf')$.
		Set all other $M(\Pf\leftarrow\Pf')=0$.
		It is clear that $M$ is a functor.
		
		We now show $M$ is a sheaf.
		Suppose $\{\Pf_i\to\Pf\}$ is in $\Cov(\Pscfin)$ (Definition~\ref{deff:Pscfin}).
		Then every $\Pf_i$, for $1\leq i\leq n$, is a finitary refinement of $\Qf$ if and only if $\Pf$ is a finitary refinement of $\Qf$.
		Moreover, this means each $\Pf_i\sqcap\Pf_j$ is also a finitary refinement of $\Qf$ if and only if $\Pf$ is a finitary refinement of $\Qf$.
		If $\Pf$ is a finitary refinement of $\Qf$, we have the following, where the top row is an equalizer diagram:
		\[
			\xymatrix@C=10ex{
				F(\Pf) \ar[r] \ar@{=}[d] & {\displaystyle\bigoplus_{i=1}^n F(\Pf_i)} \ar@<1ex>[r]^-{pr_0} \ar@<-1ex>[r]_-{pr_1}\ar@{=}[d]  & {\displaystyle\bigoplus_{j=1}^n\bigoplus_{k=1}^n F(\Pf_j\sqcap\Pf_k)} \ar@{=}[d] \\
				M(\Pf) \ar[r] & {\displaystyle\bigoplus_{i=1}^n M(\Pf_i)} \ar@<1ex>[r]^-{pr_0} \ar@<-1ex>[r]_-{pr_1} & {\displaystyle\bigoplus_{j=1}^n\bigoplus_{k=1}^n M(\Pf_j\sqcap\Pf_k)}.
			}
		\]
		Therefore, the bottom row is an equalizer diagram as well.
		If $\Pf$ is not a finitary refinement of $\Qf$, then the bottom row of the diagram above is all $0$'s and is thus also an equalizer diagram.
		Thus, $M$ is a sheaf.
		
		Finally, we show that $M$ is an $\OP$-representation.
		By construction, $M$ and $\{{}_{\Pf}M\}_{\Pf\in\Psc}$ satisfy Definition~\ref{deff:OE-representation}(\ref{deff:OE-representation:action}).
		Let $\Pf'$ be a finitary refinement of $\Pf$.
		If $\Pf$ is not a finitary refinement of $\Qf$, then Definition~\ref{deff:OE-representation}(\ref{deff:OE-representation:compatibility}) is satisfied since it will be a square of $0$'s.
		Now suppose $\Pf$ is a finitary refinement of $\Qf$.
		All ${}_{\Pf}M([\gamma])=0$ unless $[\gamma]=\boldsymbol{1}_{\Init(\Zpixel)}$.
		Similarly,  ${}_{\Pf'}M([\gamma])=0$ unless $[\gamma]=\boldsymbol{1}_{\Init(\Zpixel)}$.
		Moreover, if $\Xpixel\in\Pf$ is not $\Init(\Zpixel)$, then let $\overline{\Xpixel}=\Init(\Xpixel)$ in $\Pf'$ and notice ${}_{\Pf}M(\Xpixel) = 0 = {}_{\Pf'}M(\overline{\Xpixel})$.
		So, we only need to check that $M(\Pf\leftarrow\Pf')=f_{\Init(\Zpixel)}$.
		This is also true by construction and so $M$ is an $\OP$-representation.
	\end{xmp}
	
	\section{Higher Auslander Categories}\label{sec:higher auslander categories}
	For this section, fix $\Bbbk$ to be a field and $\Kcal$ to be $\kMod=\kVec$.
	Thus, we will suppress the $\Kcal$ in $\RepK$ and $\rpwfK$ to write just $\Rep$ and $\rpwf$.
	Recall our triples $\Space$ (Definitions~\ref{deff:Gamma}~and~\ref{deff:sim}) and that $\RR$ and $\RR^n$ may be considered as triples (Examples~\ref{xmp:RR}~and~\ref{xmp:RR^n}).
	In the later case, we are using the product of triples (Definition~\ref{deff:product of triples}) that is indeed the product in the category of triples (Propostion~\ref{prop:product of triples is a triple}).
	We will also use the fact that any screen on $\RR^n$ is a product of screens on $\RR$ and vice verse (Proposition~\ref{prop:product of screens is a screen}).
	
	In this section we will apply Sections~\ref{sec:path category}~and~\ref{sec:representations} to construct a continuous version of higher Auslander algebras, which we call higher Auslander categories.
	
	Let $n\geq 1$ be an integer and let $\RR^n$ be as in Example~\ref{xmp:RR^n}.
	We construct a path based $\Ic^{(n)}$ in $\Cc^{(n)}$, where $\Cc^{(n)}$ is the $\Bbbk$-linear path category from $\RR^n$ seen as product of triples (Definition~\ref{deff:product of triples} and Proposition~\ref{prop:product of triples is a triple}).
	Our construction is based on those in \cite{OT12,JKPV19}.
	We emphasize that while essentially the same model appear in both papers, it originated in \cite{OT12} and was modified in \cite{JKPV19}.
	
	A nonzero morphism $f:\bar{x}\to\bar{y}$ is in $\Ic^{(n)}$ if and only if at least one of the following are satisfied.
	\begin{enumerate}
		\item $x_1\leq 0$ or $y_1\leq 0$,
		\item $x_n\geq 1$ or $y_n\geq 1$, or
		\item $x_i\geq x_{i+1}$ or $y_i\geq y_{i+1}$ for $1\leq i < n$.
	\end{enumerate}
	Equivalently, we can say that a nonzero $f:\bar{x}\to\bar{y}$ is \emph{not} in $\Ic^{(n)}$ if and only if $0 < x_1 \leq y_1 < x_2 \leq y_2 < \cdots < x_n\leq y_n < 1$ (see Proposition~\ref{prop:Hom support of representable projectives in Aus(n)}).
	
	Recall the definition of a path based ideal (Definition~\ref{deff:path based ideal}).
	
	The following follows immediately from the construction.
	\begin{prop}\label{prop:ideal Ic(n) is admissible}
		The ideal $\Ic^{(n)}$ is a path based ideal.
	\end{prop}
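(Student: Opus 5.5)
We need to show $\Ic^{(n)}$ is generated by elements of the form $\bigoplus_i\lambda_i[\gamma_i]$ with each $\lambda_i$ not a zero divisor (Definition~\ref{deff:path based ideal}). Since $\Bbbk$ is a field, every nonzero scalar qualifies. The plan is to show that the defining condition describes a two-sided ideal, and that this ideal is spanned by single path classes.

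First, the shape of the morphism spaces. In $\Cc^{(n)}$, by Example~\ref{xmp:RR as path categories} and Remark~\ref{rem:preserved product}, $\Hom_{\Cc^{(n)}}(\bar x,\bar y)$ is $\Bbbk$ when $x_i\le y_i$ for all $i$, and $0$ otherwise. So every nonzero morphism is $\lambda[\gamma]$ for a single path $\gamma$ and some $\lambda\neq0$.

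Define $\Ic^{(n)}(\bar x,\bar y)$ to be all of $\Hom(\bar x,\bar y)$ when one of (1)--(3) holds, and $0$ otherwise. Each component is then a subspace, so $\Ic^{(n)}$ is closed under sums and scalar multiples.

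Next, the ideal property. Take $f:\bar x\to\bar y$ in $\Ic^{(n)}$ and a composable $g:\bar y\to\bar z$, with $gf\neq0$. Conditions (1)--(3) are each stated as a disjunction over the source or the target of the morphism. We check each case:
\begin{itemize}
\item If the condition holds at $\bar x$, then $gf$ has source $\bar x$, so $gf\in\Ic^{(n)}$ directly.
\item If it holds at $\bar y$, we use the monotonicity $y_i\le z_i$:
\begin{itemize}
\item If $y_1\le0$, then $x_1\le y_1\le0$, so the source $\bar x$ witnesses (1).
\item If $y_n\ge1$, then $z_n\ge y_n\ge1$, so the target $\bar z$ witnesses (2).
\item For (3), suppose $y_i\ge y_{i+1}$. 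Then $x_i\le y_i$ and $x_{i+1}\le y_{i+1}$ alone do not give $x_i\ge x_{i+1}$ or $z_i\ge z_{i+1}$.
\end{itemize}
\end{itemize}

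The (3) subcase is the main obstacle. Here I would use the equivalent characterization in the paper: $f\notin\Ic^{(n)}$ iff $0<x_1\le y_1<x_2\le\cdots<x_n\le y_n<1$. On this reading the ideal is defined so that $gf$ avoids it only when $gf$ itself satisfies that chain. Granting the chain for $gf$ gives $y_1\le z_1<x_2\le y_2$ and so on, which places $\bar y$ in the same interlacing pattern, hence $f$ and $g$ also satisfy it. Taking the contrapositive yields the ideal property for both left and right composition.

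Finally, generation by paths. Each nonzero element of $\Ic^{(n)}$ is $\lambda[\gamma]$, which equals $\lambda$ times the generator $[\gamma]$. So $\Ic^{(n)}$ is generated by the set $\{[\gamma]: [\gamma]\in\Ic^{(n)}\}$, each of the form $1\cdot[\gamma]$, and it is therefore path based.
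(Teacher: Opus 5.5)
Your final paragraph is what the paper means when it says the statement ``follows immediately from the construction'': every nonzero morphism of $\Cc^{(n)}$ is $\lambda[\gamma]$ with $\lambda\neq 0$ a unit, so $\Ic^{(n)}$ is generated by path classes $1\cdot[\gamma]$. The gap is in your middle step. You define $\Ic^{(n)}$ Hom-wise by (1)--(3) and then try to prove that this set is an ideal. It is not one, so the difficulty you hit in subcase (3) is a real failure, not a technicality. For $n=2$ take $\bar x=(0.1,0.3)$, $\bar y=(0.4,0.3)$, $\bar z=(0.4,0.5)$. The morphism $\bar x\to\bar y$ satisfies (3) at its target, yet the composite $\bar x\to\bar z$ satisfies none of (1)--(3). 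This composite also fails the interlacing chain, because $z_1=0.4\not<x_2=0.3$. So (1)--(3) and the chain are \emph{not} equivalent membership tests for individual morphisms, and you cannot use one to prove closure of the set defined by the other. What your contrapositive argument actually shows is that the set of nonzero morphisms failing the chain is closed under composition on both sides. To conclude $gf\in\Ic^{(n)}$ for the set you defined, you would need ``fails the chain implies (1)--(3)'', and the example refutes that.

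The repair is to read (1)--(3) as describing \emph{generators}. Then $\Ic^{(n)}$ is the ideal they generate, namely the morphisms that factor through some $\bar w$ with $w_1\le 0$, $w_n\ge 1$, or $w_i\ge w_{i+1}$. With this reading, being an ideal is automatic and path-basedness is exactly your last paragraph. In fact, since each Hom space of $\Cc^{(n)}$ is spanned by a single path class, every ideal of $\Cc^{(n)}$ is path based. If you also want the chain description of this ideal, you need both inclusions. Your monotonicity argument gives one: each generator fails the chain, and failing the chain is stable under composition. For the other, factor a morphism with $y_j\ge x_{j+1}$ through $(x_1,\ldots,x_{j-1},y_j,x_{j+1},\ldots,x_n)$; the incoming morphism then satisfies (3). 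This is the same factorization the paper uses in its Hom-support proposition for the higher Auslander category.
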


	\begin{note}\label{note:higher auslander category}
		We denote by $\Aus{n}$ the quotient $\Cc^{(n)}/\Ic^{(n)}$.
	\end{note}
	
	When $n=2$, the set of nonzero objects in $\Aus{2}$ is the shaded region below, without its boundary:
	\begin{displaymath}\begin{tikzpicture}
		\draw[->] (-1,0) -- (3.2,0);
		\draw[->] (0,-1) -- (0,3.2);
		\filldraw[fill opacity = .17, draw opacity = 0] (0,0) -- (0,3) -- (3,3) -- (0,0);
		\draw (3.1,0) node [anchor=west] {$w_1$};
		\draw (0,3.1) node [anchor=south] {$w_2$};
	\end{tikzpicture}\end{displaymath}
	The morphisms in $\Aus{2}$ move up and/or to the right: the positive $w_1$ direction and/or the positive $w_2$ direction.
	
	When $n=3$, the set of nonzero objects in $\Aus{3}$ is the interior of the polygon below (without its faces).
	We show two different perspectives as this paper is 2D and only has static images:
	\begin{displaymath}\tdplotsetmaincoords{40}{120}\begin{tikzpicture}[tdplot_main_coords]
		\draw[->] (0,0,0) -- (3,0,0);
		\draw[->] (0,0,0) -- (0,3,0);
		\draw[->] (0,0,0) -- (0,0,4);
		\draw (3,0,0) node[anchor=east] {$w_1$};
		\draw (0,3,0) node[anchor=west] {$w_2$};
		\draw (0,0,4) node[anchor=south] {$w_3$};
		\draw (0,0,0) -- (0,3,3) -- (3,3,3) -- (0,0,0);
		\draw (0,0,0) -- (0,0,3) -- (0,3,3) -- (0,0,0);
		\draw (0,0,0) -- (0,0,3) -- (3,3,3) -- (0,0,0);
		\foreach \x in {.75, 1.5, 2.25}
		{
			\draw[draw opacity =.3] (0,0,0) -- (0,\x,\x) -- (\x,\x,\x) -- (0,0,0);
			\draw[draw opacity =.3] (0,0,0) -- (0,0,\x) -- (0,\x,\x) -- (0,0,0);
			\draw[draw opacity =.3] (0,0,0) -- (0,0,\x) -- (\x,\x,\x) -- (0,0,0);
			\draw[draw opacity =.3] (0,0,0) -- (\x,\x,3);
			\draw[draw opacity =.3] (0,0,0) -- (0,\x,3);
			\draw[draw opacity =.3] (0,0,0) -- (\x,3,3);
			\draw[draw opacity =.3] (\x,\x,3) -- (0,\x,3);
			\draw[draw opacity =.3] (\x,\x,3) -- (\x,3,3);
		}
	\end{tikzpicture}
	\qquad\qquad
	\tdplotsetmaincoords{80}{110}\begin{tikzpicture}[tdplot_main_coords]
		\draw[->] (0,0,0) -- (3,0,0);
		\draw[->] (0,0,0) -- (0,3,0);
		\draw[->] (0,0,0) -- (0,0,4);
		\draw (3,0,0) node[anchor=east] {$w_1$};
		\draw (0,3,0) node[anchor=west] {$w_2$};
		\draw (0,0,4) node[anchor=south] {$w_3$};
		\draw (0,0,0) -- (0,3,3) -- (3,3,3) -- (0,0,0);
		\draw (0,0,0) -- (0,0,3) -- (0,3,3) -- (0,0,0);
		\draw (0,0,0) -- (0,0,3) -- (3,3,3) -- (0,0,0);
		\foreach \x in {.75, 1.5, 2.25}
		{
			\draw[draw opacity =.3] (0,0,0) -- (0,\x,\x) -- (\x,\x,\x) -- (0,0,0);
			\draw[draw opacity =.3] (0,0,0) -- (0,0,\x) -- (0,\x,\x) -- (0,0,0);
			\draw[draw opacity =.3] (0,0,0) -- (0,0,\x) -- (\x,\x,\x) -- (0,0,0);
			\draw[draw opacity =.3] (0,0,0) -- (\x,\x,3);
			\draw[draw opacity =.3] (0,0,0) -- (0,\x,3);
			\draw[draw opacity =.3] (0,0,0) -- (\x,3,3);
			\draw[draw opacity =.3] (\x,\x,3) -- (0,\x,3);
			\draw[draw opacity =.3] (\x,\x,3) -- (\x,3,3);
		}
	\end{tikzpicture}\end{displaymath}
	Here, morphisms move in at least one of: the positive $w_1$ direction, positive $w_2$ direction, and/or positive $w_3$ direction.
	
	Let $\bar{x}$ be an point in $\RR^n$, let $1\leq i \leq n$ be an integer, and let $\delta\in\RR_{\geq 0}$.
	We define $g_{\bar{x}i\delta}$ to be the path in $\RR^n$ given by
	\[
		g_{\bar{x}i\delta}(t) = (x_1,x_2\ldots,x_{i-1},\, x_i+t\delta,\, x_{i+1},\ldots,x_{n-1},x_n).
	\]
	Thus, any path $[\gamma]$ in $\RR^n$ is a finite composition of $g_{\bar{x}i\delta}$'s by traveling along the first coordinate, then the second, and so on.
	
	\begin{prop}\label{prop:Hom support of representable projectives in Aus(n)}
		Let $\bar{x},\bar{y}$ be objects in $\Aus{n}$.
		Then $\Hom_{\Aus{n}}(\bar{x},\bar{y})\neq 0$ if and only if $0 < x_1 \leq y_1 < x_2 \leq y_2 < \cdots < x_n \leq y_n < 1$.
	\end{prop}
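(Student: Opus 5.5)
We work directly from the description of $\Aus{n}=\Cc^{(n)}/\Ic^{(n)}$. In $\RR^n$ (product of copies of Example~\ref{xmp:RR}), $\Hom_{C}(\bar{x},\bar{y})$ is a singleton $\{[\gamma]\}$ exactly when $x_i\leq y_i$ for all $i$, since $\sim$ on each factor only remembers endpoints. So $\Hom_{\Cc^{(n)}}(\bar{x},\bar{y})$ is either $0$ or $\Bbbk\langle[\gamma]\rangle\cong\Bbbk$. Then $\Hom_{\Aus{n}}(\bar{x},\bar{y})$ is this module modulo $\Ic^{(n)}(\bar{x},\bar{y})$. Hence it is nonzero if and only if $\bar{x}\leq\bar{y}$ coordinatewise and $[\gamma]\notin\Ic^{(n)}$.

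The plan is to identify exactly which basis morphisms lie in $\Ic^{(n)}$. First we show that the set $G$ of generating morphisms (those $[\gamma]:\bar{x}\to\bar{y}$ where some endpoint violates $0<w_1<w_2<\cdots<w_n<1$) is already closed under composition with arbitrary morphisms. Take $h\circ f\circ g$ with $f\in G$ from $\bar{x}$ to $\bar{y}$, $g:\bar{u}\to\bar{x}$ and $h:\bar{y}\to\bar{v}$. Since $\bar{u}\leq\bar{x}$ and $\bar{y}\leq\bar{v}$ coordinatewise, each violation propagates:
\begin{itemize}
\item if $x_1\leq 0$ then $u_1\leq 0$, and if $y_1\leq 0$ then $x_1\leq 0$, so $u_1\leq 0$;
\item if $y_n\geq 1$ then $v_n\geq 1$, and if $x_n\geq 1$ then $y_n\geq 1$, so $v_n\geq 1$;
\item if $x_i\geq x_{i+1}$ then $x_i\leq y_i$ gives $y_i\geq x_{i+1}$, but this is not yet a violation at an endpoint.
\end{itemize}
The third case is the main obstacle: composites such as $\bar{u}\to\bar{v}$ with $u_{i+1}\leq\ldots$ may pass through a bad point while both endpoints are good. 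So the ideal is \emph{not} just the set of morphisms with a bad endpoint, and we must characterise ideal membership by factorisation.

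The claim is that a basis morphism $[\gamma]:\bar{x}\to\bar{y}$ lies in $\Ic^{(n)}$ if and only if it factors through a point $\bar{z}$ with $\bar{x}\leq\bar{z}\leq\bar{y}$ that lies outside the open region $0<z_1<\cdots<z_n<1$. Since all hom-spaces have rank at most one and the generators are single basis elements with unit coefficients, the ideal is spanned by such factored basis elements. This reduces the proposition to a box-intersection question: does the box $[\bar{x},\bar{y}]$ meet the complement of the open simplex-like region? We expect this to be the delicate step and would handle it by explicit choice of $\bar{z}$.

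For the \emph{only if} direction, suppose there is an inequality $x_i\leq y_i$ with $y_i\geq x_{i+1}$ for some $i$. Take $\bar{z}=(x_1,\ldots,x_i,y_i,\ldots)$, replacing coordinates $i$ through $i+1$ suitably, for instance $z_i=y_i$ and $z_{i+1}=x_{i+1}$. Then $z_i\geq z_{i+1}$, and $\bar{x}\leq\bar{z}\leq\bar{y}$. Also $x_1\leq 0$ or $y_n\geq 1$ are themselves bad endpoints. So if the chain $0<x_1\leq y_1<x_2\leq\cdots<x_n\leq y_n<1$ fails, or if $\bar{x}\not\leq\bar{y}$, the Hom is zero.

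For the \emph{if} direction, assume the chain holds and take any $\bar{z}$ in the box. Then $0<x_1\leq z_1\leq y_1<x_2\leq z_2\leq\cdots\leq y_n<1$, so $\bar{z}$ is good. Hence no factorisation through a bad point exists, $[\gamma]\notin\Ic^{(n)}$, and the Hom space is $\Bbbk\neq 0$.
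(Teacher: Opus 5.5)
Your proof is correct and follows the paper's argument. For the ``if'' direction, every point of the box $[\bar{x},\bar{y}]$ is good. For the ``only if'' direction, your bad intermediate point $\bar{z}$ (with $z_i=y_i$ and $z_k=x_k$ for $k\neq i$) is exactly the target of the paper's first segment $g_{\bar{x}j\delta_j}$. You also make explicit a reduction the paper uses tacitly: since hom-spaces have rank at most one and basis morphisms compose to basis morphisms, a basis morphism lies in $\Ic^{(n)}$ if and only if it factors through a bad point of the box. One small fix: your displayed formula $\bar{z}=(x_1,\ldots,x_i,y_i,\ldots)$ literally puts $y_i$ in slot $i+1$, whereas you mean it in slot $i$.
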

	\begin{proof}
		($\Leftarrow$).
		Assume the inequality in the statement of the proposition.
		Then, certainly, $\bar{x}\not\cong 0$ and $\bar{y}\not\cong 0$ in $\Aus{n}$.
		Moreover, $\Hom_{\Cc}(\bar{x},\bar{y})\cong \Bbbk$, by construction.
		We need to show that the nonzero morphism $[\gamma]:\bar{x}\to\bar{y}$ in $\Cc$ does not factor through some $\bar{z}\in\RR^n$ such that $\bar{z}\cong 0$ in $\Aus{n}$.
		
		Let $\bar{z}\in\RR^n$ such that the nonzero $[\gamma]:\bar{x}\to\bar{y}$ in $\Cc$ factors through $\bar{z}$.
		Then we can rewrite $[\gamma]$ as $[\gamma_1\cdot \gamma_2]=[\gamma_2]\circ[\gamma_1]$.
		Since $[\gamma_1]\neq 0$, we know $x_k\leq z_k$ for each $1\leq k \leq n$.
		Since $[\gamma_2]\neq 0$, we know $z_k\leq y_k$ for each $1\leq k \leq n$.
		This forces $0<z_1<z_2<\ldots < z_n<1$, and so $\bar{z}\not\cong 0$ in $\Aus{n}$.
		Therefore, $[\gamma]$ is not 0 in $\Aus{n}$.
		
		($\Rightarrow$).
		Now suppose $\Hom_{\Aus{n}}(\bar{x},\bar{y})\neq 0$.
		Then we must have $\Hom_{\Aus{n}}(\bar{x},\bar{y})\cong \Bbbk$.
		Let $[\gamma]:\bar{x}\to\bar{y}$ be nonzero in $\Aus{n}$.
		We immediately know $0<x_1<x_2<\ldots<x_n<1$, $0<y_1<y_2<\ldots<y_n<1$, and $x_k\leq y_k$ for each $1\leq k \leq n$.

		For contradiction, suppose there is some $1\leq j < n$ such that $x_{j+1}\leq y_j$.
		For each $1\leq k \leq n$, let $\delta_k=y_k-x_k$.
		Then $[\gamma]$ has a representative of the form
		\[
				g_{\bar{x}j\delta_j}\cdot g_{(\bar{x}+\delta_j e_j)1\delta_1}\cdots g_{(\bar{x}+\delta_1 e_1 + \cdots + \delta_{n-1} e_{n-1})n\delta_n}.
		\]
		However, the target of $[g_{\bar{x}j\delta_j}]$ is $(x_1, \ldots x_{j-1},y_j,x_{j+1},\ldots, x_n)$.
		Since $x_{j+1}\leq y_j$, the target of $[g_{\bar{x} j \delta_j}]$ is $0$ in $\Aus{n}$.\
		This is a contradiction since $[\gamma]$ is nonzero in $\Aus{n}$.
		Therefore, the inequality in the statement of the proposition holds.
	\end{proof}
	
	We say a sequence of objects $\{\bar{x}_{(i)}\}_{i=1}^\infty$ in $\Aus{n}$ is \emph{projective} if the following are satisfied.
	\begin{enumerate}
		\item We have $\displaystyle\lim_{i\to\infty} \bar{x}_{(i)} =: \bar{x}$ is an object in $\Cc^{(n)}$ such that $0=x_1<x_2<\ldots<x_n<1$, where the limit is computed in $\RR^n$ with the usual metric.
		\item We have $\Hom_{\Aus{n}}(\bar{x}_{(i+1)},\bar{x}_{(i)})\cong \Bbbk$ for all $i\geq 1$.
	\end{enumerate}
	
	\begin{deff}[$\Pbarx$]\label{deff:higher auslander projective}
		For each object $\bar{x}$ in $\Cc^{(n)}$ such that $0\leq x_1<x_2<\cdots<x_n<1$, we define a representation $\Pbarx$ of $\Aus{n}$ as follows.\footnote{Notice the difference in categories!}
		
		First, if $\bar{x}\neq 0$ in $\Aus{n}$ then $\Pbarx:=\Hom_{\Aus{n}}(\bar{x},-)$.
		If $x_1=0$ we define $\Pbarx$ on objects as
		\[
			\Pbarx(\bar{y})=\begin{cases}
				\Bbbk & \exists \text{ projective } \{\bar{x}_{(i)}\}_{i=1}^\infty \text{ such that }{\displaystyle\lim_{i\to\infty}} \Hom_{\Aus{n}}(\bar{x}_{(i)},\bar{y}) \cong \Bbbk \\
				0 & \text{otherwise},
			\end{cases}
		\]
		where the limit of $\Hom$'s is taken by choosing the element in $\Hom_{\Aus{n}}(\bar{x}_{(i+1)},\bar{x}_{(i)})$ corresponding to $1\in\Bbbk$, for each $i\geq 1$.
		
		If $\Hom_{\Aus{n}}(\bar{y},\bar{y'})\cong \Bbbk$, $\Pbarx(\bar{y})\neq 0$, and $\Pbarx(\bar{y}')\neq 0$, we define $\Pbarx([\gamma]):=\boldsymbol{1}_{\Bbbk}$.
		Otherwise, we say $\Pbarx(f)=0$ for any $f:\bar{y}\to\bar{y}'$ in $\Aus{n}$.
	\end{deff}
	
	Notice that each $\Pbarx$ is indecomposable, even if $x_1=0$.
	
	While it follows almost immediately that $\Pbarx$ is a functor even if $x_1=0$, perhaps the reader would benefit from some intuitive reasoning as to why $\Pbarx$ makes sense to include among the $\Hom$ functors.
	
	Suppose $\Hom_{\Aus{n}}(\bar{y},\bar{y'})\cong \Bbbk$, $\Pbarx(\bar{y})\neq 0$, and $\Pbarx(\bar{y}')\neq 0$.
	Then there are $N,N'\in\NN$ such that if $i>\max{N,N'}$ we have $\Hom_{\Aus{n}}(\bar{x}_{(i)},\bar{y})\cong \Bbbk$ and similarly for $\bar{y}'$.
	For the unique class $[\gamma]:\bar{y}\to\bar{y}'$, and when $i>\max{N,N'}$, we have the diagram below, where each node is isomorphic to $\Bbbk$ in $\kvec$ and each arrow is an isomorphism:
	\[
		\xymatrix@C=16ex{
			 \Hom_{\Aus{n}}(\bar{x}_{(i)},\bar{y}) \ar[r]^-{-\circ(\bar{x}_{(i+1)}\to\bar{x}_{(i)})} \ar[d]_-{[\gamma]\circ-} & \Hom_{\Aus{n}}(\bar{x}_{(i+1)},\bar{y}) \ar[d]^-{[\gamma]\circ-} \\
			\Hom_{\Aus{n}}(\bar{x}_{(i)},\bar{y}') \ar[r]_-{-\circ(\bar{x}_{(i+1)}\to\bar{x}_{(i)})} & \Hom_{\Aus{n}}(\bar{x}_{(i+1)},\bar{y}').
		}
	\]
	Thus, the induced map 
	\[
		\left( \lim_{i\to\infty} \Hom_{\Aus{n}}(\bar{x}_{(i)},\bar{y}) \right) \stackrel{[\gamma]\circ-}{\longrightarrow} \left( \lim_{i\to\infty} \Hom_{\Aus{n}}(\bar{x}_{(i)},\bar{y}') \right)
	\]
	is an isomorphism.
	However, when defining $\Pbarx(\bar{y})$, we can choose any sequence so long as the limit is isomorphic to $\Bbbk$.
	Thus, it makes sense to simply define $\Pbarx(\bar{y})=\Bbbk$ and to define $\Pbarx([\gamma])=\boldsymbol{1}_{\Bbbk}$ in the above context.
	
	\begin{prop}\label{prop:Px is projective}
		The $\Pbarx$ functors in Definition~\ref{deff:higher auslander projective} are projective in $\rpwf(\Aus{n})$.
	\end{prop}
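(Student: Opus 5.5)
We split into two cases according to whether $x_1>0$ or $x_1=0$.

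\textbf{Case $x_1>0$.} Here $\Pbarx=\Hom_{\Aus{n}}(\bar{x},-)$ is representable. By Proposition~\ref{prop:Hom support of representable projectives in Aus(n)} it is pointwise finite-dimensional, since each value is $0$ or $\Bbbk$. The Yoneda lemma gives $\Hom(\Pbarx,M)\cong M(\bar{x})$, naturally in $M$. Evaluation at $\bar{x}$ is exact on $\rpwf(\Aus{n})$, because exactness of representations is checked pointwise (as recalled before Lemma~\ref{lem:pixelating exact sequence}). Hence $\Hom(\Pbarx,-)$ is exact and $\Pbarx$ is projective.

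\textbf{Case $x_1=0$.} The plan is to show that, for every $M$ in $\rpwf(\Aus{n})$,
\[
\Hom(\Pbarx,M)\cong \lim_{i\to\infty} M(\bar{x}_{(i)})
\]
for a fixed projective sequence $\{\bar{x}_{(i)}\}$. The limit is taken along the transition maps $M(\bar{x}_{(i+1)}\to\bar{x}_{(i)})$.

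First, we check that $\Pbarx$ is the colimit of the representables $P_{\bar{x}_{(i)}}$ along the monomorphisms induced by $\bar{x}_{(i+1)}\to\bar{x}_{(i)}$. Concretely, $\Pbarx(\bar{y})=\Bbbk$ exactly when $\bar{y}$ lies in the support of $P_{\bar{x}_{(i)}}$ for all large $i$. The intuitive discussion after Definition~\ref{deff:higher auslander projective} shows that the morphisms agree in the limit, independently of the chosen sequence. Then $\Hom(\mathrm{colim}\,P_{\bar{x}_{(i)}},M)=\lim \Hom(P_{\bar{x}_{(i)}},M)=\lim M(\bar{x}_{(i)})$ by Yoneda.

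Second, we show that $M\mapsto \lim_i M(\bar{x}_{(i)})$ is exact on $\rpwf(\Aus{n})$. Left exactness is automatic. For right exactness, an epimorphism $M\to N$ gives pointwise surjections of inverse systems of finite-dimensional spaces. Finite-dimensional inverse systems satisfy the Mittag-Leffler condition, since images stabilize by dimension count. So $\lim^1$ of the kernel system vanishes and the map on limits is surjective.

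The main obstacle is this $\lim^1$ step, so it should be done carefully. The key input is that pointwise finite-dimensionality of the kernel $K$ forces the descending chains of images $\im(K(\bar{x}_{(j)})\to K(\bar{x}_{(i)}))$ to stabilize. Mittag-Leffler then gives vanishing $\lim^1$ and hence surjectivity. This is the only place the pwf hypothesis is used, which explains why the proposition is stated in $\rpwf(\Aus{n})$ rather than $\Rep(\Aus{n})$.

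A secondary check is that the colimit description really recovers Definition~\ref{deff:higher auslander projective}, including the morphisms. In particular, $\Pbarx([\gamma])=\boldsymbol{1}_\Bbbk$ exactly when both endpoints lie in the eventual support, and this follows from Proposition~\ref{prop:Hom support of representable projectives in Aus(n)}.
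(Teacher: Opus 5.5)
Your proof is correct, and its key step is the same as the paper's. Both arguments pass to the inverse system $M(\bar{x}_{(i)})$ along a projective sequence, and both use that the kernel is pointwise finite-dimensional to get the Mittag-Leffler condition, hence surjectivity of $\varprojlim M(\bar{x}_{(i)})\to\varprojlim N(\bar{x}_{(i)})$.

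The difference is packaging. The paper lifts $g_{\bar{x}}(1)$ to a compatible family $(m_i)$ and builds $h$ by hand, leaving the check that $h$ is a morphism as ``straightforward''. You instead identify $\Pbarx\cong\varinjlim P_{\bar{x}_{(i)}}$ and apply Yoneda to get $\Hom(\Pbarx,-)\cong\varprojlim_i M(\bar{x}_{(i)})$ naturally. That makes well-definedness and naturality of the lift automatic. It also shows more generally that any sequential colimit of representables whose values have dimension at most $1$ is projective in $\rpwf(\Aus{n})$.

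One claim must be dropped: the colimit is \emph{not} independent of the chosen projective sequence. The paper's ``there exists a projective sequence'' phrasing and its intuitive discussion obscure this. Take $n=2$ and $\bar{x}=(0,\tfrac12)$. The projective sequence $(\tfrac1i,\tfrac12+\tfrac1i)$ gives a colimit that is $\Bbbk$ at $(\tfrac12,\tfrac{7}{10})$ and $0$ at $(\tfrac{3}{10},\tfrac12)$. The sequence $(\tfrac1i,\tfrac12)$ gives the opposite values. For the first sequence the transition maps are not even monic: at $(0.72,0.8)$ the map from stage $4$ to stage $5$ is $\Bbbk\to 0$.

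So you should fix $\bar{x}_{(i)}=(\epsilon_i,x_2,\ldots,x_n)$ with $\epsilon_i\downarrow 0$. This is exactly the normalization the paper makes ``without loss of generality''. For this sequence the transition maps are monic, and the colimit has support $0<y_1<x_2\le y_2<\cdots<x_n\le y_n<1$. It is therefore the intended $\Pbarx=M_{\bar{x},1}$, and with that choice your argument goes through as written.
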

	\begin{proof}
		If $\bar{x}\neq 0$ in $\Aus{n}$ then the statement follows from the fact that $\Hom(\bar{x},-)$ is a projective object in $\rpwf(\Aus{n})$.
		So, we assume $x_1=0$.
		
		Consider the diagram $\Pbarx \stackrel{g}{\to} N \stackrel{f}{\twoheadleftarrow} M$ in $\rpwf(\Aus{n})$.
		We will construct a lift $h:\Pbarx\to M$ such that $g=fh$.
		
		Since the result follows immediately if $g=0$, we assume $g\neq 0$.
		Then $f\neq 0$ and so let $K=\ker(f)$.
		Choose some $\bar{z}$ such that $g_{\bar{z}}:\Pbarx(\bar{z})\to N(\bar{z})$ is nonzero.
		Then $\Pbarx(\bar{z})\neq 0$ and so there is a projecive sequence $\{\barxi\}_{i=1}^\infty$ such that ${\displaystyle\lim_{i\to\infty}}\Hom_{\Aus{n}}(\barxi,\bar{z})\cong \Bbbk$.
		Without loss of generality, we may assume that for sufficiently large $i$, we have $x_k = x_{(i),k}$ for $1 < k \leq n$.
		
		Let $N\in\NN$ such that if $i> N$ then $\Hom_{\Aus{n}}(\barxi,\bar{z})\cong \Bbbk$ and $x_k = x_{(i),k}$ for $1 < k \leq n$.
		Remove the elements of $\{\barxi\}$ where $i\leq N$ and reindex the remaining sequence by $i\mapsto i-N$.
		Thus, $\Hom_{\Aus{n}}(\barxi,\bar{z})\cong \Bbbk$ and $x_k = x_{(i),k}$ when $1 < k \leq n$, for all indices $i$.
		
		Now we have a system of short exact sequences
		\[
			\xymatrix{
				& {\vdots} \ar[d] & {\vdots} \ar[d] & {\vdots} \ar[d] \\
				0 \ar[r] & K(\barx{i+1}) \ar[r] \ar[d] & M(\barx{i+1}) \ar[r] \ar[d] & N(\barx{i+1}) \ar[r] \ar[d] & 0 \\
				0 \ar[r] & K(\barxi) \ar[r] \ar[d] & M(\barxi) \ar[r] \ar[d] & N(\barxi) \ar[r] \ar[d] & 0 \\
				& {\vdots}  & {\vdots} & {\vdots} 
			}
		\]
		In $\kVec$, let
		\begin{align*}
			K(\bar{x}) &= \lim_{\leftarrow} K(\barxi) & M(\bar{x}) &= \lim_{\leftarrow} M(\barxi) & N(\bar{x}) &= \lim_{\leftarrow} N(\barxi).
		\end{align*}
		Since $K$ is pointwise finite-dimensional, the inverse system $\{K(\barxi)\}$ is Mittag-Leffler and so $0\to K(\bar{x})\to M(\bar{x})\to N(\bar{x})\to 0$ is exact.
		In particular, $M(\bar{x})$ surjects onto $N(\bar{x})$; denote this epimorphism by $f_{\bar{x}}$.
		
		We also have the inverse system $\{\Pbarx(\barxi)\}$ and we denote its limit by $\Pbarx(\bar{x})$.
		Since, for each $i$, we have $g_{\barxi}:\Pbarx(\barxi)\to N(\barxi)$ we have an induced map between the limits: $g_{\bar{x}}:\Pbarx(\bar{x})\to N(\bar{x})$.
		Since $\Pbarx(\barxi)=\Bbbk$ for each $i$, we know $\Pbarx(\bar{x})=\Bbbk$.
		
		Now, let $n=g_{\bar{x}}(1)$ and choose $m\in M(\bar{x})$ such that $f_{\bar{x}}(m)=n$.
		For each $i$, let $m_i\in M(\barxi)$ be the image of $m$ under the limit map $M(\bar{x})\to M(\barxi)$.
		Now, for any $\bar{y}$ in $\Aus{n}$ such that $g_{\bar{y}}\neq 0$, we know there is some $\barxi$ such that $\Hom_{\Aus{n}}(\barxi,\bar{y})\cong \Bbbk$.
		Then we know $N(\barxi\to\bar{y})\circ f_{\barxi}(m_i) = f_{\bar{y}}\circ M(\barxi\to\bar{y})(m_i)$.
		So, define $m_{\bar{y}}= M(\barxi\to\bar{y})(m_i)$.
		In general, define $m_{\bar{y}}=M(\barxi\to y)(m_i)$, for some $\barxi$ such that $\Hom_{\Aus{n}}(\barxi,\bar{y})\neq 0$, whenever such a $\barxi$ exists, and define $m_{\bar{y}}=0$ otherwise.
		Notice that if $\Pbarx(\bar{y})=0$ then we must have $m_{\bar{y}}=0$ also.
		It is straightforward to check that $h:\Pbarx\to M$ determined
		\[
			h_{\bar{y}}=\begin{cases}
				\lambda\mapsto \lambda m_{\bar{y}} & \Pbarx(\bar{y})\neq 0 \\
				0 & \text{otherwise}
			\end{cases}
		\]
		is a morphism such that $g=fh$, completing the proof.
	\end{proof}
	
	Using Proposition~\ref{prop:Px is projective} as a justification, we have the following definition.
	\begin{deff}\label{deff:projective source}
		Let $\bar{x}$ be an object in $\Cc^{(n)}$.
		We say $\bar{x}$ is a \emph{projective source} if either $\bar{x}\neq 0$ in $\Aus{n}$ or $0=x_1<x_2<\ldots<x_n<1$.
	\end{deff}
	
	For a suitable screen $\Pf$, we want to relate $\PfAus{n}$ to the category from a higher Auslander algebra of type $\mathbf{A}^{(n)}_m$.
	To do this, we will construct a suitible $\Pf$ as follows.

	Let $m\in \NN_{>0}$, let $\bar{a}=(a_1,\ldots,a_{m+n-2})$ be a finite list of real numbers such that $0< a_1<a_2,\ldots,<a_{m+n-2}<1$, and let
	\[
		\Pf=\{(-\infty,0],(0,a_1),[a_1,a_2),\ldots,[a_{m+n-2},1),[1,+\infty)\}
	\]
	Recall $m\geq 2$ and $n\geq 1$ so $m+n-2 \geq 1$ and $\Pf$ has at least 2 cells contained in $(0,1)$.
	
	We define $\Pf_{\bar{a}}=\prod_{i=1}^n\Pf$, which is a screen on $\RR^n$ (Proposition~\ref{prop:product of screens is a screen}).
	We set $I=\{-1,0,1,\ldots,m+n-1,m+n\}$, $a_{-1}=-\infty$, $a_0=0$, $a_{m+n-1}=1$, and $a_{m+n}=+\infty$.

	Let $\Xpixel_{-1}=(-\infty,0]$, $\Xpixel_{0}=(0,a_1)$, and $\Xpixel_i=[a_i,a_{i+1})$ if $0< i \leq m+n-1$.
	For $\bar{\imath}=(i_1,i_2,\ldots,i_n)\in I^n$, we define $\Xpixel_{\bar{\imath}}=\prod_{j=1}^n \Xpixel_{i_j}$.
	
	\begin{lem}\label{lem:dead pixels for higher auslander categories}
		Let $\bar{\imath}\in I^n$.
		The pixel $\Xpixel_{\bar{\imath}}\in\Pf_{\bar{a}}$ is not a dead pixel if and only if 
		\[
			0 \leq i_1 < i_2 < \cdots < i_{n-1} < i_n < m+n-1.
		\]
	\end{lem}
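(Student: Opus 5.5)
The plan is to prove the two directions separately, working coordinatewise with the explicit description of nonzero objects and morphisms in $\Aus{n}$ from Proposition~\ref{prop:Hom support of representable projectives in Aus(n)}. Recall that an object $\bar{x}$ is nonzero in $\Aus{n}$ exactly when $0<x_1<\cdots<x_n<1$. Also, $\Xpixel_{-1}=(-\infty,0]$, the pixel $\Xpixel_{m+n-1}=[a_{m+n-1},+\infty)=[1,+\infty)$ (I read the top index this way, as the last pixel of $\Pf$), and the intervals $\Xpixel_0,\ldots,\Xpixel_{m+n-2}$ are disjoint and ordered left to right.

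($\Leftarrow$, the condition fails implies dead.) Suppose the chain $0\le i_1<\cdots<i_n<m+n-1$ fails. I would split into three cases.
\begin{enumerate}
\item Some $i_1=-1$ (or more generally $i_1<0$). Then every $\bar{x}\in\Xpixel_{\bar\imath}$ has $x_1\le 0$.
\item $i_n\ge m+n-1$. Then every $\bar{x}\in\Xpixel_{\bar\imath}$ has $x_n\ge 1$.
\item Some $i_j\ge i_{j+1}$. Then some $\bar{x}\in\Xpixel_{\bar\imath}$ has $x_j\ge x_{j+1}$. If $i_j>i_{j+1}$, every point of the pixel has this property; if $i_j=i_{j+1}$, choose $x_j=x_{j+1}$ inside the common interval.
\end{enumerate}
In each case the constant path $\rho$ at such an $\bar{x}$ has $\im(\rho)\subset\Xpixel_{\bar\imath}$ and $[\rho]=\boldsymbol{1}_{\bar{x}}\in\Ic^{(n)}$, so $[\rho]=0$ in $\Aus{n}$. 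Hence $\Xpixel_{\bar\imath}$ is pre-dead (Definition~\ref{deff:pre-dead}), and therefore dead by Remark~\ref{rem:pre-dead pixels are dead}.

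($\Rightarrow$, the condition holds implies not dead.) Assume $0\le i_1<\cdots<i_n<m+n-1$. Every coordinate interval then lies in $(0,1)$, and the intervals are pairwise disjoint and strictly ordered. So every $\bar{x}\in\Xpixel_{\bar\imath}$ satisfies $0<x_1<\cdots<x_n<1$.

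Moreover, for $\bar{x},\bar{y}$ in the pixel with $\bar{x}\le\bar{y}$ coordinatewise, we get $y_k<x_{k+1}$, because $y_k\in\Xpixel_{i_k}$ lies strictly left of $\Xpixel_{i_{k+1}}\ni x_{k+1}$. Hence, by Proposition~\ref{prop:Hom support of representable projectives in Aus(n)}, every path inside the pixel gives a nonzero morphism in $\Aus{n}$, and in particular $\Xpixel_{\bar\imath}$ is not pre-dead.

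It remains to show no $\bar{x}$ in the pixel becomes $\cong 0$ in $\PfAus{n}$. This is the expected obstacle, for two reasons: the ideal is enlarged to its $\Pf$-completion $\IbarP$ before localizing, and the localization could a priori kill further objects.

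For the completion, I would check that $\boldsymbol{1}_{\bar{x}}\notin\IbarP$. A morphism $\Pf$-equivalent to $\boldsymbol{1}_{\bar{x}}$ is, by Definition~\ref{deff:P-equivalent paths}, built from paths $\rho_1\cdot\gamma\cdot\rho_3$ with all pieces staying in $\Xpixel_{\bar\imath}$. By the computation above, such morphisms are nonzero in $\Aus{n}$, i.e.\ they are not in $\Ic^{(n)}$. Since $\Ic^{(n)}$ is spanned by individual path classes and every $\Hom$ is at most $\Bbbk$, the generators of $\IbarP$ are themselves paths $\Pf$-equivalent to paths in $\Ic^{(n)}$. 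I would then argue that such a generator cannot pass through a path lying entirely inside a non-pre-dead pixel of this shape. Concretely: its endpoints are in pixels where the inequalities of Proposition~\ref{prop:Hom support of representable projectives in Aus(n)} fail, and $\Pf$-equivalence preserves the pixels of the endpoints.

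For the localization, I would use the calculus of fractions (Proposition~\ref{prop:Sigma yields a calculus of fractions}). There, $\boldsymbol{1}_{\bar{x}}=0$ in $\PfAus{n}$ iff $\sigma\circ\boldsymbol{1}_{\bar{x}}=0$ in $\AbarP$ for some $\sigma\in\Sigma_{\Pf}$ starting at $\bar{x}$. Such a $\sigma$ is a path inside $\Xpixel_{\bar\imath}$, and it is nonzero by the previous step. Hence $\bar{x}\not\cong 0$, so the pixel is not dead.
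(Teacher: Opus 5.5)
Your proof is correct and follows the paper's route. For the ``dead'' direction you exhibit a point of $\Xpixel_{\bar\imath}$ that is already $\cong 0$ in $\Aus{n}$, routed through pre-deadness and Remark~\ref{rem:pre-dead pixels are dead}. For the ``not dead'' direction you check coordinatewise that every point and every path inside the pixel satisfies the inequalities of Proposition~\ref{prop:Hom support of representable projectives in Aus(n)}.

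Your extra treatment of $\IbarP$ and the localization fills in a step the paper skips when it goes straight from ``nonzero in $\Aus{n}$'' to ``not dead''. One wording fix in that step: endpoints of morphisms in $\Ic^{(n)}$ need not lie in dead pixels. The contradiction instead comes from $\Pf$-equivalence preserving endpoint pixels. So the partner $g\in\Ic^{(n)}$ of any generator factoring a path in $\Xpixel_{\bar\imath}$ would have both endpoints in $\Xpixel_{\bar\imath}$, where every comparable pair satisfies those inequalities.
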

	\begin{proof}
		($\Rightarrow$).
		We assume $i_1>0$ as the proof when $i_1=0$ is nearly the same.
		Let $\bar{x}\in\Xpixel_{\bar{\imath}}$ and assume the inequality.
		Then we have
		\[
			0<a_{i_1}\leq x_1 < a_{i_1 +1 }\leq a_{i_2} \leq x_2 < \cdots < a_{i_{n-1} + 1}\leq a_{i_n} \leq x_n < a_{i_n +1}\leq 1.
		\]
		Restricting our attention to the $x_k$'s, we have $0<x_1<x_2<\cdots < x_n<1$.
		By definition, this means $\bar{x}\not\cong 0$ in $\Aus{n}$.
		Thus, every $[\gamma]:\bar{x}\to\bar{y}$ with $\bar{x},\bar{y}\in \Xpixel_{\bar{\imath}}$ is nonzero in $\Aus{n}$ if and only if it is nonzero in $\Cc$.
		Therefore, $\Xpixel_{\bar{\imath}}$ is not a dead pixel.
		
		($\Leftarrow$).
		Suppose the inequality is false.
		Then either (i) there is some $1\leq j < k \leq n$ such that $i_j\geq i_k$ or (ii) there is some $1\leq k \leq n$ such that $i_k < 0$ or $i_k \geq m+n-1$.
		
		Suppose (i).
		So there exists some $\bar{x}\in\Xpixel_{\bar{\imath}}$ such that $x_j\geq x_k$.
		By definition this means $\bar{x}\cong 0$ in $\Aus{n}$ and so $\Xpixel_{\bar{\imath}}$ is a dead pixel since $\bar{x}\cong 0$ in $\PfAus{n}$.
		
		Suppose (ii).
		Then there is some $\bar{x}\in\Xpixel_{\bar{\imath}}$ such that $x_k=0$ or $x_k\geq 1$, respectively.
		In both cases, by definition, $\bar{x}\cong 0$ in $\Aus{n}$.
		Thus, again, $\Xpixel_{\bar{\imath}}$ is dead.
		This concludes the proof.
	\end{proof}
	
	When $n=1$, we also define $\Pf_{\bar{()}}=\{(-\infty,0],(0,1),[1,+\infty)\}$ for the empty sequence $\bar{a}=()$.
	
	Choose a projective source $\bar{x}$.
	\begin{itemize}
		\item If $x_1=0$ and $n=1$, set $\bar{a}_{\bar{x}}=()$.
		\item If $x_1=0$ and $n>1$, set $\bar{a}_{\bar{x}}=(x_2,\ldots,x_n)$.
		\item If $x_1>0$, set $\bar{a}_{\bar{x}}=(x_1,\ldots,x_n)$.
	\end{itemize}
	We define $\Pf_{\bar{x}}$ to be $\Pf_{\bar{a}_{\bar{x}}}$ as before.
	We define a specific pixel in $\Pf_{\bar{x}}$:
	\[
		\Xpixel_{\bar{x}}= \begin{cases}
			\Xpixel_{(0,1,\ldots,n-1)} & x_1=0 \\
			\Xpixel_{(1,2,\ldots,n)} & x_1>0.
		\end{cases}
	\]
	Recall that a screen $\Pf$ pixelates a representation $M$ when all of the morphisms in $\Sigma_{\Pf}$ are sent to isomorphisms by $M$ (Definition~\ref{deff:pixelated representation}).
	
	\begin{prop}\label{prop:finite sums of projectives have a screen}
		Let $\bar{x}_{(1)},\ldots,\bar{x}_{(m)}$ be a finite collection of projective sources, for $1\leq i\leq m$.
		For each $1\leq i \leq m$, the partition $\Pf_{\bar{x}_{(i)}}$ pixelates $P_{\bar{x}_{(i)}}=\Hom_{\Aus{n}}(\bar{x}_{(i)},-)$.
		Moreover, $\Pf_{\bar{x}_{(i)}}\sqcap \cdots\sqcap \Pf_{\bar{x}_{(n)}}$ pixelates each $P_{\bar{x}_{(i)}}$.
	\end{prop}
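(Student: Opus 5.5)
Fix a projective source $\bar{x}$ and write $\Pf=\Pf_{\bar{x}}$. To show that $\Pf$ pixelates $P_{\bar{x}}$ we need $P_{\bar{x}}(\sigma)$ to be an isomorphism for every $\sigma\in\Sigma_{\Pf}$ (Definition~\ref{deff:pixelated representation}). Since $P_{\bar{x}}$ takes values in $\{0,\Bbbk\}$ and its nonzero structure maps are identities (Proposition~\ref{prop:Hom support of representable projectives in Aus(n)} and Definition~\ref{deff:higher auslander projective}), everything reduces to one claim. For each pixel $\Xpixel_{\bar{\imath}}\in\Pf$, the support of $P_{\bar{x}}$ either contains $\Xpixel_{\bar{\imath}}$ or is disjoint from it. Granting this, let $\sigma=[\rho]$ with $\im(\rho)\subset\Xpixel_{\bar{\imath}}$. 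In the first case the endpoints satisfy $\Hom_{\Aus{n}}(\rho(0),\rho(1))\cong\Bbbk$, because $\Xpixel_{\bar{\imath}}$ is not dead. Then $P_{\bar{x}}(\sigma)=\boldsymbol{1}_{\Bbbk}$, and for $x_1>0$ this map is precomposition by a nonzero map in a one-dimensional space. In the second case $P_{\bar{x}}(\sigma)$ is $0\to 0$. The zero morphisms in $\Sigma_{\Pf}$ coming from pre-dead pixels fall into the second case, since a dead pixel cannot meet the support.

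To prove the claim, take first $x_1>0$. Then $\bar{a}_{\bar{x}}=(x_1,\dots,x_n)$ and the cut points of $\Pf$ in each coordinate are $0,x_1,\dots,x_n,1$. By Proposition~\ref{prop:Hom support of representable projectives in Aus(n)}, $\bar{y}\in\supp P_{\bar{x}}$ if and only if $x_k\le y_k<x_{k+1}$ for every $k$, with $x_{n+1}:=1$. So the support is exactly the single pixel $\Xpixel_{(1,2,\dots,n)}=\prod_k[x_k,x_{k+1})$, and every other pixel is disjoint from it.

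Now take $x_1=0$. Here $\bar{a}_{\bar{x}}=(x_2,\dots,x_n)$, and I will unwind Definition~\ref{deff:higher auslander projective}. Choose the projective sequence with $\bar{x}_{(i)}=(1/i,x_2,\dots,x_n)$; the proof of Proposition~\ref{prop:Px is projective} allows this normalisation. With it, $\bar{y}\in\supp P_{\bar{x}}$ if and only if $0<y_1<x_2\le y_2<\dots<x_n\le y_n<1$. This is exactly the pixel $\Xpixel_{(0,1,\dots,n-1)}=(0,x_2)\times[x_2,x_3)\times\dots\times[x_n,1)$.

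The main obstacle is this last step: justifying that \emph{every} projective sequence gives the same support. If $\bar{x}_{(i)}\to\bar{x}$ with nonzero maps $\bar{x}_{(i+1)}\to\bar{x}_{(i)}$, then the coordinates $x_{(i),k}$ decrease in $i$ and converge to $x_k$. Having $\Hom(\bar{x}_{(i)},\bar{y})\neq 0$ for large $i$ forces $x_{(i),k}\le y_k<x_{(i),k+1}$. Letting $i\to\infty$ gives $x_k\le y_k$, and since $x_{(i),k+1}\le x_k+\dots$ decreases to $x_{k+1}$, also $y_k<x_{k+1}$. For $k=1$ we get $y_1>0$, because $y_1\ge x_{(i),1}>0$. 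Conversely, the explicit sequence above realises every such $\bar{y}$. I would check the strict-versus-weak inequality at the boundary $y_k=x_{k+1}$ carefully: $y_k<x_{(i),k+1}$ for all large $i$ forces $y_k\le x_{k+1}$, and then $y_k\ge x_{(i),k}$ together with the nonvanishing of the Hom excludes equality.

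For the final statement, $\Qf=\Pf_{\bar{x}_{(1)}}\sqcap\dots\sqcap\Pf_{\bar{x}_{(m)}}$ refines each $\Pf_{\bar{x}_{(i)}}$. It is again a product of one-dimensional screens whose cut points are the union of all the cut points, so it is a screen by Proposition~\ref{prop:product of screens is a screen}. By Remark~\ref{rem:pixelating partitions}, refinements of a pixelating screen still pixelate, which finishes the proof.
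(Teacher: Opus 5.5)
Your overall route matches the paper's. You show that the support of $P_{\bar{x}}$ is exactly the pixel $\Xpixel_{\bar{x}}$, deduce that every $\sigma\in\Sigma_{\Pf_{\bar{x}}}$ goes to an isomorphism, and pass to the meet because it refines each $\Pf_{\bar{x}_{(i)}}$. The case $x_1>0$ and the refinement step are fine. You are more careful than the paper about the zero morphisms of pre-dead pixels. One small slip: $P_{\bar{x}}(\sigma)$ is postcomposition with $\sigma$, not precomposition.

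The problem is the step you flag as the main obstacle. It is false that every projective sequence yields the same support. From $y_k<x_{(i),k+1}$ for all large $i$ and $x_{(i),k+1}\downarrow x_{k+1}$ you only get $y_k\le x_{k+1}$. Nothing rules out equality when the $(k+1)$st coordinates decrease strictly. Concretely, take $n=2$ and $\bar{x}_{(i)}=(\tfrac{1}{10i},\tfrac12+\tfrac{1}{10i})$; this is a projective sequence converging to $\bar{x}=(0,\tfrac12)$. For $\bar{y}=(\tfrac12,\tfrac34)$ we have $0<\tfrac{1}{10i}\le\tfrac12<\tfrac12+\tfrac{1}{10i}\le\tfrac34<1$ for every $i$. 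So $\Hom_{\Aus{2}}(\bar{x}_{(i)},\bar{y})\cong\Bbbk$ for all $i$, with isomorphic transition maps. Under the literal Definition~\ref{deff:higher auslander projective}, then, $P_{\bar{x}}(\bar{y})=\Bbbk$, although $\bar{y}\notin\Xpixel_{\bar{x}}=(0,\tfrac12)\times[\tfrac12,1)$.

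This breaks more than your argument. The point $\bar{y}$ lies in the pixel $[\tfrac12,1)\times[\tfrac12,1)$, which is pre-dead, since the identity at any diagonal point lies in $\Ic^{(2)}$. Hence the zero endomorphism of $\bar{y}$ belongs to $\Sigma_{\Pf_{\bar{x}}}$, and the literal $P_{\bar{x}}$ sends it to $0\colon\Bbbk\to\Bbbk$. Your ``contains or is disjoint'' dichotomy fails, and so does the proposition under that reading.

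So the normalisation $\bar{x}_{(i)}=(1/i,x_2,\dots,x_n)$ cannot be derived. It must be built into the definition of $P_{\bar{x}}$ for $x_1=0$, namely $\supp P_{\bar{x}}=\{0<y_1<x_2\le y_2<\cdots<x_n\le y_n<1\}$. This is what the paper does implicitly. Its proof only says the argument is ``adjusted by selecting an appropriate'' sequence, the proof of Proposition~\ref{prop:Px is projective} makes the same ``without loss of generality'' choice, and the later identification $M_{\bar{x},1}=P_{\bar{x}}$ has exactly this support. Replace your independence argument with this explicit convention and the rest of your proof goes through unchanged.
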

	\begin{proof}
		We first consider just one $\bar{x}$ in $\Aus{n}$ such that $\bar{x}\not\cong 0$.
		The following proof may be adjusted when $x_1=0$ by replacing $\bar{x}$ with a projective sequence and selecting an appropriate $\barxi$.
		We will show that $\Hom_{\Aus{n}}(\bar{x},\bar{y})\cong\Bbbk$ if and only if $\bar{y}\in\Xpixel_{\bar{x}}$.
		By Proposition~\ref{prop:Hom support of representable projectives in Aus(n)}, we know that $\Hom_{\Aus{n}}(\bar{x},\bar{y})\cong \Bbbk$ if and only if
		\[
			0<x_1\leq y_1<x_2\leq y_2<\cdots < x_n\leq y_n < 1.
		\]
		This is precisely the condition for $\bar{y}\in\Xpixel_{\bar{x}}$ and so each nonzero morphism $[\gamma]$ in $\Aus{n}$ with source $\bar{x}$ is in $\Sigma_{\Pf_{\bar{x}}}$.
		Therefore, $\Pf_{\bar{x}}$ pixelates $\Pbarx$.
		
		Now consider our collection $\bar{x}_{(1)},\ldots,\bar{x}_{(m)}$.
		Since $\Pf=\Pf_{\bar{x}_{(1)}}\sqcap \cdots \sqcap \Pf_{\bar{x}_{(m)}}$ is a (finitary) refinement of each $\Pf_{\bar{x}_{(i)}}$, we see that $\Pf$ pixelates each $P_{\bar{x}_{(i)}}$.
	\end{proof}
	
	\begin{deff}[finitely $\mathcal{P}$-presented]\label{deff:finitely P-presented}
		Let $\mathcal{P}=\{P_{\bar{x}}\}$, where $\bar{x}$ runs over all projective sources.
		We say $M$ in $\rpwf(\Aus{n})$ is \emph{finitely $\mathcal{P}$-presented} if $M$ is finitely-presented in $\rpwf(\Aus{n})$ by projective objects in $\mathcal{P}$.
		Denote by $\rfp(\Aus{n})$ the full category of $\rpwf(\Aus{n})$ whose objects are finitely $\mathcal{P}$-presented representations. 
	\end{deff}
	
	Higher Auslander algebras were originally defined by Iyama in \cite{iyama11} and a combinatorial approach was introduced in \cite{OT12} that we use here. This model also appears in \cite{JKPV19}.
	
	\begin{deff}[higher Auslander algebra/category]\label{deff:higher auslander algebra}
		Let $m\in\NN_{>1}$ and $n\in\NN_{>0}$.
		The $n$th higher Auslander algebra of $A_m$ is the path algebra of the quiver $Q^{(n)}_m$ obtained in the following way.
		The vertices of $Q^{(n)}$ are labeled in $n$-tuples $\bar{\imath}=(i_1,\ldots,i_n)$ where $1\leq i_1\leq i_2\leq \ldots\leq i_n\leq m$.
		There is an arrow $\bar{\imath}\to\bar{\jmath}$ when $\bar{\imath}_k=\bar{\jmath}_k$ for all but one $k=\ell$, where $\bar{\jmath}_\ell=\bar{\imath}_\ell+1$.
		
		There are two imposed relations in the path algebra that generate an admissible ideal $I^{(n)}$.
		\begin{enumerate}
			\item Any two compositions of \emph{arrows} $\bar{\imath} \to\bar{\jmath}\to \bar{k}$ and $\bar{\imath}\to\bar{\jmath}'\to\bar{k}$ are the same.
			\item Any path from a constant sequence $(i,i,\ldots,i)$ to another $(j,j,\ldots,j)$ is $0$.
		\end{enumerate}
		The \emph{$n$th higher Auslander algebra of type $A_m$} is the algebra $\Lambda=\Bbbk Q^{(n)}_m / I^{(n)}$.
		The immediate consequence of (1) is that $e_{\bar{\jmath}}\Lambda e_{\bar{\imath}}$ is either isomorphic to $\Bbbk$ or is 0.
		
		The \emph{category from the $n$th higher Auslander algebra of type $A_m$} is the $\Bbbk$-linearized category constructed from $Q^{(n)}_m$ modulo the ideal induced by $I^{(n)}$.
		We denote it by $\boldsymbol{A}^{(n)}_m$.
	\end{deff}
	
	\begin{prop}\label{prop:pixelation is auslander algebra}
		The $\Bbbk$-linear category $\boldsymbol{A}^{(n)}_m$ from the $n$th higher Auslander algebra of type $A_m$ is isomorphic to $\overline{\mathcal{Q}(\Aus{n},\Pf_{\bar{a}})}$.
	\end{prop}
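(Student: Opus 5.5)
The plan is to build the isomorphism explicitly at the level of quivers and then compare relations. It uses the description of $\KcAP$ as $\QcAP/\Jc$ from Definition~\ref{deff:KcAP} together with Theorem~\ref{thm:barPlocal yields cat equivalent to cat from quiver}(\ref{thm:barPlocal yields cat equivalent to cat from quiver:A}).

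\textbf{Vertices.} By Lemma~\ref{lem:dead pixels for higher auslander categories}, the vertices of $Q(\Aus{n},\Pf_{\bar{a}})$ are the pixels $\Xpixel_{\bar{\imath}}$ with $0\leq i_1<i_2<\cdots<i_n<m+n-1$. Define the bijection $\bar{\imath}\mapsto(i_1+1,\,i_2,\,i_3-1,\ldots,i_n-(n-2))$, i.e.\ subtract $k-2$ from the $k$-th entry. Strict increase becomes weak increase, so this lands exactly on the vertex set $1\leq j_1\leq\cdots\leq j_n\leq m$ of $Q^{(n)}_m$. I would double-check the endpoint bookkeeping against the case $n=1$, where there are $m$ live pixels $\Xpixel_0,\ldots,\Xpixel_{m-1}$.

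\textbf{Arrows.} Next I would show that pseudo arrows between live pixels occur exactly between $\Xpixel_{\bar{\imath}}$ and $\Xpixel_{\bar{\imath}+e_\ell}$, which are the pixels adjacent in one coordinate direction. A path $g_{\bar{x}\ell\delta}$ meeting two pixels changes one pixel index by one. Conversely, a monotone path meeting exactly two pixels can cross only one wall in one coordinate, unless it crosses a corner where several coordinates change simultaneously. Such a corner-crossing path is $\sim$-equivalent, by Definition~\ref{deff:screen}(\ref{deff:screen:path requirement}) and the relation $\sim$ on $\RR^n$, to a composite of axis-parallel pieces. That composite factors through intermediate pixels, so it is not an arrow of the quiver (it is a composition). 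It remains to check that the axis arrow is nonzero in $\PfAus{n}$. Pick points near the common wall, with all coordinates inside the open chamber $0<x_1<\cdots<x_n<1$ when both pixels are live, and apply Proposition~\ref{prop:Hom support of representable projectives in Aus(n)}. This gives $Q(\Aus{n},\Pf_{\bar{a}})\cong Q^{(n)}_m$ as quivers.

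\textbf{Relations.} Here I would show $\Jc$ equals the ideal generated by $I^{(n)}$.
\begin{enumerate}
\item Commutativity relations hold in $\PfAus{n}$ because, by Definition~\ref{deff:sim}, morphisms in $\Cc^{(n)}$ depend only on their endpoints.
\item A path in the quiver maps to zero exactly when some realizing morphism factors through a dead point. Using Proposition~\ref{prop:Hom support of representable projectives in Aus(n)} and the freedom to choose representatives within the pixels (Lemma~\ref{lem:objects in same pixel become isomorphic}), this happens precisely when the target's coordinate $k$ reaches the source's coordinate $k+1$ pixel index. Under the relabelling this is exactly when $\Hom$ vanishes in $\boldsymbol{A}^{(n)}_m$, namely when the path passes through some $j'$ with $j'_{k}>j_{k+1}$, which is generated by relation (2) of Definition~\ref{deff:higher auslander algebra} together with commutativity.
\item Since both categories have $\Hom$-spaces of dimension at most one, commutativity plus agreement of the zero patterns forces $\Jc=I^{(n)}$.
\end{enumerate}

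\textbf{Main obstacle.} I expect step (2) of the relations to be the hardest: matching the zero pattern exactly. The difficulty is that in the localization, a morphism between pixels is zero if \emph{some} choice of representatives forces passage through the dead region, since the objects within a pixel are isomorphic. The first thing I would try is to pick the extremal representatives: the source near the top corner of its pixel and the target near the bottom, giving the least restrictive inequality. I would then verify that the resulting condition $\Hom\neq0$ is $i_k\leq i'_k<i_{k+1}$, which translates to the combinatorial description of nonzero $e_{\bar{\jmath}}\Lambda e_{\bar{\imath}}$ from \cite{OT12}.
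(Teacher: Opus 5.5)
Your overall route (relabel the live pixels by $\Phi\colon\Xpixel_{\bar\imath}\mapsto(i_1+1,i_2,\ldots,i_n-(n-2))$, then compare Hom-spaces) is the paper's. The paper's proof only asserts $\Hom_{\mathcal{Q}}(\Xpixel,\Ypixel)\cong e_{\Phi(\Ypixel)}\Lambda e_{\Phi(\Xpixel)}$ by appeal to the models of \cite{OT12,JKPV19}. The steps you add to flesh this out fail in two places.

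\textbf{The quivers are not isomorphic.} $Q(\Aus{n},\Pf_{\bar a})$ is \emph{not} isomorphic to $Q^{(n)}_m$. Take a monotone path that leaves $\Xpixel_{\bar\imath}$ by crossing the walls $w_l=a_{i_l+1}$, $l\in S$, at a single instant (a segment through a corner). Because pixels are closed below, it meets exactly two pixels. So whenever $\Xpixel_{\bar\imath+\sum_{l\in S}e_l}$ is live and the morphism is nonzero (e.g.\ $\Xpixel_{(0,2)}\to\Xpixel_{(1,3)}$ when $n=2$, $m\ge 3$), it is a pseudo arrow and hence an arrow of the quiver. Definition~\ref{deff:pseudo arrow} needs only one representative meeting two pixels, and also being a composite of axis arrows does not disqualify it; the caption of Figure~\ref{fig:auslander screen n=2} mentions these diagonal arrows. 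Consequently ``$\Jc=I^{(n)}$'' is not a meaningful comparison. Instead, define the functor $\Bbbk Q^{(n)}_m\to\overline{\mathcal{Q}(\Aus{n},\Pf_{\bar a})}$ on axis arrows. Show it is full: each diagonal arrow agrees modulo $\Jc$ with the composite of axis arrows, since both have the same image in $\PfAus{n}$. Then identify its kernel.

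\textbf{The zero pattern.} You correctly note that $\Xpixel_{\bar\imath}\to\Xpixel_{\bar\imath'}$ vanishes as soon as \emph{one} admissible pair of representatives is zero in $\Aus{n}$. But your extremal choice (source high, target low) tests the opposite quantifier, existence of a nonzero representative, and yields $i'_k\le i_{k+1}$. The worst case is $x_{k+1}=a_{i_{k+1}}$ and $y_k$ just below $a_{i'_k+1}$, which gives the correct condition $i'_k<i_{k+1}$. For example, with $n=2$, $\Xpixel_{(0,1)}\to\Xpixel_{(1,2)}$ has a nonzero representative ($y_1=a_1<x_2$) but is killed by $x_2=a_1<y_1$. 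This matches $(1,1)\to(2,2)$ being $0$ in $\Lambda$. Your ``points near the common wall'' check of axis arrows has the same flaw. Its conclusion survives only because liveness of $\bar\imath+e_\ell$ forces $i_\ell+1<i_{\ell+1}$.

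\textbf{Relation (2) does not generate the zero pattern.} Commutativity together with relation (2) as printed does not produce it. For $n=1$ every vertex is constant, so (2) kills all arrows. For $n=3$, $m=2$, the path $(1,1,2)\to(1,2,2)\to(2,2,2)$ survives modulo (1)--(2). Yet the corresponding $\Xpixel_{(0,1,3)}\to\Xpixel_{(1,2,3)}$ vanishes: take $x_2=a_1<y_1$. So your step (3) must be run against the presentation of \cite{OT12}. That presentation has a zero relation on each degenerate square $\bar\jmath\to\bar\jmath+e_{k+1}\to\bar\jmath+e_k+e_{k+1}$ with $j_k=j_{k+1}$. Equivalently, $e_{\bar\jmath'}\Lambda e_{\bar\jmath}\neq 0$ iff $j_1\le j'_1\le j_2\le\cdots\le j_n\le j'_n$. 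For $\bar\jmath=\Phi(\bar\imath)$ and $\bar\jmath'=\Phi(\bar\imath')$, this is exactly $i_k\le i'_k<i_{k+1}$. That is the description the paper's one-line proof tacitly relies on.
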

	\begin{proof}
		Denote by $\mathcal{Q}$ the category $\overline{\mathcal{Q}(\Aus{n},\Pf_{\bar{a}})}$.
		Using the models in \cite{OT12,JKPV19}, we have a bijection $\Phi$ from the isomorphism classes of objects in $\mathcal{Q}$ to the vertices in the quivers of the models.
		The bijection is given by
		\[
			\Phi: \Xpixel_{\bar{\imath}} \mapsto (i_1+1,i_2,i_3-1,i_4-2,\ldots,i_n-(n-2)).
		\]
		It is straightforward, but tedious, to check that if $\Xpixel$ and $\Ypixel$ are not dead in $\PfAus{n}$, then $\Hom_{\mathcal{Q}}(\Xpixel,\Ypixel)\cong e_{\Phi(\Ypixel)}\Lambda e_{\Phi(\Xpixel)}$, where $\Lambda$ is the $n$th Auslander algebra of type $A_m$ as in Definition~\ref{deff:higher auslander algebra}.
	\end{proof}
	
	\begin{xmp}[$A^{(2)}_5$]
		The quiver $Q^{(2)}_5$ for the $2$nd higher Auslander algebra of type $A_5$ is given by 
		\[
			\xymatrix@R=4ex@C=4ex{
				(1,5) \ar[r] & (2,5) \ar[r] & (3,5) \ar[r] & (4,5) \ar[r] & (5,5) \\
				(1,4) \ar[r] \ar[u] \ar@{..}[ur] & (2,4) \ar[r] \ar[u] \ar@{..}[ur] & (3,4) \ar[r] \ar[u] \ar@{..}[ur] & (4,4) \ar[u] \ar@{..}[ur] \\
				(1,3) \ar[u] \ar[r] \ar@{..}[ur] & (2,3) \ar[r] \ar[u] \ar@{..}[ur] & (3,3) \ar[u] \ar@{..}[ur] \\
				(1,2) \ar[u] \ar[r] \ar@{..}[ur] & (2,2) \ar[u] \ar@{..}[ur] \\
				(1,1). \ar[u] \ar@{..}[ur]
			}
		\]
		The category $\boldsymbol{A}_5^{(2)}$ has objects the vertices of $Q^{(2)}_5$ and has the relations induced by $I^{(2)}$.
		For some $\bar{a}$ of length $(a_1,\ldots,a_{5+2-2})$, we see that $\boxed{\Aus{2}}^{\Pf_{\bar{a}}}$ is equivalent to $\boldsymbol{A}_5^{(2)}$ (Proposition~\ref{prop:pixelation is auslander algebra}).
		This can be seen graphically in Figure~\ref{fig:auslander screen n=2}.
	\end{xmp}
	
	\begin{figure}
	\begin{center}
	\begin{tikzpicture}
		\filldraw[fill opacity = .2, draw opacity = 0] (0,0) -- (0,3) -- (3,3) -- (0,0);
		\filldraw[fill opacity = .2, draw opacity = 0, fill = blue] (0,.5) -- (.5,.5) -- (.5, 1) -- (1,1) -- (1,1.5) -- (1.5,1.5) -- (1.5,2) -- (2,2) -- (2,2.5) -- (2.5,2.5) -- (2.5,3) -- (0,3) -- (0,.5);
		\foreach \x in {0,.5,1,1.5,2,2.5,3}
		{
			\draw (-1 , \x) -- (4 , \x);
			\draw (\x , -1) -- (\x , 4);
		}
		\draw(-1,1.5) node[anchor=east] {$\cdots$};
		\draw(4,1.5) node[anchor=west] {$\cdots$};
		\draw(1.5,-1) node[anchor=north] {$\vdots$};
		\draw(1.5,4) node[anchor=south] {$\vdots$};
	\end{tikzpicture}
	\qquad \qquad
	\begin{tikzpicture}
		\draw[white] (0,0) -- (0,-1);
		\foreach \x in {0, 1, 2, 3, 4}
			\filldraw[fill=black] (\x,4) circle[radius=.5mm];
		\foreach \x in {0, 1, 2, 3}
		{
			\filldraw[fill=black] (\x, 3) circle[radius=.5mm];
			\draw[->] (\x+.2,4) -- (\x+.8,4);
			\draw[->] (\x,3.2) -- (\x,3.8);
		}
		\foreach \x in {0, 1, 2}
		{
			\filldraw[fill=black] (\x, 2) circle[radius=.5mm];
			\draw[->] (\x+.2,3) -- (\x+.8,3);
			\draw[->] (\x,2.2) -- (\x,2.8);
		}
		\foreach \x in {0,1}
		{
			\filldraw[fill=black] (\x, 1) circle[radius=.5mm];
			\draw[->] (\x+.2,2) -- (\x+.8,2);
			\draw[->] (\x,1.2) -- (\x,1.8);
		}
		\draw[->] (.2,1) -- (.8,1);
		\draw[->] (0,.2) -- (0,.8);
		\filldraw[fill=black] (0,0) circle[radius=.5mm];
		\foreach \x in {0,1,2,3}
			\draw[dotted] (\x+.2,\x+.2) -- (\x+.8,\x+.8);
		\foreach \x in {0,1,2}
			\draw[dotted] (\x+.2,\x+1.2) -- (\x+.8,\x+1.8);
		\foreach \x in {0,1}
			\draw[dotted] (\x+0.2,\x+2.2) -- (\x+0.8,\x+2.8);
		\draw[dotted] (.2,3.2) -- (.8,3.8);
	\end{tikzpicture}
	\caption{On the left, example of a screen that pixelates a finite sum of projective indecomposables.
	Let $\barx{1}=(0,\frac{1}{6})$, $\barx{2}=(\frac{1}{6},\frac{1}{3})$, $\barx{3}=(\frac{1}{3},\frac{1}{2})$, $\barx{4}=(\frac{1}{2},\frac{2}{3})$, and $\barx{5}=(\frac{2}{3},\frac{5}{6})$.
	In the figure, on the left, is the screen $\Pf=\Pf_{\barx{1}}\sqcap\Pf_{\barx{2}}\sqcap\Pf_{\barx{3}}\sqcap\Pf_{\barx{4}}\sqcap\Pf_{\barx{5}}$.
	All the pixels that are not \emph{completely} shaded are dead pixels.
	Thus, only the blue pixels are not dead pixels.
	This means $\PfAus{2}$ is equivalent to the path algebra from the quiver on the right, with the usual mesh relations (the diagonal pseudo arrows exit but are superfluous).}\label{fig:auslander screen n=2}
	\end{center}
	\end{figure}
	
	Recall that $\Sbold$ is the subset of $\boldsymbol{2}^{\Psc}$ such that, for each $\Lsc\in\Sbold$, if $\Pf_1,\Pf_2\in\Lsc$ then there is some $\Pf\in\Lsc$ such that $\Pf$ refines both $\Pf_1$ and $\Pf_2$.
	
	We define a specific $\Asc{n}\subset\Sbold$ to be $\{\Pf_{\bar{a}}\}$, where each $\Pf_{\bar{a}}$ is defined as before with $\bar{a}=(a_1,\ldots,a_{m+n-2})$ where $0<a_1<\ldots < a_k<1$ and $k\geq n-1$ (where if $n=1$ we also include $\Pf_{\bar{()}}$ in $\Asc{n}$).
		For any $\Xpixel_{\bar{\imath}}\in\Pf\in\Asc{n}$, if $\Xpixel_{\bar{\imath}}$ contains any nonzero objects of $\Aus{n}$ and $i_1>0$, then there is some initial $s_{\Xpixel}\in\Xpixel$. That is, for any other $\bar{x}\in\Xpixel$ there is a path $\gamma\in\Gamma$ such that $\gamma(0)=s_{\Xpixel}$, $\gamma(1)=\bar{x}$, and $\im(\gamma)\subset\Xpixel$.
	
	The set $\Psc$ is closed under $\sqcap$ (Definition~\ref{deff:partition operations}).
	It is straightforward to check that $\Asc{n}$ is also closed under $\sqcap$ by taking a pair of $\bar{a}$ and $\bar{a}'$ and combining the sequences in the correct order.
	Thus, by Theorem~\ref{thm:downward closed subsets in V give abelian categories}, the subcategory $\rpA(\Aus{n})$ of $\rpwf(\Aus{n})$ is abelian and, by Corollary~\ref{cor:exact restrictions}, embeds exactly into $\rpwf(\Aus{n})$.
	
	\begin{rem}\label{rem:projectives from projectives}
		Given a $\Pf_{\bar{a}}\in\Aus{n}$, where $\bar{a}$ has length $m+n-2$, we see that every indecomposable projective representation of $\boldsymbol{A}_m^{(n)}$ embeds into $\rpwf(\Aus{n})$ as some $P_{\bar{x}}$.
		
		Conversely, every $P_{\bar{x}}$ is pixelated there is some $\Pf_{bar{a}}$ that pixelates $P_{\bar{x}}$, where $\bar{a}$ again has length $m+n-2$.
		Thus, there is some indecomposable projective representation of $\boldsymbol{A}_m^{(n)}$ that lifts to a representation isomorphic to $P_{\bar{x}}$.
	\end{rem}
	
	\begin{prop}\label{prop:finitely P-presented is rpA}
		We have the equality $\rfp(\Aus{n})=\rpA(\Aus{n})$.
		Moreover, every representation in $\rpA(\Aus{n})$ comes from a representation of an $n$th higher Auslander algebra of type $A_m$, for some $m$.
	\end{prop}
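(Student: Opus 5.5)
I would prove the two inclusions separately and then read off the ``moreover'' clause from Theorem~\ref{thm:pixelated rep comes from rep of pixelation} together with Proposition~\ref{prop:pixelation is auslander algebra}.

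For $\rfp(\Aus{n})\subseteq\rpA(\Aus{n})$, take $M$ finitely $\mathcal{P}$-presented, say
\[
\bigoplus_{j} P_{\bar{y}_{(j)}}\to\bigoplus_{i} P_{\bar{x}_{(i)}}\to M\to 0,
\]
with both sums finite. By Proposition~\ref{prop:finite sums of projectives have a screen}, the screen $\Pf=\bigsqcap_i\Pf_{\bar{x}_{(i)}}\sqcap\bigsqcap_j\Pf_{\bar{y}_{(j)}}$ pixelates every summand. Pixelation is clearly preserved by finite direct sums, since $M(\sigma)$ is then a direct sum of isomorphisms. Since $\Asc{n}$ is closed under $\sqcap$, this $\Pf$ lies in $\Asc{n}$. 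Lemma~\ref{lem:pixelating exact sequence}(\ref{lem:pixelating exact sequence:cokernel}) then shows that $\Pf$ pixelates $M$, so $M\in\rpA(\Aus{n})$.

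For the reverse inclusion, let $M\in\rpA(\Aus{n})$ be pixelated by some $\Pf_{\bar{a}}\in\Asc{n}$, with $\bar{a}$ of length $m+n-2$. Theorem~\ref{thm:pixelated rep comes from rep of pixelation} gives $M\cong\pi^*G^*(\overline{M})$ for a representation $\overline{M}$ of $\overline{\mathcal{Q}(\Aus{n},\Pf_{\bar{a}})}$, and by Proposition~\ref{prop:pixelation is auslander algebra} this category is $\boldsymbol{A}^{(n)}_m$. This already proves the ``moreover'' statement. Since $M$ is pointwise finite-dimensional and $\boldsymbol{A}^{(n)}_m$ has finitely many objects, $\overline{M}$ is a finite-dimensional module over the finite-dimensional algebra $\Lambda$. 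It therefore has a finite projective presentation
\[
\overline{Q}_1\to\overline{Q}_0\to\overline{M}\to 0
\]
by finite sums of indecomposable projectives $e_{\bar\imath}\Lambda$.

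Next apply the exact functor $\pi^*G^*$ (Remark~\ref{rem:Fstar and Gstar are equivalences} and Proposition~\ref{prop:pi star is an exact embedding}) to obtain a right exact sequence in $\rpwf(\Aus{n})$. By Remark~\ref{rem:projectives from projectives}, each lifted indecomposable projective is isomorphic to some $P_{\bar{x}}$ with $\bar{x}$ a projective source. Concretely, $\bar{x}$ is the initial point $s_{\Xpixel}$ of the corresponding non-dead pixel, or a boundary source with $x_1=0$ when $i_1=0$. Hence $M$ is finitely $\mathcal{P}$-presented, and these summands are projective in $\rpwf(\Aus{n})$ by Proposition~\ref{prop:Px is projective}.

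The main obstacle is the identification $\pi^*G^*(e_{\bar\imath}\Lambda)\cong P_{\bar{x}}$. One must check that the pullback of a representable functor on the pixelation is genuinely the representable $\Hom_{\Aus{n}}(\bar{x},-)$ on $\Aus{n}$, or its limit version $P_{\bar{x}}$, and not merely something pixelated in the same way. I would verify this pointwise using Proposition~\ref{prop:Hom support of representable projectives in Aus(n)}: the support of $\Hom_{\Aus{n}}(\bar{x},-)$ is $\{\bar{y}\mid 0<x_1\le y_1<x_2\le\cdots\le y_n<1\}$, and I would compare it with the union of the pixels $\Ypixel$ satisfying $e_{\Phi(\Ypixel)}\Lambda e_{\Phi(\Xpixel)}\neq0$, using the initiality of $s_{\Xpixel}$ noted before Remark~\ref{rem:projectives from projectives}. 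For the pixels with $i_1=0$ I would use the projective-sequence description in Definition~\ref{deff:higher auslander projective}. Finally, all structure maps are identities on $\Bbbk$ in both functors, so matching supports yields the isomorphism.
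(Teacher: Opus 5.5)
Your proposal is correct and follows the paper's proof essentially step for step. The forward inclusion uses the meet screen, closure of $\mathscr{A}^{(n)}$ under $\sqcap$, and Lemma~\ref{lem:pixelating exact sequence}(\ref{lem:pixelating exact sequence:cokernel}). The reverse inclusion pushes $M$ down to a finite-dimensional representation of $\boldsymbol{A}^{(n)}_m$, takes a projective presentation there, and lifts it with the exact functor $\pi^*G^*$; your support comparison via Proposition~\ref{prop:Hom support of representable projectives in Aus(n)} simply fills in the identification of lifted indecomposable projectives with the $P_{\bar{x}}$, which the paper dismisses as ``straightforward to check'' together with Remark~\ref{rem:projectives from projectives}.
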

	\begin{proof}
		Suppose $M$ is a finitely $\mathcal{P}$-presented representation and that $\{P_{\bar{x}_{(i)}}\}_{i=1}^m$ are the indecomposable projectives in $\mathcal{P}$ that appear in the presentation.
		Then $\Pf_{\bar{x}_{(1)}}\sqcap\cdots\sqcap \Pf_{\bar{x}_{(n)}} = \Pf_{\bar{a}}$ for some $\bar{a}$. 
		We see that $\Pf_{\bar{a}}$ pixelates the two terms in the projective presentation of $M$.
		Thus, by Lemma~\ref{lem:pixelating exact sequence}(\ref{lem:pixelating exact sequence:cokernel}), $\Pf_{\bar{a}}$ pixelates $M$ and so $M$ is in $\rpA(\Aus{n})$.
		
		Let $M$ be a representation in $\rpA(\Aus{n})$ and let $\Pf_{\bar{a}}\in\Asc{n}$ such that $\Pf_{\bar{a}}$ pixelates $M$ (Definition~\ref{deff:pixelated representation}) and $\bar{a}$ has length $m+n-2\geq n$.
		By Proposition~\ref{prop:pixelation is auslander algebra}, we know that the $\Bbbk$-linear category $\boldsymbol{A}_m^{(n)}$ from the $n$th higher Auslander algebra of type $A_m$ is isomorphic to $\overline{\mathcal{Q}(\Aus{n},\Pf_{\bar{a}})}$.
		Thus, by Theorem~\ref{thm:pixelated rep comes from rep of pixelation}, there is some representation $\overline{M}$ of $\boldsymbol{A}_m^{(n)}$ that lifts to a representation $\widehat{M}$ isomorphic to $M$.
		It is straightforward to check that each projective $P_{\bar{x}}$ comes from a projective indecomposable in $\rpwf(\boldsymbol{A}_m^{(n)})$, where $\bar{x}$ takes coordinates in $\bar{a}$ (except $x_1$ may be $0$).
		Moreover, every projective indecomposable in $\rpwf(\boldsymbol{A}_m^{(n)})$ lifts to some $P_{\bar{x}}$.
		
		By Proposition~\ref{prop:pi star is an exact embedding}, the embedding $\rpwf(\overline{\mathcal{Q}(\Aus{n},\Pf_{\bar{a}})})\to\rpwf(\Aus{n})$ is exact.
		Thus, noting Remark~\ref{rem:projectives from projectives}, the projective resolution of $\overline{M}$ lifts to a projective resolution of $M$.
		Since $M$ is pwf, we know $\overline{M}$ is finite-dimensional and thus finitely-presented.
		Therefore, $M$ is finitely $\mathcal{P}$-presented and comes from a representation of an Auslander algebra of type $A_m$.
	\end{proof}
	
	We introduce a type of indecomposable representation in $\rfp(\Aus{n})$.
	For each pair of a projective source $\bar{x}$ and $c\in\RR$ such that $x_n < c \leq 1$, we have the indecomposable $M_{\bar{x},c}$ whose support is given by
	\[
		\supp M_{\bar{x},c} = \begin{cases}
			\{\bar{w}\in\RR^n \mid 0 < w_1 < x_2\leq w_2 < \cdots x_n\leq w_n < c \leq 1\} & x_1=0 \\
			\{\bar{w}\in\RR^n \mid 0 < x_1\leq w_1 < x_2\leq w_2 < \cdots x_n\leq w_n < c \leq 1\} & x_1>0.
		\end{cases}
	\]
	For any nonzero morphism $[\gamma]$ in $\Aus{n}$, we have
	\[
		M([\gamma]) = \begin{cases}
			\boldsymbol{1}_{\Bbbk} & \gamma(0),\gamma(1)\in\supp M \\
			0 & \text{otherwise}.
		\end{cases}
	\]
	If $c=1$ then $M_{\bar{x},c}=\Pbarx$.
	
	\begin{prop}\label{prop:projective resolution}
		Ever $M_{\bar{x},c}$ such that $c<1$ is finitely presented with a projective resolution  of length exactly $n$.
	\end{prop}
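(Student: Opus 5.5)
We will write down an explicit resolution of $M_{\bar{x},c}$ by objects of $\mathcal{P}$, modelled on the Koszul-type resolution of interval modules in the finite higher Auslander algebras of \cite{OT12}. The plan is to shift coordinates one at a time. Set $\bar{w}^{(0)}=\bar{x}$. For $1\leq k\leq n$, let $\bar{w}^{(k)}$ be the projective source obtained from $\bar{x}$ by the cyclic shift that moves $c$ into the last coordinate and slides the other coordinates down:
\[
\bar{w}^{(k)}=(x_1,\ldots,x_{n-k},x_{n-k+2},\ldots,x_n,c).
\]
This is the continuous analogue of the OT12 recipe, in which the $k$th syzygy starts at the tuple obtained by ``rotating in'' the cutoff $c$. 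At each stage the tuple is still strictly increasing, with entries in $[0,1)$, so it is a projective source in the sense of Definition~\ref{deff:projective source}; this holds precisely because $x_n<c<1$ and $c\neq 1$.

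The candidate resolution is
\[
0\to P_{\bar{w}^{(n)}}\to P_{\bar{w}^{(n-1)}}\to\cdots\to P_{\bar{w}^{(1)}}\to \Pbarx\to M_{\bar{x},c}\to 0.
\]
Each map is the nonzero morphism given by Yoneda, or by the limit construction of Proposition~\ref{prop:Px is projective} when $x_1=0$, corresponding to the unique nonzero element of $\Hom_{\Aus{n}}(\bar{w}^{(k+1)},\bar{w}^{(k)})$. Proposition~\ref{prop:Hom support of representable projectives in Aus(n)} shows that this Hom space is $\Bbbk$, since the inequalities $w^{(k)}_j\leq w^{(k+1)}_j<w^{(k)}_{j+1}$ interleave.

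\textbf{Exactness.} I would check exactness pointwise, which suffices by the remark before Lemma~\ref{lem:pixelating exact sequence}. Rather than working in the continuum directly, I would do it via pixelation. All the $\bar{w}^{(k)}$ use only the finitely many coordinates in $\{x_1,\ldots,x_n,c\}$. So Proposition~\ref{prop:finite sums of projectives have a screen} gives a single screen $\Pf_{\bar{a}}\in\Asc{n}$ that pixelates every term, and $M_{\bar{x},c}$ as well, since its support is a union of pixels of $\Pf_{\bar{a}}$ with $\bar{a}$ built from these coordinates. By Theorem~\ref{thm:pixelated rep comes from rep of pixelation} and Proposition~\ref{prop:pixelation is auslander algebra}, the whole complex comes from a complex of representations of $\boldsymbol{A}^{(n)}_m$. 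That complex is the known minimal projective resolution of an interval module $M_{\bar{\imath},j}$ from \cite{OT12}. Exactness then transfers back through the exact embedding of Proposition~\ref{prop:pi star is an exact embedding}, and Remark~\ref{rem:projectives from projectives} identifies the lifted projectives with the $P_{\bar{w}^{(k)}}$.

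\textbf{Length exactly $n$.} To show the length is exactly $n$ and not shorter, I would argue that the resolution is minimal. Each map is nonzero and lands in the radical, because no $\bar{w}^{(k+1)}$ equals $\bar{w}^{(k)}$. Then $\Ext^n(M_{\bar{x},c},P_{\bar{w}^{(n)}})\neq0$, computed by applying $\Hom(-,P_{\bar{w}^{(n)}})$ to the resolution and using that $\Hom(P_{\bar{w}^{(n-1)}},P_{\bar{w}^{(n)}})=0$. This Hom vanishes by Proposition~\ref{prop:Hom support of representable projectives in Aus(n)}, since the last coordinate of $\bar{w}^{(n)}$ is $c$, which exceeds the corresponding coordinate $x_n$ of $\bar{w}^{(n-1)}$ in the wrong direction.

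\textbf{Main obstacle.} I expect the main difficulty to be the bookkeeping: verifying that the index translation $\Phi$ of Proposition~\ref{prop:pixelation is auslander algebra} sends the pixels containing $\bar{w}^{(k)}$ to exactly the OT12 syzygy vertices, and handling the boundary case $x_1=0$. In that case $\Pbarx$ is not representable and its pixel is $\Xpixel_{(0,1,\ldots,n-1)}$, so the first shift has to be checked by hand. If the OT12 matching proves awkward, my fallback is a direct pointwise computation. For each $\bar{y}$, the complex evaluated at $\bar{y}$ is a complex of copies of $\Bbbk$ and $0$ indexed by which of the conditions $y_{n-k+1}<w^{(k)}_{n-k+1}$ fail. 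That makes it the tensor product of two-term complexes, a Koszul complex, which is acyclic except in degree zero.
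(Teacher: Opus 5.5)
Your resolution is exactly the one in the paper. Setting $x_{n+1}:=c$, your $\bar{w}^{(k)}$ is the paper's $\bar{x}_k$, namely $(x_1,\ldots,x_{n+1})$ with $x_{n+1-k}$ deleted. The paper only writes the resolution down, so your exactness and $\Ext^n$ arguments go beyond it. Two of your justifications are wrong as stated, though.

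\textbf{Yoneda direction.} For representable terms, a map $P_{\bar{w}^{(k+1)}}\to P_{\bar{w}^{(k)}}$ corresponds to an element of $P_{\bar{w}^{(k)}}(\bar{w}^{(k+1)})=\Hom_{\Aus{n}}(\bar{w}^{(k)},\bar{w}^{(k+1)})$. The space $\Hom_{\Aus{n}}(\bar{w}^{(k+1)},\bar{w}^{(k)})$ that you wrote is $0$. Your interleaving inequality is the condition for the correct direction, so only the notation needs fixing.

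\textbf{Vanishing Hom in the $\Ext^n$ step.} For $n\geq 2$, both $\bar{w}^{(n-1)}=(x_1,x_3,\ldots,x_n,c)$ and $\bar{w}^{(n)}=(x_2,\ldots,x_n,c)$ end in $c$, so the last coordinate gives no obstruction. The space $\Hom(P_{\bar{w}^{(n-1)}},P_{\bar{w}^{(n)}})\cong\Hom_{\Aus{n}}(\bar{w}^{(n)},\bar{w}^{(n-1)})$ vanishes because of the first coordinate, $x_2>x_1$. When $x_1=0$, use instead that $P_{\bar{w}^{(n-1)}}$ is generated at points with $y_1<x_2$, where $P_{\bar{w}^{(n)}}$ vanishes. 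With this fix, $\Ext^n(M_{\bar{x},c},P_{\bar{w}^{(n)}})\cong\Bbbk$ and the length is exactly $n$.

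\textbf{The fallback.} It is misdescribed: pointwise every term is $0$ or $\Bbbk$, so the complex at $\bar{y}$ is not a Koszul complex. The correct picture is simpler. Write $z_0<\cdots<z_n$ for $x_1,\ldots,x_n,c$ and set $z_{n+1}=1$. A term can be nonzero at $\bar{y}$ only if $y_j\in[z_{j-1},z_{j+1})$ for all $j$. Call $y_j$ low if $y_j<z_j$ and high otherwise, and place $P_{\bar{w}^{(k)}}$ in degree $k$ and $M_{\bar{x},c}$ in degree $-1$.
\begin{itemize}
\item If the pattern is $\mathrm{L}^a\mathrm{H}^{n-a}$, exactly the terms in degrees $n-a$ and $n-a-1$ are nonzero, and the map between them is an isomorphism.
\item For every other pattern, all terms vanish.
\end{itemize}
This proves exactness directly. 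It also removes the index matching with \cite{OT12} that you identified as the main obstacle.
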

	\begin{proof}
		The projective resolution is the following:
		\[
			\xymatrix{
				P_{\barx{n}} \ar@{^(->}[r] &
				P_{\barx{n-1}} \ar[r] &
				{\cdots} \ar[r] &
				P_{\barx{2}} \ar[r] &
				P_{\barx{1}} \ar[r] &
				P_{\barx{0}} \ar@{->>}[r] &
				M_{\bar{x},c},
			}
		\]
		where
		\begin{align*}
			\bar{x}_0 &= \{x_1,x_2,\ldots,x_n\} &&=\bar{x}, \\
			\bar{x}_1 &= \{x_1,x_2,\ldots,x_{n-1},c\} && \text{by replacing }x_n\text{ with }c, \\
			\bar{x}_2 &= \{x_1,x_2,\ldots,x_{n-2},x_n,c\} && \text{by replacing }x_{n-1}\text{ with }x_n,\\
			&\ \ \vdots \\
			\bar{x}_{n-1} &= \{x_1,x_3,x_4,\ldots,x_{n-1},x_n,c\} && \text{by replacing }x_2\text{ with }x_3,\text{ and} \\
			\bar{x}_n &= \{x_2,x_3,\ldots,x_{n-1},x_n,c\} && \text{by replacing }x_1\text{ with }x_2. \qedhere
		\end{align*}
	\end{proof}
	
	So, every $M_{\bar{x},c}$ exists in $\rfp(\Aus{n})$.
	Let $\Maus{n}$ be full subcategory of $\rfp(\Aus{n})$ whose objects are isomorphic to one in $\{M_{\bar{x},c} \mid \bar{x}\text{ is a projective source}, x_n<c\leq 1\}$ as well as the $0$ object.
	
	\begin{lem}\label{lem:Mxcs hom spaces}
		Let $M_{\bar{x},c}$ and $M_{\bar{y},d}$ be nonzero objects in $\Maus{n}$.
		Then \\$\Hom_{\rfp(\Aus{n})}(M_{\bar{y},d},M_{\bar{x},c})\cong \Bbbk$ if and only if
		\[
			x_1\leq y_1 < x_2 \leq y_2 < \cdots < x_n \leq y_n < c \leq d.
		\]
		If the condition above is not satisfied, $\Hom_{\rfp(\Aus{n})}(M_{\bar{y},d},M_{\bar{x},c})=0$.
	\end{lem}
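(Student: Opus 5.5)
Both $M_{\bar{x},c}$ and $M_{\bar{y},d}$ are thin: one-dimensional on a convex ``box'' support, with every arrow inside the support sent to $\boldsymbol{1}_{\Bbbk}$. The plan is therefore the standard interval-module argument. A morphism $h:M_{\bar{y},d}\to M_{\bar{x},c}$ is a family of scalars $h_{\bar{w}}$, one for each $\bar{w}\in\supp M_{\bar{y},d}\cap\supp M_{\bar{x},c}$. Naturality forces these scalars to agree along nonzero morphisms of $\Aus{n}$ that stay in the intersection, and forces certain of them to vanish.

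The key closure properties follow from the explicit supports. $\supp M_{\bar{y},d}$ is closed under predecessors inside the support, so it is ``upward closed within the box''. $\supp M_{\bar{x},c}$ is closed under successors inside its box, so it is ``downward closed''. Concretely:
\begin{itemize}
\item If $\bar{w}\in\supp M_{\bar{y},d}$ and $\bar{w}\to\bar{w}'$ is a nonzero morphism with $\bar{w}'\notin\supp M_{\bar{y},d}$, then naturality gives $h_{\bar{w}'}\circ 1=\ldots$. From this, $h$ must vanish at points of $\supp M_{\bar{x},c}$ that are reachable from $\supp M_{\bar{y},d}$ but leave it, and dually.
\item Naturality at a nonzero arrow $\bar{w}\to\bar{w}'$ with $\bar{w}\in\supp M_{\bar{y},d}\setminus\supp M_{\bar{x},c}$ and $\bar{w}'\in\supp M_{\bar{x},c}$ forces $h_{\bar{w}'}=0$. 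Dually, an arrow leaving $\supp M_{\bar{x},c}$ into $\supp M_{\bar{y},d}$ forces $h_{\bar{w}}=0$.
\end{itemize}

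\textbf{Sufficiency.} Assume $x_1\le y_1<x_2\le y_2<\cdots<x_n\le y_n<c\le d$. Coordinatewise, $\supp M_{\bar{y},d}\cap\supp M_{\bar{x},c}$ is the set $y_k\le w_k<x_{k+1}$ (with $x_{n+1}:=c$), and it is nonempty. The lower walls come from $\bar{y}$ and the upper walls from $\bar{x},c$. Define $h_{\bar{w}}=\boldsymbol{1}_{\Bbbk}$ on this intersection and $0$ elsewhere. To check naturality for $[\gamma]:\bar{w}\to\bar{w}'$, use that every path factors as a composite of the $g_{\bar{x}i\delta}$, so it suffices to check one coordinate move at a time. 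Suppose the move starts in the intersection and stays in $\supp M_{\bar{y},d}$. Then it can only exit the intersection by crossing an upper wall $x_{k+1}$, which also exits $\supp M_{\bar{x},c}$, so both composites are $0$. The case of entering $\supp M_{\bar{x},c}$ from outside the intersection is symmetric: that can only happen by crossing a lower wall $y_k$, which means coming from outside $\supp M_{\bar{y},d}$. Hence $h$ is natural. Moreover, the intersection is connected through nonzero morphisms (it is a product of intervals, and Proposition~\ref{prop:Hom support of representable projectives in Aus(n)} supplies the arrows). So any morphism is a scalar multiple of this $h$, giving $\Hom\cong\Bbbk$.

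\textbf{Necessity.} Suppose the chain of inequalities fails and $h\neq 0$, say $h_{\bar{w}}\neq 0$ at some $\bar{w}$ in the intersection; we derive a contradiction. If some $x_k>y_k$, move $\bar{w}$ backwards in coordinate $k$ to a point with $y_k\le w_k<x_k$. That point lies in $\supp M_{\bar{y},d}$ but not in $\supp M_{\bar{x},c}$, and it has a nonzero arrow to $\bar{w}$, which forces $h_{\bar{w}}=0$. If instead $y_k\ge x_{k+1}$ (or $d<c$), move forward past the upper wall $x_{k+1}$ (resp.\ $c$) while staying in $\supp M_{\bar{y},d}$. This forces $h_{\bar{w}}=0$ by the dual argument. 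If the intersection is empty, then $h=0$ trivially.

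\textbf{Main obstacle.} The difficulty is ensuring these auxiliary moves remain nonzero in $\Aus{n}$, i.e.\ that they respect the strict interlacing of Proposition~\ref{prop:Hom support of representable projectives in Aus(n)}. I would handle this by moving only one coordinate at a time and choosing the new value strictly between the neighbouring coordinates, which is possible precisely because the supports are defined by strict inequalities. The case $x_1=0$ (respectively $y_1=0$) needs a separate remark: the lower wall $x_1$ is then simply absent, so these arguments go through with that constraint omitted.
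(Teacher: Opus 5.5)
Your overall strategy is sound and proves the lemma, but the ``dual'' constraint at the end of your second bullet is reversed, and the case $d<c$ of your necessity argument depends on it.

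\textbf{The reversed constraint.} Take $h\colon M_{\bar{y},d}\to M_{\bar{x},c}$ and a nonzero arrow $\bar{w}\to\bar{w}'$ with $\bar{w}$ in the intersection and $\bar{w}'\in\supp M_{\bar{y},d}\setminus\supp M_{\bar{x},c}$. Both composites in the naturality square land in $M_{\bar{x},c}(\bar{w}')=0$, so nothing is forced. What forces $h_{\bar{w}}=0$ is an arrow from the intersection to a point $\bar{w}'\in\supp M_{\bar{x},c}\setminus\supp M_{\bar{y},d}$. Then $h_{\bar{w}}=M_{\bar{x},c}(\bar{w}\to\bar{w}')\circ h_{\bar{w}}=h_{\bar{w}'}\circ M_{\bar{y},d}(\bar{w}\to\bar{w}')=0$.

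\textbf{Consequence for $d<c$.} Your step ``move past $c$ while staying in $\supp M_{\bar{y},d}$'' is impossible when $d<c$, since every point of $\supp M_{\bar{y},d}$ has $w_n<d$. Even if it were possible, it would prove nothing. The repair is to raise $w_n$ to a value in $[d,c)$. The new point stays in $\supp M_{\bar{x},c}$ and leaves $\supp M_{\bar{y},d}$, which gives $h_{\bar{w}}=0$.

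\textbf{Smaller points.}
\begin{itemize}
\item The case $y_k\ge x_{k+1}$ needs no move at all. There the $k$th coordinate intervals of the two supports are disjoint, so the intersection is empty.
\item In the sufficiency check, you should also list the moves that start in the intersection and leave $\supp M_{\bar{y},d}$. Such a move crosses $y_{k+1}\ge x_{k+1}$ (or $d\ge c$), so it also leaves $\supp M_{\bar{x},c}$, and both composites vanish.
\item Your ``main obstacle'' is not really one. If $\bar{w}\le\bar{w}'$ both lie in $\supp M_{\bar{x},c}$, then $w'_k<x_{k+1}\le w_{k+1}$ for each $k$. So the arrow is nonzero in $\Aus{n}$ by Proposition~\ref{prop:Hom support of representable projectives in Aus(n)}, and likewise inside $\supp M_{\bar{y},d}$.
\end{itemize}

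\textbf{Comparison with the paper.} With this repair your proof is complete, and it is organized differently from the paper's.
\begin{itemize}
\item The paper argues from the corner $\bar{y}$ of the source. Every point of $\supp M_{\bar{y},d}$ is reached from $\bar{y}$; when $y_1=0$ the paper instead uses a sequence $\bar{w}_{(i)}\to\bar{y}$ inside $\supp M_{\bar{y},d}$. So a nonzero $f$ forces that corner into $\supp M_{\bar{x},c}$, which gives $x_k\le y_k<x_{k+1}$.
\item It then excludes $d<c$ by essentially the top-wall move described above.
\item Its write-up only establishes ``nonzero $\Rightarrow$ inequalities''. It leaves implicit both the existence of a nonzero morphism under the inequalities and the one-dimensionality of the $\Hom$ space.
\end{itemize}
Your pointwise argument, at an arbitrary point of the intersection, needs no limiting sequence and treats $x_1=0$ or $y_1=0$ uniformly. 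It also supplies the explicit morphism and the connectivity argument that together give $\Hom\cong\Bbbk$.
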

	\begin{proof}
		Suppose $\Hom_{\rfp(\Aus{n})}(M_{\bar{y},d},M_{\bar{x},c})\cong \Bbbk$ and let $f:M_{\bar{y},d}\to M_{\bar{x},c}$ be a nonzero morphism.
		Then $\supp M_{\bar{x},c} \cap \supp  M_{\bar{y},d}\neq \emptyset$.
		Notice that if, for some nonzero $\bar{w}$ in $\Aus{n}$, we have $M_{\bar{x},c}(\bar{w})=0$, then $f_{\bar{z}}:M_{\bar{y},d}(\bar{z})\to M_{\bar{x},c}(\bar{z})$ is the 0 map for all $\bar{z}$ such that $\Hom_{\Aus{n}}(\bar{w},\bar{z})\neq 0$.
		In particular, if $\bar{y}\neq 0$ in $\Aus{n}$, then $\bar{y}\in\supp M_{\bar{x},c}$.
		
		For the rest of the proof we need a sequence $\{\bar{w}_{(i)}\}_{i=1}^\infty$ in $\RR^n$ such that $\bar{w}_{(i)}\in\supp M_{\bar{y},d}$ for each $i$, ${\displaystyle\lim_{i\to\infty}} \bar{w}_{(i)}=\bar{y}$, and
		\[
			\Hom_{\Aus{n}}(\bar{w}_{(i+1)},\bar{w}_{(i)})\cong \Bbbk
		\] for all $i$.
		As in the argument in the proof of Proposition~\ref{prop:Px is projective}, we may assume that there is some $W\in\NN$ such that, if $i>W$, $(w_k)_{(i)}=y_k$ for $1 < k \leq n$.
		Again like the proof of Proposition~\ref{prop:Px is projective}, for every $\bar{w}\in\supp M_{\bar{y},d}$ there is some $\bar{w}_{(i_w)}$ such that $\Hom_{\Aus{n}}(\bar{w}_{(i)},\bar{w})\cong\Bbbk$.
		
		For contradiction, suppose $d<c$.
		Then there is some element $\bar{w}=(w_1,\ldots,w_{n-1},c)$ in $\supp M_{\bar{x},c}$ but not in $\supp M_{\bar{y},d}$.
		This means $M_{\bar{x},c}(\bar{w}_{(i_w)},\bar{w})\circ f_{\bar{w}_{(i_w)}} \neq 0$ but $f_{\bar{w}}\circ M_{\bar{y},d}(\bar{w}_{(i_w)},\bar{w})=0$.
		Since $f$ is a map of representations, this is a contradiction.
		Therefore, the condition in the lemma holds.
		
		Now suppose the condition in the lemma does not hold.
		Then, since the condition is false, there is some $N\in \NN$ such that, for all $i\geq N$, we have $\bar{w}_{(i)}\notin \supp M_{\bar{x},c}$.
		Without loss of generality, $N\geq W$.
		Then, the only way that the condition $f_{\bar{w}} \circ M_{\bar{y},d}(\bar{w}_{(i_w)},\bar{w})= M_{\bar{x},c} \circ f_{\bar{w}_{(i_w)}}$ is satisfied, for all $\bar{w}\in\supp M_{\bar{x},c}\cap \supp M_{\bar{y},d}$, is if $f=0$.
		If $\supp M_{\bar{x},c}\cap M_{\bar{y},d}=\emptyset$ then $f$ must be 0, anyway.
		This concludes the proof.
	\end{proof}
	
	The following definition of an $n$-tilting cluster tilting subcategories comes from Iyama \cite{iyama11}.
	
	\begin{deff}[$n$-cluster tilting subcategory]\label{deff:n-cluster tilting subcategory}
		Let $\Dcal$ be an abelian category.
		A subcategory $\mathcal{T}$ of $\Dcal$ is an \emph{$n$-cluster tilting subcategory} if $\mathcal{T}$ is functorially finite and
		\begin{align*}
			\mathcal{T} &= \{M \in\Ob(\Dcal)\mid \Ext^i(\mathcal{T},M)=0,\, 0< i < n\} \\
			&=\{M \in\Ob(\Dcal)\mid \Ext^i(M,\mathcal{T})=0,\, 0< i < n\}.
		\end{align*}
	\end{deff}
	
	We want to show that $\mathrm{add}\Maus{n}$ is $(n-1)$-cluster tilting.
	This means $\Maus{n}$ must also contain the indecomposable injectives.
	\begin{prop}\label{prop:Maus has injectives}
		The indecomposable injective objects in $\rfp(\Aus{n})$ are precisely the $M_{\bar{x},c}$ in $\Maus{n}$ such that $x_1=0$ and every $M$ in $\rfp(\Aus{n})$ has an injective coresolution.
	\end{prop}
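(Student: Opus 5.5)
The plan is to reduce everything to the finite-dimensional setting through pixelation, using Proposition~\ref{prop:finitely P-presented is rpA} and Theorem~\ref{thm:pixelated rep comes from rep of pixelation}. Given $M$ in $\rfp(\Aus{n})=\rpA(\Aus{n})$, choose $\Pf_{\bar{a}}\in\Asc{n}$ that pixelates $M$. By Proposition~\ref{prop:pixelation is auslander algebra}, $M\cong\pi^*G^*(\overline{M})$ for some finite-dimensional representation $\overline{M}$ of $\boldsymbol{A}^{(n)}_m$. The reference \cite{OT12} describes the indecomposable injectives of $\boldsymbol{A}^{(n)}_m$ combinatorially, as the modules with support on an appropriate ``upper'' face of the simplex. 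First I will check that, under the bijection $\Phi$, these correspond to the pixels of the face $i_1=0$. Their lifts then have support $\{0<w_1<x_2\le w_2<\cdots<x_n\le w_n<c\}$, which is exactly $M_{\bar{x},c}$ with $x_1=0$ and with $x_2,\dots,x_n,c$ taken among the $a_i$ (or $c=1$).

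Next comes injectivity of $M_{\bar{x},c}$ with $x_1=0$ in $\rfp(\Aus{n})$. Take a monomorphism $M_{\bar{x},c}\hookrightarrow N$ in $\rfp(\Aus{n})$, or an extension problem. Refine a screen in $\Asc{n}$ that pixelates $M_{\bar{x},c}$ to one that also pixelates $N$ and any third term; this is possible because $\Asc{n}$ is closed under $\sqcap$. The problem then descends, via the equivalence $\pi^*G^*$ and the exactness in Proposition~\ref{prop:pi star is an exact embedding} and Theorem~\ref{thm:downward closed subsets in V give abelian categories}, to $\mathrm{mod}\,\boldsymbol{A}^{(n)}_{m'}$ for the refined $\bar{a}'$. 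The key check is that $M_{\bar{x},c}$ remains the lift of an indecomposable injective after refining. Since $x_1=0$, its support is cut only by the fixed values $x_2,\dots,x_n,c$ and touches the boundary $w_1=0$, so in any refinement it corresponds again to a vertex on the face $i_1=0$. Splitting then occurs downstairs and lifts back up.

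For the converse, let $I$ be indecomposable injective in $\rfp(\Aus{n})$. Pixelate it and write $I=\pi^*G^*\overline{I}$. Embed $\overline{I}$ into its injective envelope $E(\overline{I})$ in $\mathrm{mod}\,\boldsymbol{A}^{(n)}_m$ and lift this to a monomorphism $I\hookrightarrow \pi^*G^*E(\overline{I})$. By the previous step, the target is a finite sum of $M_{\bar{x},c}$ with $x_1=0$. Since $I$ is injective, this monomorphism splits, and since $I$ is indecomposable, Krull--Remak--Schmidt (valid here by \cite{BCB20}) shows $I$ is one of these summands. The same lifting argument, applied to a finite injective coresolution of $\overline{M}$ (which exists since $\boldsymbol{A}^{(n)}_m$ has finite global dimension), yields an injective coresolution of $M$. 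This works because $\pi^*G^*$ is exact and carries injectives of $\boldsymbol{A}^{(n)}_m$ to injectives of $\rfp(\Aus{n})$.

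I expect the main obstacle to be the stability of injectivity under refinement. An object that is injective for one screen $\Pf_{\bar{a}}$ must remain so for every finer screen, and also with respect to test objects that are pixelated only by finer screens. This requires carefully matching supports with the combinatorial description of injectives from \cite{OT12} for all $m$ at once. As a first attempt I would verify it directly, showing $\Ext^1_{\rfp}(-,M_{\bar{x},c})=0$ by computing $\Hom$ from projective presentations using Lemma~\ref{lem:Mxcs hom spaces}.
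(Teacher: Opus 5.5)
Your proposal is correct and follows essentially the same route as the paper. You lift an injective coresolution of $\overline{M}$ over $\boldsymbol{A}^{(n)}_m$ through $\pi^*G^*$, and you prove that $M_{\bar{x},c}$ with $x_1=0$ is injective by passing to a common refinement in $\Asc{n}$ and descending the extension problem; your envelope-splitting and Krull--Remak--Schmidt argument for the converse just spells out what the paper asserts in one line.

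One wording fix: the indecomposable injectives of $\boldsymbol{A}^{(n)}_m$ are indexed by all non-dead pixels $\Xpixel_{\bar{\imath}}$, not only by those with $i_1=0$. It is their supports, not their indexing vertices, that reach the face $i_1=0$, and that is exactly why every lift has $x_1=0$.
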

	\begin{proof}
		First we will show the existence of the coresolution.
		Then we will show the desired objects' injectivity.
		
		Let $M$ be an object in $\rfp(\Aus{n}) = \rpA(\Aus{n})$ and let $\Pf_{\bar{a}}$ be a screen in $\Asc{n}$ that pixelates $M$.
		Without loss of generality, we assume $\bar{a}$ has length $m+n-2\geq n$.
		As in the proof of Proposition~\ref{prop:finitely P-presented is rpA}, we find $\overline{M}$ in $\rpwf(\boldsymbol{A}_m)$ such that $M$ comes from $\bar{M}$ (Theorem~\ref{thm:pixelated rep comes from rep of pixelation}).
		In $\rpwf(\boldsymbol{A}_m)$ we take the injective coresolution of $\overline{M}$.
		
		We now show that each injective indecomposable in the coresolution of $\overline{M}$ embeds into $\rfp(\Aus{n})$ as an object in $\Maus{n}$.
		Each $\overline{I}_{\bar{\imath}}$ in the coresolution has support
		\[
			\{\bar{\jmath} \in \{1,\ldots,m\}^n \mid j_1\leq j_2\leq \ldots\leq j_m,\, \forall 1\leq k \leq m, j_k \leq i_k,\, \text{and } e_{\bar{\imath}}\Lambda e_{\bar{\jmath}}\neq 0\},
		\]
		where $\Lambda$ is the $n$th higher Auslander algebra of type $A_m$.
		Recall the bijection $\Phi$ in the proof of Proposition~\ref{prop:pixelation is auslander algebra}.
		We see 
		\[
			\Phi^{-1}(\bar{\imath})=(i_1-1,i_2,i_3+1,i_4+2,\ldots,i_n+(n-2)).
		\]
		Let $\bar{y}=(0,a_{i_1-1},a_{i_2},a_{i_3+1},\ldots,a_{i_{n-1}+(n-3)})$ and $d=a_{i_n+(n-2)}$.
		Then we see $\bar{I}_{\bar{\imath}}$ embeds into $\rfp(\Aus{n})$ as $M_{\bar{y},d}$.
		
		Since the embedding $\rpwf(\boldsymbol{A}^{(n)}_m)\hookrightarrow \rfp(\Aus{n})$. is exact, we have an exact sequence of the form
		\[
			\xymatrix@C=5ex{
				0\ar[r] & M \ar[r] &
				{\displaystyle\bigoplus_{i}^{m_1}} M_{\barx{i,1},c_{i,1}} \ar[r] &
				{\displaystyle\bigoplus_{i}^{m_2}} M_{\barx{i,2},c_{i,2}} \ar[r] &
				\cdots \ar[r] &
				{\displaystyle\bigoplus_{i}^{m_p}} M_{\barx{i,p},c_{i,p}},
			}
		\]
		where each $(x_1)_{(i,j)}=0$.
		
		Notice also that if some representation $I$ in $\rfp(\Aus{n})$ that is injective, it comes from a direct sum of $\bar{I}_{\bar{\imath}}$'s in some $\rpwf(\boldsymbol{A}_m^{(n)})$.
		Thus, $I$ is a direct sum of $M_{\bar{x},c}$'s.
		
		It remains to show that each $M_{\bar{x},c}$ is injective in $\rfp(\Aus{n})$ when $x_1=0$.
		Suppose $M_{\bar{x},c} \stackrel{g}{\leftarrow} M \stackrel{f}{\hookrightarrow} N$ is a diagram in $\rpA(\Aus{n})=\rfp(\Aus{n})$, where $x_1=0$.
		Then there is some $\Pf_{\bar{a}}$ that pixelates each of $M_{\bar{x},c}$, $M$, and $N$. 		By Theorem~\ref{thm:pixelated rep comes from rep of pixelation} and our observations in the previous paragraph, these come from $\bar{I}_{\bar{\imath}}$, $\overline{M}$, and $\overline{N}$ in $\rpwf(\boldsymbol{A}_m^{(n)})$, respectively, where $\bar{I}_{\bar{\imath}}$ is the injective at $\bar{\imath}$.
		In this particular case, we can ``push down'' the morphisms $f$ and $g$ to $\bar{f}:\overline{M}\to\overline{N}$ and $\bar{g}:\overline{M}\to\bar{I}_{\bar{\imath}}$, respectively, where $\bar{f}$ is still mono.
		
		Since $\bar{I}_{\bar{\imath}}$ is injective, there is $\bar{h}:\overline{N}\to\bar{I}_{\bar{\imath}}$ such that $\bar{h}\bar{f}=\bar{g}$.
		Then the lower commutative triangle, now with $\bar{h}$, embeds into $\rfp(\Aus{n})$ as a commutative triangle with $f$, $g$, and now $h$.
		This completes the proof.
	\end{proof}
	
	It should be noted that it is possible, but exceedingly tedious, to show $M_{\bar{x},c}$, with $x_1=0$, is also injective in $\rpwf(\Aus{n})$.
	
	The proof of the following proposition is a dual computation to that for Proposition~\ref{prop:projective resolution}.
	\begin{prop}\label{prop:injective coresolution}
		Every $M_{\bar{x},c}$ in $\Maus{n}$ such that $x_1>0$ has an injective coresolution of length exactly $n$.
	\end{prop}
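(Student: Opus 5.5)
The plan is to dualize the explicit construction in Proposition~\ref{prop:projective resolution}. Since the indecomposable injectives of $\rfp(\Aus{n})$ are exactly the $M_{\bar{y},d}$ with $y_1=0$ (Proposition~\ref{prop:Maus has injectives}), I will shift coordinates to the left, introducing $0$ at the start and deleting $c$ at the end. This is the mirror image of the projective resolution, which introduced $c$ at the end and deleted coordinates from the left.

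Set $\bar{x}=(x_1,\ldots,x_n)$ with $x_1>0$ and $x_n<c\leq 1$. Define injectives
\begin{align*}
	I_0 &= M_{(0,x_2,\ldots,x_n),c}, \\
	I_1 &= M_{(0,x_1,x_3,\ldots,x_n),c}, \\
	I_2 &= M_{(0,x_1,x_2,x_4,\ldots,x_n),c}, \\
	&\ \ \vdots \\
	I_{n-1} &= M_{(0,x_1,\ldots,x_{n-1}),c}, \\
	I_n &= M_{(0,x_1,\ldots,x_{n-1}),x_n}.
\end{align*}
In this list $I_0$ replaces $x_1$ by $0$, each subsequent $I_k$ removes the next coordinate $x_{k+1}$ while shifting the earlier ones to the right, and finally $c$ is replaced by $x_n$. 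The coresolution I propose is $0\to M_{\bar{x},c}\to I_0\to I_1\to\cdots\to I_n\to 0$. All the maps between consecutive terms are the unique (up to scalar) nonzero morphisms. By Lemma~\ref{lem:Mxcs hom spaces}, the relevant Hom spaces are $\Bbbk$ precisely because the interlacing inequalities $y_1\le x'_1<y_2\le\cdots<c'\le d$ hold for consecutive pairs. Checking these inequalities is the first routine step.

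Exactness is the main step. I will reduce it to a finite computation. Take $\Pf_{\bar{a}}\in\Asc{n}$ with $\bar{a}$ containing $x_1,\ldots,x_n,c$ (when $c<1$), so that it pixelates every term (Proposition~\ref{prop:finitely P-presented is rpA}). By Theorem~\ref{thm:pixelated rep comes from rep of pixelation}, Proposition~\ref{prop:pixelation is auslander algebra} and the exact embedding of Proposition~\ref{prop:pi star is an exact embedding}, the sequence is the image of a sequence of $\boldsymbol{A}^{(n)}_m$-modules, and it suffices to check exactness there. Alternatively, one can check exactness pointwise: every module involved is thin, with support given by explicit interlaced regions. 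At a point $\bar{w}$, the supports containing $\bar{w}$ should form a contiguous interval of indices of even length (or length one only at $M_{\bar{x},c}$), which gives exactness of $0\to\Bbbk\to\Bbbk\to0$ blocks. I expect this support bookkeeping, in particular the sign conventions that make consecutive compositions vanish and the boundary cases where $x_{k+1}=c$ or $c=1$, to be the main obstacle. I would handle it by comparing with the known injective coresolutions in the higher Auslander algebra of type $A$ from \cite{OT12}, transported via $\Phi$. When $c=1$, $I_n=M_{(0,x_1,\ldots,x_{n-1}),x_n}$ still makes sense, so that case needs no separate treatment.

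Finally, the length is exactly $n$ and not shorter. Since $\mathrm{add}$ of the injectives is closed under summands, it is enough to show $\Ext^n(N,M_{\bar{x},c})\neq0$ for some $N$. The natural candidate is to observe that the last map $I_{n-1}\to I_n$ is not split surjective, because $I_n$ is indecomposable and not isomorphic to $I_{n-1}$ when $x_n<c$; the coresolution is also minimal. Minimality of an injective coresolution in the finite-dimensional category $\rpwf(\boldsymbol{A}^{(n)}_m)$ then gives injective dimension exactly $n$.
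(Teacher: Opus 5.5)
Your proposal is correct and takes the same approach as the paper. The paper's entire proof is that this is the dual of the explicit projective resolution: your terms, obtained from $(0,x_1,\dots,x_n,c)$ by deleting successively $x_1,\dots,x_n,c$, mirror exactly the paper's projective terms, obtained from $(x_1,\dots,x_n,c,1)$ by deleting $c,x_n,\dots,x_1$. Your non-split-last-map argument also supplies the ``exactly $n$'' that the paper leaves unjustified.

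The exactness check you flagged is easier than you feared. Write $z_0=0$, $z_i=x_i$ for $1\le i\le n$, and $z_{n+1}=c$. A point $\bar w$ lies in the support of some term iff each $w_i$ lies in $[z_{i-1},z_i)$ or in $[z_i,z_{i+1})$, with all coordinates of the first kind preceding those of the second. It then lies in exactly two consecutive terms, indexed by the number of first-kind coordinates. So supports of terms two apart are disjoint and no signs are needed. The ``length one only at $M_{\bar x,c}$'' case you allowed never occurs, and it would in fact contradict injectivity of $M_{\bar x,c}\to I_0$.
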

	
	\begin{prop}\label{prop:Maus is n-1 cluster tilting}
		The subcategory $\mathrm{add}\Maus{n}$ of $\rfp(\Aus{n})$ is an $(n-1)$-cluster tilting subcategory.
	\end{prop}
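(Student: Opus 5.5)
The plan is to reduce to the finite case via pixelation and invoke Iyama's theorem for the $n$th higher Auslander algebra of type $A_m$. By Proposition~\ref{prop:finitely P-presented is rpA}, $\rfp(\Aus{n})=\rpA(\Aus{n})$. Any finite set of objects of this category is pixelated by a single screen $\Pf_{\bar{a}}\in\Asc{n}$, because $\Asc{n}$ is closed under $\sqcap$. For such a screen, Theorem~\ref{thm:pixelated rep comes from rep of pixelation} and Proposition~\ref{prop:pixelation is auslander algebra} identify the pixelated representations with $\rpwf(\boldsymbol{A}^{(n)}_m)=\mathrm{mod}\,\Lambda$. Proposition~\ref{prop:pi star is an exact embedding} shows that this identification is an exact embedding, and Remark~\ref{rem:projectives from projectives} shows that it carries projectives to projectives. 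The known fact (Iyama, \cite{iyama11}, in the model of \cite{OT12}) that $\Lambda=\Bbbk Q^{(n)}_m/I^{(n)}$ has global dimension $\le n$ is what drives the whole argument. The expected mechanism is that $\mathrm{mod}\,\Lambda$ contains an $n$-cluster tilting module $\Lambda$-$M$ whose summands are exactly the interval-type modules. In the model, these are the modules whose supports are of the form given for $M_{\bar x,c}$ once the coordinates are restricted to $\bar a$.

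\textbf{Step 1: identification at a finite level.} I would first show that, under the embedding $\pi^*G^*$, the indecomposable summands of Iyama's cluster tilting module for $\Lambda$ correspond exactly to the $M_{\bar x,c}$ with all coordinates of $\bar x$ and $c$ taken from $\{0\}\cup\bar a\cup\{1\}$. This runs parallel to the way injectives were matched in the proof of Proposition~\ref{prop:Maus has injectives}, using the bijection $\Phi$ together with the support description. It is also consistent with Lemma~\ref{lem:Mxcs hom spaces} and Propositions~\ref{prop:projective resolution} and~\ref{prop:injective coresolution}: the resolutions of length $n$ are exactly the ones appearing in the finite theory.

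\textbf{Step 2: vanishing of $\Ext$.} Next I would show $\Ext^i(M_{\bar y,d},M_{\bar x,c})=0$ for $0<i<n-1$. Choose $\Pf_{\bar a}$ pixelating both objects. By Proposition~\ref{prop:projective resolution}, $M_{\bar y,d}$ has a projective resolution by $P$'s, and every term of it is pixelated by the same $\Pf_{\bar a}$. Applying $\Hom(-,M_{\bar x,c})$ therefore computes $\Ext$ inside $\mathrm{mod}\,\Lambda$, because the embedding is fully faithful; this follows since $\pi^*$ is induced by a localization. The vanishing then follows from the finite statement.

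\textbf{Step 3: the two equalities.} If $N$ satisfies $\Ext^i(\Maus{n},N)=0$ for $0<i<n-1$, pick a screen pixelating $N$. The corresponding finite-level $M_{\bar x,c}$ form the finite cluster tilting module, so $\overline N$ lies in its additive closure. By Step 1, $N$ is then a direct sum of $M_{\bar x,c}$'s. The dual condition is handled symmetrically, using the injectives from Proposition~\ref{prop:Maus has injectives}.

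\textbf{Step 4: functorial finiteness.} A right $\add\Maus{n}$-approximation of $N$ is obtained from the finite level, as the approximation of $\overline N$ by the finite cluster tilting module. Its lift remains an approximation against all of $\Maus{n}$. To see this, take any $M_{\bar x,c}\to N$ and refine the screen so that it also pixelates $M_{\bar x,c}$. Because of this refinement, I need approximations to be stable under the refinement maps. I expect that to hold because every Hom space is $0$ or $\Bbbk$ (Lemma~\ref{lem:Mxcs hom spaces}), but I flag it as a delicate point.

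\textbf{Main obstacle: indexing.} The main obstacle is the index bookkeeping. The statement says $(n-1)$-cluster tilting, while $\Lambda$ is the $n$th Auslander algebra and the resolutions have length $n$. So I must pin down precisely which $d$ in Iyama's $d$-cluster tilting theorem applies for the paper's normalization of $\Aus{n}$ versus $\boldsymbol{A}^{(n)}_m$, and check that it matches the $n-1$ claimed. I would first test the cases $n=1,2$ by hand against Lemma~\ref{lem:Mxcs hom spaces}.
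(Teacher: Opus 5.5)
Your reduction is the paper's: choose one screen $\Pf_{\bar a}\in\Asc{n}$ that pixelates everything in sight, pass to $\boldsymbol{A}^{(n)}_m$ via Theorem~\ref{thm:pixelated rep comes from rep of pixelation} and Proposition~\ref{prop:pixelation is auslander algebra}, and transport the finite cluster tilting statement back.

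\textbf{The gap.} It is the indexing problem you flagged, and it is exactly where Step~3 breaks. You assume only $\Ext^i(\Maus{n},N)=0$ for $0<i<n-1$. You then use the finite module as an $n$-cluster tilting module, but its additive closure is cut out by vanishing for all $0<i<n$, including $i=n-1$.

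\textbf{Why it cannot be closed at index $n-1$.} Under Definition~\ref{deff:n-cluster tilting subcategory}, the statement as printed is false for $n=2$. The condition ``$0<i<1$'' is vacuous, so being $1$-cluster tilting would force $\add\Maus{2}=\rfp(\Aus{2})$.
\begin{itemize}
\item Take $\bar a=(a_1,a_2)$. Let $N$ be $\Bbbk$ with identity maps on the non-dead pixel $\Xpixel_{(0,2)}=\{0<w_1<a_1,\ a_2\le w_2<1\}$ of $\Pf_{\bar a}$, and $0$ elsewhere.
\item $N$ is pixelated by $\Pf_{\bar a}\in\Asc{2}$, so it lies in $\rfp(\Aus{2})$ by Proposition~\ref{prop:finitely P-presented is rpA}, and $\End(N)=\Bbbk$.
\item $N$ is not any $M_{\bar x,c}$. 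In $\supp M_{\bar x,c}$ the range of $w_1$ ends exactly where that of $w_2$ begins (at $x_2$), whereas here $a_1<a_2$.
\item Under the bijection $\Phi$ of Proposition~\ref{prop:pixelation is auslander algebra}, $N$ is the simple at $(1,2)$ of the Auslander algebra of $A_2$. This is the one indecomposable outside that algebra's $2$-cluster tilting subcategory, and correspondingly $\Ext^1(N,M_{(a_1,a_2),1})\neq 0$.
\end{itemize}
The paper's own proof has the same off-by-one: it calls the finite subcategory $(n-1)$-cluster tilting and works with $0<i<n-1$.

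\textbf{The corrected statement.} What the argument really proves is that $\add\Maus{n}$ is $n$-cluster tilting. The algebra $\boldsymbol{A}^{(n)}_m$ has global dimension $n$: it is the path algebra of $A_m$ for $n=1$ and the Auslander algebra of $A_m$ for $n=2$ (compare Proposition~\ref{prop:projective resolution}). By Iyama it carries an $n$-cluster tilting module whose endomorphism algebra is of the next type, which is what Theorem~\ref{thm:higher auslander categories} mirrors. With $0<i<n$ in place of $0<i<n-1$, your Steps~1--3 go through as you wrote them.

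Your Step~2 is in fact more careful than the paper, which only cites exactness of the embedding. Exactness alone does not transfer $\Ext$-vanishing. Resolving by the $P_{\bar x}$, which are projective on both sides, together with full faithfulness of $\pi^*$, does identify the $\Ext$ groups.

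\textbf{Step~4.} Your worry is justified: the paper only notes that objects are finitely presented and copresented by $\add\Maus{n}$, and that does not yield approximations. One way to settle it is the left adjoint $\pi_!$ of $\pi^*$ (left Kan extension along $\pi$).
\begin{itemize}
\item $\pi_!$ is right exact. After pulling back along $\pi$, it sends each $P_{\bar x}$ to $0$ or to the $P$ whose coordinates are those of $\bar x$ rounded down into $\{0\}\cup\bar a$.
\item Apply it to the presentation $P_{\bar x_1}\to P_{\bar x_0}\to M_{\bar x,c}\to 0$, where $\bar x_0=\bar x$ and $\bar x_1=(x_1,\dots,x_{n-1},c)$. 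This shows $\pi^*\pi_!M_{\bar x,c}$ is $0$ or again some $M_{\bar x',c'}$ with coordinates in $\{0\}\cup\bar a\cup\{1\}$.
\item Every map $M_{\bar x,c}\to\pi^*\overline{N}$ factors through the unit $M_{\bar x,c}\to\pi^*\pi_!M_{\bar x,c}$. Hence the lifted finite approximation of $\overline N$ is an approximation against all of $\Maus{n}$.
\end{itemize}
Left approximations are dual, via the right adjoint.
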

	\begin{proof}
		By Propositions~\ref{prop:finitely P-presented is rpA}~and~\ref{prop:Maus has injectives}, we see every object in $\rfp(\Aus{n})$ is finitely-presented and finitely-copresented by objects in $\add\Maus{n}$.
		Thus, $\add\Maus{n}$ is functorially finite.
		Moreover, in the proofs of the same propositions we have seen that each indecomposable projective and injective is in $\Maus{n}$.
		
		Let $M_{\bar{x},c}$ and $M_{\bar{y},d}$ be representations in $\Maus{n}$ and $N$ some indecomposable representation not in $\Maus{n}$.
		By the proof of Proposition~\ref{prop:finitely P-presented is rpA}, we have $\Pf_{\bar{a}_1}$, $\Pf_{\bar{a}_2}$, and $\Pf_{\bar{a}_3}$ that pixelate $M_{\bar{x},c}$, $M_{\bar{y},d}$, and $N$, respectively.
		
		Since $\Pf_{\bar{a}_1}$, $\Pf_{\bar{a}_2}$, and $\Pf_{\bar{a}_3}$ are in $\Asc{n}$, so is $\Pf_{\bar{a}} = \Pf_{\bar{a}_1} \sqcap \Pf_{\bar{a}_2} \sqcap \Pf_{\bar{a}_3}$, where $\bar{a}$ is the combined lists of $\bar{a}_1$, $\bar{a}_2$, and $\bar{a}_3$, arranged in ascending order with duplicates removed.
		For ease of notation, let $\Pf=\Pf_{\bar{a}}$.
		Then there is some $n$th Auslander algebra $\Lambda$ of type $A_m$ such that $\boldsymbol{A}_m^{(n)}$ is equivalent to $\PfAus{n}$.
		Furthermore, there are $\overline{M}_1$, $\overline{M}_2$, and $\overline{N}$ representations of $\PfAus{n}$ whose lifts to $\rfp(\Aus{n})$ are isomorphic to $M_{\bar{x},c}$, $M_{\bar{y},d}$, and $N$, respectively.
		
		By construction, $\overline{M}_1$ and $\overline{M}_2$ are in the $(n-1)$ cluster tilting subcategory $\rpwf(\boldsymbol{A}_m^{(n)}$ (using the models in \cite{OT12,JKPV19}).
		We may then use the fact that $\rpwf(\boldsymbol{A}_m^{(n)})\to\rfp(\Aus{n})$ is an exact embedding (Remark~\ref{rem:pi star factors}) and the fact that $\Ext^i(\overline{M}_1,\overline{M}_2)=0$ for $0 <i < n-1$ to see that $\Ext^i(M_{\bar{x},c},M_{\bar{y},d})=0$ for $0<i<n-1$.
		Also by construction, $\overline{N}$ is \emph{not} in the $(n-1)$ cluster tilting subcategory of $\rpwf(\boldsymbol{A}_m^{(n)})$.
		Then there is some $\overline{M}_3$ in the $(n-1)$ cluster tilting subcategory of $\rpwf(\boldsymbol{A}_m^{(n)})$ such that $\Ext^i(\overline{M}_3,\overline{N})\neq 0$ or $\Ext^i(\overline{N},\overline{M}_3)\neq 0$, for some $0< i < n-1$.
		The $\overline{M}_3$ lifts to an object $M_{\overline{z},e}$ in $\Maus{n}$ and either $\Ext^i(M_{\overline{z},e},N)\neq 0$ or $\Ext^i(N,M_{\overline{z},e})\neq 0$.
		In either case, $N$ has an extension with something in $\Maus{n}$ in degree $i$ for $0<i<n-1$.
		Therefore, $\Maus{n}$ is an $(n-1)$ cluster tilting subcategory of $\rfp(\Aus{n})$.
	\end{proof}
	
	Because we are working in the world of the continuum, we need to make a small modification to $\Maus{n}$.
	\begin{deff}[$\Mausbar{n}$]\label{deff:Mausbarn}
		We define $\Mausbar{n}$ as the subcategory of $\Maus{n}$ that omits the projective and injective objects.
		That is, the objects of $\Mausbar{n}$ are the the objects $M_{\bar{x},c}$ where $x_1>0$ and $c<1$.
	\end{deff}
	
	We now present our analogue of a specific case of \cite[Corollary 1.16]{iyama11}, more easily seen by comparing to \cite[Theorem/Construction 3.4]{OT12} and \cite[Theorem 2.3]{JKPV19}.

	\begin{thm}\label{thm:higher auslander categories}
		Let $n\geq 1$ be an integer.
		Then $\Mausbarop{n}\simeq \Aus{n+1}$.
	\end{thm}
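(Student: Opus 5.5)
The plan is to build an explicit functor $\Theta:\Aus{n+1}\to\Mausbarop{n}$, bijective on nonzero isomorphism classes of objects and an isomorphism on $\Hom$-spaces. Both categories have $\Hom$-spaces that are $0$ or $\Bbbk$. Proposition~\ref{prop:Hom support of representable projectives in Aus(n)} gives the description of these spaces for $\Aus{n+1}$, and Lemma~\ref{lem:Mxcs hom spaces} gives it for $\Maus{n}$. So the proof reduces to matching the two inequality patterns and then checking composition.

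\emph{Objects.} The nonzero objects of $\Aus{n+1}$ are the points $\bar{w}=(w_1,\ldots,w_{n+1})$ with $0<w_1<\cdots<w_{n+1}<1$. The nonzero objects of $\Mausbar{n}$ are the $M_{\bar{x},c}$ with $0<x_1<\cdots<x_n<c<1$. I would send $\bar{w}$ to $M_{(w_1,\ldots,w_n),w_{n+1}}$. This is clearly a bijection on objects, and by construction the $M_{\bar x,c}$ are pairwise non-isomorphic, since their supports differ. Zero objects are sent to zero.

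\emph{Morphisms.} By Proposition~\ref{prop:Hom support of representable projectives in Aus(n)}, $\Hom_{\Aus{n+1}}(\bar{w},\bar{v})\neq0$ if and only if
\[
0<w_1\le v_1<w_2\le v_2<\cdots<w_{n+1}\le v_{n+1}<1 .
\]
In $\Mausbarop{n}$, the corresponding space is $\Hom_{\rfp(\Aus{n})}(M_{\bar v',v_{n+1}},M_{\bar w',w_{n+1}})$, where $\bar v'$ and $\bar w'$ are the truncations to the first $n$ coordinates. By Lemma~\ref{lem:Mxcs hom spaces}, applied with $\bar x=\bar w'$, $c=w_{n+1}$, $\bar y=\bar v'$, $d=v_{n+1}$, this space is nonzero if and only if
\[
w_1\le v_1<w_2\le\cdots<w_n\le v_n<w_{n+1}\le v_{n+1}.
\]
Together with $0<w_1$ and $v_{n+1}<1$, which hold automatically on $\Mausbar{n}$, this is exactly the displayed condition for $\Aus{n+1}$. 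Thus both $\Hom$-spaces are $\Bbbk$ simultaneously.

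To define $\Theta$ on the basis element $[\gamma]:\bar w\to\bar v$, I would send it to the canonical morphism $M_{\bar v',v_{n+1}}\to M_{\bar w',w_{n+1}}$. This morphism is identity $\boldsymbol{1}_\Bbbk$ at each point of $\supp M_{\bar v',v_{n+1}}\cap\supp M_{\bar w',w_{n+1}}$ and zero elsewhere. I would then check that it is a morphism of representations, using the description of the structure maps of the $M_{\bar x,c}$ as $\boldsymbol{1}_\Bbbk$ inside the support.

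\emph{Functoriality, the main obstacle.} What remains is to show that $\Theta$ respects composition, and in particular that zero compositions match. Suppose $\bar w\to\bar v\to\bar u$ are both nonzero in $\Aus{n+1}$. Then their composite is nonzero exactly when the chain $w_1\le u_1<w_2\le\cdots$ holds, again by Proposition~\ref{prop:Hom support of representable projectives in Aus(n)}. On the other side, the composite of the canonical maps $M_{\bar u',u_{n+1}}\to M_{\bar v',v_{n+1}}\to M_{\bar w',w_{n+1}}$ is the canonical map when the supports of the outer two intersect compatibly, and is zero otherwise. Lemma~\ref{lem:Mxcs hom spaces} says this happens precisely under the same chain. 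The delicate part is the pointwise verification that the composite is nonzero, and not merely that the $\Hom$-space is nonzero. I would handle this by exhibiting a point of $\supp M_{\bar u',u_{n+1}}$ lying in all three supports, namely $\bar u'$ itself, perturbed into the support when needed, as in the proof of Lemma~\ref{lem:Mxcs hom spaces}. At such a point all maps are $\boldsymbol{1}_\Bbbk$.

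As an alternative route to functoriality, I could pick a screen $\Pf_{\bar a}\in\Asc{n}$ whose list $\bar a$ contains all the coordinates involved. Then the finitely many objects live in $\rpwf(\boldsymbol{A}^{(n)}_m)$ via Theorem~\ref{thm:pixelated rep comes from rep of pixelation} and Proposition~\ref{prop:pixelation is auslander algebra}. There, Iyama's \cite[Corollary 1.16]{iyama11}, in the form of \cite{OT12,JKPV19}, gives the finite isomorphism onto $\boldsymbol{A}^{(n+1)}_{m'}$, and compatibility of all these isomorphisms with refinement yields $\Theta$.
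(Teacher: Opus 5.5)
Your proposal is correct and follows essentially the same route as the paper: the object bijection $\bar w\mapsto M_{(w_1,\ldots,w_n),w_{n+1}}$ (the paper's ``set $x_{n+1}=c$''), together with matching the nonvanishing conditions of Proposition~\ref{prop:Hom support of representable projectives in Aus(n)} and Lemma~\ref{lem:Mxcs hom spaces}, is all the paper's proof does. Your extra check that composites agree is not in the paper's proof. It uses that $\bar u'=(u_1,\ldots,u_n)$ already lies in all three supports whenever the chain for $(\bar w,\bar u)$ holds; no perturbation is needed, since $u_1>0$. This supplies the functoriality the paper leaves implicit, and it makes your alternative route through Iyama's finite result unnecessary.
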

	\begin{proof}
		Let $M_{\bar{x},c}$ be in $\Mausbarop{n}$.
		Then $M_{\bar{x},c}$ is determined by
		\[
			0 < x_1 < x_2 < \cdots < x_{n-1} < x_n < c < 1.
		\]
		If we set $x_{n+1}=c$, there is an immediate bijection between the nonzero objects of $\Mausbarop{n}$ and $\Aus{n+1}$.
		
		Let $M_{\bar{x},c}$ and $M_{\bar{y},d}$ be nonzero objects in $\Mausbarop{n}$.
		By Lemma~\ref{lem:Mxcs hom spaces}, and noting we are in the opposite category, we see that $\Hom_{\Mausbarop{n}}(M_{\bar{x},c},M_{\bar{y},d})\cong\Bbbk$ if and only if
		\[
			0<x_1\leq y_1 < x_2\leq y_2 < \cdots < x_n\leq y_n < c \leq d < 1,
		\]
		and otherwise the hom space is $0$.
		Set $\bar{x}'=(x_1,\ldots,x_n,c)$ and $\bar{y}'=(y_1,\ldots,y_n,d)$.
		Then the displayed condition is the same condition for $\Hom_{\Aus{n+1}}(\bar{x}',\bar{y}')$ to be nonzero and isomorphic to $\Bbbk$.
		Therefore, $\Mausbarop{n}\simeq \Aus{n+1}$.
	\end{proof}
	
%	\section{Persistence Modules}\label{sec:persistence}
%	For persistence modules: \cite{BBH22,BBH23,ABH24, BDL25}
	
	\bibliographystyle{alpha}
	\bibliography{Untitled.bib}
\end{document}